\documentclass[reqno, 11pt, letterpaper]{amsart}

\usepackage{amsmath}
\usepackage{amsfonts}
\usepackage{amssymb}
\usepackage{graphicx}
\usepackage{amsthm,color,yfonts,cite} \usepackage{paralist}
\usepackage{mathabx}
\usepackage{hyperref}
\usepackage{physics}
\usepackage{dsfont}
\usepackage{bbm}
\usepackage{mathrsfs}

\newtheorem{theorem}{Theorem}
\newtheorem{definition}[theorem]{Definition}
\newtheorem{proposition}[theorem]{Proposition}
\newtheorem{lemma}[theorem]{Lemma}

\newtheorem{assumption}[theorem]{Assumption}

\theoremstyle{remark}
\newtheorem{remark}[theorem]{Remark}

\makeatletter
\newcommand*{\rom}[1]{\expandafter\@slowromancap\romannumeral #1@}
\makeatother

\newcommand{\ls}{\lesssim}

\newcommand{\la}{\langle}
\newcommand{\ra}{\rangle}
\newcommand{\R}{\mathbb{R}}
\newcommand{\T}{\mathbb{T}}
\newcommand{\C}{\mathbb{C}}
\newcommand{\Z}{\mathbb{Z}}
\newcommand{\pa}{\partial}

\newcommand{\al}{\alpha}

\usepackage{wrapfig}
\usepackage{tikz}
\usetikzlibrary{arrows,calc,decorations.pathreplacing}
\definecolor{light-gray1}{gray}{0.90}
\definecolor{light-gray2}{gray}{0.80}
\definecolor{light-gray3}{gray}{0.60}

\numberwithin{equation}{section}

\numberwithin{theorem}{section}

\numberwithin{table}{section}

\numberwithin{figure}{section}

\ifx\pdfoutput\undefined
  \DeclareGraphicsExtensions{.pstex, .eps}
\else
  \ifx\pdfoutput\relax
    \DeclareGraphicsExtensions{.pstex, .eps}
  \else
    \ifnum\pdfoutput>0
      \DeclareGraphicsExtensions{.pdf}
    \else
      \DeclareGraphicsExtensions{.pstex, .eps}
    \fi
  \fi
\fi

\title[Long-time KdV approximation for the FPUT system]{Long-time Korteweg-de Vries approximation for the Fermi-Pasta-Ulam-Tsingou system}

\date{\today}
\author[Y. Hong]{Younghun Hong}
\address{Department of Mathematics, Chung-Ang University, Seoul 06974, South Korea}
\email{yhhong@cau.ac.kr}

\author[J. Jang]{Junyeong Jang}
\address{Department of Mathematics, Chung-Ang University, Seoul 06974, South Korea}
\email{jyjang0119@cau.ac.kr}

\begin{document}

\begin{abstract}
We establish a quantitative $L^2$-approximation of the infinite Fermi--Pasta--Ulam--Tsingou (FPUT) system by two counter-propagating Korteweg--de Vries (KdV) waves at low Sobolev regularity as the lattice spacing $h$ tends to zero. For initial moving-frame profiles uniformly bounded in $H^s$, $0<s\leq 1$, and KdV initial data given by their continuations to the real line, we obtain an approximation error bounded by $\sim h^{\frac{2s}{5}}e^{K_0|t|}$. Consequently, the approximation error tends to zero uniformly on logarithmically growing time intervals. Our proof combines conservation of the rescaled FPUT Hamiltonian with an $h$-uniform local well-posedness theory in $L^2$ and persistence of Sobolev regularity, yielding $h$-uniform Sobolev bounds with at most exponential growth in time. We also introduce a frequency-localized auxiliary equation that enables us to compare the Fourier restriction norms associated with the FPUT and KdV flows.
\end{abstract}

\maketitle

\section{Introduction}
In the early 1950s, Fermi, Pasta, Ulam, and Tsingou carried out pioneering numerical experiments on the MANIAC I computer at Los Alamos to study a one-dimensional chain of particles coupled by weakly nonlinear
nearest-neighbor interactions, now known as the FPUT
system~\cite{FermiPastaUlam1955,Dauxois2008}. The experiments were designed to test the expectation that nonlinear coupling would drive the system toward thermal equilibrium, with energy equipartitioned among its linear normal modes. Contrary to this expectation, energy initially concentrated in one or a few low-frequency modes spread to other modes but later returned remarkably close to its initial distribution, a phenomenon now known
as FPUT recurrence. Zabusky and Kruskal subsequently connected this recurrence to the KdV limit and soliton dynamics~\cite{ZK1965}. In a related development, Toda introduced a lattice with exponential
interactions, now known as the Toda lattice, which was later shown to be completely integrable~\cite{Toda1967Vibration,Henon1974TodaIntegrals,Flaschka1974TodaIntegrals}. The FPUT system has since become a paradigmatic model for studying dispersive waves, solitons, near-integrability, metastability,
thermalization, and small-amplitude and continuum
limits~\cite{BermanIzrailev2005,Gallavotti2008}.

In this article, we consider the FPUT system on the infinite lattice $\mathbb{Z}$, consisting of an infinite chain of particles coupled by nonlinear springs. Let \(q(z), p(z):\mathbb{Z}\to\mathbb{R}\) denote the displacement and momentum, respectively, of the $z$-th particle in the chain. The formal Hamiltonian of the system is given by
\begin{equation}\label{eq:FPUT-Hamiltonian}
\mathcal{H}(q,p)
=
\sum_{z\in\mathbb{Z}}
\left\{
\frac{p(z)^2}{2}
+
V\bigl(q(z+1)-q(z)\bigr)
\right\},
\end{equation}
where $V:\mathbb{R}\to\mathbb{R}$ is the nearest-neighbor interaction potential. The corresponding Hamiltonian equations are explicitly given by
\begin{equation}\label{eq:FPUT-system}
\left\{
\begin{aligned}
\partial_t q(t,z)
&=
p(t,z),\\
\partial_t p(t,z)
&=
V'\bigl(q(t,z+1)-q(t,z)\bigr)
-
V'\bigl(q(t,z)-q(t,z-1)\bigr),
\end{aligned}
\right.
\end{equation}
where $(q(t,z), p(t,z)):\R\times\Z\to \R\times\R$. Throughout this article, we impose the following normalization for the potential:
\begin{equation}\label{eq:potential-assumption}
V\in C^4(\mathbb{R}),
\qquad
V(0)=V'(0)=0,
\qquad
V''(0)=V'''(0)=1.
\end{equation}

\begin{remark}
\noindent\textup{(i)}
Any potential $V\in C^4(\mathbb{R})$ satisfying
$V(0)=V'(0)=0$, $V''(0)=a>0$, and $V'''(0)=b\neq0$
(see Friesecke and Pego~\cite{FP1999,FP2002,FP2004-1,FP2004-2})
can be reduced to the normalized setting above by a simple rescaling.
More precisely, setting
$\tilde{t}=\sqrt{a} t$,
$\tilde{q}(\tilde{t},z)=\frac{b}{a}q(t,z)$, and
$\tilde{p}(\tilde{t},z)=\frac{b}{a^{3/2}}p(t,z)$,
and defining
$\tilde{V}(r)=\frac{b^2}{a^3}V\bigl(\frac{a}{b}r\bigr)$,
we find that $(\tilde{q},\tilde{p})$ satisfies the corresponding
Hamiltonian system with interaction potential $\tilde{V}$.
Moreover, $\tilde{V}(0)=\tilde{V}'(0)=0$ and
$\tilde{V}''(0)=\tilde{V}'''(0)=1$.

\par\smallskip
\noindent\textup{(ii)}
After the normalization in \textup{(i)}, the assumptions
in \eqref{eq:potential-assumption} cover the $\alpha$--FPU system
with interaction potential
$V(r)=\frac12r^2+\frac{\alpha}{3}r^3$, $\alpha\neq0$,
for which Zabusky and Kruskal formally derived the KdV equation
as a long-wave approximation~\cite{ZK1965}.
They also cover the potential $V(r)=e^r-1-r$, which gives rise
to the completely integrable Toda lattice.

\par\smallskip
\noindent\textup{(iii)}
The two nondegeneracy conditions imposed on $V$ play distinct roles
in the KdV approximation.
The condition $V''(0)=a>0$ ensures that the linearized lattice
supports stable acoustic waves with sound speed $\sqrt{a}$;
in a frame moving at this speed, the leading long-wave dispersive
correction gives rise to the Airy part of the KdV equation.
The condition $V'''(0)=b\neq0$ produces the leading quadratic term
in the force law and, consequently, the quadratic nonlinearity
in the KdV equation.

\par\smallskip
\noindent\textup{(iv)}
If, instead, $V'''(0)=0$ but $V^{(4)}(0)\neq0$,
as in the $\beta$--FPU system, the leading nonlinear correction
is cubic, and an appropriate small-amplitude, long-wave scaling
leads to the modified KdV equation; see~\cite{Abl2011,KY2025}.
\end{remark}

The goal of this article is to rigorously justify the KdV approximation
for low-regularity solutions of the FPUT system over long time intervals.
For solitary waves, Friesecke and Pego~\cite{FP1999} established
the existence of small-amplitude waves whose suitably rescaled profiles
converge to the KdV soliton.
See also~\cite{FW1994,FP2002,FP2004-1,FP2004-2,Miz2013}
for related existence and stability results.
For more general dynamics, Schneider and Wayne~\cite{SW2000}
showed that sufficiently regular FPUT solutions can be approximated
by two counter-propagating waves governed by the KdV equation.
Further developments include the normal-form approach of
Bambusi and Ponno~\cite{BP2006} and the extension to longer time
intervals by Khan and Pelinovsky~\cite{KP2017}.
More recently, dispersive PDE methods have enabled the study of
the KdV approximation at lower regularity and over longer time
intervals~\cite{HKY2021,KY2025,LK2026,KL2026}.
More broadly, the rigorous derivation of continuum equations from
lattice models remains an active area of research; see, for example,
\cite{SSB1986,KLS2013,HY2019-1,HKNY2021,HKY2023,KOVW2023,KOVW2025}.

\subsection{Rescaled FPUT system}

In the literature, the FPUT-to-KdV approximation is usually formulated in the
small-amplitude regime, which is equivalent to the continuum-limit formulation under a simple rescaling. We adopt the latter formulation because it provides a natural framework for comparing the regularity requirements for the approximation with those in the low-regularity well-posedness theory for the limiting KdV equation.

To begin, we introduce the relative displacement, or strain variable,
\begin{equation}
r(t,z):=q(t,z+1)-q(t,z).
\end{equation}
The FPUT system \eqref{eq:FPUT-system} can then be rewritten
as the second-order differential-difference equation
\begin{equation}\label{eq:FPUT-system-original-scale}
\partial_t^2 r(t,z)
=
V'\bigl(r(t,z+1)\bigr)
+
V'\bigl(r(t,z-1)\bigr)
-
2V'\bigl(r(t,z)\bigr).
\end{equation}
Let $h>0$ denote the rescaled lattice spacing.
To study the continuum limit $h\to0$, we introduce the lattice
\[
h\mathbb{Z}:=\{hm:m\in\mathbb{Z}\}
\]
and define the discrete Laplacian $\Delta_h$ by
\[
\Delta_h f(z)
:=
\frac{f(z+h)+f(z-h)-2f(z)}{h^2},
\qquad z\in h\mathbb{Z}.
\]
We then introduce the long-wave, small-amplitude rescaling
$r_h:\mathbb{R}\times h\mathbb{Z}\to\mathbb{R}$ by
\begin{equation}\label{eq:FPUT-rescaling}
r_h(t,z)
:=
\frac{1}{h^2}
r\left(\frac{t}{h^3},\frac{z}{h}\right).
\end{equation}
Under this rescaling, \eqref{eq:FPUT-system-original-scale} becomes
\begin{equation}\label{eq:FPUT-system-rescaled}
h^6\partial_t^2 r_h
=
\Delta_h\bigl(V'(h^2r_h)\bigr).
\end{equation}
By the assumptions in \eqref{eq:potential-assumption}
and Taylor's theorem, we write
\begin{equation}
V'(h^2r_h)
=
h^2r_h
+
\frac{h^4}{2}r_h^2
+
\frac{h^4}{2}\mathcal{R}_h[r_h],
\end{equation}
where the higher-order remainder is defined by
\begin{equation}\label{eq:higher-order-remainder}
\mathcal{R}_h[y]
:=
\frac{2}{h^4}
\left\{
V'(h^2y)-h^2y-\frac{h^4}{2}y^2
\right\},
\qquad y\in\mathbb{R}.
\end{equation}
Substituting this expansion into \eqref{eq:FPUT-system-rescaled}
and dividing by $h^6$, we obtain the initial-value problem
\begin{equation}\label{eq:FPUT-system-expanded}
\left\{
\begin{aligned}
\partial_t^2 r_h
-
\frac{1}{h^4}\Delta_h r_h
&=
\frac{1}{2h^2}\Delta_h\bigl(r_h^2\bigr)
+
\frac{1}{2h^2}\Delta_h\bigl(\mathcal{R}_h[r_h]\bigr),\\
r_h(0)&=r_{h,0},\\
\partial_t r_h(0)&=r_{h,1}.
\end{aligned}
\right.
\end{equation}
For finite-energy solutions, the corresponding rescaled Hamiltonian
is given by
\begin{equation}\label{eq: rescaled FPUT-Hamiltonian}
\mathcal{H}_h(t)
=
h\sum_{z\in h\mathbb{Z}}
\left\{
\frac12
\left|h^2\bigl(|\nabla_h|^{-1}\partial_t r_h\bigr)(t,z)\right|^2
+
\frac{1}{h^4}V\bigl(h^2r_h(t,z)\bigr)
\right\},
\end{equation}
where $|\nabla_h|:=(-\Delta_h)^{1/2}$.

\subsection{Basic notation}

We introduce the notation needed to formulate the KdV approximation.

\subsubsection{Function spaces and differential operators}

To reflect the Riemann-sum scaling underlying the continuum limit,
we define
\[
L_z^p(h\mathbb{Z})
:=
\big\{
f_h:h\mathbb{Z}\to\mathbb{C}:
\|f_h\|_{L_z^p(h\mathbb{Z})}<\infty
\big\},
\]
equipped with the rescaled lattice 
$L^p$-norms
\[
\|f_h\|_{L_z^p(h\mathbb{Z})}
:=
\begin{cases}
\displaystyle
\left(
h\sum_{z\in h\mathbb{Z}}|f_h(z)|^p
\right)^{1/p},
& 1\leq p<\infty,\\[1.2ex]
\displaystyle
\sup_{z\in h\mathbb{Z}}|f_h(z)|,
& p=\infty.
\end{cases}
\]
For $f_h\in L_z^2(h\mathbb{Z})$, we define its lattice Fourier transform by
\[
(\mathcal{F}_h f_h)(\xi)
=\widehat{f_h}(\xi)
:=
h\sum_{z\in h\mathbb{Z}}f_h(z)e^{-iz\xi},
\qquad \xi\in\mathbb{T}_h,
\]
where the series is understood in $L^2(\mathbb{T}_h)$ and
$\mathbb{T}_h$ is the one-dimensional torus of period $\frac{2\pi}{h}$:
\[
\mathbb{T}_h
:=
\mathbb{R}\big/\left(\frac{2\pi}{h}\mathbb{Z}\right)
\simeq
\left[-\frac{\pi}{h},\frac{\pi}{h}\right).
\]
For $g_h\in L^2(\mathbb{T}_h)$, the inverse lattice Fourier transform
is given by
\[
(\mathcal{F}_h^{-1}g_h)(z)
=\widecheck{g_h}(z)
:=
\frac{1}{2\pi}
\int_{-\frac{\pi}{h}}^{\frac{\pi}{h}}
g_h(\xi)e^{iz\xi} d\xi,
\qquad z\in h\mathbb{Z}.
\]
These definitions correspond to our conventions for the continuum
Fourier transform and its inverse:
\[
\hat{f}(\xi)
=
\int_{\mathbb{R}}f(x)e^{-ix\xi} dx,
\qquad
\check{g}(x)
=
\frac{1}{2\pi}
\int_{\mathbb{R}}g(\xi)e^{ix\xi} d\xi.
\]

\begin{remark}
We use $\xi$ for the frequency variable
in both $\mathbb{R}$ and $\mathbb{T}_h$.
When the distinction is needed, we write $[\xi]$ for the unique
representative of $\xi\in\mathbb{R}$ modulo $\frac{2\pi}{h}$
lying in $[-\frac{\pi}{h},\frac{\pi}{h})$.
\end{remark}

For functions on $h\mathbb{Z}$, we introduce two lattice analogues
of the spatial derivative.

\begin{definition}[Lattice derivative operators $\nabla_h$ and $\partial_h$]
We define $\nabla_h$ as the discrete Fourier multiplier
\[
\widehat{\nabla_h f_h}(\xi)
:=
m_h(\xi)\widehat{f_h}(\xi),
\qquad
m_h(\xi):=\frac{2i}{h}\sin\left(\frac{h[\xi]}{2}\right).
\]
We define $\partial_h$ as the discrete Fourier multiplier
\[
\widehat{\partial_h f_h}(\xi)
:=
i[\xi]\widehat{f_h}(\xi).
\]
\end{definition}

\begin{remark}
Both $\nabla_h$ and $\partial_h$ are lattice analogues of
the derivative $\partial_x$ on $\mathbb{R}$:
for each fixed $\xi\in\mathbb{R}$, their symbols converge to $i\xi$ as $h\to0$. Note that $\Delta_h=\nabla_h^2$.
\end{remark}

For $s\in\mathbb{R}$, we define the lattice Sobolev space
$H_z^s(h\mathbb{Z})$ by the norm
\[
\|f_h\|_{H_z^s(h\mathbb{Z})}
:=
\big\|
(1-\partial_h^2)^{\frac{s}{2}}f_h
\big\|_{L_z^2(h\mathbb{Z})}
=
\left(
\frac{1}{2\pi}
\int_{-\frac{\pi}{h}}^{\frac{\pi}{h}}
(1+|\xi|^2)^s
|\widehat{f_h}(\xi)|^2 d\xi
\right)^{1/2}.
\]

\begin{definition}[Almost-translation operator on $h\mathbb{Z}$]
For $a\in\mathbb{R}$, we define $e^{a\partial_h}$ as the discrete
Fourier multiplier
\[
\widehat{e^{a\partial_h}f_h}(\xi)
:=
e^{ia[\xi]}\widehat{f_h}(\xi).
\]
\end{definition}

\begin{remark}\label{rmk: almost translation on hZ}
On $\mathbb{R}$, the operator $e^{a\partial_x}$ acts as translation
by $a\in\mathbb{R}$:
$(e^{a\partial_x}f)(x)=f(x+a)$.
On $h\mathbb{Z}$, if $a\in h\mathbb{Z}$, then $e^{a\partial_h}$
agrees with the corresponding lattice translation:
$(e^{a\partial_h}f_h)(z)=f_h(z+a)$.
If $a\notin h\mathbb{Z}$, however, $e^{a\partial_h}$ does not
correspond to a pointwise translation on the lattice and,
in general, does not preserve products.
\end{remark}

\subsubsection{Continuation and discretization}

To compare functions on $h\mathbb{Z}$ with functions on $\mathbb{R}$,
we use the continuation and discretization operators introduced
by Nakamura and Tadano~\cite{NT2021}.
These operators transfer functions between the lattice and continuum
settings.

Throughout this paper, we fix $\varphi$ and $\alpha$ as follows.
\begin{assumption}
\label{assumption for varphi}
\begin{enumerate}[$(1)$]
\item
We assume that $\varphi\in\mathcal{S}(\R;\R)$ is even,
$\operatorname{supp}\hat{\varphi}\subset(-2\pi,2\pi)$, and
$\hat{\varphi}(0)=1$, where $\hat{\varphi}$ denotes the Fourier
transform of $\varphi$ on $\mathbb{R}$.

\item
For every $\xi\in\mathbb{R}$,
\begin{equation}\label{phi assumption 1}
\sum_{m\in\Z}|\hat{\varphi}(\xi+2\pi m)|^2=1.
\end{equation}
\end{enumerate}
The support condition in \textup{(1)} and \eqref{phi assumption 1}
imply that there exists $\alpha\in(0,\pi)$ such that
\begin{equation}\label{phi assumption 2}
\hat{\varphi}(\xi)=1,
\qquad |\xi|\leq\alpha.
\end{equation}
\end{assumption}

\begin{definition}[Discretization and continuation]
\label{def: discretization and continuation}
We define the discretization operator
$\mathfrak{D}_h:L_x^2(\R)\to L_z^2(h\Z)$ by
\begin{equation}\label{eq: definition of Dh}
(\mathfrak{D}_h f)(z)
:=
\frac{1}{h}\int_\R
\overline{\varphi\left(\frac{x-z}{h}\right)}
f(x)\,dx,
\qquad z\in h\Z.
\end{equation}
The continuation operator
$\mathfrak{C}_h:L_z^2(h\Z)\to L_x^2(\R)$
is its adjoint, $\mathfrak{C}_h=\mathfrak{D}_h^*$, and is given by
\begin{equation}\label{eq: definition of Ch}
(\mathfrak{C}_h f_h)(x)
:=
\sum_{z\in h\Z}
\varphi\left(\frac{x-z}{h}\right)f_h(z),
\qquad x\in\R,
\end{equation}
where the series converges in $L_x^2(\R)$.
\end{definition}

\begin{remark}
The operator $\mathfrak{C}_h$ is an $L^2$-isometry, and its adjoint
$\mathfrak{D}_h=\mathfrak{C}_h^*$ is a partial isometry satisfying
$\mathfrak{D}_h\mathfrak{C}_h=\textup{Id}_{L_z^2(h\Z)}$;
see Lemma~\ref{lem: isometry property of Ch and Dh}.
\end{remark}

\subsection{Main result: KdV limit of the FPUT system}
We decompose $r_h$ into right- and left-moving components by writing
\begin{equation}\label{eq:FPUT-profile-decomposition}
r_h(t,z)=
e^{-\frac{t}{h^2}\partial_h}u_h^+(t,z)
+
e^{\frac{t}{h^2}\partial_h}u_h^-(t,z).
\end{equation}
Then the profiles $u_h^\pm$ satisfy the coupled FPUT system (see Section~\ref{subsec: Derivation of the coupled FPUT system})
\begin{equation}\label{eq: coupled FPUT system} 
\pa_tu_h^\pm
=
\mp\frac{\nabla_h-\pa_h}{h^2}u_h^\pm
\mp\frac{1}{4}\nabla_h
e^{\pm\frac{t}{h^2}\pa_h}
\big(r_h^2+\mathcal{R}_h[r_h]\big),
\end{equation}
where $\mathcal{R}_h$ is the higher-order remainder defined in \eqref{eq:higher-order-remainder}. In the formal continuum limit $h\to0$ (see Remark~\ref{rmk: Formal KdV limit in integral form}), the profiles $u_h^\pm$ converge to solutions $w^\pm$ of the corresponding KdV equations
\begin{equation}\label{eq:KdV}
\partial_t w^\pm
\pm
\frac{1}{24}\partial_x^3w^\pm
\pm
\frac{1}{2}w^\pm\partial_xw^\pm
=
0.
\end{equation}
Consequently, the leading-order behavior of the solution to the rescaled FPUT system \eqref{eq:FPUT-system-expanded} is described by two decoupled KdV waves propagating in opposite directions:
\begin{equation}\label{eq:FPUT-KdV-approximation}
(\mathfrak{C}_hr_h)(t,x)
\underset{h\to0}\approx
w^+\big(t,x-\tfrac{t}{h^2}\big)
+
w^-\big(t,x+\tfrac{t}{h^2}\big).
\end{equation}

Our main theorem establishes an $L^2$-convergence bound for the FPUT--KdV approximation in the low-regularity range $0<s\leq1$, with at most exponential growth in time. 

\begin{theorem}[Exponential-in-time KdV approximation for the FPUT system]\label{thm: main theorem}
Let $0<s\leq 1$, and let $\mathfrak{C}_h:L_z^2(h\Z)\to L_x^2(\R)$ be the continuation operator defined in \eqref{eq: definition of Ch}. Suppose that
$$
\sup_{h\in (0,1]}
\|u_{h,0}^\pm\|_{H_z^s(h\Z)}<\infty
\quad\textup{and}\quad
\|w_0^\pm\|_{H_x^s(\R)}<\infty.
$$
Let $w^\pm(t)\in C_t(\R;H_x^s(\R))$ be the unique solution to the KdV equation \eqref{eq:KdV} with initial data $w_0^\pm$. Then there exist $h_0\in(0,1]$ and constants $K_0, K_s>0$ such that, for every $h\in(0,h_0]$, the coupled FPUT system \eqref{eq: coupled FPUT system} with initial data $u_{h,0}^\pm$ admits a unique global solution $u_h^\pm(t)\in C_t(\R;H_z^s(h\Z))$ and, for every $t\in\R$,
\begin{equation}\label{ineq: convergence bound}
\|\mathfrak{C}_hu_h^\pm(t)-w^\pm(t)\|_{L_x^2(\R)}
\leq
e^{K_0|t|}
\Big(
K_0\|\mathfrak{C}_hu_{h,0}^\pm-w_0^\pm\|_{L_x^2(\R)}
+
K_sh^{\frac{2s}{5}}\Big).
\end{equation}
\end{theorem}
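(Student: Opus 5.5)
The proof has three steps: $h$-uniform a priori bounds, a short-time comparison in $L^2$, and iteration with Gronwall.

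\emph{Step 1: global existence and $h$-uniform bounds.} First I will build an $h$-uniform local well-posedness theory for the coupled FPUT system \eqref{eq: coupled FPUT system} in $L^2_z(h\Z)$. The tool is Fourier restriction norms $X^{s,b}_h$ adapted to the lattice dispersion symbols
\[
\mp\tfrac{1}{h^2}\bigl(m_h(\xi)-i[\xi]\bigr)\approx \pm\tfrac{i}{24}\xi^3 .
\]
These symbols satisfy $|\partial_\xi^2(\cdot)|\gtrsim|\xi|$ only for $|\xi|\lesssim h^{-1}$, which is all we need on $\T_h$.

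The key bilinear estimate is a Kenig--Ponce--Vega/Bourgain type bound
\[
\|\nabla_h(u v)\|_{X^{0,b-1}_h}\ls \|u\|_{X^{0,b}_h}\|v\|_{X^{0,b}_h}.
\]
I will prove it via the resonance function and a lattice Strichartz/smoothing estimate. It has the following consequences.
\begin{itemize}
\item The cross terms $e^{\pm 2t\partial_h/h^2}u^\mp_h\,u^\pm_h$ are highly non-resonant, with a phase gain of order $h^{-2}|\xi|$. They can therefore be placed in the same norm with a gain.
\item The remainder $\mathcal R_h$ carries an extra $h^2$ factor. It can be absorbed using $\|f\|_{L^\infty}\ls h^{-1/2}\|f\|_{L^2}$, or better Sobolev bounds.
\item The local existence time depends only on the $L^2$ norm.
\item Persistence of regularity gives
\[
\|u_h^\pm(t)\|_{H^s}\le C\|u_h^\pm(t_0)\|_{H^s}
\]
over one local interval.
\end{itemize}
Conservation of $\mathcal H_h$ controls $\|u_h^+\|_{L^2}+\|u_h^-\|_{L^2}$ uniformly in $h$ and $t$, provided $h$ is small so that $h^2 r_h$ stays in the region where $V\approx r^2/2$. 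The smallness of $\|h^2r_h\|_{L^\infty}\ls h^{3/2}\|r_h\|_{L^2}$ closes this bootstrap. Iterating the local theory with this uniform local time then gives global solutions with $\sup_h\|u_h^\pm(t)\|_{H^s_z}\le Ce^{K|t|}$. Propagation of $H^s$ for KdV is classical and gives the same for $w^\pm$, with the exponential absorbed into $K_0$ after a logarithm.

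\emph{Step 2: short-time comparison.} On an interval $[t_0,t_0+\delta]$, with $\delta$ depending only on the uniform $L^2$ bounds, I compare $\mathfrak C_h u_h^\pm$ with $w^\pm$. The mismatch between lattice and continuum frequency spaces is handled by an auxiliary frequency-localized equation: KdV with the nonlinearity projected via $P_{\le h^{-\theta}}$, and with the FPUT symbols on frequencies $|\xi|\le h^{-\theta}$, on which $\mathfrak C_h$ intertwines lattice and continuum multipliers up to errors. The high-frequency pieces cost
\[
\|P_{>h^{-\theta}}u\|_{L^2}\ls h^{\theta s}\|u\|_{H^s}.
\]
On low frequencies, the symbol error is $|h^{-2}(m_h-i\xi)-\tfrac{i}{24}\xi^3|\ls h^2|\xi|^5$. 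The error from the nonlinearity not commuting with $\mathfrak C_h$ is similar. The oscillating cross terms and $\mathcal R_h$ contribute $O(h^{c})$ in $X^{0,b-1}$. Optimizing
\[
h^{\theta s}\quad\text{against}\quad h^{2}h^{-5\theta}\cdot h^{-\theta(1-s)}\text{-type losses}
\]
is where I expect $\theta=2/5$ and the rate $h^{2s/5}$ to arise. The comparison of the two $X^{0,b}$ norms (FPUT versus KdV) is done through the auxiliary equation, whose solution lies in both.

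\emph{Step 3: iteration.} The difference estimate on one interval reads
\[
\|D(t_0+\delta)\|_{L^2}\le C\|D(t_0)\|_{L^2}+K_s h^{2s/5}e^{K|t_0|},
\]
using the Step 1 bounds. Iterating over $|t|/\delta$ intervals yields \eqref{ineq: convergence bound} with $K_0\sim \delta^{-1}\log C+K$.

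The main obstacle is Step 2, specifically the $h$-uniform bilinear estimate near $|\xi|\sim h^{-1}$, where the dispersion degenerates. My first attempt is to restrict to $|\xi|\le h^{-\theta}$ via the auxiliary equation, so that degenerate frequencies are only ever estimated in $H^s$ and never through dispersion.
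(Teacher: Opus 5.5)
Your architecture matches the paper's. You use an $h$-uniform $X^{s,b}$ theory with $L^2$-determined lifespan and persistence of regularity, conservation of $\mathcal{H}_h$, and exponential $H^s$ growth. You add a frequency-localized auxiliary equation at $N_0\sim h^{-2/5}$ on which $\mathfrak{C}_h$ intertwines exactly, compare first in FPUT and then in Airy Bourgain norms, and finish with a $C^m$ iteration. The gap is in how you handle the nonlinearity at high frequencies, in two related places.

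\textbf{The missing translation correction.} You implicitly use $e^{\pm\frac{t}{h^2}\partial_h}(r_h^2)=\bigl(u_h^\pm+e^{\pm\frac{2t}{h^2}\partial_h}u_h^\mp\bigr)^2$, which is false: $e^{a\partial_h}$ has symbol $e^{ia[\xi]}$ and preserves products only for $a\in h\mathbb{Z}$. The defect $\mathcal{T}_h^\pm[r_h]$ lives exactly on frequency-wrapping interactions, i.e.\ two inputs of the same sign whose sum leaves $[-\frac{\pi}{h},\frac{\pi}{h})$. There it equals the wrapped product times $e^{\mp2\pi i a/h}-1$. With $a=\pm t/h^2$ this is a time oscillation $e^{\pm 2\pi i t/h^3}$, i.e.\ a modulation shift by $2\pi/h^3$. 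You therefore need bilinear estimates for these wrapped terms with shifts $\frac{2\pi\ell}{h^3}$, $\ell\in\{-1,0,1\}$, at the unfolded output frequency $\xi\pm\frac{2\pi}{h}\notin[-\frac{\pi}{h},\frac{\pi}{h})$. For equal-sign inputs this needs a uniform lower bound on the shifted resonance function. The smallness $h^{s-s'}$ then comes from $|\xi|\le\frac{\pi}{h}\le|\xi_1|+|[\xi-\xi_1]|$. The same bookkeeping is what justifies restarting at $t_0$ in Steps 1 and 3. The coupled system is not autonomous, so you must check that the fixed shifts $e^{\pm 2t_0\partial_h/h^2}$ preserve Bourgain norms and that the phases $e^{\mp 2\pi i t_0/h^3}$ are unimodular constants.

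\textbf{The misdiagnosed obstacle.} Dispersion does not degenerate near $|\xi|\sim h^{-1}$. Indeed $s_h''(\xi)=\frac{1}{2h}\sin(\frac{h\xi}{2})$, so $|s_h''(\xi)|\ge\frac{|\xi|}{2\pi}$ on all of $\mathbb{T}_h$, as your own Step 1 remark says. The genuine high-frequency difficulty is the wrapping above. Nor can the cutoff keep high frequencies out of dispersive estimates. Two places need the smoothing bilinear estimate on the whole torus, because $\nabla_h$ costs up to $h^{-1}$ while only $H^s$, $s\le1$, is available:
\begin{itemize}
\item the $L^2$ local theory;
\item the first comparison of the coupled FPUT with the auxiliary equation, through terms like $P^h_{>N_0}\nabla_h(u_h^\pm)^2$ and the high--high$\to$low interactions in $P^h_{\le N_0}\nabla_h\{(u_h^\pm)^2-(v_h^\pm)^2\}$.
\end{itemize}
What the cutoff actually buys is comparability of the FPUT and Airy Bourgain norms. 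Having $\langle\tau\mp s_h(\xi)\rangle\sim\langle\tau\mp s(\xi)\rangle$ uniformly on $|\xi|\le N_0$ requires $h^2N_0^5\lesssim1$. Balancing the flow error $h^2N_0^{5-s}$ against the tail $N_0^{-s}$ then forces $\theta=\frac25$. Your displayed optimization, $h^{2-6\theta+\theta s}$ against $h^{\theta s}$, would give $\theta=\frac13$ instead.
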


\begin{remark}[Dependence of the constants]
The constants $K_0$ and $K_s$, as well as the threshold $h_0$, are independent of $h\in(0,1]$ and $t\in\R$. With the potential $V$ and the continuation operator $\mathfrak{C}_h$ fixed throughout the paper, the threshold $h_0$ and the exponential rate $K_0$ depend only on $s$ and the lower-order bounds
$$
\sup_{h\in(0,1]}\sum_{\pm}\|u_{h,0}^\pm\|_{L_z^2(h\Z)}
\quad\textup{and}\quad
\sup_\pm\|w_0^\pm\|_{L_x^2(\R)},
$$
whereas $K_s$ may additionally depend on the corresponding $H^s$-bounds in the hypothesis. The same choices of $h_0$, $K_0$ and $K_s$ can be made uniformly when $w_0^\pm$ is replaced by an $h$-dependent family, provided that the corresponding bounds are uniform in $h$.
\end{remark}

\begin{remark}[Logarithmic time validity]
If \(\|\mathfrak{C}_hu_{h,0}^\pm-w_0^\pm\|_{L_x^2(\R)}\leq Ch^{\frac{2s}{5}}\), then 
$$
\lim_{h\to 0}\sup_{|t|\leq c\log(1/h)}\sum_{\pm}\|\mathfrak{C}_hu_h^\pm(t)-w^\pm(t)\|_{L_x^2(\R)}=0
$$
for every $0<c<\frac{2s}{5K_0}$. Thus, in the rescaled time variable, the temporal interval of validity grows at least logarithmically as $h\to 0$.
\end{remark}

In view of the profile decomposition \eqref{eq:FPUT-profile-decomposition}, Theorem~\ref{thm: main theorem} yields the following continuum limit for the strain variable $r_h$. The corresponding small-amplitude limit for the original FPUT system follows from the scaling relation \eqref{eq:FPUT-rescaling}.
\begin{theorem}[Continuum and small-amplitude limits for the FPUT system]\label{thm: continuum limit}
Let $0<s\leq 1$, and suppose that 
$$
\sup_{h\in(0,1]}\Big(\|r_{h,0}\|_{H_z^s(h\Z)}+\big\|h^2\nabla_h^{-1}r_{h,1}\big\|_{H_z^s(h\Z)}\Big)<\infty.
$$
For each $h\in(0,1]$, let $w^{\pm,h}(t)\in C_t(\R;H_x^s(\R))$ be the solution to the KdV equation with initial data \(w^{\pm,h}(0)=\mathfrak{C}_hr_{h,0}^\pm\) with 
$$
r_{h,0}^\pm:=\frac{1}{2}(r_{h,0}\mp h^2\nabla_h^{-1}r_{h,1}).
$$
Then, there exist $h_0\in(0,1]$ and constants $K_0, K_s>0$ such that for every $h\in(0,h_0]$, the rescaled FPUT system \eqref{eq:FPUT-system-expanded} with initial data $(r_{h,0}, r_{h,1})$ admits a unique global solution $r_h(t)\in C_t(\R;H_z^s(h\Z))$, and the following statements hold for every $T>0$.
\begin{enumerate}[(1)]
\item (Continuum limit)
\begin{equation}\label{ineq: continuum limit}
\sup_{|t|\leq T}
\Big\|(\mathfrak{C}_hr_h)(t,\cdot )-w^{+,h}\big(t,  \cdot -\tfrac{t}{h^2}\big)-w^{-,h}\big(t,  \cdot +\tfrac{t}{h^2}\big)\Big\|_{L_x^2(\R)}
\leq K_se^{K_0T}h^{\frac{2s}{5}}.
\end{equation}
\item (Small-amplitude limit) Scaling back, \(r(t,z):=h^2r_h(h^3t, hz):\mathbb{R}\times\mathbb{Z}\to\mathbb{R}\) solves the original FPUT system \eqref{eq:FPUT-system-original-scale}, and 
\begin{equation}\label{ineq: small-amplitude limit}
\sup_{|t|\leq \frac{T}{h^3}}
\big\|(\mathfrak{C}_1r)(t,\cdot )-h^2w^{+,h}(h^3t, h(\cdot -t))-h^2w^{-,h}(h^3t, h(\cdot +t))\big\|_{L_x^2(\R)}
\leq K_se^{K_0T}h^{\frac{3}{2}+\frac{2s}{5}}.
\end{equation}
\end{enumerate}
\end{theorem}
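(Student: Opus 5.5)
The plan is to derive Theorem~\ref{thm: continuum limit} from Theorem~\ref{thm: main theorem}, applied with the $h$-dependent KdV data $w_0^\pm=\mathfrak{C}_hu_{h,0}^\pm$. This choice is permitted by the remark on the dependence of the constants, provided the relevant bounds are uniform in $h$.

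\textbf{Step 1: matching the initial data.} First I relate $(r_{h,0},r_{h,1})$ to the profiles. From \eqref{eq:FPUT-profile-decomposition} at $t=0$ we get $r_{h,0}=u_{h,0}^++u_{h,0}^-$. Differentiating in time and using \eqref{eq: coupled FPUT system} at $t=0$ gives
\[
r_{h,1}=-\tfrac{1}{h^2}\partial_h(u_{h,0}^+-u_{h,0}^-)+\partial_tu_h^+(0)+\partial_tu_h^-(0).
\]
The linear part of $\partial_tu_h^\pm$ replaces $\partial_h$ by $\nabla_h$, so the linear terms combine to $-\tfrac{1}{h^2}\nabla_h(u_{h,0}^+-u_{h,0}^-)$. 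The nonlinear terms come with opposite signs $\mp$ and cancel at $t=0$. Hence $h^2\nabla_h^{-1}r_{h,1}=-(u_{h,0}^+-u_{h,0}^-)$, which gives $u_{h,0}^\pm=r_{h,0}^\pm$. This should be exactly how Section~\ref{subsec: Derivation of the coupled FPUT system} sets up the correspondence. The hypothesis then gives $\sup_h\|u_{h,0}^\pm\|_{H^s_z}<\infty$. Since $\mathfrak{C}_h$ is an isometry whose symbol is supported in $|\xi|<2\pi/h$, with $|\xi|\sim|[\xi]|$ on the relevant periodic pieces, we also get $\|w_0^{\pm,h}\|_{H^s_x}\lesssim\|u_{h,0}^\pm\|_{H^s_z}$ uniformly in $h$. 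The initial-difference term in \eqref{ineq: convergence bound} vanishes, so the theorem yields $\|\mathfrak{C}_hu_h^\pm(t)-w^{\pm,h}(t)\|_{L^2}\le K_se^{K_0|t|}h^{2s/5}$. Global well-posedness of $r_h$ follows from that of $u_h^\pm$ by reconstructing $r_h$ via \eqref{eq:FPUT-profile-decomposition}, since the almost-translations are $H^s$-isometries. Uniqueness holds because any solution $r_h$ generates profiles solving \eqref{eq: coupled FPUT system}.

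\textbf{Step 2: commuting continuation with the almost-translation.} This is the main technical point. I must show that $\mathfrak{C}_he^{a\partial_h}f_h$ agrees with the translate $(\mathfrak{C}_hf_h)(\cdot+a)$ in $L^2$, for $a=\mp t/h^2$. On the Fourier side, $\widehat{\mathfrak{C}_hf_h}(\xi)=\hat\varphi(h\xi)\widehat{f_h}(\xi)$ with $\widehat{f_h}$ extended periodically. Multiplying by $e^{ia[\xi]}$ differs from multiplying by $e^{ia\xi}$ only where $[\xi]\neq\xi$, that is, for $|\xi|>\pi/h$, where $\hat\varphi(h\xi)$ may still be nonzero. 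So the two sides are not exactly equal. The error is bounded by $2\|P_{|\xi|>\pi/h}\mathfrak{C}_hf_h\|_{L^2}$. For frequencies $|\xi|\ge\pi/h$ of the continuation, the periodic representative satisfies $|[\xi]|\ge \alpha/h$ on the support of $\hat\varphi(h\xi)$, by \eqref{phi assumption 2} and the partition identity \eqref{phi assumption 1}. The error is therefore $\lesssim h^s\|f_h\|_{H^s_z}$. To apply this to $u_h^\pm(t)$, I need $h$-uniform $H^s$ bounds on $u_h^\pm(t)$ with at most exponential growth. These are the persistence-of-regularity bounds the abstract says the proof of Theorem~\ref{thm: main theorem} produces, and I will quote them. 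Since $h^s\le h^{2s/5}$, this error is absorbed after enlarging $K_s$ and $K_0$. Summing over $\pm$ and taking the supremum over $|t|\le T$ gives \eqref{ineq: continuum limit}.

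\textbf{Step 3: scaling back.} Set $r(t,z)=h^2r_h(h^3t,hz)$. It solves \eqref{eq:FPUT-system-original-scale} by reversing \eqref{eq:FPUT-rescaling}. By the definition \eqref{eq: definition of Ch}, we have $(\mathfrak{C}_1r)(t,x)=h^2(\mathfrak{C}_hr_h)(h^3t,hx)$. The map $f\mapsto h^2f(h\cdot)$ multiplies $L^2_x$ norms by $h^{3/2}$. The shifts transform as $hx\mp h^3t/h^2=h(x\mp t)$. Hence \eqref{ineq: small-amplitude limit} follows from \eqref{ineq: continuum limit} with time range $|t|\le T/h^3$.

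The main obstacle is Step 2. It needs the uniform exponential $H^s$ bound on $u_h^\pm(t)$, which goes beyond the bare statement of Theorem~\ref{thm: main theorem}, and it needs care with the aliasing of $\hat\varphi(h\cdot)$ beyond $|\xi|=\pi/h$.
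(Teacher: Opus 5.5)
Your proposal is correct and follows the paper's proof: identify $u_{h,0}^\pm=r_{h,0}^\pm$, apply Theorem~\ref{thm: main theorem} with the $h$-dependent data $\mathfrak{C}_hu_{h,0}^\pm$, control the failure of $\mathfrak{C}_h$ to intertwine $e^{\mp\frac{t}{h^2}\pa_h}$ with continuum translation by a high-frequency Sobolev tail using the exponential bound of Proposition~\ref{prop: exponential bound}, and then rescale. The only minor difference is in that commutation step. You bound the commutator directly by the aliased tail $|[\xi]|>\alpha/h$ and obtain $h^s$. The paper instead splits at $N_0$ using Lemma~\ref{lem: Low-frequency multiplier identity} and obtains $N_0^{-s}\sim h^{\frac{2s}{5}}$. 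Both errors are absorbed into the same final bound.
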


\begin{remark}[Comparison with previous results]
The KdV approximation was first rigorously established by Schneider and Wayne~\cite{SW2000} for sufficiently regular solutions on fixed intervals of rescaled time, and extended to logarithmically longer intervals by Khan and Pelinovsky~\cite{KP2017}. The first author and collaborators~\cite{HKY2021} subsequently established the short-time approximation for $s>\frac{3}{4}$ using Fourier-analytic methods. More recently, Liu and Koch~\cite{LK2026} established long-time approximation for $H^1$ solutions of the Toda lattice using its complete integrability. For the general FPU system, Koch and Liu~\cite[Theorem~1.2]{KL2026} established a global-in-time approximation in $H^{-1}$ for uniformly $L^2$-bounded initial data, resolving the long-time approximation question raised in~\cite{HKY2021}.

Our result was obtained independently\footnote{The preprint of Koch and Liu~\cite{KL2026} appeared on arXiv shortly before ours.} and provides a complementary quantitative approximation. For initial data uniformly bounded in $H^s$, $0<s\leq 1$, we obtain an $L^2$ error of order $e^{K_0|t|}h^{\frac{2s}{5}}$. Both results yield logarithmically growing intervals of validity in their respective error norms. In this setting, direct Sobolev interpolation of the global $H^{-1}$ estimate in~\cite[Theorem~1.2(ii)]{KL2026} with additional $h$-uniform $H^s$ bounds for both flows, growing at most exponentially in time, gives an $L^2$ error of order $e^{K_s|t|}h^{\frac{2s}{5(1+s)}}$.

Both arguments use conservation of the rescaled FPUT Hamiltonian to control the $L^2$ norm. We combine this control with $h$-uniform persistence of regularity to establish the additional Sobolev bounds. A frequency-localized auxiliary equation then allows us to compare the lattice and continuum flows in Bourgain spaces and obtain the stated $L^2$ rate without the above interpolation loss.
\end{remark}

\begin{remark}[Approximation on the lattice]
\label{rmk: Approximation on the lattice}

Our choice of continuation provides direct control of the original lattice solution with the same error bound. Indeed, by Lemma~\ref{lem: isometry property of Ch and Dh},
$\mathfrak{D}_h$ is an $L^2$-contraction satisfying
$\mathfrak{D}_h\mathfrak{C}_h=\textup{Id}_{L_z^2(h\Z)}$.
Thus, applying $\mathfrak{D}_h$ to \eqref{ineq: continuum limit},
we obtain
\[
\sup_{|t|\leq T}
\Big\|
r_h(t,\cdot)
-\mathfrak{D}_h\Big[
w^{+,h}\bigl(t,\cdot-\tfrac{t}{h^2}\bigr)
+w^{-,h}\bigl(t,\cdot+\tfrac{t}{h^2}\bigr)
\Big]
\Big\|_{L_z^2(h\Z)}\leq K_se^{K_0T}h^{\frac{2s}{5}}.
\]
Similarly, applying $\mathfrak{D}_1$ to
\eqref{ineq: small-amplitude limit} yields an approximation
of the original strain:
\[
\sup_{|t|\leq T/h^3}
\Big\|
r(t,\cdot)
-h^2\mathfrak{D}_1\Big[
w^{+,h}\bigl(h^3t,h(\cdot-t)\bigr)
+w^{-,h}\bigl(h^3t,h(\cdot+t)\bigr)
\Big]
\Big\|_{\ell^2(\Z)}\leq K_se^{K_0T}h^{\frac32+\frac{2s}{5}}.
\]
\end{remark}

\subsection{Proof strategy and new ideas}

Our analysis follows the strategy of Hong, Kwak, and Yang~\cite{HKY2021}. We first reformulate the FPUT as a coupled system of integral equations (see Subsection~\ref{subsec: Derivation of the coupled FPUT system}), a form suited to the Fourier-analytic methods of the low-regularity theory of Kenig, Ponce, and Vega~\cite{KPV1996}. By establishing linear and bilinear estimates for the FPUT in Fourier restriction norms, also known as Bourgain norms~\cite{Bou1993-1,Bou1993-2}, we obtain solution bounds uniform in $h$. We then estimate the KdV approximation error. Lowering the regularity threshold and extending the approximation to longer time intervals, however, require several new ideas to overcome the obstacles encountered in the earlier work.

The first obstacle arises when comparing solutions of the FPUT with those of the KdV in $L^2$. To estimate their difference, we use Bourgain norms to handle the derivative nonlinearities. These norms, however, depend sensitively on the underlying linear flows: choosing a norm associated with either the FPUT phase or the Airy phase can impose additional regularity requirements on the solution of the other equation. In the earlier work~\cite{HKY2021}, this difficulty was avoided by using mixed space-time norms, but the maximal-function estimate required $s>\frac34$. In this article, we instead introduce the frequency-localized FPUT \eqref{eq: lattice auxiliary equation} as an intermediate model and first approximate the full FPUT dynamics by those of this auxiliary equation (Lemma \ref{lem: first reduction}). On the chosen low-frequency range, the FPUT and Airy Bourgain norms are comparable uniformly in $h$, allowing us to estimate the low-frequency approximation error in a common Bourgain norm (Lemma \ref{lem: Low-frequency comparison of the FPUT and Airy flows}). The remaining high-frequency part of the KdV solution is controlled by its Sobolev regularity. We previously used this idea in related settings~\cite{HY2024,HJY2025}.

A second difficulty concerns the comparison of lattice and continuum functions. In the earlier work~\cite{HKY2021}, we used piecewise-linear interpolation. However, this interpolation produces frequency copies outside the fundamental domain even for low-frequency lattice inputs, obstructing the flow identities required by the present argument. We instead use the continuation operator $\mathfrak{C}_h$ to compare the frequency-localized FPUT and KdV flows (Lemma \ref{lem: second reduction}). On the relevant low-frequency range, this operator identifies the lattice Bourgain norm with the continuum Bourgain norm associated with the extended FPUT phase (Lemma \ref{lemma: Low-frequency identification of Bourgain norms}).

The final and main obstacle concerns the use of conservation laws to iterate the local approximation estimate. Conservation of the rescaled FPUT Hamiltonian \eqref{eq: rescaled FPUT-Hamiltonian} provides only $L^2$-level control uniformly for sufficiently small $h>0$, while we also seek to estimate the approximation error in $L^2$. Since our quantitative error estimates rely on a positive regularity gap between the solution bounds and the error norm, this conservation law alone does not provide the higher-regularity bounds needed for the iteration. To overcome this obstacle, we establish persistence of regularity uniformly in $h$ for the coupled FPUT system (Proposition~\ref{prop: exponential bound}).
More precisely, under the assumptions \(\|u_{h,0}^\pm\|_{L_z^2(h\Z)}\leq R_0\) and \(\|u_{h,0}^\pm\|_{H_z^s(h\Z)}\leq R_s\) for \(s>0\), we obtain the higher Sobolev bound
\[
\sum_\pm \|u_h^\pm(t)\|_{H_z^s(h\Z)}
\leq Ce^{K|t|}R_s,
\]
which holds uniformly for sufficiently small $h>0$. Such a bound follows by iterating the uniform local $L^2$ theory
with persistence of regularity, using the conserved quantity
to control the $L^2$ norm.
Our key observation is that this exponential growth is compatible
with a quantitative approximation estimate on time intervals
of order $\log(1/h)$.
We use the same argument in other settings~\cite{HJ2026-1,HJ2026-2}.
In recent work, Koch and Liu~\cite{KL2026} address the same obstacle
by measuring the approximation error in a negative Sobolev norm
and using the rescaled Hamiltonian to iterate the local approximation
estimate.

\subsection{Organization of the paper}
The rest of the paper is organized as follows. In Section~\ref{sec: Preliminaries}, we derive the coupled FPUT system and introduce the function spaces and their basic properties used below. In Section~\ref{sec: bilinear estimates}, we establish the bilinear estimates and bounds for the translation correction, while we treat the higher-order remainder for general potentials in Section~\ref{sec: Estimates for the translation correction and high-order remainder}. In Section~\ref{sec: Uniform bounds and persistence of regularity}, we prove uniform local bounds and exponential-in-time Sobolev bounds. Finally, in Section~\ref{sec: KdV Approximation and Global Extension}, we establish the local KdV approximation, extend it globally by iteration, and deduce the continuum and small-amplitude limits. Appendix~\ref{appendix: Proof of integral estimates} contains the proof of the integral estimates underlying the bilinear estimates.

\subsection{Notation}\label{subsec: Notation}
For the reader's convenience, we summarize the relevant notation in
the following table.
\begin{center}
\small
\renewcommand{\arraystretch}{1.2}
\setlength{\tabcolsep}{4pt}
\begin{tabular}{@{}clc@{}}
\hline
\textbf{symbol}
&
\textbf{description}
&
\textbf{definition}
\\
\hline
$h\Z$, $\T_h$
&
lattice and its Fourier domain
&
$\T_h=\R/(\frac{2\pi}{h}\Z)$
\\
$z$, $x$
&
lattice and continuum spatial variables
&
$z\in h\mathbb{Z},\ x\in\mathbb{R}$
\\
$[\xi]$
&
representative of $\xi$ modulo $2\pi/h$
&
\([\xi]\in[-\frac{\pi}{h},\frac{\pi}{h})\)
\\
$\mathfrak{D}_h,$ $\mathfrak{C}_h$
&
discretization and continuation
&
\eqref{eq: definition of Dh}, \eqref{eq: definition of Ch}
\\
$P_{\leq N}^h$, $P_{\leq N}$
&
lattice and continuum frequency projections
&
Section~\ref{subsec: Notation}
\\
$X_{h,\pm}^{s,b}$, $X_\pm^{s,b}$
&
FPUT and Airy Bourgain spaces
&
Section~\ref{subsec: Bourgain spaces}
\\
\hline
\end{tabular}
\end{center}

Throughout the paper, \(h\in(0,1]\) denotes the lattice spacing, and we consider the continuum limit \(h\to0\). We use the subscript \(h\), as in \(u_h\) and \(v_h\), for functions on \(h\mathbb Z\) and their Fourier transforms on \(\mathbb T_h\), while functions on \(\mathbb R\), such as \(w\), carry no subscript. We denote the lattice and continuum spatial variables by \(z\in h\mathbb Z\) and \(x\in\mathbb R\), respectively, and use \(\xi\) for the corresponding Fourier variable in either \(\mathbb T_h\) or \(\mathbb R\). In both settings, \(t\) denotes time and \(\tau\) its Fourier-dual variable.

For convenience, we regard the lattice Fourier domain
\(\mathbb{T}_h:=\mathbb{R}/(\frac{2\pi}{h}\mathbb{Z})\) as the periodic interval \([-\frac{\pi}{h},\frac{\pi}{h})\). This viewpoint is useful for evaluating frequency integrals, but periodicity must be handled
carefully because Fourier convolution may wrap frequencies across the boundary. Accordingly, whenever a frequency expression may lie outside this interval, we write \([\xi]\) for its unique representative modulo
\(\frac{2\pi}{h}\) in \([-\frac{\pi}{h},\frac{\pi}{h})\); when it is clear that \(\xi\in[-\frac{\pi}{h},\frac{\pi}{h})\), we simply write \(\xi\).

For nonnegative numbers $A$ and $B$, we write $A\ls B$ if $A\leq CB$ for some constant $C>0$, independent of $h$, and $A\sim B$ if $A\ls B$ and $B\ls A$. We denote the Japanese bracket by
$$
\la y\ra:=\sqrt{1+|y|^2}.
$$
For a set $A\subset\R$, let $\mathbbm{1}_A$ denote its characteristic function and define the sharp frequency projection $P_A$ on $\R$ by
$$
\widehat{(P_Af)}(\xi)=\mathbbm{1}_A(\xi)\hat{f}(\xi).
$$
The corresponding projection on $h\Z$, with symbol $\mathbbm{1}_A$ restricted to $[-\frac{\pi}{h},\frac{\pi}{h})$, is denoted by $P_A^h$. In particular, we write $P_{\leq N}:=P_{[-N,N]}$ and $P_{\leq N}^h:=P_{[-N,N]}^h$.
Throughout the paper, we set $N_0:=\frac{1}{2}h^{-\frac{2}{5}}$.
For given $T>0$, we denote by $\eta_T(t)$ the smooth time cut-off satisfying
\begin{equation}\label{eq: time cutoff}
\eta_T(t):=\eta(t/T)
\quad \textup{where}\quad
0\leq\eta\leq1, \quad \eta(t)\equiv 1  \textup{ on }[-1,1],\quad\operatorname{supp}\eta\subset[-2,2].
\end{equation}

\subsection{Tool and computational resource disclosure}
During the preparation of this work, the authors used ChatGPT (OpenAI; GPT-5.5, GPT-5.6, and GPT-6.0) to improve the writing and presentation. In particular, ChatGPT assisted in consolidating multiple integral estimates proved by the authors and used in the proofs of Lemmas~\ref{lem: FPUT bilinear estimates},
\ref{lem: bilinear estimates for translation correction},
and~\ref{lem: estimates for the translation correction}
into the unified integral estimate in Lemma~\ref{lem: unified integral bounds}. The authors independently verified all mathematical statements, proofs, and references in the final manuscript and take full
responsibility for its contents.

\subsection{Acknowledgements}
This work was supported by the National Research Foundation of Korea (NRF) grant funded by the Korean government (MSIT) (No. RS-2023-00219980 and RS-2026-25479401).

\section{Preliminaries}\label{sec: Preliminaries}
We first derive the coupled FPUT system for the moving-frame profiles. We then collect basic lattice Fourier identities and Sobolev norm equivalences and establish the properties of the continuation and discretization operators. Finally, we introduce the Bourgain spaces and recall the linear estimates used below.

\subsection{Derivation of the coupled FPUT system}\label{subsec: Derivation of the coupled FPUT system}
Following \cite{HKY2021}, we reformulate the FPUT system \eqref{eq:FPUT-system-expanded} as the coupled system
\eqref{eq: coupled FPUT integral form}, which is well suited to Fourier analysis. We first write \eqref{eq:FPUT-system-expanded} as the first-order system
\[
\pa_t\mathbf y
=
\frac{\nabla_h}{h^2}
\begin{bmatrix}
0&1\\
1&0
\end{bmatrix}
\mathbf y
+
\frac{\nabla_h}{2}
\big(r_h^2+\mathcal R_h[r_h]\big)
\begin{bmatrix}
0\\
1
\end{bmatrix},
\]
where \(\mathbf{y} =[r_h\ \ h^2\nabla_h^{-1}\pa_tr_h]^\top\). Then, introducing
\[\mathbf r_h
:=
\begin{bmatrix}
r_h^+\\
r_h^-
\end{bmatrix}
=
\frac12
\begin{bmatrix}
1&-1\\
1&1
\end{bmatrix}
\mathbf y
=
\begin{bmatrix}
\frac12(r_h-h^2\nabla_h^{-1}\pa_t r_h)\\
\frac12(r_h+h^2\nabla_h^{-1}\pa_t r_h)
\end{bmatrix},\]
we diagonalize the linear part:
\[
\pa_t\mathbf r_h
=
\frac{\nabla_h}{h^2}
\begin{bmatrix}
-1&0\\
0&1
\end{bmatrix}
\mathbf r_h
+
\frac{\nabla_h}{4}
\big(r_h^2+\mathcal R_h[r_h]\big)
\begin{bmatrix}
-1\\
1
\end{bmatrix},
\]
equivalently,
\begin{equation}\label{eq: def of transformed variables}
\left\{\begin{aligned}
\pa_t r_h^\pm
&=
\mp\frac{\nabla_h}{h^2}r_h^\pm
\mp\frac{\nabla_h}{4}
\big(r_h^2+\mathcal R_h[r_h]\big),\\
r_h^\pm(0)
&=
r_{h,0}^\pm
:=
\frac12
\big(r_h(0)\mp h^2\nabla_h^{-1}\pa_t r_h(0)\big).
\end{aligned}\right.
\end{equation}
Next, introducing the moving-frame profiles
\begin{equation}\label{eq: def of uh}
u_h^\pm(t,z)
:=
e^{\pm\frac{t}{h^2}\pa_h}r_h^\pm(t,z),
\end{equation}
we obtain
\begin{equation}\label{eq: coupled FPUT PDE form}
\pa_t u_h^\pm
=
\mp\frac{\nabla_h-\pa_h}{h^2}u_h^\pm
\mp\frac14\nabla_h
\big\{
e^{\pm\frac{t}{h^2}\pa_h}(r_h^2)
+
e^{\pm\frac{t}{h^2}\pa_h}\mathcal R_h[r_h]
\big\},
\end{equation}
where 
\begin{equation}\label{decomposition of rh}
r_h(t,z)
=
r_h^+(t,z)+r_h^-(t,z)
=
e^{-\frac{t}{h^2}\pa_h}u_h^+(t,z)
+
e^{\frac{t}{h^2}\pa_h}u_h^-(t,z).
\end{equation}

\begin{remark}
The decomposition \eqref{decomposition of rh} describes two
counter-propagating components through Fourier multipliers.
At times \(t=h^3m\), \(m\in\Z\), these multipliers act as ordinary
lattice translations:
\[
r_h(t,z)
=
e^{-hm\pa_h}u_h^+(t,z)
+
e^{hm\pa_h}u_h^-(t,z)=
u_h^+\left(t,z-\frac{t}{h^2}\right)
+
u_h^-\left(t,z+\frac{t}{h^2}\right),
\]
since the shifts \(\pm \frac{t}{h^2}=\pm hm\) preserve \(h\Z\).
For \(t\notin h^3\Z\), the points \(z\pm \frac{t}{h^2}\) lie outside
\(h\Z\), so these expressions are not defined for
lattice functions. The Fourier multiplier formulation
\eqref{decomposition of rh}, however, remains valid for every
\(t\in\R\).
\end{remark}
To compare \(r_h(t,z)\) with counter-propagating KdV flows, we rewrite the quadratic term \(e^{\pm\frac{t}{h^2}\pa_h}(r_h^2)\) in \eqref{eq: coupled FPUT PDE form} as
\[
\big(e^{\pm\frac{t}{h^2}\pa_h}r_h\big)^2
+\mathcal T_h^\pm[r_h]=
\big(
u_h^\pm
+e^{\pm\frac{2t}{h^2}\pa_h}u_h^\mp
\big)^2
+\mathcal T_h^\pm[r_h],
\]
where 
\begin{equation}\label{eq: product translation correction}
\mathcal T_h^\pm[r_h]
:=
e^{\pm\frac{t}{h^2}\pa_h}(r_h^2)
-
\big(e^{\pm\frac{t}{h^2}\pa_h}r_h\big)^2
\end{equation}
is the translation correction.

Finally, to obtain an integral formulation, we define the linear FPUT propagators by
\begin{equation}\label{eq: linear FPU flow}
S_h^\pm(t)
:=e^{\pm its_h(-i\partial_h)}
=e^{\mp\frac{t}{h^2}(\nabla_h-\pa_h)},
\end{equation}
where 
\begin{equation}\label{eq: FPUT phase function}
s_h(\xi):=
\frac{1}{h^2}\bigg\{[\xi]-\frac{2}{h}\sin\bigg(\frac{h[\xi]}{2}\bigg)\bigg\}. 
\end{equation}
Then, by Duhamel's formula, \eqref{eq: coupled FPUT PDE form} is equivalent
to the \textit{coupled FPUT system}
\begin{equation}\label{eq: coupled FPUT integral form}
\boxed{\quad
\left\{
\begin{aligned}
u_h^\pm(t)
&=
S_h^\pm(t)u_{h,0}^\pm\mp\frac14
\int_0^t S_h^\pm(t-t')\nabla_h
\Big\{
\big(
u_h^\pm
+e^{\pm\frac{2t'}{h^2}\pa_h}u_h^\mp
\big)^2\\
&\qquad\qquad\qquad\qquad\qquad\qquad\qquad
+\mathcal T_h^\pm[r_h]
+e^{\pm\frac{t'}{h^2}\pa_h}\mathcal R_h[r_h]
\Big\}(t')dt',\\
u_h^\pm(0)
&=
u_{h,0}^\pm
=
\frac12
\big(
r_h(0)\mp h^2\nabla_h^{-1}\pa_t r_h(0)
\big),
\end{aligned}
\right.
\quad}
\end{equation}
where \(\mathcal T_h^\pm[r_h]\) is the translation correction, and
\(e^{\pm\frac{t}{h^2}\pa_h}\mathcal R_h[r_h](t)\)
is the higher-order remainder in the moving frame;
see \eqref{eq: product translation correction} and \eqref{eq:higher-order-remainder}.

\begin{remark}[Preservation of real-valuedness]\label{remark: Preservation of real-valuedness}
Real-valued initial data give real-valued profiles throughout their
existence interval. Indeed, since \(s_h\) is odd and the symbol of
\(\nabla_h\) is purely imaginary and odd, we have \(\overline{S_h^\pm(t)f_h}
=
S_h^\pm(t)\overline{f_h}\) and \(\overline{\nabla_h f_h}
=
\nabla_h\overline{f_h}\). The same commutation property holds for \(\nabla_h^{-1}\) and
\(e^{a\pa_h}\), \(a\in\R\). Thus, these operators map real-valued
functions to real-valued functions, and \(u_{h,0}^\pm\) are real
whenever \(r_h(0)\) and \(\pa_t r_h(0)\) are real.
Moreover, the nonlinear terms are real for real-valued profiles,
since they involve products, the operator \(e^{a\partial_h}\), and the real-valued
function \(V'\). Starting with \(S_h^\pm(t)u_{h,0}^\pm\), every
successive approximation in the Duhamel iteration is therefore
real-valued, and so is its limit \(u_h^\pm(t)\).
\end{remark}

\begin{remark}[Formal KdV limit in integral form]
\label{rmk: Formal KdV limit in integral form}
The integral formulation \eqref{eq: coupled FPUT integral form}
makes the formal KdV limit transparent.
For \(h|\xi|\ll1\), Taylor's theorem gives \(s_h(\xi)
=
\frac{\xi^3}{24}+O(h^2|\xi|^5)\) and \(\frac{2i}{h}\sin\big(\frac{h\xi}{2}\big)
=
i\xi+O(h^2|\xi|^3)\). Thus, as \(h\to0\), the linear FPUT flow \(S_h^\pm(t)\) and
the operator \(\nabla_h\) formally converge to the Airy flow
\(S^\pm(t)\) and \(\pa_x\), respectively, where
\[
S^\pm(t)
:=
e^{\pm it s(-i\partial_x)}
=
e^{\mp\frac{t}{24}\pa_x^3}
\]
and \(s(\xi):=\frac{\xi^3}{24}\). Moreover, the contributions of the higher-order remainder
\(\mathcal R_h[r_h]\), the rapidly translated interactions
involving the counter-propagating component, and the translation
correction \(\mathcal T_h^\pm[r_h]\) to the Duhamel integral
in \eqref{eq: coupled FPUT integral form} are formally negligible
as \(h\to0\). Consequently, the limiting profiles \(w^\pm\)
formally satisfy
\begin{equation}\label{eq:KdV integral form}
\boxed{\quad
w^\pm(t)
=
S^\pm(t)w_0^\pm
\mp
\frac14
\int_0^t
S^\pm(t-t')\pa_x
\big(w^\pm(t')\big)^2dt'.
\quad}
\end{equation}
These are the integral formulations of the two decoupled KdV equations \eqref{eq:KdV}.
\end{remark}

\subsection{Lattice Fourier analysis and Sobolev norms}
We collect basic Fourier identities and Sobolev norm equivalences on $h\Z$.
Since the lattice Fourier transform is defined on $\T_h\simeq[-\frac{\pi}{h},\frac{\pi}{h})$, we adopt the following convention for its $\frac{2\pi}{h}$-periodic extension to $\R$. This convention is important for almost translations and nonlinear products, where interactions crossing the boundary of $[-\frac{\pi}{h},\frac{\pi}{h})$ give rise to frequency-wrapping. For $f_h\in L_z^2(h\Z)$, we denote the $\frac{2\pi}{h}$-periodic extension of its lattice Fourier transform by
$$
\widetilde{\mathcal{F}}_hf_h(\xi):=\widehat{f_h}([\xi]), \quad \xi\in\R.
$$
In particular, for $a\in\R$,
$$
\widetilde{\mathcal{F}}_h\big(e^{a\pa_h}f_h\big)(\xi)
=e^{ia[\xi]}\widetilde{\mathcal{F}}_hf_h(\xi)\quad \textup{for a.e. }\xi\in\R. 
$$

\begin{lemma}
Let $f_h, g_h\in L_z^2(h\Z)$. Then the following identities hold.
\begin{enumerate}[(1)]
\item (Parseval’s identity) One has
$$
h\sum_{z\in h\Z}f_h(z)\overline{g_h(z)}
=\frac{1}{2\pi}\int_{-\frac{\pi}{h}}^{\frac{\pi}{h}}\widehat{f_h}(\xi)\overline{\widehat{g_h}(\xi)} d\xi.
$$
\item For $\xi\in [-\frac{\pi}{h}, \frac{\pi}{h})$, we have
$$
\mathcal{F}_h(f_hg_h)(\xi)
=\frac{1}{2\pi}\int_{-\frac{\pi}{h}}^{\frac{\pi}{h}}\widehat{f_h}([\xi-\xi_1]) \widehat{g_h}(\xi_1) d\xi_1.
$$
\end{enumerate}
\end{lemma}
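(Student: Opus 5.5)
The plan is to reduce both identities to the orthogonality of the characters $e^{-iz\xi}$ on $\mathbb{T}_h$, working first with finitely supported lattice functions and then extending by density.

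For finitely supported $f_h$ and $g_h$, the transforms $\widehat{f_h}$ and $\widehat{g_h}$ are trigonometric polynomials. Expanding the right-hand side of (1) gives
\[
\frac{1}{2\pi}\int_{-\frac{\pi}{h}}^{\frac{\pi}{h}}\widehat{f_h}(\xi)\overline{\widehat{g_h}(\xi)}\,d\xi
= h^2\sum_{z,z'\in h\Z} f_h(z)\overline{g_h(z')}\,\frac{1}{2\pi}\int_{-\frac{\pi}{h}}^{\frac{\pi}{h}} e^{-i(z-z')\xi}\,d\xi .
\]
The inner integral equals $\frac{1}{h}\delta_{z,z'}$, because $z-z'\in h\Z$ makes the exponential $\frac{2\pi}{h}$-periodic with zero mean unless $z=z'$. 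This yields identity (1) for finitely supported functions.

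Identity (1) shows that $\mathcal{F}_h$ is, up to the factor $(2\pi)^{-1/2}$, an isometry from a dense subspace of $L_z^2(h\Z)$ into $L^2(\mathbb{T}_h)$. It therefore extends continuously to all of $L_z^2(h\Z)$; this is exactly the sense in which the defining series converges. Both sides of (1) are continuous in $(f_h,g_h)$ for this topology, so (1) holds in general.

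For (2), I first note that $f_hg_h\in L_z^1\cap L_z^2$, since $\|f_hg_h\|_{L_z^1}\le\|f_h\|_{L_z^2}\|g_h\|_{L_z^2}$ and $\ell^2\subset\ell^\infty$. Hence $\mathcal{F}_h(f_hg_h)$ is given by an absolutely convergent series and is continuous. I then insert the inversion formula for $g_h$, namely $g_h(z)=\frac{1}{2\pi}\int_{-\frac{\pi}{h}}^{\frac{\pi}{h}}\widehat{g_h}(\xi_1)e^{iz\xi_1}\,d\xi_1$, which is valid for $L^2$ functions by unitarity. This gives
\[
\mathcal{F}_h(f_hg_h)(\xi)=\frac{1}{2\pi}\int_{-\frac{\pi}{h}}^{\frac{\pi}{h}}\widehat{g_h}(\xi_1)\Big(h\sum_{z} f_h(z)e^{-iz(\xi-\xi_1)}\Big)\,d\xi_1 .
\]
The inner sum is $\widetilde{\mathcal F}_hf_h(\xi-\xi_1)=\widehat{f_h}([\xi-\xi_1])$ by $\frac{2\pi}{h}$-periodicity.

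The only delicate point, and the one I expect to be the main obstacle, is justifying the interchange of sum and integral when $f_h$ is merely in $\ell^2$. I would handle it in two steps.

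1. Prove (2) for finitely supported $f_h$, where the sum is finite and the interchange is trivial.
2. Pass to the limit. If $f_h^{(n)}\to f_h$ in $L_z^2$, the left side converges uniformly in $\xi$ by the $L^1$ bound above. The right side converges uniformly by Cauchy--Schwarz together with (1), because the translation $\xi_1\mapsto[\xi-\xi_1]$ preserves the $L^2(\mathbb{T}_h)$ norm.

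This gives (2) for every $\xi\in[-\frac{\pi}{h},\frac{\pi}{h})$.
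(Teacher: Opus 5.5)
The paper gives no proof of this lemma; it treats both identities as standard lattice Fourier analysis. Your argument is correct and complete, so there is nothing in the paper to compare it against.

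For (1), the orthogonality computation on finitely supported functions is right: the inner integral is $\frac{1}{h}$ when $z=z'$ and $0$ otherwise. The density extension is also right, and it is exactly the sense in which the paper defines $\mathcal{F}_h$ on $L_z^2(h\Z)$.

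For (2), your two-step argument handles the interchange of sum and integral properly.
\begin{itemize}
\item With $f_h$ finitely supported, the sum is finite and no justification is needed.
\item For the limit, the left side converges uniformly in $\xi$ because $\|f_hg_h\|_{L_z^1}\le\|f_h\|_{L_z^2}\|g_h\|_{L_z^2}$.
\item The right side converges uniformly by Cauchy--Schwarz, since $\xi_1\mapsto[\xi-\xi_1]$ is a measure-preserving bijection of $[-\frac{\pi}{h},\frac{\pi}{h})$. The same fact shows that $\widehat{f_h}([\xi-\cdot])$ is well defined almost everywhere for each fixed $\xi$.
\end{itemize}
Together these give the identity for every $\xi$, using the continuous representative of $\mathcal{F}_h(f_hg_h)$.

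Two small remarks, neither a gap.
\begin{itemize}
\item The pointwise inversion formula for $g_h(z)$ does not need unitarity, that is, completeness of the characters. It follows from (1) applied to $g_h$ and the lattice delta at $z$, whose transform is $he^{-iz\xi}$.
\item You can avoid the separate limiting argument in (2). Apply the polarized identity (1) to the pair $g_h$ and $F_\xi(z):=\overline{f_h(z)}e^{iz\xi}$. The left side of (1) is then exactly $h\sum_z f_h(z)g_h(z)e^{-iz\xi}$. Moreover $\widehat{F_\xi}(\xi_1)=\overline{\widehat{f_h}([\xi-\xi_1])}$, and this identification uses the same measure-preservation observation. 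This packages your density step into the isometry already established in (1).
\end{itemize}
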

\begin{lemma}[Triangle inequality on $\T_h$]\label{lem: triangle inequality on Th}
If $\xi, \xi_1\in[-\frac{\pi}{h},\frac{\pi}{h})$, then
\begin{equation}\label{ineq: triangle inequality on Th}
|\xi|\leq |\xi_1|+\big|[\xi-\xi_1]\big|.
\end{equation}
Moreover, if $[\xi-\xi_1]\neq \xi-\xi_1$, then
\begin{equation}\label{ineq: triangle inequality on Th for wrapped frequency}
|\xi|\leq \frac{\pi}{h}\leq 
|\xi_1|+\big|[\xi-\xi_1]\big|.
\end{equation}
\end{lemma}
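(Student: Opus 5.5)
The statement to prove is the triangle inequality on $\T_h$, Lemma~\ref{lem: triangle inequality on Th}. The argument is elementary and splits according to whether the difference $\xi-\xi_1$ wraps around the boundary of $[-\frac{\pi}{h},\frac{\pi}{h})$. Since $\xi,\xi_1\in[-\frac{\pi}{h},\frac{\pi}{h})$, we have $\xi-\xi_1\in(-\frac{2\pi}{h},\frac{2\pi}{h})$. Hence $[\xi-\xi_1]=\xi-\xi_1+\frac{2\pi k}{h}$ for a single integer $k\in\{-1,0,1\}$.

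\emph{Case $k=0$, i.e.\ $[\xi-\xi_1]=\xi-\xi_1$.} Here \eqref{ineq: triangle inequality on Th} is the usual triangle inequality on $\R$:
\[
|\xi|=|\xi_1+(\xi-\xi_1)|\leq|\xi_1|+\big|[\xi-\xi_1]\big|.
\]

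\emph{Case $k\neq0$.} This covers the wrapped situation, and proving \eqref{ineq: triangle inequality on Th for wrapped frequency} here also gives \eqref{ineq: triangle inequality on Th}. The first inequality $|\xi|\leq\frac{\pi}{h}$ holds because $\xi\in[-\frac{\pi}{h},\frac{\pi}{h})$. For the second, use the identity
\[
\frac{2\pi k}{h}=[\xi-\xi_1]-\xi+\xi_1.
\]
Taking absolute values with $|k|=1$ gives
\[
\frac{2\pi}{h}\leq\big|[\xi-\xi_1]\big|+|\xi|+|\xi_1|\leq\big|[\xi-\xi_1]\big|+\frac{\pi}{h}+|\xi_1|,
\]
using $|\xi|\leq\frac{\pi}{h}$ in the last step. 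Subtracting $\frac{\pi}{h}$ yields $\frac{\pi}{h}\leq|\xi_1|+|[\xi-\xi_1]|$, which completes the chain in \eqref{ineq: triangle inequality on Th for wrapped frequency} and therefore also proves \eqref{ineq: triangle inequality on Th} in this case.

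The only point needing care is the half-open convention at the endpoint $-\frac{\pi}{h}$, where $|\xi|=\frac{\pi}{h}$ is attained. The estimate $|\xi|\leq\frac{\pi}{h}$ is non-strict, so the argument is unaffected and no step presents a real obstacle.
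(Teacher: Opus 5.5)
Your proof is correct and follows essentially the same route as the paper. The unwrapped case is the ordinary triangle inequality, and in the wrapped case $\bigl|\xi-\xi_1-[\xi-\xi_1]\bigr|=\frac{2\pi}{h}$ gives $|\xi_1|+\bigl|[\xi-\xi_1]\bigr|\geq\frac{2\pi}{h}-|\xi|\geq\frac{\pi}{h}\geq|\xi|$.
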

\begin{proof}
If $[\xi-\xi_1]=\xi-\xi_1$, \eqref{ineq: triangle inequality on Th} follows from the triangle inequality. Otherwise, we have
$$
\big|\xi-\xi_1-[\xi-\xi_1]\big|=\frac{2\pi}{h},
$$
since $|\xi-\xi_1|<\frac{2\pi}{h}$. Thus, 
$$
|\xi_1|+|[\xi-\xi_1]|\geq \frac{2\pi}{h}-|\xi|\geq\frac{\pi}{h}\geq |\xi|.
$$
This completes the proof.
\end{proof}

We next introduce lattice Sobolev norms and their equivalent formulations, which will be used to estimate the higher-order remainder \(\mathcal R_h[r_h]\);
see Lemma~\ref{lem: estimates for the higher-order remainder}.
\begin{definition}[Lattice \(W^{s,p}\)-norms]
\label{def: lattice Sobolev norm}
For \(s\geq0\) and \(1<p<\infty\), define the inhomogeneous (resp., homogeneous) lattice Sobolev norm
\[
\|f_h\|_{W_z^{s,p}(h\Z)}
:=
\|\la\pa_h\ra^s f_h\|_{L_z^p(h\Z)},
\qquad\Big(\textup{resp., }
\|f_h\|_{\dot W_z^{s,p}(h\Z)}
:=
\||\pa_h|^s f_h\|_{L_z^p(h\Z)}\Big),
\]
where \(\pa_h\) is the lattice Fourier multiplier with symbol
\(i[\xi]\). In particular,
\(H_z^s(h\Z)=W_z^{s,2}(h\Z)\) and
\(\dot H_z^s(h\Z)=\dot W_z^{s,2}(h\Z)\).
\end{definition}

The following lemma gives equivalent expressions for the Sobolev
seminorms in Definition~\ref{def: lattice Sobolev norm} using
\(\nabla_h\), whose Fourier symbol is
\(\frac{2i}{h}\sin\big(\frac{h[\xi]}{2}\big)\).
For first-order seminorms, one may also use the forward or backward
difference operator, defined by
\[
\pa_h^+f_h(z)
:=
\frac{f_h(z+h)-f_h(z)}{h},
\qquad
\pa_h^-f_h(z)
:=
\frac{f_h(z)-f_h(z-h)}{h},
\qquad
z\in h\Z.
\]
\begin{lemma}[Equivalence of lattice Sobolev seminorms
{\cite[Proposition~1.2]{HY2019-2}}]
Let \(h\in(0,1]\), \(1<p<\infty\), and \(s\geq0\).
Then, for $f_h\in W_z^{s,p}(h\Z)$, we have
\begin{equation}\label{ineq: norm equivalence of the lattice Sobolev norm}
\|f_h\|_{\dot W_z^{s,p}(h\Z)}
\sim
\big\||\nabla_h|^s f_h\big\|_{L_z^p(h\Z)}.
\end{equation}
In particular, when \(s=1\),
\[
\|f_h\|_{\dot W_z^{1,p}(h\Z)}
\sim
\|\pa_h^\pm f_h\|_{L_z^p(h\Z)}.
\]
The implicit constants may depend on \(s\) and \(p\), but are
independent of \(h\).
\end{lemma}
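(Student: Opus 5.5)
The plan is to reduce the equivalence to the boundedness, uniformly in \(h\), of two lattice Fourier multipliers on \(L_z^p(h\Z)\). Write
\[
|\nabla_h|^s=M_h\,|\pa_h|^s,\qquad M_h(\xi):=\Big(\frac{\frac{2}{h}|\sin(\frac{h[\xi]}{2})|}{|[\xi]|}\Big)^s=\Big(\frac{\sin(h[\xi]/2)}{h[\xi]/2}\Big)^s .
\]
On \([-\frac{\pi}{h},\frac{\pi}{h})\) we have \(h[\xi]/2\in[-\frac{\pi}{2},\frac{\pi}{2}]\), so \(\frac{2}{\pi}\leq \frac{\sin(h[\xi]/2)}{h[\xi]/2}\leq1\). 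Hence both \(M_h\) and \(M_h^{-1}\) are bounded above and below. The claim then follows once \(M_h\) and \(M_h^{-1}\) are shown to be \(L^p\)-multipliers on \(h\Z\) with norms independent of \(h\).

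To obtain \(h\)-independence, I would rescale. The map \(f_h(z)\mapsto f_h(hz)\) sends \(L_z^p(h\Z)\) to \(\ell^p(\Z)\) up to the constant factor \(h^{1/p}\), which is the same on both sides. It also converts the symbol \(M_h(\xi)\) into \(m(\theta):=\big(\frac{\sin(\theta/2)}{\theta/2}\big)^s\) on \(\theta=h\xi\in[-\pi,\pi)\). So the problem becomes a single, \(h\)-free question: are the \(2\pi\)-periodic symbols \(m\) and \(m^{-1}\) (restricted to the fundamental domain and periodized) Fourier multipliers on \(\ell^p(\Z)\)?

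For this I would apply the Marcinkiewicz multiplier theorem on \(\T\) to the sequence side, or equivalently transfer from \(\R\) via de Leeuw/Stein–Weiss transference. The functions \(m\) and \(m^{-1}\) are smooth and nonvanishing on \((-\pi,\pi)\), including near \(0\), where \(\frac{\sin(\theta/2)}{\theta/2}\) is analytic and equal to \(1\). The periodization is continuous at \(\pm\pi\) because \(m\) is even, so the only singularity is a jump in the derivative (a kink) at \(\theta=\pm\pi\). This means \(m\) has bounded variation on \([-\pi,\pi]\), and so does \(m^{-1}\). Marcinkiewicz's theorem for \(\ell^p(\Z)\) (bounded symbol, uniformly bounded variation on dyadic pieces, which here is just total variation) therefore gives boundedness for \(1<p<\infty\), with constants depending only on \(s\) and \(p\). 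Composing the two bounds \(\||\nabla_h|^s f_h\|\ls\||\pa_h|^sf_h\|\) and the reverse inequality proves \eqref{ineq: norm equivalence of the lattice Sobolev norm}.

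For the \(s=1\) statement, \(\pa_h^\pm\) has symbol \(\pm\frac{e^{\pm ih\xi}-1}{h}=e^{\pm ih\xi/2}\cdot\frac{2i}{h}\sin(h\xi/2)\). Thus \(\pa_h^\pm=e^{\pm\frac{h}{2}\pa_h}\nabla_h\), and the factor \(e^{\pm ih[\xi]/2}\) is again a unimodular, piecewise smooth, bounded-variation symbol after rescaling, so it is bounded on \(L^p\) uniformly in \(h\). The same argument applies to its inverse. Combining this with the \(s=1\) case of the first part gives the claim. The main point to verify carefully is that the periodized symbols have bounded variation near the wrap point \(\theta=\pm\pi\) and are uniformly bounded. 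Note that \(e^{i\theta/2}\) has a jump there, but a jump discontinuity is still allowed by Marcinkiewicz, which requires only bounded variation, so no problem arises.
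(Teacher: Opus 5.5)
The paper gives no proof of this lemma; it quotes it from \cite[Proposition~1.2]{HY2019-2}. Your multiplier argument is a correct self-contained proof along the standard lines, so it can stand in for the citation.

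The rescaling $g(n)=f_h(hn)$ multiplies both $L^p$ norms by the same factor $h^{1/p}$. After it, the ratio of the symbols of $|\nabla_h|^s$ and $|\partial_h|^s$ becomes the $h$-independent function $m(\theta)=\bigl(\frac{\sin(\theta/2)}{\theta/2}\bigr)^{s}$ on $[-\pi,\pi)$. Both $m^{\pm1}$ are bounded and of bounded variation on $\mathbb{T}$, hence $\ell^p(\mathbb{Z})$ multipliers for $1<p<\infty$ with constants depending only on $s$ and $p$.

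Two minor corrections.

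First, in the $s=1$ part, the factorization $\partial_h^\pm=e^{\pm\frac h2\partial_h}\nabla_h$ only gives $\|\partial_h^\pm f_h\|_{L^p}\sim\|\nabla_h f_h\|_{L^p}$. Your first part concerns $|\nabla_h|$, not $\nabla_h$. You also need the multiplier $i\operatorname{sgn}([\xi])$ relating $\nabla_h$ and $|\nabla_h|$, which is a discrete Hilbert transform. It is bounded and invertible by the same bounded-variation criterion, since its rescaled symbol has only jumps at $0$ and $\pm\pi$. Equivalently, apply the criterion once to $e^{\pm i\theta/2}\, i\operatorname{sgn}(\theta)\frac{\sin(\theta/2)}{\theta/2}$ and its reciprocal.

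Second, de Leeuw's theorem transfers in the opposite direction from the one you need. It restricts $\mathbb{R}$-multipliers to $\mathbb{Z}$ and yields multipliers for Fourier series on $\mathbb{T}$, not $\ell^p(\mathbb{Z})$-multipliers with symbols on $\mathbb{T}$. The discrete Marcinkiewicz theorem you state is the right tool. Alternatively, note that a symbol of bounded variation on $\mathbb{T}$ is a Stieltjes superposition of indicators of arcs, each a combination of modulated discrete Hilbert transforms. If you prefer to pass through $\mathbb{R}$, extend $m\mathbf{1}_{[-\pi,\pi]}$ by zero and apply Marcinkiewicz on $\mathbb{R}$. Then return to $\mathbb{Z}$ by sampling, using a Schwartz $\phi$ with $\hat\phi=1$ on $[-\pi,\pi]$ and the Plancherel--P\'olya inequality.
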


For fractional orders \(0<s<1\), the homogeneous \(H^s\)-norm
also admits a characterization by spatial differences.

\begin{lemma}[Fractional difference characterization]
Let \(0<h\leq1\) and \(0<s<1\). Then, for \(f_h\in H_z^s(h\Z)\),
\begin{equation}\label{ineq: characterization of the lattice Sobolev norm}
\|f_h\|_{\dot H_z^s(h\Z)}^2
\sim_s
h\sum_{k\in\Z\setminus\{0\}}
\frac{
\|f_h(\cdot+hk)-f_h\|_{L_z^2(h\Z)}^2
}{|hk|^{1+2s}}.
\end{equation}
\end{lemma}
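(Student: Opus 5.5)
The approach is to prove \eqref{ineq: characterization of the lattice Sobolev norm} on the Fourier side via Parseval's identity, exactly as in the continuum Gagliardo–Slobodeckij characterization. The quantity $f_h(\cdot+hk)-f_h$ has lattice Fourier transform $(e^{ihk[\xi]}-1)\widehat{f_h}(\xi)$, since $hk\in h\Z$ and the almost-translation $e^{hk\pa_h}$ is then a genuine lattice translation (Remark~\ref{rmk: almost translation on hZ}). By Parseval and Tonelli, the right-hand side equals
\[
\frac{1}{2\pi}\int_{-\frac{\pi}{h}}^{\frac{\pi}{h}} M_h(\xi)\,|\widehat{f_h}(\xi)|^2\,d\xi,
\qquad
M_h(\xi):=h\sum_{k\neq0}\frac{|e^{ihk\xi}-1|^2}{|hk|^{1+2s}}=h\sum_{k\neq0}\frac{4\sin^2(hk\xi/2)}{|hk|^{1+2s}} .
\]
Since $\|f_h\|_{\dot H_z^s(h\Z)}^2=\frac{1}{2\pi}\int |\xi|^{2s}|\widehat{f_h}(\xi)|^2d\xi$, it suffices to prove the multiplier bound $M_h(\xi)\sim_s|\xi|^{2s}$ uniformly for $h\in(0,1]$ and $|\xi|\le\pi/h$.

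For the multiplier bound, set $\theta:=h|\xi|\in[0,\pi]$. By scaling, $M_h(\xi)=h^{-2s}\,G(\theta)$ with $G(\theta):=\sum_{k\neq0}4\sin^2(k\theta/2)|k|^{-1-2s}$, so the claim reduces to the $h$-independent estimate $G(\theta)\sim_s\theta^{2s}$ on $[0,\pi]$. For the upper bound, split the sum at $|k|\sim1/\theta$ (for $\theta>0$). Terms with $|k|\le1/\theta$ satisfy $\sin^2\le k^2\theta^2/4$ and contribute $\lesssim\theta^2\sum_{|k|\le1/\theta}|k|^{1-2s}\lesssim\theta^{2s}$, using $s<1$. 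Terms with $|k|>1/\theta$ satisfy $\sin^2\le1$ and contribute $\lesssim\sum_{|k|>1/\theta}|k|^{-1-2s}\lesssim\theta^{2s}$, using $s>0$. If $1/\theta<1$ the first range is empty and the second sum is $O(1)\lesssim\theta^{2s}$, since $\theta\ge1$ there.

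For the lower bound, use only part of the sum. When $\theta\le\pi/2$, restrict to $1/(2\theta)\le|k|\le1/\theta$; this range contains $\gtrsim1/\theta$ integers, and on it $k\theta/2\in[1/4,1/2]$, so $\sin^2\gtrsim1$ and the contribution is $\gtrsim\theta^{-1}\cdot\theta^{1+2s}=\theta^{2s}$. When $\theta\in[\pi/2,\pi]$, the single term $k=1$ gives $4\sin^2(\theta/2)\ge2\gtrsim\theta^{2s}$.

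The main point requiring care is that the constants be independent of $h$ all the way up to the edge $|\xi|=\pi/h$ of the Brillouin zone, where lattice effects could in principle degenerate the multiplier. The scaling reduction to $G$ on the compact range $\theta\in[0,\pi]$ handles this. The edge is harmless precisely because $\theta\le\pi<2\pi$, so $\sin(k\theta/2)$ cannot vanish for all small $k$; this is what the $k=1$ term used above. Finally, the identity is an equality of nonnegative quantities, so both sides are simultaneously finite or infinite, and no density argument is needed for $f_h\in H_z^s(h\Z)$.
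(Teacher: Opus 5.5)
Your argument is the same as the paper's: Parseval reduces the claim to the $h$-independent bound $\sum_{k\neq0}4\sin^2(k\theta/2)|k|^{-1-2s}\sim_s\theta^{2s}$ for $0\le\theta\le\pi$. The upper bound then comes from splitting at $|k|\sim\theta^{-1}$. The lower bound comes from the block $\frac{1}{2\theta}\le|k|\le\frac{1}{\theta}$ for small $\theta$ and from the $k=1$ term for $\theta$ of order one.

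One step fails as written: the threshold $\theta\le\pi/2$ for the block argument. For $1<\theta\le\pi/2$ you have $\frac{1}{\theta}<1$, so the range $\frac{1}{2\theta}\le|k|\le\frac{1}{\theta}$ contains no nonzero integer and contributes nothing. The fix is to use the block only for $\theta\le\frac12$, which is the paper's threshold. There the block has length at least $1$ and so contains $\gtrsim\theta^{-1}$ integers. On $[\frac12,\pi]$, use the $k=1$ term instead, since $4\sin^2(\theta/2)\ge4\sin^2(\frac14)\gtrsim\theta^{2s}$. With this adjustment the proof is complete.
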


\begin{proof}
We follow the argument in \cite[Lemma~4.3]{HY2019-1}.
By Parseval's identity and Fubini's theorem, 
\[
h\sum_{k\in\Z\setminus\{0\}}
\frac{
\|f_h(\cdot+hk)-f_h\|_{L_z^2}^2
}{|hk|^{1+2s}}=
\frac{1}{2\pi}
\int_{-\frac{\pi}{h}}^{\frac{\pi}{h}}
h^{-2s}
\left(
\sum_{k\in\Z\setminus\{0\}}
\frac{4\sin^2(\frac{hk\xi}{2})}{|k|^{1+2s}}
\right)
|\widehat{f_h}(\xi)|^2 d\xi.
\]
Therefore, it suffices to show that for \(|y|\leq\pi\),
\begin{equation}\label{ineq: characterization of the lattice Sobolev norm step 1}
\sum_{k\in\Z\setminus\{0\}}
\frac{4\sin^2(\frac{ky}{2})}{|k|^{1+2s}}
\sim_s |y|^{2s}.
\end{equation}
The case \(y=0\) is immediate. Suppose that \(0<|y|\leq\frac{1}{2}\). Then, by \(4\sin^2(\frac{ky}{2})\lesssim\min\{|ky|^2,1\}\),
\[
\begin{aligned}
\sum_{k\in\Z\setminus\{0\}}
\frac{4\sin^2(\frac{ky}{2})}{|k|^{1+2s}}
&\lesssim
|y|^2
\sum_{1\leq|k|\leq|y|^{-1}}
|k|^{1-2s}
+
\sum_{|k|>|y|^{-1}}
\frac{1}{|k|^{1+2s}}\\
&\lesssim_s
|y|^2|y|^{2s-2}+|y|^{2s}
\lesssim_s |y|^{2s}.
\end{aligned}
\]
For the lower bound, we observe that \(\big|\sin\big(\frac{ky}{2}\big)\big|
\geq\sin(\frac{1}{4})>0\) if \(\frac{1}{2|y|}\leq|k|\leq\frac{1}{|y|}\). Hence,
\[
\sum_{k\in\Z\setminus\{0\}}
\frac{4\sin^2(\frac{ky}{2})}{|k|^{1+2s}}
\gtrsim
\sum_{\frac{1}{2|y|}\leq|k|\leq\frac{1}{|y|}}
\frac{1}{|k|^{1+2s}}\gtrsim_s
|y|^{-1}|y|^{1+2s}
=
|y|^{2s}.
\]
It remains to consider \(\frac{1}{2}\leq|y|\leq\pi\).
In this region,
\[
4\sin^2\bigg(\frac14\bigg)
\leq
4\sin^2\bigg(\frac y2\bigg)
\leq
\sum_{k\in\Z\setminus\{0\}}
\frac{4\sin^2(\frac{ky}{2})}{|k|^{1+2s}}\leq
4\sum_{k\in\Z\setminus\{0\}}
\frac{1}{|k|^{1+2s}}
\lesssim_s1.
\]
Since \(|y|^{2s}\sim_s1\) on this region,
\eqref{ineq: characterization of the lattice Sobolev norm step 1}
follows. This completes the proof.
\end{proof}

\subsection{Continuation/discretization}
As we noted before, the comparison between the lattice FPUT profiles and the continuum KdV solutions requires transferring functions between $h\Z$ and $\R$. In this subsection, we establish the properties of the continuation operator $\mathfrak{C}_h$ and discretization operator $\mathfrak{D}_h$.
\begin{lemma}[Fourier representations of the continuation/discretization operators]
Suppose that $f_h\in L_z^2(h\Z)$ and $f\in L_x^2(\R)$. Then, for almost every \(\xi\in\R\),
\begin{equation}\label{eq: Ch FT}
\widehat{\mathfrak{C}_hf_h}(\xi)=\hat{\varphi}(h\xi)\widetilde{\mathcal{F}}_hf_h(\xi),
\end{equation}
where \(\widetilde{\mathcal{F}}_hf_h\) is the periodic extension of the lattice Fourier transform \(\mathcal{F}_hf_h\) to $\mathbb{R}$. On the other hand, for almost every \(\xi\in[-\frac{\pi}{h}, \frac{\pi}{h})\), we have
\begin{equation}\label{eq: Dh FT}
\mathcal{F}_h(\mathfrak{D}_hf)(\xi)
=\sum_{k\in\Z}\overline{\hat{\varphi}(h\xi+2\pi k)}\hat{f}\bigg(\xi+\frac{2\pi k}{h}\bigg).
\end{equation}
\end{lemma}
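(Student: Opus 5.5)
We first prove the formula \eqref{eq: Ch FT} for finitely supported lattice functions \(f_h\) and then extend by density. For such \(f_h\), the series defining \(\mathfrak{C}_hf_h\) is a finite sum of translates and dilates of \(\varphi\). The change of variables \(x=z+hy\) gives
\[
\int_\R\varphi\Big(\frac{x-z}{h}\Big)e^{-ix\xi}\,dx=h\,e^{-iz\xi}\hat\varphi(h\xi).
\]
Summing over \(z\) therefore yields
\[
\widehat{\mathfrak{C}_hf_h}(\xi)=\hat\varphi(h\xi)\,h\sum_{z\in h\Z}f_h(z)e^{-iz\xi}.
\]
Since \(z\in h\Z\), the exponential \(e^{-iz\xi}\) is \(\frac{2\pi}{h}\)-periodic in \(\xi\). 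Hence the last sum equals \(\widehat{f_h}([\xi])=\widetilde{\mathcal F}_hf_h(\xi)\), which is \eqref{eq: Ch FT} for finitely supported data.

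To pass to general \(f_h\in L_z^2(h\Z)\), we show that the right-hand side of \eqref{eq: Ch FT} defines a bounded map into \(L^2(\R)\). Split \(\R\) into the translates \([-\frac{\pi}{h},\frac{\pi}{h})+\frac{2\pi m}{h}\) and use periodicity of \(\widetilde{\mathcal F}_hf_h\):
\[
\int_\R|\hat\varphi(h\xi)|^2|\widetilde{\mathcal F}_hf_h(\xi)|^2d\xi=\int_{-\pi/h}^{\pi/h}\sum_{m}|\hat\varphi(h\xi+2\pi m)|^2|\widehat{f_h}(\xi)|^2d\xi.
\]
By \eqref{phi assumption 1}, the inner sum equals \(1\), so this is \(2\pi\|f_h\|_{L_z^2}^2\). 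By Plancherel on both sides, the map \(f_h\mapsto\mathfrak{C}_hf_h\) is therefore an isometry on finitely supported data. Density of finitely supported functions then shows that the series converges in \(L^2_x\) and that \eqref{eq: Ch FT} holds almost everywhere for every \(f_h\in L_z^2(h\Z)\). This same computation also gives the isometry property of \(\mathfrak{C}_h\).

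For \eqref{eq: Dh FT}, we use duality, since \(\mathfrak{D}_h=\mathfrak{C}_h^*\). For any test function \(g_h\), the lattice Parseval identity gives
\[
h\sum_z(\mathfrak{D}_hf)(z)\overline{g_h(z)}=\langle f,\mathfrak{C}_hg_h\rangle_{L^2_x}.
\]
By continuum Plancherel and \eqref{eq: Ch FT}, the right-hand side equals
\[
\frac1{2\pi}\int_\R\hat f(\xi)\overline{\hat\varphi(h\xi)}\,\overline{\widetilde{\mathcal F}_hg_h(\xi)}\,d\xi.
\]
We fold this integral onto \([-\frac{\pi}{h},\frac{\pi}{h})\) by writing \(\xi\mapsto\xi+\frac{2\pi k}{h}\) and using periodicity of \(\widetilde{\mathcal F}_hg_h\). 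It becomes
\[
\frac1{2\pi}\int_{-\pi/h}^{\pi/h}\Big(\sum_k\overline{\hat\varphi(h\xi+2\pi k)}\hat f\big(\xi+\tfrac{2\pi k}{h}\big)\Big)\overline{\widehat{g_h}(\xi)}\,d\xi.
\]
Comparing with the lattice Parseval identity yields \eqref{eq: Dh FT}.

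The only delicate point in the duality step is justifying the interchange of sum and integral. Because \(\operatorname{supp}\hat\varphi\subset(-2\pi,2\pi)\), at most a bounded number of terms (at most four) are nonzero for each \(\xi\). The Cauchy--Schwarz inequality together with \eqref{phi assumption 1} then bounds the folded series in \(L^2(\T_h)\) by \(\|\hat f\|_{L^2}\), which justifies Fubini. Otherwise the argument is routine, and we do not expect any substantial obstacle.
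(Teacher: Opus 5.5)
Your proposal is correct. For \eqref{eq: Ch FT} it uses the same direct computation as the paper: the change of variables $x=z+hy$ and the $\frac{2\pi}{h}$-periodicity of $e^{-iz\xi}$ for $z\in h\Z$. The difference is in the justification. The paper interchanges sum and integral formally for general $f_h\in L_z^2(h\Z)$. You work on finitely supported data and then extend by density, using $\sum_m|\hat\varphi(h\xi+2\pi m)|^2=1$. That extra step also proves the $L_x^2$-convergence of the series defining $\mathfrak{C}_h$ and the isometry in Lemma~\ref{lem: isometry property of Ch and Dh}, which the paper takes from Nakamura--Tadano.

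For \eqref{eq: Dh FT} the paper gives no argument and cites \cite[Lemma~2.1]{NT2021}. Your duality-plus-folding proof replaces that citation with a self-contained argument. It uses only the formula for $\widehat{\mathfrak{C}_h g_h}$, continuum Plancherel, and the periodicity of $\widetilde{\mathcal{F}}_h g_h$. The folding is legitimate because the integrand is a product of two $L^2(\R)$ functions, hence in $L^1$.

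One step you pass over quickly is ``comparing with the lattice Parseval identity.'' This presupposes $\mathfrak{D}_hf\in L_z^2(h\Z)$, so that $\mathcal{F}_h(\mathfrak{D}_hf)$ is defined. It follows from your own isometry, since $|h\sum_z(\mathfrak{D}_hf)(z)\overline{g_h(z)}|\leq\|f\|_{L_x^2}\|g_h\|_{L_z^2}$ for all finitely supported $g_h$. More directly, test with the lattice delta $g_h=\mathbbm{1}_{\{z_0\}}$, whose transform is $\widehat{g_h}(\xi)=he^{-iz_0\xi}$. This gives $(\mathfrak{D}_hf)(z_0)=\mathcal{F}_h^{-1}G(z_0)$, where $G$ is the folded series you have already shown lies in $L^2(\T_h)$, and \eqref{eq: Dh FT} follows at once.
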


\begin{proof}
By direct computation, we obtain
$$
\begin{aligned}
\widehat{\mathfrak{C}_hf_h}(\xi)
&=
\int_\R\sum_{z\in h\Z}\varphi\bigg(\frac{x-z}{h}\bigg)f_h(z)e^{-ix\xi} dx
=
\sum_{z\in h\Z}f_h(z)\int_\R\varphi\bigg(\frac{x-z}{h}\bigg)e^{-ix\xi} dx\\
&=
\sum_{z\in h\Z}f_h(z) h\hat{\varphi}(h\xi)e^{-iz\xi}
=
\hat{\varphi}(h\xi) h\sum_{z\in h\Z}f_h(z) e^{-iz\xi}
=
\hat{\varphi}(h\xi)\widetilde{\mathcal{F}}_hf_h(\xi).
\end{aligned}
$$
For \eqref{eq: Dh FT}, we refer to \cite[Lemma 2.1]{NT2021}.
\end{proof}

The next lemma gives the basic mapping properties of the discretization/continuation operators. The continuation operator $\mathfrak{C}_h$ is an $L^2$-isometry, while its adjoint $\mathfrak{D}_h$ is a partial isometry satisfying $\mathfrak{D}_h\mathfrak{C}_h=\textup{Id}$. Moreover, the corresponding $H^s$ bounds are uniform in $h$.
\begin{lemma}\label{lem: isometry property of Ch and Dh}
Let $0< h\leq 1$. Then, 
\begin{equation}\label{eq: L2 isometry}
\|\mathfrak{C}_hf_h\|_{L_x^2(\R)}=\|f_h\|_{L_z^2(h\Z)}
\quad
\textup{and}
\quad
\mathfrak{D}_h\mathfrak{C}_h=\textup{Id}_{L_z^2(h\Z)}.
\end{equation}
Moreover, for every $s\geq 0$, we have
\begin{equation}\label{ineq: Ch Hs equivalence}
\|f_h\|_{H_z^s(h\Z)}
\leq
\|\mathfrak{C}_hf_h\|_{H_x^s(\R)}
\ls
\|f_h\|_{H_z^s(h\Z)}
\end{equation}
and
\begin{equation}\label{ineq: Dh Hs bound}
\|\mathfrak{D}_hf\|_{H_z^s(h\Z)}
\leq\|f\|_{H_x^s(\R)}.
\end{equation}
\end{lemma}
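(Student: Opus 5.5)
All of the claims will be proved on the Fourier side, using the representations \eqref{eq: Ch FT} and \eqref{eq: Dh FT} together with Plancherel on $\R$ and Parseval on $h\Z$.

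\emph{$L^2$ isometry.} By \eqref{eq: Ch FT} and Plancherel,
\[
\|\mathfrak{C}_hf_h\|_{L^2_x}^2=\frac1{2\pi}\int_\R|\hat\varphi(h\xi)|^2|\widetilde{\mathcal F}_hf_h(\xi)|^2\,d\xi .
\]
We split $\R$ into the translates $[-\frac{\pi}{h},\frac{\pi}{h})+\frac{2\pi k}{h}$, $k\in\Z$. On each translate, periodicity gives $\widetilde{\mathcal F}_hf_h(\xi+\frac{2\pi k}{h})=\widehat{f_h}(\xi)$. Summing over $k$ and using \eqref{phi assumption 1} with argument $h\xi$ collapses the weight to $1$, so the integral equals $\frac1{2\pi}\int_{\T_h}|\widehat{f_h}|^2d\xi=\|f_h\|_{L^2_z}^2$. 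For $\mathfrak{D}_h\mathfrak{C}_h$, we insert \eqref{eq: Ch FT} into \eqref{eq: Dh FT}: $\hat f(\xi+\frac{2\pi k}{h})=\hat\varphi(h\xi+2\pi k)\widehat{f_h}(\xi)$. Hence
\[
\mathcal F_h(\mathfrak D_h\mathfrak C_hf_h)(\xi)=\sum_k|\hat\varphi(h\xi+2\pi k)|^2\,\widehat{f_h}(\xi)=\widehat{f_h}(\xi).
\]
(Equivalently, an isometry $C$ satisfies $C^*C=\mathrm{Id}$.)

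\emph{$H^s$ bounds for $\mathfrak C_h$.} The same computation gives
\[
\|\mathfrak C_hf_h\|_{H^s_x}^2=\frac1{2\pi}\int_{\T_h}\Big(\sum_k\la\xi+\tfrac{2\pi k}{h}\ra^{2s}|\hat\varphi(h\xi+2\pi k)|^2\Big)|\widehat{f_h}(\xi)|^2d\xi .
\]
For the lower bound, take $|\xi|\le\pi/h$. Then $|\xi+\frac{2\pi k}{h}|\ge|\xi|$ for every $k$, since $|\xi+2\pi k/h|\ge 2\pi|k|/h-\pi/h\ge\pi/h$ when $k\neq0$. So the bracket is at least $\la\xi\ra^{2s}\sum_k|\hat\varphi|^2=\la\xi\ra^{2s}$. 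For the upper bound, the support condition $\operatorname{supp}\hat\varphi\subset(-2\pi,2\pi)$ means that only $|k|\le1$ contributes, since $|h\xi|\le\pi$ forces $|h\xi+2\pi k|<2\pi$ only for $|k|\le 1$. For those $k$, $|\xi+2\pi k/h|\le 3\pi/h\le3|\xi|$ whenever $\hat\varphi(h\xi+2\pi k)\ne0$ with $k\ne0$. Indeed, in that case $|h\xi+2\pi k|<2\pi$ forces $|h\xi|>0$, and more precisely $|\xi|\gtrsim$ something comparable to $1/h$. This last point is the only delicate step. Since $\hat\varphi$ is compactly supported in the open interval, its support lies in $[-2\pi+\delta,2\pi-\delta]$ for some $\delta>0$, which gives $|h\xi|\ge\delta$. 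Therefore $\la\xi+2\pi k/h\ra\lesssim_\delta\la\xi\ra$, and the bracket is $\lesssim\la\xi\ra^{2s}$.

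\emph{$H^s$ bound for $\mathfrak D_h$.} For $\xi\in\T_h$, Cauchy--Schwarz in $k$ applied to \eqref{eq: Dh FT}, together with \eqref{phi assumption 1}, gives
\[
|\mathcal F_h(\mathfrak D_hf)(\xi)|^2\le\sum_k|\hat\varphi(h\xi+2\pi k)|^2\sum_k|\hat f(\xi+\tfrac{2\pi k}{h})|^2=\sum_k|\hat f(\xi+\tfrac{2\pi k}{h})|^2 .
\]
We multiply by $\la\xi\ra^{2s}$ and use $\la\xi\ra\le\la\xi+2\pi k/h\ra$, as in the lower-bound step. Integrating over $\T_h$ and unfolding the sum over $k$ into an integral over $\R$ yields $\|\mathfrak D_hf\|_{H^s_z}\le\|f\|_{H^s_x}$. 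The main obstacle is the upper bound in \eqref{ineq: Ch Hs equivalence}, namely controlling the aliased frequency copies; it is handled by the strict support condition on $\hat\varphi$ as described above.
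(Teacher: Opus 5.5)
Your proposal is correct and follows the paper's proof essentially verbatim. You use the same unfolding of $\R$ into translates of $[-\frac{\pi}{h},\frac{\pi}{h})$ together with \eqref{phi assumption 1}, and the same restriction to $|k|\le1$ via the strict support of $\hat\varphi$. The paper writes $\operatorname{supp}\hat\varphi\subset[-c,c]$ with $c<2\pi$ and obtains $|\xi+\frac{2\pi k}{h}|\le\frac{c}{2\pi-c}|\xi|$, which is exactly your $\delta$-argument with $c=2\pi-\delta$. The Cauchy--Schwarz bound for $\mathfrak{D}_h$ is also the same. The only difference is that you verify the $L^2$ isometry and $\mathfrak{D}_h\mathfrak{C}_h=\mathrm{Id}$ directly, whereas the paper cites Nakamura--Tadano for them.

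One cleanup: the intermediate claim $3\pi/h\le3|\xi|$ is false as written. Replace it with the bound your next sentences actually establish, namely $|\xi+\frac{2\pi k}{h}|<\frac{2\pi}{h}\le\frac{2\pi}{\delta}|\xi|$.
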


\begin{proof}
For \eqref{eq: L2 isometry}, we refer to \cite[Lemma A.1]{NT2021}. For \eqref{ineq: Ch Hs equivalence}, we recall from Assumption \ref{assumption for varphi} that \(\hat{\varphi}\) is supported in \([-c, c]\) for some $c\in (\pi,2\pi)$. Then, by Plancherel's identity and \eqref{eq: Ch FT}, we write 
$$
\begin{aligned}
\|\mathfrak{C}_hf_h\|_{H_x^s(\R)}^2
&=\frac{1}{2\pi}
\int_\R\langle\xi\rangle^{2s}|\hat{\varphi}(h\xi)|^2|\widetilde{\mathcal{F}}_hf_h(\xi)|^2 d\xi\\
&=\frac{1}{2\pi}\sum_{k\in\{-1,0,1\}}
\int_{-\frac{\pi}{h}+\frac{2\pi k}{h}}^{\frac{\pi}{h}+\frac{2\pi k}{h}}\langle\xi\rangle^{2s}|\hat{\varphi}(h\xi)|^2|\widetilde{\mathcal{F}}_hf_h(\xi)|^2 d\xi\\
&=
\frac{1}{2\pi}\sum_{k\in\{-1,0,1\}}\int_{-\frac{\pi}{h}}^{\frac{\pi}{h}}\bigg\langle\xi+\frac{2\pi k}{h} \bigg\rangle^{2s}|\hat{\varphi}(h\xi+2\pi k)|^2|\widetilde{\mathcal{F}}_hf_h(\xi)|^2 d\xi,
\end{aligned}
$$
where in the last step, we used that \(\widetilde{\mathcal{F}}_hf_h(\xi+\frac{2\pi k}{h})=\widetilde{\mathcal{F}}_hf_h(\xi)\). If $k=\pm1$, $\xi\in[-\frac{\pi}{h}, \frac{\pi}{h}]$, and $\hat{\varphi}(h\xi+2\pi k)\neq0$, then
$$
|\xi|\leq\bigg|\xi+\frac{2\pi k}{h}\bigg|\leq \frac{c}{2\pi-c}|\xi|.
$$
Thus, replacing \(\langle\xi+\frac{2\pi k}{h} \rangle^{2s}\) by \(\langle\xi\rangle^{2s}\) in the above integral and using \eqref{phi assumption 1}, we prove \eqref{ineq: Ch Hs equivalence}. 

By \eqref{eq: Dh FT}, Cauchy–Schwarz and \eqref{phi assumption 1}, we obtain
$$
\la\xi\ra^{2s}|\mathcal{F}_h(\mathfrak{D}_hf)(\xi)|^2
\leq
\sum_{k\in\Z}\bigg\langle\xi+\frac{2\pi k}{h}\bigg\rangle^{2s}\bigg|\hat{f}\bigg(\xi+\frac{2\pi k}{h}\bigg)\bigg|^2,
$$
since $|\xi|\leq |\xi+\frac{2\pi k}{h}|$ for $\xi\in[-\frac{\pi}{h},\frac{\pi}{h})$. Then integrating over $[-\frac{\pi}{h},\frac{\pi}{h})$, \eqref{ineq: Dh Hs bound} follows.
\end{proof}
The next lemma shows that \(\mathfrak{C}_h\mathfrak{D}_h\) acts as the identity on functions with Fourier support in $|\xi|\leq\frac{\alpha}{h}$. In this range, the identification operators also commute with frequency localization.
\begin{lemma}[Low-frequency compatibility]
Let $0<N\leq \frac{\al}{h}$, and let $f_h\in L_z^2(h\Z)$ and $f\in L_x^2(\R)$. Then for almost every $\xi\in\R$,
\begin{equation}\label{eq: localized Ch FT}
\mathcal{F}_x\big(\mathfrak{C}_hP_{\leq N}^hf_h\big)(\xi)
=\mathbbm{1}_{[-N,N]}(\xi)\hat{\varphi}(h\xi)\widetilde{\mathcal{F}}_hf_h(\xi), \quad 
\mathfrak{C}_hP_{\leq N}^h=P_{\leq N}\mathfrak{C}_h,
\end{equation}
and, for almost every $\xi\in[-\tfrac{\pi}{h},\tfrac{\pi}{h})$,
\begin{equation}\label{eq: localized Dh FT}
\mathcal{F}_h\big(\mathfrak{D}_hP_{\leq N}f\big)(\xi)
=\mathbbm{1}_{[-N,N]}(\xi)\overline{\hat{\varphi}(h\xi)}\hat{f}(\xi),\qquad \mathfrak{D}_hP_{\leq N}=P_{\leq N}^h\mathfrak{D}_h.
\end{equation}
In particular, $\mathfrak{C}_h\mathfrak{D}_hP_{\leq N}=P_{\leq N}$.
\end{lemma}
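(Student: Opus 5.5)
The plan is to reduce everything to the Fourier representations \eqref{eq: Ch FT} and \eqref{eq: Dh FT}, together with two facts about $\hat\varphi$. First, $\hat\varphi(h\xi)=1$ for $|\xi|\le N\le\frac{\alpha}{h}$, by \eqref{phi assumption 2}. Second, whenever $|\xi|\le\frac{\alpha}{h}$ and $k\neq0$, we have $\hat\varphi(h\xi+2\pi k)=0$. The second fact follows from \eqref{phi assumption 1}: the sum $\sum_m|\hat\varphi(h\xi+2\pi m)|^2$ equals $1$, and its $m=0$ term already equals $1$.

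For \eqref{eq: localized Ch FT}, apply \eqref{eq: Ch FT} to $P_{\le N}^hf_h$. Its periodic extension is $\widetilde{\mathcal F}_h(P^h_{\le N}f_h)(\xi)=\mathbbm 1_{[-N,N]}([\xi])\widetilde{\mathcal F}_hf_h(\xi)$, so
\[
\mathcal F_x\big(\mathfrak C_hP^h_{\le N}f_h\big)(\xi)=\hat\varphi(h\xi)\mathbbm 1_{[-N,N]}([\xi])\widetilde{\mathcal F}_hf_h(\xi).
\]
It remains to replace $\mathbbm 1_{[-N,N]}([\xi])$ by $\mathbbm 1_{[-N,N]}(\xi)$ on $\operatorname{supp}\hat\varphi(h\cdot)$, and this is the only point needing care. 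Let $[\xi]=\xi-\frac{2\pi k}{h}$ with $k\neq0$ and $|[\xi]|\le N\le\frac{\alpha}{h}$. Then $h\xi=h[\xi]+2\pi k$ with $|h[\xi]|\le\alpha$, so the second fact gives $\hat\varphi(h\xi)=0$. Conversely, for $|\xi|\le N<\frac{\pi}{h}$ we have $[\xi]=\xi$. Hence the two indicators agree wherever $\hat\varphi(h\xi)\neq0$, which proves the first identity.

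For the commutation, \eqref{eq: Ch FT} gives $\mathcal F_x(P_{\le N}\mathfrak C_hf_h)=\mathbbm 1_{[-N,N]}(\xi)\hat\varphi(h\xi)\widetilde{\mathcal F}_hf_h(\xi)$, which matches the formula just obtained. Thus $\mathfrak C_hP^h_{\le N}=P_{\le N}\mathfrak C_h$.

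For \eqref{eq: localized Dh FT}, apply \eqref{eq: Dh FT} to $P_{\le N}f$:
\[
\mathcal F_h(\mathfrak D_hP_{\le N}f)(\xi)=\sum_k\overline{\hat\varphi(h\xi+2\pi k)}\,\mathbbm 1_{[-N,N]}\big(\xi+\tfrac{2\pi k}{h}\big)\hat f\big(\xi+\tfrac{2\pi k}{h}\big),\qquad \xi\in[-\tfrac{\pi}{h},\tfrac{\pi}{h}).
\]
If $k\neq0$, then $|\xi+\frac{2\pi k}{h}|\ge\frac{\pi}{h}>N$, so the indicator vanishes and only $k=0$ survives. This is the first identity. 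Next, \eqref{eq: Dh FT} for $f$ itself, multiplied by $\mathbbm 1_{[-N,N]}(\xi)$, keeps only $k=0$ by the second fact (since $|\xi|\le N\le\frac{\alpha}{h}$). So $\mathfrak D_hP_{\le N}=P^h_{\le N}\mathfrak D_h$.

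Finally, $\mathfrak C_h\mathfrak D_hP_{\le N}=\mathfrak C_hP^h_{\le N}\mathfrak D_hP_{\le N}$, because $\mathfrak D_hP_{\le N}f$ already has lattice Fourier support in $[-N,N]$. Combining the two formulas gives
\[
\mathcal F_x(\mathfrak C_h\mathfrak D_hP_{\le N}f)(\xi)=\mathbbm 1_{[-N,N]}(\xi)\hat\varphi(h\xi)\overline{\hat\varphi(h\xi)}\hat f(\xi).
\]
Since $\hat\varphi(h\xi)=1$ there, this equals $\mathbbm 1_{[-N,N]}(\xi)\hat f(\xi)$, so $\mathfrak C_h\mathfrak D_hP_{\le N}=P_{\le N}$.

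The main obstacle is the bookkeeping of aliased frequencies $\xi+\frac{2\pi k}{h}$, $k\ne0$, in the $\mathfrak C_h$ identity; the vanishing of $\hat\varphi$ at $h\xi+2\pi k$ for $|h\xi|\le\alpha$ handles it.
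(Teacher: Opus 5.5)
Your proposal is correct and follows essentially the same argument as the paper. Both reduce to the Fourier representations of $\mathfrak{C}_h$ and $\mathfrak{D}_h$ and eliminate the aliased terms $k\neq0$ using $\hat{\varphi}=1$ on $[-\alpha,\alpha]$ together with $\sum_m|\hat{\varphi}(\cdot+2\pi m)|^2=1$; the only minor difference is that you verify $\mathfrak{D}_hP_{\leq N}=P_{\leq N}^h\mathfrak{D}_h$ directly from the Fourier formula for $\mathfrak{D}_hf$, whereas the paper obtains it by taking adjoints of $\mathfrak{C}_hP_{\leq N}^h=P_{\leq N}\mathfrak{C}_h$.
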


\begin{proof}
For $\xi\in\R$, write $\xi^*=\xi-\frac{2\pi k}{h}\in[-\frac{\pi}{h}, \frac{\pi}{h})$ for a unique $k\in\Z$.
Then \eqref{eq: Ch FT} gives
\begin{equation}\label{eq: localized Ch FT step 1}
\mathcal{F}_x\big(\mathfrak{C}_hP_{\leq N}^hf_h\big)(\xi)
=\hat{\varphi}(h\xi^*+2\pi k)\mathbbm{1}_{[-N,N]}(\xi^*)\widehat{f_h}(\xi^*).
\end{equation}
If $|\xi^*|>N$, this expression vanishes. If $|\xi^*|\leq N$, then $|\hat{\varphi}(h\xi^*)|=1$ by \eqref{phi assumption 2}. Thus, the condition \eqref{phi assumption 1} implies that $\hat{\varphi}(h\xi^*+2\pi j)=0$ for any integer $j\neq0$. Therefore, \eqref{eq: localized Ch FT step 1} vanishes unless $k=0$. 
Moreover, since
$$
\mathcal{F}_x(P_{\leq N}\mathfrak{C}_hf_h)(\xi)
=
\mathbbm{1}_{[-N,N]}(\xi)\hat{\varphi}(h\xi)\widetilde{\mathcal{F}}_hf_h(\xi),
$$
we also obtain $\mathfrak{C}_hP_{\leq N}^h=P_{\leq N}\mathfrak{C}_h$. This proves \eqref{eq: localized Ch FT}.

For \eqref{eq: localized Dh FT}, \eqref{eq: Dh FT} yields 
$$
\mathcal{F}_h\big(\mathfrak{D}_hP_{\leq N}f\big)(\xi)=
\sum_{k\in\Z}\overline{\hat{\varphi}(h\xi+2\pi k)}\mathbbm{1}_{[-N,N]}\bigg(\xi+\frac{2\pi k}{h}\bigg)\hat{f}\bigg(\xi+\frac{2\pi k}{h}\bigg).
$$
For $k\neq 0$, we have $|\xi+\frac{2\pi k}{h}|\geq \frac{\pi }{h}>\frac{\alpha}{h}\geq N$. Hence, only the term $k=0$ remains. Taking adjoints in $\mathfrak{C}_hP_{\leq N}^h=P_{\leq N}\mathfrak{C}_h$ and using $\mathfrak{D}_h=\mathfrak{C}_h^*$, we also obtain $\mathfrak{D}_hP_{\leq N}=P_{\leq N}^h\mathfrak{D}_h$.
Finally, using \eqref{eq: localized Ch FT} and \eqref{eq: localized Dh FT} we have
$$
\mathcal{F}_x\big(\mathfrak{C}_h\mathfrak{D}_hP_{\leq N}f\big)(\xi)
=
\mathbbm{1}_{[-N,N]}(\xi)\hat{\varphi}(h\xi)\overline{\hat{\varphi}(h\xi)}\hat{f}(\xi)
=\mathbbm{1}_{[-N,N]}(\xi)\hat{f}(\xi),
$$
since $|\hat{\varphi}(h\xi)|=1$ where $|\xi|\leq N\leq\frac{\alpha}{h}$.
\end{proof}
The low-frequency identities in the preceding lemma imply the following compatibility between lattice and continuum Fourier multipliers.
\begin{lemma}[Low-frequency multiplier identity]\label{lem: Low-frequency multiplier identity}
Let $p(-i\partial_h)$ and $p(-i\partial_x)$ denote the Fourier multipliers on $h\Z$ and $\R$, respectively, associated with the same bounded measurable symbol $p$. If $0<N\leq \frac{\alpha}{h}$, then
\begin{equation}\label{eq: commutativity of multipliers in low frequency}
\mathfrak{C}_hp(-i\partial_h)P_{\leq N}^hf_h=p(-i\partial_x)\mathfrak{C}_hP_{\leq N}^hf_h.
\end{equation}
\end{lemma}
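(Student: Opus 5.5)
We compare Fourier transforms on $\R$ and use \eqref{eq: localized Ch FT} together with \eqref{eq: Ch FT}. One subtlety is how the lattice multiplier $p(-i\pa_h)$ is interpreted. On $h\Z$ it acts by $\widehat{p(-i\pa_h)f_h}(\xi)=p([\xi])\widehat{f_h}(\xi)$ for $\xi\in[-\frac{\pi}{h},\frac{\pi}{h})$, consistent with the conventions for $\pa_h$ and $e^{a\pa_h}$. Its periodic extension is therefore $\widetilde{\mathcal F}_h(p(-i\pa_h)f_h)(\xi)=p([\xi])\widetilde{\mathcal F}_hf_h(\xi)$ for $\xi\in\R$.

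\emph{Left-hand side.} The function $g_h:=p(-i\pa_h)f_h$ lies in $L_z^2(h\Z)$ since $p$ is bounded, and the two lattice multipliers commute, so $p(-i\pa_h)P_{\leq N}^hf_h=P_{\leq N}^hg_h$. Applying \eqref{eq: localized Ch FT} to $g_h$ gives
\[
\mathcal F_x\big(\mathfrak C_hP_{\leq N}^hg_h\big)(\xi)=\mathbbm 1_{[-N,N]}(\xi)\hat\varphi(h\xi)\,p([\xi])\,\widetilde{\mathcal F}_hf_h(\xi).
\]
On the support $|\xi|\leq N\leq\frac{\alpha}{h}<\frac{\pi}{h}$ we have $[\xi]=\xi$, so this equals $\mathbbm 1_{[-N,N]}(\xi)\hat\varphi(h\xi)p(\xi)\widetilde{\mathcal F}_hf_h(\xi)$.

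\emph{Right-hand side.} By \eqref{eq: localized Ch FT} applied to $f_h$, the function $\mathfrak C_hP_{\leq N}^hf_h$ has Fourier transform $\mathbbm 1_{[-N,N]}(\xi)\hat\varphi(h\xi)\widetilde{\mathcal F}_hf_h(\xi)$. The continuum multiplier $p(-i\pa_x)$ multiplies this by $p(\xi)$, which gives exactly the same expression.

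Injectivity of the Fourier transform on $L_x^2(\R)$ then yields \eqref{eq: commutativity of multipliers in low frequency}. The only real point, and the main thing to check, is the identification $[\xi]=\xi$ on the support. This holds because $\alpha<\pi$, so there is no frequency wrapping. Without \eqref{eq: localized Ch FT}, the aliased copies $\xi\pm\frac{2\pi}{h}$ in \eqref{eq: Ch FT} would see $p(\xi\pm\tfrac{2\pi}{h})\neq p([\xi])$ and the identity would fail. The localization lemma already removes those copies using $\hat\varphi=1$ on $[-\alpha,\alpha]$ and \eqref{phi assumption 1}.
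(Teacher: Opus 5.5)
Your proof is correct and is essentially the argument the paper intends: the paper gives no separate proof and presents the lemma as an immediate consequence of the low-frequency compatibility identity \eqref{eq: localized Ch FT}, which is exactly the Fourier-side comparison you carry out. You also correctly identify the one point that needs checking: $[\xi]=\xi$ on $[-N,N]$ because $N\leq\frac{\alpha}{h}<\frac{\pi}{h}$, so the lattice symbol $p([\xi])$ agrees with the continuum symbol $p(\xi)$ there.
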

This identity will be used to derive the continuum formulation of the frequency-localized auxiliary equation; see \eqref{eq: continuum auxiliary equation} below.

\subsection{Bourgain spaces}\label{subsec: Bourgain spaces}
We introduce Bourgain spaces, also called \(X^{s,b}\)-spaces or Fourier restriction spaces, to exploit the dispersive smoothing associated with the linear FPUT and Airy flows. Adapted to their respective phase functions, these spaces measure spatial Sobolev regularity and modulation relative to the linear flows. They provide the framework for the bilinear estimates in the next section.

For \(u_h:\R\times h\Z\to\C\) and \(u:\R\times\R\to\C\),
we define the space-time Fourier transforms by
\[
\widetilde{u_h}(\tau,\xi)
=
\mathcal F_{t,h}u_h(\tau,\xi)
:=
h\sum_{z\in h\Z}
\int_\R u_h(t,z)e^{-i(t\tau+z\xi)} dt,
\qquad
(\tau,\xi)\in\R\times\T_h,
\]
\[
\widetilde u(\tau,\xi)
=
\mathcal F_{t,x}u(\tau,\xi)
:=
\int_\R\int_\R
u(t,x)e^{-i(t\tau+x\xi)} dx dt,
\qquad
(\tau,\xi)\in\R\times\R.
\]

For \(s,b\in\R\), we define the Bourgain spaces
\(X_{h,\pm}^{s,b}=X_{h,\pm}^{s,b}(\R\times h\Z)\) and
\(X_\pm^{s,b}=X_\pm^{s,b}(\R\times\R)\), associated with the
linear FPUT flow \(S_h^\pm(t)\) and the Airy flow \(S^\pm(t)\),
respectively, by the norms
\[
\begin{aligned}
\|u_h\|_{X_{h,\pm}^{s,b}}
&:=
\big\|
\la\xi\ra^s
\la\tau\mp s_h(\xi)\ra^b
\widetilde{u_h}(\tau,\xi)
\big\|_{L_{\tau,\xi}^2(\R\times\T_h)},\\
\|u\|_{X_\pm^{s,b}}
&:=
\big\|
\la\xi\ra^s
\la\tau\mp s(\xi)\ra^b
\widetilde u(\tau,\xi)
\big\|_{L_{\tau,\xi}^2(\R\times\R)}.
\end{aligned}
\]
For pairs \(\mathbf u=(u_h^+,u_h^-)\), we use the product space
\[
\mathbf X_h^{s,b}
:=
X_{h,+}^{s,b}\times X_{h,-}^{s,b},
\]
equipped with the norm
\[
\|\mathbf u\|_{\mathbf X_h^{s,b}}
:=
\|u_h^+\|_{X_{h,+}^{s,b}}
+
\|u_h^-\|_{X_{h,-}^{s,b}}.
\]

\begin{remark}[Sensitivity to the phase function]
\label{rmk: sensitivity on the phase function}
Bourgain norms depend on the phase function of the underlying linear flow, and using a different phase may require additional spatial regularity. Although \(s_h(\xi)\to s(\xi)\) for each fixed \(\xi\), this pointwise convergence does not generally yield a comparison between the FPUT and Airy Bourgain norms that is uniform in \(h\). A direct comparison of the FPUT and KdV solutions in a single Bourgain space may therefore require more regularity than is available in our setting. Following \cite{HY2024}, we address this issue by introducing a frequency-localized auxiliary equation.
\end{remark}

We recall the following standard estimates, which hold for Bourgain spaces on both $\R\times h\Z$ and $\R\times\R$, independently of the associated real-valued phase function. We refer to \cite{Tao2006, LP2015} for their proofs and further details.
\begin{lemma}[Basic estimates for the Bourgain spaces]\label{lem: Xsb properties}
Let $s,b\in\R$ and $T\in(0,1]$. Let $\eta_T(t)=\eta(\frac{t}{T})\in C_c^\infty(\R)$ be the smooth time cut-off defined in \eqref{eq: time cutoff}. Let $X^{s,b}$ be a Bourgain space associated with a real-valued phase function, and let $S(t)$ denote the corresponding linear flow.
Then, the following estimates hold.
\begin{enumerate}[$(i)$]
\item (Embedding) If $b>\frac{1}{2}$, then $\|u\|_{C_t(\R;H^s)}\ls \|u\|_{X^{s,b}}$.
\item (Linear flow estimate) 
If $b>\frac{1}{2}$, then
$$
\|\eta_T(t)S(t)f\|_{X^{s,b}}
\ls T^{\frac{1}{2}-b}\|f\|_{H^s}.
$$
\item (Stability with respect to smooth time localization)
If $-\frac{1}{2}<b'\leq b<\frac{1}{2}$, then
$$
\|\eta_T(t)u\|_{X^{s,b'}}
\ls
T^{b-b'}\|u\|_{X^{s,b}}.
$$
Moreover, if $\frac{1}{2}<b\leq1$, then
$$
\|\eta_T(t)u\|_{X^{s,b}}
\ls
T^{\frac{1}{2}-b}\|u\|_{X^{s,b}}.
$$

\item (Inhomogeneous term estimate) 
If $\frac{1}{2}<b\leq 1$, then
$$
\bigg\|
\eta_T(t) \int_0^tS(t-t_1)F(t_1)dt_1
\bigg
\|_{X^{s,b}} \ls T^{\frac{1}{2}-b}\|F\|_{X^{s,b-1}}.
$$
\end{enumerate}
\end{lemma}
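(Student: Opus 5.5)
The plan is to reduce every estimate to a one-dimensional statement in the time variable alone, uniformly in the spatial frequency. This is why the lemma holds on both $\R\times h\Z$ and $\R\times\R$ and for any real-valued phase $\phi$.

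\textbf{Reduction.} Write $u(t)=S(t)v(t)$, so that $\widetilde{v}(\tau,\xi)=\widetilde{u}(\tau+\phi(\xi),\xi)$. Since $|e^{it\phi(\xi)}|=1$ and the translation $\tau\mapsto\tau-\phi(\xi)$ preserves Lebesgue measure, we get
\[
\|u\|_{X^{s,b}}=\big\|\la\xi\ra^s\|\mathcal F_x v(\cdot,\xi)\|_{H^b_t(\R)}\big\|_{L^2_\xi}.
\]
Here $L^2_\xi$ is taken over $\T_h$ or $\R$. Multiplication by $\eta_T(t)$ commutes with $S(t)$. The Duhamel operator also conjugates cleanly:
\[
\int_0^tS(t-t_1)F(t_1)\,dt_1=S(t)\int_0^tS(-t_1)F(t_1)\,dt_1 .
\]
So it suffices to prove, for each fixed $\xi$ and with constants independent of $\xi$, $h$ and $\phi$, the following scalar facts for $f,g:\R\to\C$:
\begin{itemize}
\item[(a)] $\|f\|_{L^\infty_t}\ls\|f\|_{H^b}$ for $b>\frac12$, and $f$ is continuous;
\item[(b)] $\|\eta_T\|_{H^b}\ls T^{\frac12-b}$ for $T\le1$;
\item[(c)] $\|\eta_Tf\|_{H^{b'}}\ls T^{b-b'}\|f\|_{H^b}$ for $-\frac12<b'\le b<\frac12$, and $\|\eta_Tf\|_{H^b}\ls T^{\frac12-b}\|f\|_{H^b}$ for $\frac12<b\le1$;
\item[(d)] $\|\eta_T\int_0^tg\|_{H^b}\ls T^{\frac12-b}\|g\|_{H^{b-1}}$ for $\frac12<b\le1$.
\end{itemize}
Squaring, multiplying by $\la\xi\ra^{2s}$ and integrating in $\xi$ then gives (ii)–(iv).

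\textbf{Embedding (i).} Apply (a) with Fourier inversion in $\tau$: $\|\mathcal F_xu(t,\xi)\|\le\int|\widetilde u|\,d\tau$. Cauchy–Schwarz with $\int\la\tau-\phi\ra^{-2b}d\tau<\infty$ bounds the $H^s$ norm at each $t$ by $\|u\|_{X^{s,b}}$. Continuity in $H^s$ follows from dominated convergence.

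\textbf{Linear flow (ii).} Combine (b) with $\widehat{\eta_T}(\tau)=T\hat\eta(T\tau)$ and a direct scaling computation.

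\textbf{Time localization (iii).} For $b>\frac12$, use the algebra-type bound
\[
\|fg\|_{H^b}\ls\|f\|_{L^\infty}\|g\|_{H^b}+\|f\|_{H^b}\|g\|_{\dot W^{b,\infty}}
\]
with $f=\eta_T$. Since $\|\eta_T\|_{L^\infty}=1$, the first term is harmless. The second is handled by (a) and (b) applied to $g$, which gives $T^{\frac12-b}$. The case $|b'|,|b|<\frac12$ is the delicate one. There I would first prove $\|\eta_Tf\|_{H^{b}}\ls\|f\|_{H^b}$ for $|b|<\frac12$, by duality and because the multiplier $\eta_T$ is bounded on $H^b$ for $0\le b<\frac12$ uniformly in $T$. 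Then I would prove the gain $\|\eta_Tf\|_{L^2}\ls T^{b}\|f\|_{H^b}$ for $0\le b<\frac12$, via Hölder and Sobolev embedding $H^b\subset L^{2/(1-2b)}$ on the support of length $\sim T$. Duality gives the negative-index analogue, and interpolating (with $\eta_T=\eta_{T}\eta_{2T}$-type factorizations) covers all $-\frac12<b'\le b<\frac12$.

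\textbf{Inhomogeneous term (iv).} Write $g=g_1+g_2$ with $\hat g_1$ supported in $|\tau|\le T^{-1}$. For $g_1$, expand
\[
\int_0^te^{it\tau}\,d\tau\text{-kernel }\frac{e^{it\tau}-1}{i\tau}
\]
in a Taylor series $\sum_k\frac{(it)^k\tau^{k-1}}{k!}$. Each term gives $\|\eta_T t^k\|_{H^b}\,T^{k-1}\ls C^kT^{\frac12-b}/k!$ times $\int_{|\tau|\le1/T}|\hat g|$, and Cauchy–Schwarz bounds the latter by $T^{-\frac12+(1-b)}\cdot\ldots$, i.e. by $\|g\|_{H^{b-1}}$ with the right power. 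For $g_2$, write $\frac{e^{it\tau}}{i\tau}-\frac1{i\tau}$. The first piece is $\eta_T$ times a function whose $H^b$ norm is bounded by $\|g\|_{H^{b-1}}$, so (iii) applies. The second is $\eta_T$ times a constant bounded via Cauchy–Schwarz by $T^{b-\frac12}\|g\|_{H^{b-1}}$, combined with (b).

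I expect the main obstacle to be the bookkeeping of the sharp $T$-powers in (iii) near $b'=-\frac12$ and in (iv). Both are classical (Tao's book, Linares–Ponce), so I would follow those proofs verbatim on the scalar level.
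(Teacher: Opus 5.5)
Your route is the standard one, and it is what the paper relies on: the paper gives no proof of this lemma and simply cites Tao and Linares--Ponce, whose arguments use the same conjugation $u=S(t)v$ to reduce everything to scalar $H^b_t$ estimates, uniformly in $\xi$ and in the phase. The overall structure is sound for (i), (ii), the first half of (iii), and (iv). In the first half of (iii) this means: uniform boundedness of multiplication by $\eta_T$ on $H^b$ for $|b|<\frac12$, the gain $\|\eta_Tf\|_{L^2}\le\|\eta_T\|_{L^{1/b}}\|f\|_{L^{2/(1-2b)}}\lesssim T^b\|f\|_{H^b}$, log-convexity, duality, and the factorization $\eta_T=\eta_T\eta_{2T}$. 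However, two steps are wrong as written and need fixing.

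\textbf{Second half of (iii).} The product bound you display puts $\|g\|_{\dot W^{b,\infty}}$ on the rough factor. That norm is not controlled by $\|g\|_{H^b}$, and (a) only gives $\|g\|_{L^\infty}$. What you need is the Kato--Ponce form
$\|\eta_Tg\|_{H^b}\lesssim\|\eta_T\|_{L^\infty}\|g\|_{H^b}+\|\eta_T\|_{H^b}\|g\|_{L^\infty}$.
A more elementary alternative: insert $\langle\tau\rangle^b\lesssim\langle\tau-\sigma\rangle^b+\langle\sigma\rangle^b$ into $\widehat{\eta_Tg}=\frac{1}{2\pi}\widehat{\eta_T}*\hat g$ and apply Young's inequality. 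This gives $\|\eta_Tg\|_{H^b}\lesssim\|\hat\eta\|_{L^1}\|g\|_{H^b}+\|\eta_T\|_{H^b}\|\hat g\|_{L^1}$. Since $\|\hat g\|_{L^1}\lesssim\|g\|_{H^b}$ for $b>\frac12$ and $\|\eta_T\|_{H^b}\lesssim T^{\frac12-b}$, the bound is $\lesssim T^{\frac12-b}\|g\|_{H^b}$.

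\textbf{Low-frequency part of (iv).} The $T$-powers do not combine as written. The factor coming from $|\tau|^{k-1}$ on $\{|\tau|\le 1/T\}$ is $T^{1-k}$, not $T^{k-1}$. Together with $\|\eta_Tt^k\|_{H^b}\lesssim C^kT^{k+\frac12-b}$, the $k$-th term is bounded by $\frac{C^k}{k!}T^{\frac32-b}\int_{|\tau|\le 1/T}|\hat g|\,d\tau$. Cauchy--Schwarz with $b\le 1$ gives $\int_{|\tau|\le1/T}|\hat g|\,d\tau\lesssim T^{b-\frac32}\|g\|_{H^{b-1}}$ (not $T^{-\frac12+(1-b)}$). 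So the low-frequency contribution is $O(\|g\|_{H^{b-1}})$, which is better than the required $T^{\frac12-b}$. The loss $T^{\frac12-b}$ in the stated estimate enters only through your high-frequency piece, via (iii); your treatment of that piece and of the constant term is correct.

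With these two corrections, the proposal proves the lemma.
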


In the contraction mapping argument below, the $X^{s,b}$-norm is used to propagate $H^s$-regularity, whereas contraction is established in the weaker $X^{0,b}$-metric. The following lemma shows that the resulting metric space is complete, as required for the application of the Banach fixed point theorem.

\begin{lemma}[Completeness of the fixed-point space]\label{lem: completeness of the fixed-point space}
Let $s\geq 0$, $b\in\R$ and $M_0, M_s\geq 0$. Set
$$
\mathcal{X}_h:=
\Big\{u_h\in X_{h,\pm}^{s,b}: \|u_h\|_{X_{h,\pm}^{0,b}}\leq M_0,\quad \|u_h\|_{X_{h,\pm}^{s,b}}\leq M_s\Big\}
$$
and endow $\mathcal{X}_h$ with the metric
$$
d_h(u_h, v_h):=\|u_h-v_h\|_{X_{h,\pm}^{0,b}}.
$$
Then $(\mathcal{X}_h, d_h)$ is complete. The same statement holds with $X_{h,\pm}^{s,b}$ replaced by $X_\pm^{s,b}$.
\end{lemma}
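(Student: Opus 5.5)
The plan is to verify completeness directly: take a Cauchy sequence in $(\mathcal X_h,d_h)$, identify its limit in the weak metric, and use weak lower semicontinuity to show the limit still obeys the stronger $X_{h,\pm}^{s,b}$ bound. We argue on the Fourier side, where both norms are weighted $L^2$ norms on $\R\times\T_h$ (respectively $\R\times\R$).

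\textbf{Step 1: a limit in the weak metric.} Let $(u_n)\subset\mathcal X_h$ be Cauchy for $d_h$. Then $\la\tau\mp s_h(\xi)\ra^b\widetilde{u_n}$ is Cauchy in $L^2_{\tau,\xi}$. Since $X_{h,\pm}^{0,b}$ is isometric to this weighted $L^2$ space, it is complete. Hence there is $u\in X_{h,\pm}^{0,b}$ with $\|u_n-u\|_{X_{h,\pm}^{0,b}}\to0$, and $\|u\|_{X_{h,\pm}^{0,b}}\leq M_0$ by continuity of the norm.

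\textbf{Step 2: the limit keeps the $X^{s,b}$ bound.} The functions $F_n:=\la\xi\ra^s\la\tau\mp s_h(\xi)\ra^b\widetilde{u_n}$ are bounded by $M_s$ in $L^2$. After passing to a subsequence, $F_n\rightharpoonup F$ weakly in $L^2$, with $\|F\|_{L^2}\leq M_s$ by weak lower semicontinuity of the norm.

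\textbf{Step 3: identify $F$ with $u$.} Since $G_n:=\la\tau\mp s_h(\xi)\ra^b\widetilde{u_n}\to G:=\la\tau\mp s_h(\xi)\ra^b\widetilde u$ strongly in $L^2$, a further subsequence converges almost everywhere. Then $F_n=\la\xi\ra^sG_n\to\la\xi\ra^sG$ almost everywhere. The weak $L^2$ limit and the almost-everywhere limit agree wherever both exist; this follows by testing against $\psi\in C_c$ and using that $\la\xi\ra^s$ is bounded on compact sets, so $\la\xi\ra^sG_n\to\la\xi\ra^sG$ in $L^2_{\mathrm{loc}}$. Therefore $F=\la\xi\ra^s\la\tau\mp s_h(\xi)\ra^b\widetilde u$, which gives $u\in X_{h,\pm}^{s,b}$ with $\|u\|_{X_{h,\pm}^{s,b}}\leq M_s$.

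\textbf{Conclusion and the continuum case.} We have $u\in\mathcal X_h$ and $d_h(u_n,u)\to0$, so $(\mathcal X_h,d_h)$ is complete. On $\R\times\T_h$ the factor $\la\xi\ra^s$ is in fact bounded, which simplifies Step 3 in the lattice case. For $X_\pm^{s,b}$ on $\R\times\R$ the argument is identical, with the $L^2_{\mathrm{loc}}$ identification in Step 3 now genuinely needed.

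The only delicate point is Step 3: showing that the weak limit coincides with the weighted transform of the strong $X^{0,b}$ limit. We handle it through local boundedness of the weight $\la\xi\ra^s$.
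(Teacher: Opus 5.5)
Your proof is correct. It runs on the same mechanism as the paper's: the $X^{s,b}$ bound survives an $X^{0,b}$-limit by weak lower semicontinuity. The difference is that you prove this mechanism directly, whereas the paper cites it. The paper deduces the lemma from an abstract Bochner-space result (Cazenave, Theorem~1.2.5): if $(f_n)$ is bounded in $L^p(I;X)$, $X\hookrightarrow Y$ with $X$ reflexive, and $f_n(t)\rightharpoonup f(t)$ in $Y$ for a.e.\ $t$, then $\|f\|_{L^p(I;X)}\le\liminf_n\|f_n\|_{L^p(I;X)}$. One natural way to apply it is to view $\tau\mapsto\la\tau\mp s_h(\cdot)\ra^b\widetilde{u_n}(\tau,\cdot)$ as an element of $L^2_\tau(\R;X)$ with $X=L^2(\la\xi\ra^{2s}d\xi)$ and $Y=L^2_\xi$. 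One then extracts a subsequence converging in $Y$ for a.e.\ $\tau$; the paper leaves this step implicit. You instead work jointly in $(\tau,\xi)$, using only Hilbert-space weak compactness and identifying the weak limit against compactly supported test functions. Your argument is therefore self-contained. The cited result is quicker to invoke and also covers non-Hilbertian mixed norms, where no weighted-$L^2_{\tau,\xi}$ description is available.

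Two simplifications are open to you. First, in the continuum case you can drop Step~2 and the weak-limit identification. Once a subsequence of $G_n$ converges a.e.\ (which you already note in Step~3), Fatou's lemma applied to $\la\xi\ra^{2s}|G_n|^2$ gives $\|u\|_{X_\pm^{s,b}}\le\liminf\|u_n\|_{X_\pm^{s,b}}\le M_s$ directly. If you keep the weak limit instead, the $C_c$-testing argument suffices by itself. In that case the a.e.\ subsequence is unnecessary, as is the loose phrase ``agree wherever both exist'' (a weak limit is only an a.e.\ class). Second, in the lattice case the bound $\la\xi\ra^s\le\la\pi/h\ra^s$ on $\T_h$ gives $\|u\|_{X_{h,\pm}^{s,b}}\le\la\pi/h\ra^s\|u\|_{X_{h,\pm}^{0,b}}$ at fixed $h$. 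Hence $\mathcal X_h$ is simply a closed subset of the complete space $X_{h,\pm}^{0,b}$, and no compactness argument is needed there at all.
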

Lemma~\ref{lem: completeness of the fixed-point space} follows from the following weak lower-semicontinuity property; see \cite[Theorem~1.2.5]{Caz2003} and for a similar application, see \cite[Proof of Proposition~3.2]{Soh2011-1}.
\begin{proposition}
Let $X\hookrightarrow Y$ be two Banach spaces, and let $1<p,q\leq \infty$. Let $I\subseteq\R$ be an open interval, possibly $I=\R$. Suppose that $\{f_n\}_{n\geq 0}$ is bounded in $L^q(I;Y)$, and let $f:I\to Y$ be such that $f_n(t)\rightharpoonup f(t)$ in $Y$ as $n\to\infty$, for almost every $t\in I$. If $\{f_n\}_{n\geq 0}$ is bounded in $L^p(I;X)$ and $X$ is reflexive, then $f\in L^p(I;X)$ and
$$
\|f\|_{L^p(I;X)}\leq\liminf_{n\to\infty}\|f_n\|_{L^p(I;X)}.
$$
\end{proposition}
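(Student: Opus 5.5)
The plan is to prove this lower-semicontinuity property in two stages. First identify the weak limits in the bigger space $Y$. Then use reflexivity of $X$ to upgrade them to $X$-valued limits, and get the norm bound from weak lower semicontinuity.

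\emph{Extracting a subsequence and a limit in $X$.} Set $L:=\liminf_{n}\|f_n\|_{L^p(I;X)}$; we may assume $L<\infty$. Pass to a subsequence with $\|f_{n_k}\|_{L^p(I;X)}\to L$. The key step is to produce, for almost every $t$, an element $g(t)\in X$ with $\|g(t)\|_X$ controlled and $g=f$. Pointwise boundedness in $X$ is not available a priori; only the integrated bound is. So the plan is to work in the Bochner space itself.

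For $1<p<\infty$ and $X$ reflexive, $L^p(I;X)$ is reflexive, since its dual is $L^{p'}(I;X^*)$ because reflexive spaces have the Radon--Nikodym property. Hence a further subsequence converges weakly in $L^p(I;X)$ to some $g$, with $\|g\|_{L^p(I;X)}\le L$ by weak lower semicontinuity of the norm. For $p=\infty$, first restrict to bounded subintervals $J\subset I$ and use $L^r(J;X)$ with $r<\infty$. This gives $g\in L^r(J;X)$ with $\|g\|_{L^r(J;X)}\le |J|^{1/r}L$. Letting $r\to\infty$ yields $\|g\|_{L^\infty(J;X)}\le L$, and exhausting $I$ by such $J$ gives the bound on all of $I$.

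\emph{Identification $g=f$.} Take $\psi\in C_c^\infty(I)$ and $\ell\in Y^*$. Then $\ell|_X\in X^*$ by the continuity of the embedding, so $\psi\,\ell$ defines a bounded functional on $L^p(J;X)$. Weak convergence therefore gives
\[
\int\psi(t)\,\ell(f_{n_k}(t))\,dt\to\int\psi(t)\,\ell(g(t))\,dt.
\]
On the other hand, $\ell(f_{n_k}(t))\to\ell(f(t))$ for almost every $t$. The bound $|\ell(f_{n_k}(t))|\le\|\ell\|\,\|f_{n_k}(t)\|_Y$, together with boundedness in $L^q(I;Y)$ with $q>1$, gives uniform integrability on $\operatorname{supp}\psi$. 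Vitali's theorem then gives convergence of the left side to $\int\psi\,\ell(f)\,dt$. Hence $\ell(f(t))=\ell(g(t))$ for almost every $t$, for each $\ell$.

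To conclude $f=g$ almost everywhere, I expect the main obstacle: choosing countably many $\ell$ that separate points. The intended route is to reduce to a separable closed subspace of $Y$ containing the essential ranges of $g$ and of all the $f_n$ (each is strongly measurable), and then use a countable norming family of functionals. This gives $f=g\in L^p(I;X)$ with $\|f\|_{L^p(I;X)}\le L$, as claimed. For the application in Lemma~\ref{lem: completeness of the fixed-point space}, one takes $X$ and $Y$ to be the weighted $L^2$ spaces defining the $X^{s,b}$ and $X^{0,b}$ norms on the Fourier side, which are Hilbert spaces and hence reflexive.
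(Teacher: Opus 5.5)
Your argument is correct and is essentially the standard proof of this result, which the paper does not prove itself but cites from \cite[Theorem~1.2.5]{Caz2003}. The structure is weak compactness in the reflexive space $L^p(I;X)$, then identification of the weak limit with $f$ by testing against $\psi\,\ell$ and applying Vitali's theorem (this is exactly where the $L^q(I;Y)$ bound with $q>1$ enters), and finally an $L^r$, $r\to\infty$, argument on bounded subintervals when $p=\infty$.

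Two points should be made explicit. First, for $p=\infty$ the weak limit $g=g_r$ (and its subsequence) depends on $r$, so each $g_r$ must be identified with $f$ before you let $r\to\infty$. Second, you need $f(t)\in Y_0$ for a.e.\ $t$ before applying the norming family; this holds because $Y_0$ is norm closed and convex, hence weakly closed, and $f(t)$ is a weak limit of the $f_n(t)\in Y_0$.
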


\section{Bilinear estimates for the FPUT system}\label{sec: bilinear estimates}

In this section, we establish the bilinear estimates (Lemma \ref{lem: FPUT bilinear estimates} and Lemma \ref{lem: bilinear estimates for translation correction}), which are needed to control the quadratic nonlinear terms
\[\nabla_h
\Big\{
(u_h^\pm)^2+\big(e^{\pm\frac{2t}{h^2}\pa_h}u_h^\mp\big)^2+2u_h^\pm\cdot e^{\pm\frac{2t}{h^2}\pa_h}u_h^\mp+\mathcal{T}_h^\pm[r_h]\Big\}\]
in the coupled FPUT system \eqref{eq: coupled FPUT integral form}. In Fourier variables, the
lattice derivative \(\nabla_h\) has the multiplier
\(\frac{2i}{h}\sin\big(\frac{h[\xi]}{2}\big)\). The estimates below exhibit a \textit{gain of one lattice derivative} in the sense that the frequency and modulation weights in the Bourgain norms control this multiplier uniformly in \(h\), even though it can be as large as \(O(h^{-1})\). This gain is the lattice analogue of the KdV \textit{bilinear smoothing} in Kenig, Ponce, and Vega~\cite{KPV1996}, and was observed in the FPUT setting by Hong, Kwak, and Yang~\cite{HKY2021}. The second and third estimates further yield an explicit positive power of \(h\) from the regularity gap \(s>s'\).

\begin{lemma}[Bilinear estimates for FPUT]
\label{lem: FPUT bilinear estimates}
Let \(h\in(0,1]\), \(s\geq0\), \(\delta\in(0,\frac14)\), and
\(b\in(\frac12,\frac34-\delta)\). Then,  the
following estimates hold uniformly in \(h\in(0,1]\).
\begin{enumerate}[\((i)\)]
\item \textup{(Bilinear estimate)}
\begin{equation}\label{ineq: FPUT bilinear estimate 1}
\|\nabla_h(u_hv_h)\|_{X_{h,\pm}^{s,-(1-b-\delta)}}
\lesssim
\|u_h\|_{X_{h,\pm}^{s,b}}
\|v_h\|_{X_{h,\pm}^{0,b}}
+
\|u_h\|_{X_{h,\pm}^{0,b}}
\|v_h\|_{X_{h,\pm}^{s,b}}.
\end{equation}

\item \textup{(Bilinear estimates for fast-moving waves)}
For every \(\max\{s-1,0\}\leq s'\leq s\), we have
\begin{equation}\label{ineq: FPUT bilinear estimate 2}
\begin{aligned}
&\big\|
\nabla_h\big(
e^{\pm\frac{2t}{h^2}\pa_h}u_h
\cdot
e^{\pm\frac{2t}{h^2}\pa_h}v_h
\big)
\big\|_{X_{h,\pm}^{s',-(1-b-\delta)}}\\
&\lesssim
h^{\min\{s-s',\frac12\}}
\Big(
\|u_h\|_{X_{h,\mp}^{s,b}}
\|v_h\|_{X_{h,\mp}^{0,b}}
+
\|u_h\|_{X_{h,\mp}^{0,b}}
\|v_h\|_{X_{h,\mp}^{s,b}}
\Big)
\end{aligned}
\end{equation}
and
\begin{equation}\label{ineq: FPUT bilinear estimate 3}
\begin{aligned}
&\big\|
\nabla_h\big(
u_h\cdot e^{\pm\frac{2t}{h^2}\pa_h}v_h
\big)
\big\|_{X_{h,\pm}^{s',-(1-b-\delta)}}\\
&\lesssim
h^{\min\{s-s',\frac12\}}
\Big(
\|u_h\|_{X_{h,\pm}^{s,b}}
\|v_h\|_{X_{h,\mp}^{0,b}}
+
\|u_h\|_{X_{h,\pm}^{0,b}}
\|v_h\|_{X_{h,\mp}^{s,b}}
\Big).
\end{aligned}
\end{equation}
\end{enumerate}
\end{lemma}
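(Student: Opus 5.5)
The plan is to reduce each estimate, by duality and Plancherel, to a weighted $L^2$ convolution bound on $\R\times\T_h$. This is the same scheme used for the KdV bilinear estimate of Kenig--Ponce--Vega, now with the FPUT phase $s_h$ and with frequency wrapping handled by Lemma~\ref{lem: triangle inequality on Th}.

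\textbf{Setup.} Write $f(\tau,\xi)=\la\xi\ra^{s}\la\tau\mp s_h(\xi)\ra^b|\widetilde{u_h}|$, and define $g$ similarly for $v_h$. By the lattice convolution formula, the left side of \eqref{ineq: FPUT bilinear estimate 1} is dominated by
\[
\Big\|\frac{|m_h(\xi)|\la\xi\ra^{s}}{\la\tau\mp s_h(\xi)\ra^{1-b-\delta}}\int \frac{f(\tau_1,\xi_1)\,g(\tau-\tau_1,[\xi-\xi_1])}{\la\xi_1\ra^{s}\la\tau_1\mp s_h(\xi_1)\ra^b\la\tau-\tau_1\mp s_h([\xi-\xi_1])\ra^b}\,d\tau_1d\xi_1\Big\|_{L^2_{\tau,\xi}}.
\]
By Lemma~\ref{lem: triangle inequality on Th}, $\la\xi\ra^s\ls\la\xi_1\ra^s+\la[\xi-\xi_1]\ra^s$. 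This distributes the $s$ derivatives onto one factor and produces the two terms on the right of \eqref{ineq: FPUT bilinear estimate 1}.

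The remaining $s=0$ estimate follows from Cauchy--Schwarz in $(\tau_1,\xi_1)$. It reduces to the uniform bound
\[
\sup_{\tau,\xi}\frac{|m_h(\xi)|^2}{\la\tau\mp s_h(\xi)\ra^{2(1-b-\delta)}}\int\frac{d\xi_1}{\la\tau\mp s_h(\xi_1)\mp s_h([\xi-\xi_1])\ra^{4b-1}}\ls1,
\]
after integrating out $\tau_1$ with $\int\la\tau_1-a\ra^{-2b}\la\tau-\tau_1-c\ra^{-2b}d\tau_1\ls\la\tau-a-c\rangle^{1-4b}$. The symmetric (dual) version is needed when the output modulation is smaller; this is the usual case split according to which of the three modulations is maximal.

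These are the integral estimates I take to be the content of the unified integral bound in the appendix (Lemma~\ref{lem: unified integral bounds}). The key input is the resonance function
\[
s_h(\xi)-s_h(\xi_1)-s_h(\xi-\xi_1).
\]
For $s_h(\xi)=h^{-2}(\xi-\frac2h\sin\frac{h\xi}2)$, trigonometric identities give an explicit factorization analogous to $\frac{1}{8}\xi\xi_1(\xi-\xi_1)$:
\[
\frac{8}{h^{3}}\sin\frac{h\xi}{4}\,\sin\frac{h\xi_1}{4}\,\sin\frac{h(\xi-\xi_1)}{4}.
\]
This holds when no wrapping occurs. In the wrapped case the $\frac{2\pi}{h}$ shift contributes an extra linear term of size $\gtrsim h^{-3}$, which makes that region highly nonresonant.

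One then changes variables $\mu=s_h(\xi_1)+s_h(\xi-\xi_1)$. The Jacobian $|s_h'(\xi_1)-s_h'(\xi-\xi_1)|$ is again a product of sines, comparable to $|\xi||\xi-2\xi_1|$ in the bulk. Together with $|m_h(\xi)|\sim|\xi|$ on $\T_h$, this gives the derivative gain exactly as in KPV, provided $b<\frac34-\delta$. I expect this Jacobian and resonance analysis near $|\xi|\sim\pi/h$, where $s_h'$ degenerates, to be the main obstacle. There I would use the wrapped/unwrapped dichotomy and the crude bound $|m_h|\le 2/h$.

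\textbf{Estimates (ii).} The factors $e^{\pm\frac{2t}{h^2}\pa_h}$ shift the modulation: $e^{\pm\frac{2t}{h^2}\pa_h}u_h$ has Fourier transform concentrated near $\tau=\mp s_h(\xi)\mp\frac{2}{h^2}\xi$. The resonance function therefore acquires the term $\frac{2}{h^2}$ times a frequency of the output or of the $\mp$-factor. For \eqref{ineq: FPUT bilinear estimate 2}, the combination is
\[
s_h(\xi)+s_h(\xi_1)+s_h(\xi-\xi_1)+\tfrac{2}{h^2}\xi,
\]
and it is $\gtrsim h^{-2}|\xi|$ except in a thin region. For \eqref{ineq: FPUT bilinear estimate 3}, the analogous term $\frac{2}{h^2}(\xi-\xi_1)$ appears.

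Hence the maximal modulation is $\gtrsim h^{-2}|\xi_{\rm rel}|$. Using part of it, $\la\sigma\ra^{-(1-b-\delta)}$ (or the dual $\la\sigma_j\ra^{-b}$) gains
\[
(h^2/|\xi_{\rm rel}|)^{\theta}.
\]
I would trade this for $h^{\min\{s-s',1/2\}}$ times $\la\xi\ra^{s-s'}$, using $|\xi|\ls h^{-1}$ and $s-s'\le1$. The remaining integral is then controlled by the same unified integral lemma with the shifted phases, and the Sobolev weights are distributed as in part (i).
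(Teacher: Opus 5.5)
Your treatment of (i) follows the paper. You distribute $\langle\xi\rangle^s$ by Lemma~\ref{lem: triangle inequality on Th}, apply Cauchy--Schwarz in $(\tau_1,\xi_1)$ against the output weight, integrate out $\tau_1$, and invoke Lemma~\ref{lem: unified integral bounds}. The paper splits off the wrapped part into Lemma~\ref{lem: bilinear estimates for translation correction}, which amounts to the same thing. No dual case split is needed. Since $2(1-b-\delta)\geq\frac12$, the bound $\mathscr I^{h;-1,-1}_{\tau,\xi,0}\lesssim1$ is uniform in $\tau$. The appendix handles every $\tau$ at once via $\langle A\rangle^{-1/2}\langle B\rangle^{-1/2}\lesssim|A-B|^{-1/2}$, where $A=\tau-s_h(\xi)$ and $B$ is the input modulation at $\xi_1=\frac{\xi}{2}$.

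The genuine gap is in (ii): your source of the factor $h^{\min\{s-s',\frac12\}}$ fails for \eqref{ineq: FPUT bilinear estimate 3}. With the upper sign and no wrapping, the resonance function is
\[
s_h(\xi)-s_h(\xi_1)+s_h(\xi-\xi_1)-\frac{2(\xi-\xi_1)}{h^2}
=-\frac{8}{h^3}\cos\Big(\frac{h\xi}{4}\Big)\cos\Big(\frac{h\xi_1}{4}\Big)\sin\Big(\frac{h(\xi-\xi_1)}{4}\Big).
\]
It vanishes on the line $\xi_1=\xi$ for every output frequency $\xi$, i.e.\ when the counter-propagating factor is at zero frequency. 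Near that line no modulation need be large, so ``maximal modulation $\gtrsim h^{-2}|\xi-\xi_1|$'' gives no gain. Nor can $(h^2/|\xi-\xi_1|)^\theta$ be traded for $h^{\min\{s-s',\frac12\}}\langle\xi\rangle^{s-s'}$ using $|\xi|\lesssim h^{-1}$, because $|\xi-\xi_1|$ is unrelated to $|\xi|$.

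The missing idea is transversality rather than resonance. For opposite signs and $0<\xi<\frac{\pi}{h}$, the $\xi_1$-derivative of the combined input phase is
\[
\frac{2}{h^2}\cos\frac{h\xi}{4}\cos\frac{h(2\xi_1-\xi)}{4}\geq\frac{2}{h^2}\sin\frac{h\xi}{4}\cos\frac{h\xi}{4}.
\]
This gives $\mathcal I^{h;-1,1}_{\tau,\xi,0}\lesssim h^2/(\sin\frac{h\xi}{4}\cos\frac{h\xi}{4})$, hence $\mathscr I^{h;-1,1}_{\tau,\xi,0}\lesssim\sin\frac{h\xi}{4}$, without using the output modulation at all.

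Your bookkeeping for \eqref{ineq: FPUT bilinear estimate 2} also does not close as written. You spend part of $\langle\tau-s_h(\xi)\rangle^{-(1-b-\delta)}$ on a gain and then bound ``the remaining integral'' by the same lemma. That double counts: the lemma already needs the full weight $\langle\tau-s_h(\xi)\rangle^{-1/2}$, and $2(1-b-\delta)-\frac12$ can be arbitrarily small.

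What the paper uses instead is the lemma's stronger conclusion \eqref{ineq: unified integral bounds 2 and 3}: for $(\sigma_1,\sigma_2)\neq(-1,-1)$,
\[
\mathscr I^{h;\sigma_1,\sigma_2}_{\tau,\xi,0}\lesssim\Big|\sin\frac{h\xi}{4}\Big|\lesssim\min\{h|\xi|,1\}.
\]
In the equal-sign case this comes from $|A-B|=\frac{4}{h^3}\sin\frac{h\xi}{4}(1+\cos\frac{h\xi}{4})\sim h^{-2}|\xi|$, which replaces the KdV size $\sim|\xi|^3$. Then $\min\{h|\xi|,1\}\langle\xi\rangle^{-2(s-s')}\lesssim h^{2\min\{s-s',\frac12\}}$. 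On the wrapped region, the gain $h^{s-s'}$ comes instead from the frequency weights via \eqref{ineq: triangle inequality on Th for wrapped frequency}.

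A minor sign error: $e^{\frac{2t}{h^2}\partial_h}$ shifts $\tau$ by $+\frac{2\xi}{h^2}$, so your resonance for \eqref{ineq: FPUT bilinear estimate 2} should contain $-\frac{2\xi}{h^2}$. The correct resonance, $-\frac{8}{h^3}\sin\frac{h\xi}{4}\cos\frac{h\xi_1}{4}\cos\frac{h(\xi-\xi_1)}{4}$, is $\gtrsim h^{-2}|\xi|$ on the whole unwrapped region, with no exceptional set.
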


\begin{remark}
Estimates of the type
\eqref{ineq: FPUT bilinear estimate 1},
\eqref{ineq: FPUT bilinear estimate 2}, and
\eqref{ineq: FPUT bilinear estimate 3}
were established in
\cite[Lemmas~6.1, 6.3, and 6.4]{HKY2021}, respectively.
Compared with the formulations stated there, the estimates above place the higher spatial regularity on only one input factor while measuring the other at the \(X^{0,b}\) level. This refinement is simple but crucial to our analysis; it allows us to close the \(H^s\) estimates in Proposition~\ref{prop: Local uniform bounds for the FPUT system} on a lifespan determined solely by the \(L^2\)-norm of the initial data. It is therefore a key ingredient in establishing persistence of regularity.
\end{remark}

\begin{remark}
The proof of Lemma~\ref{lem: FPUT bilinear estimates} follows the method of Kenig, Ponce, and Vega~\cite{KPV1996}, which is standard in the low-regularity theory of dispersive PDEs. The key step is to reduce the bilinear estimates to uniform bounds for one-dimensional multiplier integrals, as in Lemma~\ref{lem: unified integral bounds}. Although the underlying analysis is similar to that in \cite{HKY2021}, we streamline the presentation by treating the three estimates simultaneously and reducing their common calculations to the unified integral estimate in Lemma~\ref{lem: unified integral bounds}. We also carefully account for the frequency-wrapping interactions.
\end{remark}

We first isolate the frequency-wrapping contributions to the bilinear terms in Lemma~\ref{lem: FPUT bilinear estimates} and establish the corresponding estimates. To formulate them, we introduce the frequency projections
\(P_+\) and \(P_-\) defined by
\[
\widehat{P_\pm f_h}(\xi)
=
\mathbbm{1}_{\mathbb{T}_h^\pm}(\xi)\widehat{f_h}(\xi),
\]
where \(\mathbb{T}_h^+\) and \(\mathbb{T}_h^-\) denote, respectively, the nonnegative and negative frequency regions of \(\mathbb{T}_h\).
Under the identification of \(\mathbb{T}_h\) with the periodic interval
\([-\frac{\pi}{h},\frac{\pi}{h})\), these regions are represented by
\[
\mathbb{T}_h^-
\simeq
\left[-\frac{\pi}{h},0\right),
\qquad
\mathbb{T}_h^+
\simeq
\left[0,\frac{\pi}{h}\right).
\]
Here, \(\simeq\) denotes this periodic identification rather than literal equality; the left-hand sides are subsets of the quotient \(\mathbb{T}_h\), whereas the right-hand sides are their representatives in the chosen half-open interval. Frequency-wrapping in the product \(u_hv_h\) can occur only when the two input frequencies have the same sign and their sum lies outside this interval. Consequently, the corresponding wrapped components are
\[
P_-\big\{(P_+u_h)(P_+v_h)\big\}
\qquad\text{and}\qquad
P_+\big\{(P_-u_h)(P_-v_h)\big\}.
\]

The following lemma gives the estimates for the frequency-wrapping components of
\eqref{ineq: FPUT bilinear estimate 1},
\eqref{ineq: FPUT bilinear estimate 2}, and
\eqref{ineq: FPUT bilinear estimate 3}. For use in a later application, we include an additional time-oscillatory factor 
\(e^{\frac{2\pi it\ell}{h^3}}\).

\begin{lemma}[Frequency-wrapping bilinear estimates for FPUT]
\label{lem: bilinear estimates for translation correction}
Let \(h\in(0,1]\), \(s\geq0\),
\(\delta\in(0,\frac14)\), and
\(b\in(\frac12,\frac34-\delta)\). Then, for 
\(\ell\in\{-1,0,1\}\) and $\max\{s-1,0\}\leq s'\leq s$, the following estimates hold:
\begin{enumerate}[\((i)\)]
\item (Frequency wrapping component of \eqref{ineq: FPUT bilinear estimate 1})
\begin{equation}\label{ineq: translation correction bilinear estimate 1}
\begin{aligned}
&\Big\|
\nabla_h
e^{\frac{2\pi it\ell}{h^3}}P_-
\big\{
(P_+u_h)(P_+v_h)
\big\}
\Big\|_{X_{h,\pm}^{s',-(1-b-\delta)}}\\
&\lesssim
h^{s-s'}
\Big(
\|u_h\|_{X_{h,\pm}^{s,b}}
\|v_h\|_{X_{h,\pm}^{0,b}}
+
\|u_h\|_{X_{h,\pm}^{0,b}}
\|v_h\|_{X_{h,\pm}^{s,b}}
\Big).
\end{aligned}
\end{equation}
\item (Frequency wrapping component of \eqref{ineq: FPUT bilinear estimate 2})
\begin{equation}\label{ineq: translation correction bilinear estimate 2}
\begin{aligned}
&\Big\|
\nabla_h
e^{\frac{2\pi it\ell}{h^3}}P_-
\big\{
\big(P_+e^{\pm\frac{2t}{h^2}\pa_h}u_h\big)
\big(P_+e^{\pm\frac{2t}{h^2}\pa_h}v_h\big)
\big\}
\Big\|_{X_{h,\pm}^{s',-(1-b-\delta)}}\\
&\lesssim
h^{s-s'}
\Big(
\|u_h\|_{X_{h,\mp}^{s,b}}
\|v_h\|_{X_{h,\mp}^{0,b}}
+
\|u_h\|_{X_{h,\mp}^{0,b}}
\|v_h\|_{X_{h,\mp}^{s,b}}
\Big).
\end{aligned}
\end{equation}
\item (Frequency wrapping component of \eqref{ineq: FPUT bilinear estimate 3})
\begin{equation}\label{ineq: translation correction bilinear estimate 3}
\begin{aligned}
&\Big\|
\nabla_h
e^{\frac{2\pi it\ell}{h^3}}P_-
\big\{
\big(P_+u_h\big)
\big(P_+e^{\pm\frac{2t}{h^2}\pa_h}v_h\big)
\big\}
\Big\|_{X_{h,\pm}^{s',-(1-b-\delta)}}\\
&\lesssim
h^{s-s'}
\Big(
\|u_h\|_{X_{h,\pm}^{s,b}}
\|v_h\|_{X_{h,\mp}^{0,b}}
+
\|u_h\|_{X_{h,\pm}^{0,b}}
\|v_h\|_{X_{h,\mp}^{s,b}}
\Big).
\end{aligned}
\end{equation}
\end{enumerate}
The corresponding estimates also hold with \(P_+\) and \(P_-\) interchanged throughout.
\end{lemma}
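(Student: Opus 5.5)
The plan is to exploit the structural fact behind these wrapped components: an output frequency $\xi$ arising from wrapping must satisfy, by \eqref{ineq: triangle inequality on Th for wrapped frequency}, $|\xi_1|+|[\xi-\xi_1]|\geq\frac{\pi}{h}$. At least one input frequency is therefore $\gtrsim h^{-1}$. The factor $h^{s-s'}$ will come from trading $\la\xi\ra^{s'}\lesssim\la\xi\ra^{s}$ together with $\la\xi_{\max}\ra^{-(s-s')}\lesssim h^{s-s'}$, where $\xi_{\max}$ is the larger input frequency. We place the $s$-weight on the high-frequency input, so the right-hand side has the asserted form with one factor in $X^{s,b}$ and the other in $X^{0,b}$. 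After this reduction, each of (i)--(iii) becomes an $L^2$-type bilinear bound with no derivative loss left to spare beyond the $\nabla_h$ symbol. The symbol is bounded by $|m_h(\xi)|\leq 2/h\sim|\xi_{\max}|$ on the wrapped region.

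Concretely, I would proceed by duality as in Kenig--Ponce--Vega. Write the norm against a test function $g\in L^2_{\tau,\xi}$ and pass to space-time Fourier variables. The almost-translation $e^{\pm\frac{2t}{h^2}\pa_h}$ shifts the modulation variable by $\pm\frac{2}{h^2}[\xi]$, so in (ii) and (iii) the input phases become $\mp s_h(\xi_j)\pm\frac{2}{h^2}\xi_j$. The factor $e^{\frac{2\pi it\ell}{h^3}}$ shifts $\tau$ by $\frac{2\pi\ell}{h^3}$. The wrapped output frequency is $[\xi_1+\xi_2]=\xi_1+\xi_2\mp\frac{2\pi}{h}$, and the output phase $s_h$ is evaluated there. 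The key observation is that the resonance function picks up the constant $\frac{2}{h^2}\cdot\frac{2\pi}{h}$ from the $\frac{[\xi]}{h^2}$ part of $s_h$. The free parameter $\ell$ can partly cancel this, but in every case the algebra reduces to the same type of one-dimensional multiplier integral as in the non-wrapped estimates. I would then apply Cauchy--Schwarz in the modulation variables and invoke the unified integral estimate, Lemma~\ref{lem: unified integral bounds}, which is designed to cover these phases including wrapped frequencies.

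The needed integral bound has the form
\[
\sup_{\tau,\xi}\frac{|m_h(\xi)|^2}{\la\tau\mp s_h(\xi)\ra^{2(1-b-\delta)}}\int \frac{\mathbbm{1}_{\mathrm{wrap}}\,d\xi_1}{\la \tau - \Phi(\xi,\xi_1)\ra^{2b-1}}\lesssim 1 .
\]
After using $|m_h(\xi)|\lesssim |\xi_{\max}|$ and the extracted weight $h^{s-s'}$, this would follow by a change of variables $\xi_1\mapsto\Phi$. For that change of variables, the derivative $\partial_{\xi_1}\Phi$ should be of size $\gtrsim h^{-2}|\xi_1-\xi_2|$ or at least $\gtrsim h^{-2}$ on the wrapped set, because the FPUT group velocities $\mp s_h'$ and the translation speeds $\frac{2}{h^2}$ differ by order $h^{-2}$ near $|\xi|\sim h^{-1}$. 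The symmetric statement with $P_\pm$ interchanged follows by the reflection $\xi\mapsto-\xi$, using the oddness of $s_h$ and $m_h$.

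I expect the main obstacle to be the lower bound on $|\partial_{\xi_1}\Phi|$, or on the resonance itself, uniformly on the wrapped region. Near the boundary $|\xi_j|\approx\frac{\pi}{h}$, the derivative $s_h'(\xi)=h^{-2}(1-\cos(h\xi/2))$ approaches $h^{-2}$, which can produce degenerate stationary points, and the extra $\ell$-shift could land $\tau$ on resonance. If the derivative bound fails on a subregion, I would fall back on the size of the resonance: show $|\tau_{\mathrm{res}}|\gtrsim h^{-3}$ there, so that the largest modulation absorbs $|m_h|\lesssim h^{-1}$ with room to spare. This uses $b>\frac12$ and $1-b-\delta>\frac14$, each of which gives a power $h^{3/4}$ or better.
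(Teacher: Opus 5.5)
Your outline follows the paper's proof. Wrapping forces $\max\{|\xi_1|,|[\xi-\xi_1]|\}\geq\frac{\pi}{2h}$, which extracts $h^{s-s'}$ and reduces everything to $s=s'=0$. After that, Cauchy--Schwarz in $(\tau_1,\xi_1)$ and Lemma~\ref{lem: unified integral bounds} finish the argument. Your reflection argument for the $P_\pm$ interchange is a fine substitute for the paper's unfolding to $\xi-\frac{2\pi}{h}$.

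Phrase the extraction as $\la\xi\ra^{s'}\lesssim h^{-s'}\lesssim h^{s-s'}\la\xi_{\max}\ra^{s}$. The gain cannot come from $\la\xi\ra^{-(s-s')}$, because wrapped outputs can have $|\xi|=O(1)$.

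\textbf{The displayed integral bound.} The bound you display is not what this argument produces, and as written it is false.
\begin{itemize}
\item The $\tau_1$-integration via \eqref{ineq: elementary integral estimates} gives the exponent $2b$, not $2b-1$.
\item With $2b-1$, take an on-shell output $\tau=s_h(\xi)$ with $\xi$ near $-\frac{\pi}{h}$, $(\sigma_1,\sigma_2)=(-1,-1)$ and $\ell=0$. The wrapped $\xi_1$-interval has length $\sim h^{-1}$ and $|\tau-\Phi|\lesssim h^{-3}$. So your left side is $\gtrsim h^{-2}\cdot h^{-1}\cdot h^{3(2b-1)}=h^{6b-6}\to\infty$.
\item No factor $h^{s-s'}$ is available at this stage, since the core estimate must hold with $s=s'$.
\end{itemize}
The correct reduced quantity, for $-\frac{\pi}{h}\leq\xi<0$, is $\frac{h^{-2}\sin^2(\frac{h\xi}{2})}{\la\tau-s_h(\xi)\ra^{1/2}}\,\mathcal I^{h;\sigma_1,\sigma_2}_{\tau,\xi+\frac{2\pi}{h},\ell}=\mathscr I^{h;\sigma_1,\sigma_2}_{\tau,\xi+\frac{2\pi}{h},\ell}$. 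It comes from $[\xi-\xi_1]=(\xi+\frac{2\pi}{h})-\xi_1$ together with $2(1-b-\delta)\geq\frac12$. The unfolded frequency $\xi+\frac{2\pi}{h}$ lies outside $[-\frac{\pi}{h},\frac{\pi}{h})$, so \eqref{ineq: unified integral bound-1'} bounds this by a constant, and the proof is complete.

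\textbf{The anticipated obstacle.} What you call the main obstacle is exactly what the appendix proof of \eqref{ineq: unified integral bound-1'} handles. Your own fallback would not resolve it.
\begin{itemize}
\item For equal signs, $\pa_{\xi_1}\Phi=\pm\frac{2}{h^2}\sin(\frac{h\xi}{4})\sin(\frac{h(2\xi_1-\xi)}{4})$, with $\xi$ unfolded. This vanishes at the symmetric point $\xi_1=\xi-\xi_1$ for every output, and nearby it is $\sim h^{-1}|2\xi_1-\xi|$, not $h^{-2}|\xi_1-\xi_2|$.
\item Large modulation alone does not suffice. If the output modulation is $\gtrsim h^{-3}$, the condition $1-b-\delta>\frac14$ only gives a factor of about $h^{3/4}$. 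That does not absorb $|m_h|\sim h^{-1}$ when $b$ is close to $\frac34-\delta$, so there is no ``room to spare''.
\end{itemize}
The missing gain comes from the $\xi_1$-integration. The substitution $y=\frac{4}{h^3}\sin(\frac{h\xi}{4})\{1-\cos(\frac{h(2\xi_1-\xi)}{4})\}$ gives $\mathcal I\lesssim\sqrt h\,\la\tau-c\ra^{-1/2}$ for wrapped outputs. Here $c=\frac{\xi}{h^2}+\frac{2\pi\ell}{h^3}+\frac{4\sigma}{h^3}\sin(\frac{h\xi}{4})$ is the phase at the stationary point. Pairing this with $\la\tau-s_h([\xi])\ra^{-1/2}$ leaves $|s_h([\xi])-c|^{-1/2}\lesssim h^{3/2}$. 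This holds because $|2\pi(1+\ell)+4\sin z(\sigma-\cos z)|\gtrsim1$ for $z\in[\frac{\pi}{4},\frac{\pi}{2})$, every $\ell\in\{-1,0,1\}$ and $\sigma=\pm1$.

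In the opposite-sign case, $\pa_{\xi_1}\Phi$ degenerates as $[\xi]\to0$. That degeneracy is compensated by the factor $\sin^2(\frac{h\xi}{2})$.

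So cite \eqref{ineq: unified integral bound-1'} directly. If you want a self-contained proof, a coarse dichotomy between $|\pa_{\xi_1}\Phi|\gtrsim h^{-2}$ and resonance $\gtrsim h^{-3}$ is not enough. You need the square-root analysis at the stationary point described above.
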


The proofs of Lemmas~\ref{lem: FPUT bilinear estimates} and~\ref{lem: bilinear estimates for translation correction} rely on the following key integral estimates, which are proved in Appendix~\ref{appendix: Proof of integral estimates}.

\begin{lemma}[Key integral estimates]\label{lem: unified integral bounds}
Let \(h\in(0,1]\), \(\tau,\xi\in\mathbb R\), \(b>\frac12\),
\(\ell\in\{-1,0,1\}\), and
\(\sigma_1,\sigma_2\in\{-1,1\}\). Define
\begin{equation}\label{eq: definition of mathscr I}
\mathscr{I}_{\tau,\xi,\ell}^{h;\sigma_1,\sigma_2}
:=
\frac{\frac{1}{h^2}\sin^2(\frac{h[\xi]}{2})}
{\la\tau-s_h([\xi])\ra^{\frac12}}
\mathcal I_{\tau,\xi,\ell}^{h;\sigma_1,\sigma_2},
\end{equation}
where
\begin{equation}\label{eq: definition of mathcal I}
\begin{aligned}
\mathcal I_{\tau,\xi,\ell}^{h;\sigma_1,\sigma_2}
:=
\int_{\mathbb{R}}
\frac{\mathbbm{1}_{\{-\frac{\pi}{h}\leq \xi_1<\frac{\pi}{h}\}\cap\{-\frac{\pi}{h}\leq\xi-\xi_1<\frac{\pi}{h}\}}}
{\big\la
\tau
-\frac{(1+\sigma_1)\xi_1
+(1+\sigma_2)(\xi-\xi_1)}{h^2}
-\frac{2\pi\ell}{h^3}
+\sigma_1s_h(\xi_1)
+\sigma_2s_h(\xi-\xi_1)
\big\ra^{2b}}d\xi_1.
\end{aligned}
\end{equation}
Then the following estimates hold.

\begin{enumerate}[$(i)$]
\item If
\(\xi\in[-\frac{\pi}{h},\frac{\pi}{h})\), then
\begin{equation}\label{ineq: unified integral bound-1}
\mathscr I_{\tau,\xi,0}^{h;-1,-1}
\lesssim1,
\end{equation}
whereas for
\((\sigma_1,\sigma_2)\neq(-1,-1)\),
\begin{equation}\label{ineq: unified integral bounds 2 and 3}
\mathscr I_{\tau,\xi,0}^{h;\sigma_1,\sigma_2}
\lesssim
\left|\sin\left(\frac{h\xi}{4}\right)\right|.
\end{equation}

\item If
\(\xi\in\mathbb R\setminus
[-\frac{\pi}{h},\frac{\pi}{h})\), then
\begin{equation}\label{ineq: unified integral bound-1'}
\mathscr I_{\tau,\xi,\ell}^{h;\sigma_1,\sigma_2}
\lesssim1,
\end{equation}
for every
\(\ell\in\{-1,0,1\}\) and
\((\sigma_1,\sigma_2)\in\{-1,1\}^2\).
\end{enumerate}
\end{lemma}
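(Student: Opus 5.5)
The plan follows the Kenig--Ponce--Vega change-of-variables method: write the resonance function inside the bracket in \eqref{eq: definition of mathcal I} as $\tau - \Phi(\xi_1)$ with
\[
\Phi(\xi_1):=\frac{(1+\sigma_1)\xi_1+(1+\sigma_2)(\xi-\xi_1)}{h^2}+\frac{2\pi\ell}{h^3}-\sigma_1 s_h(\xi_1)-\sigma_2 s_h(\xi-\xi_1),
\]
and estimate $\int \langle \tau-\Phi(\xi_1)\rangle^{-2b}\,d\xi_1$ by the substitution $\mu=\Phi(\xi_1)$. On each interval where $\Phi$ is monotone this gives
\[
\int \frac{d\mu}{\langle\tau-\mu\rangle^{2b}|\Phi'(\xi_1(\mu))|}.
\]
I will use the explicit derivative $s_h'(\eta)=\frac{1}{h^2}\bigl(1-\cos\frac{h\eta}{2}\bigr)=\frac{2}{h^2}\sin^2\frac{h\eta}{4}$. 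All $\sin^2$ manipulations rest on two facts: $\sin^2 A-\sin^2 B=\sin(A-B)\sin(A+B)$, and $|\sin y|\sim|y|$ for $|y|\le\pi/2$.

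\emph{Case (i), $(\sigma_1,\sigma_2)=(-1,-1)$, $\ell=0$.} This is the genuine KdV-type interaction: $\Phi(\xi_1)=s_h(\xi_1)+s_h(\xi-\xi_1)$, and
\[
\Phi'(\xi_1)=\tfrac{2}{h^2}\Bigl(\sin^2\tfrac{h\xi_1}{4}-\sin^2\tfrac{h(\xi-\xi_1)}{4}\Bigr)=\tfrac{2}{h^2}\sin\tfrac{h(2\xi_1-\xi)}{4}\,\sin\tfrac{h\xi}{4}.
\]
\begin{itemize}
\item Since $\xi,\xi_1,\xi-\xi_1\in[-\pi/h,\pi/h)$, the arguments stay in the range where $|\sin y|\sim|y|$, so $|\Phi'|\sim|\xi|\,|2\xi_1-\xi|$, exactly as for Airy.
\item Near the critical point $\xi_1=\xi/2$, set $\mu=\Phi(\xi_1)$. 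Then $|\Phi(\xi_1)-\Phi(\xi/2)|\sim|\xi|\,|2\xi_1-\xi|^2$, so $|\Phi'|\gtrsim|\xi|^{1/2}|\mu-\Phi(\xi/2)|^{1/2}$.
\item Hence $\mathcal I\lesssim |\xi|^{-1/2}\int\langle\tau-\mu\rangle^{-2b}|\mu-\Phi(\xi/2)|^{-1/2}d\mu\lesssim |\xi|^{-1/2}\langle\tau-\Phi(\xi/2)\rangle^{-1/2}$, using $b>\frac12$.
\item The prefactor $\frac{1}{h^2}\sin^2\frac{h\xi}{2}\sim\xi^2$ leaves $|\xi|^{3/2}\langle\tau-\Phi(\xi/2)\rangle^{-1/2}\langle\tau-s_h(\xi)\rangle^{-1/2}$.
\item Finally, the identity $s_h(\xi)-2s_h(\xi/2)\sim|\xi|^3$ for $|\xi|\le\pi/h$ (again by the sine formula) gives, via the triangle inequality, $\max\{\langle\tau-\Phi(\xi/2)\rangle,\langle\tau-s_h(\xi)\rangle\}\gtrsim|\xi|^3$. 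This yields the bound $\lesssim 1$.
\end{itemize}

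\emph{Case (i), mixed or fast signs.} For $(\sigma_1,\sigma_2)\neq(-1,-1)$, the fast terms $\frac{2}{h^2}\xi_1$ (and/or $\frac{2}{h^2}(\xi-\xi_1)$) dominate.
\begin{itemize}
\item If exactly one $\sigma_j=1$, say $\sigma_1=1$, then $\Phi'=\frac{2}{h^2}-s_h'(\xi_1)+s_h'(\xi-\xi_1)$. This lies in $\frac{2}{h^2}[\frac12,\frac32]$ because $\sin^2\le\frac12$ on the allowed range. So $\mathcal I\lesssim h^2$, and the prefactor gives $\sin^2\frac{h\xi}{2}\,\langle\cdot\rangle^{-1/2}\lesssim|\sin\frac{h\xi}{4}|$.
\item If $\sigma_1=\sigma_2=1$, then $\Phi'=s_h'(\xi-\xi_1)-s_h'(\xi_1)$, which has the same factorized form as in the first case but with the opposite sign. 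The same computation applies, except that the time-resonance relation now reads $\tau-\frac{2\xi}{h^2}+\dots$ against $\tau-s_h(\xi)$. Their gap is $\gtrsim|\xi|/h^2$, which more than compensates and leaves a factor $\lesssim|\sin\frac{h\xi}{4}|$.
\end{itemize}
I expect this to be routine once the $\sin^2$ identities are organized.

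\emph{Case (ii), wrapped output $\xi\notin[-\pi/h,\pi/h)$.} Here $|\xi|\ge\pi/h$, while both inputs have absolute value at most $\pi/h$ and the same sign. The prefactor uses $[\xi]$ and is at most $h^{-2}$. The factorization of $\Phi'$ still holds, now with $|h\xi/4|\in[\pi/4,\pi/2]$, so $|\sin\frac{h\xi}{4}|\sim1$ and $|\Phi'|\gtrsim h^{-1}|2\xi_1-\xi|$ in the $(-1,-1)$ case; in the fast cases it is even larger.
\begin{itemize}
\item The crude version of the same substitution gives $\mathcal I\lesssim h^{1/2}\langle\tau-\Phi(\xi/2)\rangle^{-1/2}$.
\item We then need $\max\{\langle\tau-\Phi(\xi/2)\rangle,\langle\tau-s_h([\xi])\rangle\}\gtrsim h^{-3}$, uniformly in $\ell\in\{-1,0,1\}$. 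This is where the term $\frac{2\pi\ell}{h^3}$ enters: the wrap shifts the phase by exactly $\pm\frac{2\pi}{h^3}$ in the fast frame.
\end{itemize}
The main obstacle is checking this non-resonance uniformly in $\ell$, i.e.\ that $\Phi(\xi/2)-s_h([\xi])$ stays at distance $\gtrsim h^{-3}$ from zero for every admissible sign choice. I would verify it by writing $h\xi=\theta\in[\pi,2\pi)$ and bounding an explicit trigonometric function of $\theta$ away from zero. If some combination degenerates, I would fall back on the trivial bound $\mathcal I\lesssim h^{-1}$ restricted to the small set where resonance occurs, and use its measure to close.
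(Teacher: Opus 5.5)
Your equal-sign analysis matches the paper's argument: the stationary point at $\xi_1=\xi/2$, the square-root substitution, and the gap between $\Phi(\xi/2)$ and $s_h([\xi])$. The non-resonance you postpone in case (ii) does hold when $\sigma_1=\sigma_2=\sigma$, since it reduces to $|2\pi(1+\ell)+4\sin z\,(\sigma-\cos z)|\gtrsim1$ for $z=\frac{h\xi}{4}\in[\frac{\pi}{4},\frac{\pi}{2})$.

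The genuine gap is in the mixed-sign case. For $(\sigma_1,\sigma_2)=(1,-1)$ you have a sign error: the chain rule gives
\[
\Phi'(\xi_1)=\frac{2}{h^2}-s_h'(\xi_1)-s_h'(\xi-\xi_1)=\frac{2}{h^2}\cos\Big(\frac{h\xi}{4}\Big)\cos\Big(\frac{h(2\xi_1-\xi)}{4}\Big),
\]
not $\frac{2}{h^2}-s_h'(\xi_1)+s_h'(\xi-\xi_1)$. So $\Phi'$ only lies in $\frac{2}{h^2}[0,1]$, and it degenerates at the corners of the support ($\xi_1\to\pm\frac{\pi}{h}$, $\xi-\xi_1\to\mp\frac{\pi}{h}$). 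Your bound $\mathcal I\lesssim h^2$ is then false: for $\xi=1$ and $\tau$ equal to the endpoint value of $\Phi$, one checks $\mathcal I\gtrsim h$.

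The missing observation is the support constraint $\big|\frac{h(2\xi_1-\xi)}{4}\big|<\frac{\pi}{2}-\frac{h|\xi|}{4}$. It gives $\cos\big(\frac{h(2\xi_1-\xi)}{4}\big)\geq\big|\sin\frac{h\xi}{4}\big|$, hence $\mathcal I\lesssim h^2\big/\big(|\sin\frac{h\xi}{4}|\cos\frac{h\xi}{4}\big)$. This loss is cancelled exactly by the prefactor $\frac{1}{h^2}\sin^2\frac{h[\xi]}{2}=\frac{4}{h^2}\sin^2\frac{h\xi}{4}\cos^2\frac{h\xi}{4}$. The result is $\mathscr I\lesssim|\sin\frac{h\xi}{4}|\cos\frac{h\xi}{4}$, without using the factor $\la\tau-s_h([\xi])\ra^{-1/2}$ at all.

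The same oversight breaks your treatment of case (ii) with mixed signs, because the uniform non-resonance you require is false there. For $\xi\in[\frac{\pi}{h},\frac{2\pi}{h})$ and $\ell=-1$,
\[
\big|\Phi(\xi/2)-s_h([\xi])\big|=\frac{2}{h^3}\sin\Big(\frac{h\xi}{2}\Big)\longrightarrow0\qquad\text{as }\xi\to\frac{2\pi}{h}.
\]
Yet the lemma asserts (ii) for every $\ell$ and every sign pair; this is precisely the cross term of the translation correction, where $\ell=\mp1$.

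Bounding the prefactor crudely by $h^{-2}$ discards the factor $\cos^2\frac{h\xi}{4}$, which vanishes exactly at this resonance. Your fallback of a small resonant set in $\xi_1$ does not help, because the degeneracy sits in the output frequency $\xi$, not in $\xi_1$.

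The fix is again the monotone substitution with the lower bound $\Phi'\geq\frac{2}{h^2}\sin\frac{h\xi}{4}\cos\frac{h\xi}{4}$. It is valid for all $0<\xi<\frac{2\pi}{h}$ and all $\ell$, since the term $\frac{2\pi\ell}{h^3}$ only shifts $\tau$. It gives $\mathscr I\lesssim\sin\frac{h\xi}{4}\cos\frac{h\xi}{4}\lesssim1$ in (i) and (ii) at once, which is how the paper treats all opposite-sign interactions.
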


We also recall the following elementary integral inequality.

\begin{lemma}
Let \(\beta,\gamma\in\mathbb{R}\) and
\(b'\geq b>\frac12\). Then,
\begin{equation}\label{ineq: elementary integral estimates}
\int_{\mathbb{R}}
\frac{dx}
{\la x-\beta\ra^{2b'}\la x-\gamma\ra^{2b}}
\lesssim
\frac{1}{\la\beta-\gamma\ra^{2b}}.
\end{equation}
\end{lemma}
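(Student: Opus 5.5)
We reduce the bound to the two cases $2b\leq 2b'$ by a splitting of the real line according to which of the two factors is small. Set $d:=|\beta-\gamma|$. If $d\leq 1$, then $\langle\beta-\gamma\rangle\sim1$, and the claim follows from
\[
\int_{\mathbb R}\frac{dx}{\langle x-\beta\rangle^{2b'}\langle x-\gamma\rangle^{2b}}\leq\int_{\mathbb R}\frac{dx}{\langle x-\beta\rangle^{2b'}}\lesssim1,
\]
which converges since $2b'>1$.

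For $d>1$, we split $\mathbb R$ into $A:=\{|x-\beta|\geq d/2\}$ and $B:=\{|x-\beta|<d/2\}$. On $B$, the triangle inequality gives $|x-\gamma|\geq d-|x-\beta|>d/2$, hence $\langle x-\gamma\rangle^{-2b}\lesssim\langle d\rangle^{-2b}$. Then
\[
\int_B\frac{dx}{\langle x-\beta\rangle^{2b'}\langle x-\gamma\rangle^{2b}}\lesssim\langle d\rangle^{-2b}\int_{\mathbb R}\frac{dx}{\langle x-\beta\rangle^{2b'}}\lesssim\langle\beta-\gamma\rangle^{-2b}.
\]
On $A$, we have $\langle x-\beta\rangle^{-2b'}\lesssim\langle d\rangle^{-2b'}\leq\langle d\rangle^{-2b}$, using $b'\geq b$ and $\langle d\rangle\geq1$. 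Then
\[
\int_A\frac{dx}{\langle x-\beta\rangle^{2b'}\langle x-\gamma\rangle^{2b}}\lesssim\langle d\rangle^{-2b}\int_{\mathbb R}\frac{dx}{\langle x-\gamma\rangle^{2b}}\lesssim\langle\beta-\gamma\rangle^{-2b},
\]
which converges since $2b>1$. Summing the two contributions proves \eqref{ineq: elementary integral estimates}, with implicit constants depending only on $b$ and $b'$.

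There is no real obstacle here. The only point to watch is the bound on $A$: there we need the exponent $2b'$ to dominate $2b$, which is exactly the hypothesis $b'\geq b$. We also need the remaining factor $\langle x-\gamma\rangle^{-2b}$ to be integrable, which uses $b>\frac12$.
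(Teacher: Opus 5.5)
Your proof is correct and complete. The paper gives no proof of this inequality and only recalls it as elementary; your splitting according to whether $|x-\beta|$ lies below or above $|\beta-\gamma|/2$ is the standard argument, using $b'\geq b$ on the far region and $b>\frac12$ (so that both $\langle\cdot\rangle^{-2b}$ and $\langle\cdot\rangle^{-2b'}$ are integrable). The separate case $|\beta-\gamma|\leq 1$ is not actually needed, since the same two-region splitting works for every $d\geq 0$, but including it does no harm.
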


We now prove the bilinear estimates.

\begin{proof}[Proof of Lemma~\ref{lem: bilinear estimates for translation correction}]
\noindent\textbf{\underline{Step 1. Preliminary reduction.}} By the time-reversal symmetry \(t\mapsto-t\), it suffices to prove the
estimates with the upper signs. Indeed, by replacing \(u_h(t,z)\) and
\(v_h(t,z)\) by \(u_h(-t,z)\) and \(v_h(-t,z)\), respectively, 
their space-time Fourier transforms are sent to
\(\widetilde{u_h}(-\tau,\xi)\) and
\(\widetilde{v_h}(-\tau,\xi)\). Consequently, this transformation
interchanges \(X_{h,+}^{s,b}\) and \(X_{h,-}^{s,b}\), replaces
\(\ell\) with \(-\ell\) in
\(e^{\frac{2\pi it\ell}{h^3}}\), and reverses the sign in
\(e^{\pm\frac{2t}{h^2}\pa_h}\). Thus,
every lower-sign estimate follows from its upper-sign counterpart. The estimates with \(P_+\) and \(P_-\) interchanged follow similarly.
Indeed, for \(0\leq\xi<\frac{\pi}{h}\), the unfolded output frequency is
\(\xi-\frac{2\pi}{h}\), and on the corresponding wrapping region, \([\xi-\xi_1]=\left(\xi-\frac{2\pi}{h}\right)-\xi_1\). Since \([\xi-\frac{2\pi}{h}]=\xi\) and
\(\xi-\frac{2\pi}{h}\notin[-\frac{\pi}{h},\frac{\pi}{h})\),
the same calculation using \eqref{ineq: unified integral bound-1'}
yields the desired estimates.

To prove the three estimates simultaneously, we introduce
\[
(\sigma_1,\sigma_2)
\in
\{(-1,-1),(1,1),(-1,1)\},
\]
corresponding respectively to
\eqref{ineq: translation correction bilinear estimate 1},
\eqref{ineq: translation correction bilinear estimate 2}, and
\eqref{ineq: translation correction bilinear estimate 3}. We then
write
\[
\begin{aligned}
&\mathcal F_h
\Big[
P_-
\big\{
(P_+e^{\frac{(1+\sigma_1)t}{h^2}\pa_h}u_h)
(P_+e^{\frac{(1+\sigma_2)t}{h^2}\pa_h}v_h)
\big\}
\Big](t,\xi)\\
&=
\frac{\mathbbm 1_{[-\frac{\pi}{h},0)}(\xi)}{2\pi}
\int_{\xi+\frac{\pi}{h}}^{\frac{\pi}{h}}
e^{\frac{it}{h^2}\left\{
(1+\sigma_1)\xi_1
+(1+\sigma_2)[\xi-\xi_1]
\right\}}
\widehat{u_h}(t,\xi_1)
\widehat{v_h}(t,[\xi-\xi_1])d\xi_1.
\end{aligned}
\]
Here, the interval of integration is given by $(\xi+\frac{\pi}{h}, \frac{\pi}{h})$; for each
\(-\frac{\pi}{h}\leq\xi<0\), the conditions \(0\leq\xi_1<\frac{\pi}{h}\) and \(0\leq[\xi-\xi_1]<\frac{\pi}{h}\) are equivalent, up to endpoints of measure zero, to
\(\xi+\frac{\pi}{h}<\xi_1<\frac{\pi}{h}\), because 
\begin{equation}\label{eq: frequency wrapping xi-xi1}
[\xi-\xi_1]
=
\xi-\xi_1+\frac{2\pi}{h}.
\end{equation}
Hence, it follows that 
\begin{equation}\label{eq: space-time Fourier transform of frequency wrapping interaction}
\begin{aligned}
&\mathcal F_{t,h}
\Big[
e^{\frac{2\pi it\ell}{h^3}}P_-
\big\{
(P_+e^{\frac{(1+\sigma_1)t}{h^2}\pa_h}u_h)
(P_+e^{\frac{(1+\sigma_2)t}{h^2}\pa_h}v_h)
\big\}
\Big](\tau,\xi)\\
&=
\frac{\mathbbm 1_{[-\frac{\pi}{h},0)}(\xi)}{(2\pi)^2}
\int_{\mathbb R}
\int_{\xi+\frac{\pi}{h}}^{\frac{\pi}{h}}
\widetilde{u_h}(\tau_1,\xi_1)\widetilde{v_h}\big(
\tau-\tau_1-(\cdots),
[\xi-\xi_1]
\big)
d\xi_1d\tau_1
=:\mathscr{A}(\tau,\xi),
\end{aligned}
\end{equation}
where for brevity, we write
\[(\cdots)=\frac{(1+\sigma_1)\xi_1
 +(1+\sigma_2)[\xi-\xi_1]}{h^2}
+\frac{2\pi\ell}{h^3}.\]

Taking absolute values inside the integral in \eqref{eq: space-time Fourier transform of frequency wrapping interaction}, we see that \(|\mathscr{A}(\tau,\xi)|\) is bounded by the same expression with \(\widetilde{u_h}\) and \(\widetilde{v_h}\) replaced by \(\lvert\widetilde{u_h}\rvert\) and \(\lvert\widetilde{v_h}\rvert\), respectively. Since this replacement preserves the corresponding Bourgain norms, we may assume that \(\widetilde{u_h},\widetilde{v_h}\geq0\) almost everywhere. In addition, the triangle inequality
\eqref{ineq: triangle inequality on Th for wrapped frequency} gives
\[
\la\xi\ra^{s'}
\lesssim
h^{s-s'}
\big(
\la\xi_1\ra^{s}
+
\la
\xi-\xi_1+\tfrac{2\pi}{h}
\ra^{s}
\big)
\]
for \(0\leq s'\leq s\leq s'+1\). We insert this inequality into \eqref{eq: space-time Fourier transform of frequency wrapping interaction} and split the resulting expression into two terms. The first frequency weight is absorbed into the corresponding \(X^{s,b}\)-norm of \(u_h\), and the second into that of \(v_h\). It therefore suffices to prove the underlying estimate with \(s=s'=0\), since the factor \(h^{s-s'}\) is retained outside.

\noindent\textbf{\underline{Step 2. Reduction to an integral estimate.}} Fix \(-\frac{\pi}{h}\leq\xi<0\). We now apply the Cauchy--Schwarz inequality with respect to
\((\tau_1,\xi_1)\) to the double integral on the right-hand side of
\eqref{eq: space-time Fourier transform of frequency wrapping interaction}.
Its absolute value squared is bounded by
\[
\begin{aligned}
|\mathscr{A}(\tau,\xi)|^2&\lesssim\Bigg\{
\int_{\mathbb R}
\int_{\xi+\frac{\pi}{h}}^{\frac{\pi}{h}}
\frac{d\xi_1d\tau_1}
{\la\tau_1+\sigma_1s_h(\xi_1)\ra^{2b}
 \big\la
 \tau-\tau_1-(\cdots)
 +\sigma_2s_h([\xi-\xi_1])
 \big\ra^{2b}}
\Bigg\}\\
&\quad\times
\Bigg\{
\int_{\mathbb R}
\int_{\xi+\frac{\pi}{h}}^{\frac{\pi}{h}}
\la\tau_1+\sigma_1s_h(\xi_1)\ra^{2b}
|\widetilde{u_h}(\tau_1,\xi_1)|^2\\
&\qquad\times
\big\la
\tau-\tau_1
-(\cdots)
+\sigma_2s_h([\xi-\xi_1])
\big\ra^{2b}
\big|
\widetilde{v_h}(
\tau-\tau_1
-(\cdots),
[\xi-\xi_1])
\big|^2
d\xi_1 d\tau_1
\Bigg\}.
\end{aligned}
\]
For the first factor on the right-hand side, expanding the abbreviation \((\cdots)\) and applying the integral estimate \eqref{ineq: elementary integral estimates} to the
\(\tau_1\)-integration, we obtain
\[
\begin{aligned}
&\int_{\mathbb R}
\int_{\xi+\frac{\pi}{h}}^{\frac{\pi}{h}}
\frac{d\xi_1d\tau_1}
{\la\tau_1+\sigma_1s_h(\xi_1)\ra^{2b}
 \big\la
 \tau-\tau_1
 -(\cdots)
 +\sigma_2s_h([\xi-\xi_1])
 \big\ra^{2b}}\\
&\lesssim
\int_{\xi+\frac{\pi}{h}}^{\frac{\pi}{h}}
\frac{d\xi_1}
{\Big\la
\tau
-\frac{(1+\sigma_1)\xi_1
 +(1+\sigma_2)[\xi-\xi_1]}{h^2}
-\frac{2\pi\ell}{h^3}
+\sigma_1s_h(\xi_1)
+\sigma_2s_h([\xi-\xi_1])
\Big\ra^{2b}}=
\mathcal I_{\tau,\xi+\frac{2\pi}{h},\ell}
^{h;\sigma_1,\sigma_2},
\end{aligned}
\]
where the last identity follows from the definition (see \eqref{eq: definition of mathcal I}) and \([\xi-\xi_1]
=
\left(\xi+\frac{2\pi}{h}\right)-\xi_1\) (see \eqref{eq: frequency wrapping xi-xi1}). Thus, by Fubini's theorem and by extending the interval of integration to the whole lattice Fourier domain, we obtain
\[
\begin{aligned}
&\Big\|
\nabla_he^{\frac{2\pi it\ell}{h^3}}P_-
\big\{
(P_+e^{\frac{(1+\sigma_1)t}{h^2}\pa_h}u_h)
(P_+e^{\frac{(1+\sigma_2)t}{h^2}\pa_h}v_h)
\big\}
\Big\|_{X_{h,+}^{0,-(1-b-\delta)}}^2\\
&=\bigg\|\frac{\frac{2}{h}\sin\big(\frac{h\xi}{2}\big)}
{\la\tau-s_h(\xi)\ra^{(1-b-\delta)}}\mathscr{A}(\tau,\xi)\bigg\|_{L_{\tau,\xi}^2}^2\\
&\lesssim
\sup_{\substack{\tau\in\mathbb R\\
-\frac{\pi}{h}\leq\xi<0}}
\underbrace{\frac{\frac{1}{h^2}\sin^2\big(\frac{h\xi}{2}\big)}
{\la\tau-s_h(\xi)\ra^{\frac{1}{2}}}
\mathcal I_{\tau,\xi+\frac{2\pi}{h},\ell}
^{h;\sigma_1,\sigma_2}}_{=(*)}\times
\begin{cases}
\|u_h\|_{X_{h,+}^{0,b}}^2
\|v_h\|_{X_{h,+}^{0,b}}^2
&\textup{if }(\sigma_1,\sigma_2)=(-1,-1),\\[2mm]
\|u_h\|_{X_{h,-}^{0,b}}^2
\|v_h\|_{X_{h,-}^{0,b}}^2
&\textup{if }(\sigma_1,\sigma_2)=(1,1),\\[2mm]
\|u_h\|_{X_{h,+}^{0,b}}^2
\|v_h\|_{X_{h,-}^{0,b}}^2
&\textup{if }(\sigma_1,\sigma_2)=(-1,1),
\end{cases}
\end{aligned}
\]
where \(2(1-b-\delta)\geq\frac{1}{2}\) is also used in the last step. Therefore, the proof of the lemma reduces to showing that \((*)\) is bounded uniformly in \(\tau\in\mathbb{R}\) and \(-\frac{\pi}{h}\leq\xi<0\).

\noindent\textbf{\underline{Step 3. Integral estimate.}} Suppose that \(-\frac{\pi}{h}\leq\xi<0\). Then, since \(\xi+\frac{2\pi}{h}\notin[-\frac{\pi}{h},\frac{\pi}{h})\) and \([\xi+\frac{2\pi}{h}]=\xi\), it follows from \eqref{ineq: unified integral bound-1'} that 
\[
(*)=
\frac{
\frac{1}{h^2}
\sin^2\Big(
\frac{h[\xi+\frac{2\pi}{h}]}{2}
\Big)}
{\la
\tau-s_h\left([\xi+\frac{2\pi}{h}]\right)
\ra^{\frac12}}
\mathcal I_{\tau,\xi+\frac{2\pi}{h},\ell}
^{h;\sigma_1,\sigma_2}=
\mathscr I_{\tau,\xi+\frac{2\pi}{h},\ell}
^{h;\sigma_1,\sigma_2}
\lesssim1,
\]
which completes the proof.
\end{proof}

The proof of Lemma~\ref{lem: FPUT bilinear estimates} is similar to that of the previous lemma. Therefore, we sketch the common steps and explain the steps that differ in detail.

\begin{proof}[Proof of Lemma~\ref{lem: FPUT bilinear estimates}]
Again, by time-reversal symmetry, it suffices to prove the estimates with the upper signs. We take the space-time Fourier transforms of the products, and  decompose each convolution into its non-wrapping and frequency-wrapping parts. The latter are controlled by Lemma~\ref{lem: bilinear estimates for translation correction} with \(\ell=0\). More precisely, \eqref{ineq: translation correction bilinear estimate 1} with \(s=s'\), \eqref{ineq: translation correction bilinear estimate 2} and \eqref{ineq: translation correction bilinear estimate 3} correspond to the frequency wrapping contributions of \eqref{ineq: FPUT bilinear estimate 1}, \eqref{ineq: FPUT bilinear estimate 2} and \eqref{ineq: FPUT bilinear estimate 3}, respectively. Therefore, it remains to estimate the non-wrapping contributions.

To treat the three estimates simultaneously, we consider
\[
(\sigma_1,\sigma_2)
\in
\{(-1,-1),(1,1),(-1,1)\},
\]
corresponding respectively to
\eqref{ineq: FPUT bilinear estimate 1},
\eqref{ineq: FPUT bilinear estimate 2} and
\eqref{ineq: FPUT bilinear estimate 3}. On the non-wrapping region, we have \([\xi-\xi_1]=\xi-\xi_1\) and thus, the relevant part of the space-time Fourier transform is
\[
\begin{aligned}
&\frac{\frac{2i}{h}
\sin\big(\frac{h\xi}{2}\big)}{(2\pi)^2}
\int_{\mathbb R}
\int_{\substack{
-\frac{\pi}{h}\leq\xi_1<\frac{\pi}{h}\\
-\frac{\pi}{h}\leq\xi-\xi_1<\frac{\pi}{h}
}}
\widetilde{u_h}(\tau_1,\xi_1)\\
&\qquad\qquad\qquad\times
\widetilde{v_h}\left(
\tau-\tau_1
-\frac{(1+\sigma_1)\xi_1
 +(1+\sigma_2)(\xi-\xi_1)}{h^2},
\xi-\xi_1
\right)
d\xi_1d\tau_1.
\end{aligned}
\]
Here, an input corresponding to \(\sigma_j=-1\) is measured in \(X_{h,+}^{s,b}\), whereas one corresponding to \(\sigma_j=1\) is measured in \(X_{h,-}^{s,b}\). Moreover, by the argument in the proof of Lemma~\ref{lem: bilinear estimates for translation correction}, we may assume that \(\widetilde{u_h}, \widetilde{v_h}\geq0\).

First, we prove \eqref{ineq: FPUT bilinear estimate 1}, corresponding to \((\sigma_1,\sigma_2)=(-1,-1)\). Note that on the non-wrapping region, we have 
\begin{equation}\label{eq: very basic triangle-type  inequality for frequencies}
\la\xi\ra^{s}
\lesssim
\la\xi_1\ra^{s}+\la\xi-\xi_1\ra^{s}.
\end{equation}
We distribute the spatial weight between the two input factors.
Then, by Plancherel's theorem, the Cauchy--Schwarz inequality in
\((\tau_1,\xi_1)\), and
\eqref{ineq: elementary integral estimates}, exactly as in Step 2 of the proof of Lemma~\ref{lem: bilinear estimates for translation correction}, the proof reduces to showing
\[
\begin{aligned}
&\sup_{\substack{\tau\in\mathbb R\\
\xi\in[-\frac{\pi}{h},\frac{\pi}{h})}}
\frac{\frac{1}{h^2}\sin^2(\frac{h\xi}{2})}
{\la\tau-s_h(\xi)\ra^{\frac12}}
\mathcal I_{\tau,\xi,0}^{h;-1,-1}=
\sup_{\substack{\tau\in\mathbb R\\
\xi\in[-\frac{\pi}{h},\frac{\pi}{h})}}
\mathscr I_{\tau,\xi,0}^{h;-1,-1}
\lesssim1.
\end{aligned}
\]
Indeed, this bound follows immediately from
\eqref{ineq: unified integral bound-1}.

Next, we assume \((\sigma_1,\sigma_2)=(\pm1,1)\), and prove \eqref{ineq: FPUT bilinear estimate 2} and \eqref{ineq: FPUT bilinear estimate 3}. In this case, the only difference from the proof of \eqref{ineq: FPUT bilinear estimate 1} is that we use 
\[
\la\xi\ra^{s'}=\frac{\la\xi\ra^{s}}{\la\xi\ra^{s-s'}}
\lesssim
\frac{\la\xi_1\ra^{s}
+
\la\xi-\xi_1\ra^{s}}{\la\xi\ra^{s-s'}},
\]
instead of \eqref{eq: very basic triangle-type  inequality for frequencies}. Repeating the argument in Step 2 of the proof of Lemma~\ref{lem: bilinear estimates for translation correction}, but with the above bound, reduces the proof to showing
\[
\sup_{\substack{\tau\in\mathbb R\\
\xi\in[-\frac{\pi}{h},\frac{\pi}{h})}}
\frac{\frac{1}{h^2}\sin^2(\frac{h\xi}{2})}
{\la\xi\ra^{2(s-s')}
 \la\tau-s_h(\xi)\ra^{\frac12}}
\mathcal I_{\tau,\xi,0}^{h;\sigma_1,\sigma_2}=
\sup_{\substack{\tau\in\mathbb R\\
\xi\in[-\frac{\pi}{h},\frac{\pi}{h})}}
\frac{1}
{\la\xi\ra^{2(s-s')}}\mathscr I_{\tau,\xi,0}^{h;\sigma_1,\sigma_2}
\lesssim
h^{2\min\{s-s',\frac12\}}.
\]
Indeed, by
\eqref{ineq: unified integral bounds 2 and 3}, we have 
\[
\begin{aligned}
\frac{1}
{\la\xi\ra^{2(s-s')}}\mathscr I_{\tau,\xi,0}^{h;\sigma_1,\sigma_2}
&\lesssim
\frac{1}
{\la\xi\ra^{2(s-s')}}\left|\sin(\frac{h\xi}{4})\right|
\lesssim
\frac{1}
{\la\xi\ra^{2(s-s')}}\min\big\{h|\xi|,1\big\}\lesssim
h^{2\min\{s-s',\frac12\}},
\end{aligned}
\]
since $-\frac{\pi}{h}\leq \xi<\frac{\pi}{h}$.
\end{proof}

As an application of Lemma \ref{lem: bilinear estimates for translation correction}, we show bounds for the translation correction 
\[\mathcal{T}_h^\pm[r_h]
=
e^{\pm\frac{t}{h^2}\pa_h}(r_h^2)
-\big(e^{\pm\frac{t}{h^2}\pa_h}r_h\big)^2.\]

\begin{lemma}[Estimates for the translation correction]\label{lem: estimates for the translation correction}
Let $0 \leq s' \leq s\leq1$, $0<\delta<\frac{1}{4}$ and $\frac{1}{2}<b<\frac{3}{4}-\delta$. For each $j=1,2$, let $u_{h,j}^\pm:\R\times h\Z\to \R$ and define the corresponding strain by
$$
r_{h,j}(t,z)
:=
e^{-\frac{t}{h^2}\pa_h}u_{h,j}^+(t,z)
+e^{\frac{t}{h^2}\pa_h}u_{h,j}^-(t,z),
\quad z\in h\Z.
$$
Suppose that
$$
\|u_{h,j}^\pm\|_{X_{h,\pm}^{0,b}}\leq R_0
\quad\textup{and}\quad
\|u_{h,j}^\pm\|_{X_{h,\pm}^{s,b}}\leq R_s.
$$
Then, uniformly for $h\in(0,1]$ and $j=1,2$, one has
\begin{equation}\label{ineq: translation correction estimate 1}
\big\|\nabla_h\mathcal{T}_h^\pm[r_{h,j}]\big\|_{X_{h,\pm}^{s',-(1-b-\delta)}}
\ls  h^{s-s'}R_0R_s
\end{equation}
and
\begin{equation}\label{ineq: translation correction estimate 2}
\big\|\nabla_h\big(\mathcal{T}_h^\pm[r_{h,1}]-\mathcal{T}_h^\pm[r_{h,2}]\big)\big\|_{X_{h,\pm}^{0,-(1-b-\delta)}}
\ls R_0\sum_\pm\|u_{h,1}^\pm-u_{h,2}^\pm\|_{X_{h,\pm}^{0,b}}.
\end{equation}
\end{lemma}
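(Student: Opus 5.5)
The translation correction will be rewritten in Fourier variables and shown to consist only of frequency-wrapping interactions. Each wrapped piece is then estimated by Lemma~\ref{lem: bilinear estimates for translation correction}.

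\textbf{Fourier form of $\mathcal T_h^\pm$.} Write $a=\pm\frac{t}{h^2}$. For $\xi\in[-\frac{\pi}{h},\frac{\pi}{h})$, the lattice convolution formula gives
\[
\mathcal F_h\bigl(e^{a\pa_h}(fg)\bigr)(\xi)-\mathcal F_h\bigl((e^{a\pa_h}f)(e^{a\pa_h}g)\bigr)(\xi)
=\frac{1}{2\pi}\int\bigl(e^{ia\xi}-e^{ia(\xi_1+[\xi-\xi_1])}\bigr)\widehat f(\xi_1)\widehat g([\xi-\xi_1])\,d\xi_1 .
\]
On the non-wrapping region we have $\xi_1+[\xi-\xi_1]=\xi$, so the two phases agree and the integrand vanishes. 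On the wrapping region, $\xi_1+[\xi-\xi_1]=\xi\pm\frac{2\pi}{h}$. Hence
\[
\mathcal T_h^\pm[r_h]=\bigl(1-e^{\frac{2\pi i t}{h^3}}\bigr)\,P_-\bigl\{(P_+e^{a\pa_h}r_h)(P_+e^{a\pa_h}r_h)\bigr\}+\bigl(1-e^{-\frac{2\pi i t}{h^3}}\bigr)\,P_+\bigl\{(P_-e^{a\pa_h}r_h)(P_-e^{a\pa_h}r_h)\bigr\}.
\]
The exact sign of the oscillating factor (which depends on the choice of $\pm$) does not matter. The point is that each piece is a combination of terms $e^{\frac{2\pi i t\ell}{h^3}}P_\mp\{(P_\pm\,\cdot)(P_\pm\,\cdot)\}$ with $\ell\in\{-1,0,1\}$, which is exactly why the factor $e^{\frac{2\pi it\ell}{h^3}}$ was built into Lemma~\ref{lem: bilinear estimates for translation correction}.

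\textbf{Expanding and applying Lemma~\ref{lem: bilinear estimates for translation correction}.} Next substitute $e^{\pm\frac{t}{h^2}\pa_h}r_h=u_h^\pm+e^{\pm\frac{2t}{h^2}\pa_h}u_h^\mp$, which follows from \eqref{decomposition of rh} and the group property of the multipliers $e^{a\pa_h}$. The square then splits into three types of products:
\begin{itemize}
\item $(P_+u_h^\pm)(P_+u_h^\pm)$, handled by case (i);
\item $(P_+e^{\pm\frac{2t}{h^2}\pa_h}u_h^\mp)(P_+e^{\pm\frac{2t}{h^2}\pa_h}u_h^\mp)$, handled by case (ii);
\item the two cross terms, handled by case (iii) (used with the factors swapped where needed, since the product is symmetric).
\end{itemize}
The same applies with $P_\pm$ interchanged. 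Applying the lemma with $s$, $s'$ as given (note $\max\{s-1,0\}=0\le s'$ because $s\le1$) gives the factor $h^{s-s'}$. The inputs are bounded by $R_0$ and $R_s$ in the matching spaces $X^{0,b}_{h,\pm}$ and $X^{s,b}_{h,\pm}$, which yields \eqref{ineq: translation correction estimate 1}.

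\textbf{Difference estimate.} For \eqref{ineq: translation correction estimate 2}, the same decomposition applies. Since the map is a symmetric bilinear form $B$ in $e^{a\pa_h}r_h$, we write
\[
B(r_1,r_1)-B(r_2,r_2)=B(r_1-r_2,\,r_1)+B(r_2,\,r_1-r_2),
\]
where $r_1-r_2$ is the strain built from $u_{h,1}^\pm-u_{h,2}^\pm$. Lemma~\ref{lem: bilinear estimates for translation correction} with $s=s'=0$ then bounds each term by $R_0\sum_\pm\|u_{h,1}^\pm-u_{h,2}^\pm\|_{X^{0,b}_{h,\pm}}$.

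\textbf{Main obstacle.} The only delicate step is the first one: the exact identification of $\mathcal T_h^\pm$ as purely wrapped interactions, with phase factors of the form $e^{\pm 2\pi i t\ell/h^3}$. This requires careful bookkeeping of $[\xi]$ under the almost-translation $e^{a\pa_h}$, and checking that the symbol $e^{ia[\xi]}$ of the output matches the unfolded frequency. Everything after that is a direct application of the lemma.
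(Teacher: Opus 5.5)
Your proposal is correct and follows the paper's proof. The paper also writes $\mathcal T_h^\pm$ as purely wrapped products with prefactors $e^{\frac{2\pi i t\ell}{h^3}}$, $\ell\in\{-1,0,1\}$, substitutes $e^{\pm\frac{t}{h^2}\pa_h}r_h=u_h^\pm+e^{\pm\frac{2t}{h^2}\pa_h}u_h^\mp$, applies cases (i)--(iii) of Lemma~\ref{lem: bilinear estimates for translation correction}, and handles the difference via $A_1^2-A_2^2=(A_1-A_2)(A_1+A_2)$, which is equivalent to your polarization.

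There is one small slip. With the translations placed inside the factors, the prefactor of $P_-\{(P_+e^{a\pa_h}r_h)^2\}$ is $e^{-\frac{2\pi i a}{h}}-1$ rather than $1-e^{\frac{2\pi i a}{h}}$. This is because that product already carries the phase $e^{ia(\xi+\frac{2\pi}{h})}$ on the wrapping region. The slip is harmless, since only $\ell\in\{-1,0,1\}$ still occurs.
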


\begin{proof}
For \(a\in\mathbb{R}\), we claim that
\begin{equation}\label{eq: general translation correction}
\begin{aligned}
&e^{a\pa_h}(f_hg_h)
-(e^{a\pa_h}f_h)(e^{a\pa_h}g_h)\\
&=
\big(e^{-\frac{2\pi ia}{h}}-1\big)
P_-
\big\{
(e^{a\pa_h}P_+f_h)(e^{a\pa_h}P_+g_h)
\big\}
+
\big(e^{\frac{2\pi ia}{h}}-1\big)
P_+
\big\{
(e^{a\pa_h}P_-f_h)(e^{a\pa_h}P_-g_h)
\big\}.
\end{aligned}
\end{equation}
Indeed, for
\(\xi\in[-\frac{\pi}{h},\frac{\pi}{h})\), we have
\[
\begin{aligned}
&\mathcal{F}_h\big\{
e^{a\pa_h}(f_hg_h)
-(e^{a\pa_h}f_h)(e^{a\pa_h}g_h)
\big\}(\xi)\\
&=
\frac{1}{2\pi}
\int_{-\frac{\pi}{h}}^{\frac{\pi}{h}}
\big(
e^{ia([\xi]-[\xi-\xi_1]-\xi_1)}-1
\big)
\widehat{e^{a\pa_h}f_h}([\xi-\xi_1])
\widehat{e^{a\pa_h}g_h}(\xi_1) d\xi_1.
\end{aligned}
\]
Up to endpoints of measure zero, frequency-wrapping occurs precisely when \([\xi-\xi_1],\xi_1\in\mathbb{T}_h^+\) and \([\xi]\in\mathbb{T}_h^-\) or when \([\xi-\xi_1],\xi_1\in\mathbb{T}_h^-\) and \([\xi]\in\mathbb{T}_h^+\). In the former case, \([\xi]-[\xi-\xi_1]-\xi_1=-\frac{2\pi}{h}\), so that
\[
e^{ia([\xi]-[\xi-\xi_1]-\xi_1)}
=
e^{-\frac{2\pi ia}{h}}.
\]
On the other hand, in the latter case, \([\xi]-[\xi-\xi_1]-\xi_1=\frac{2\pi}{h}\), and hence
\[
e^{ia([\xi]-[\xi-\xi_1]-\xi_1)}
=
e^{\frac{2\pi ia}{h}}.
\]
For non-wrapping interactions, \([\xi]-[\xi-\xi_1]-\xi_1=0\), and therefore the corresponding exponential factor equals \(1\).
This proves \eqref{eq: general translation correction}.

Applying \eqref{eq: general translation correction} pointwise in time with \(a=\pm\frac{t}{h^2}\) and \(f_h=g_h=r_{h,j}(t)\), and noting that
\[
e^{\pm\frac{t}{h^2}\pa_h}r_{h,j}
=
u_{h,j}^\pm
+
e^{\pm\frac{2t}{h^2}\pa_h}u_{h,j}^\mp,
\]
we obtain
\[
\begin{aligned}
\mathcal{T}_h^\pm[r_{h,j}]
&=
e^{\mp\frac{2\pi it}{h^3}}
P_-
\Big\{
\big(P_+
u_{h,j}^\pm
+
P_+e^{\pm\frac{2t}{h^2}\pa_h}u_{h,j}^\mp
\big)^2
\Big\}+
e^{\pm\frac{2\pi it}{h^3}}
P_+
\Big\{
\big(
P_-
u_{h,j}^\pm
+
P_-e^{\pm\frac{2t}{h^2}\pa_h}u_{h,j}^\mp
\big)^2
\Big\}\\
&\quad-
P_-
\Big\{
\big(
P_+u_{h,j}^\pm
+
P_+e^{\pm\frac{2t}{h^2}\pa_h}u_{h,j}^\mp
\big)^2
\Big\}-
P_+
\Big\{
\big(
P_-
u_{h,j}^\pm
+
P_-e^{\pm\frac{2t}{h^2}\pa_h}u_{h,j}^\mp
\big)^2
\Big\}.
\end{aligned}
\]
We expand
\[
\big(
P_-
u_{h,j}^\pm
+
P_-e^{\pm\frac{2t}{h^2}\pa_h}u_{h,j}^\mp
\big)^2=
(P_-u_{h,j}^\pm)^2
+
(P_-e^{\pm\frac{2t}{h^2}\pa_h}u_{h,j}^\mp)^2
+
2(P_-u_{h,j}^\pm)(
P_-e^{\pm\frac{2t}{h^2}\pa_h}u_{h,j}^\mp),
\]
and similarly for the square involving \(P_+\). We apply
\eqref{ineq: translation correction bilinear estimate 1},
\eqref{ineq: translation correction bilinear estimate 2}, and
\eqref{ineq: translation correction bilinear estimate 3} to the first, second, and third terms, respectively. Consequently, \eqref{ineq: translation correction estimate 1} follows:
\[
\begin{aligned}
\big\|
\nabla_h\mathcal{T}_h^\pm[r_{h,j}]
\big\|_{X_{h,\pm}^{s',-(1-b-\delta)}}
&\lesssim
h^{s-s'}
\Big(
\|u_{h,j}^\pm\|_{X_{h,\pm}^{s,b}}
\|u_{h,j}^\pm\|_{X_{h,\pm}^{0,b}}+
\|u_{h,j}^\mp\|_{X_{h,\mp}^{s,b}}
\|u_{h,j}^\mp\|_{X_{h,\mp}^{0,b}}\\
&\qquad\qquad+
\|u_{h,j}^\pm\|_{X_{h,\pm}^{s,b}}
\|u_{h,j}^\mp\|_{X_{h,\mp}^{0,b}}+
\|u_{h,j}^\pm\|_{X_{h,\pm}^{0,b}}
\|u_{h,j}^\mp\|_{X_{h,\mp}^{s,b}}
\Big)\\
&\lesssim
h^{s-s'}R_0R_s.
\end{aligned}
\]
For the difference estimate, we apply \(A_1^2-A_2^2=(A_1-A_2)(A_1+A_2)\) to each projected square in the preceding representation. Then, by Lemma~\ref{lem: bilinear estimates for translation correction}, we obtain
\[
\begin{aligned}
\big\|
\nabla_h\big(
\mathcal{T}_h^\pm[r_{h,1}]
-
\mathcal{T}_h^\pm[r_{h,2}]
\big)
\big\|_{X_{h,\pm}^{0,-(1-b-\delta)}}&\lesssim
\sum_{j=1}^2
\Big(
\|u_{h,j}^+\|_{X_{h,+}^{0,b}}
+
\|u_{h,j}^-\|_{X_{h,-}^{0,b}}
\Big)
\sum_\pm
\|u_{h,1}^\pm-u_{h,2}^\pm\|_{X_{h,\pm}^{0,b}}\\
&\lesssim
R_0
\sum_\pm
\|u_{h,1}^\pm-u_{h,2}^\pm\|_{X_{h,\pm}^{0,b}}.
\end{aligned}
\]
This proves \eqref{ineq: translation correction estimate 2} and completes the proof.
\end{proof}

\section{Higher-order remainder estimates for general potentials}\label{sec: Estimates for the translation correction and high-order remainder}

Next, we show that the higher-order remainder 
\[\nabla_h\mathcal{R}_h[r_h]=\frac{2}{h^4}
\nabla_h\bigg\{
V'(h^2r_h)
-
h^2r_h
-
\frac{h^4}{2}r_h^2
\bigg\}\]
in the coupled FPUT system \eqref{eq: coupled FPUT integral form} vanishes as \(h\to0\).
\begin{lemma}[Higher-order remainder estimates]\label{lem: estimates for the higher-order remainder}
Let $0\leq s'\leq s\leq 1$, $b>\frac{1}{2}$, $T>0$, and $R_0, R_s>0$. For each $h\in(0,1]$ and $j=1,2$, let $\mathbf{u}_{h,j}=(u_{h,j}^+,u_{h,j}^-):\R\times h\Z\to \R^2$, and define $r_{h,j}$ by
$$
r_{h,j}(t,z)
:=
e^{-\frac{t}{h^2}\pa_h}u_{h,j}^+(t,z)
+e^{\frac{t}{h^2}\pa_h}u_{h,j}^-(t,z).
$$
Suppose that $V$ satisfies \eqref{eq:potential-assumption} and that for $j=1,2$,
$$
\sup_{0<h\leq 1}\|\mathbf{u}_{h,j}\|_{\mathbf{X}_{h}^{0,b}}\leq R_0
\quad\textup{and}\quad
\sup_{0<h\leq 1}\|\mathbf{u}_{h,j}\|_{\mathbf{X}_{h}^{s,b}}\leq R_s.
$$
Then, there exists $h_0=h_0(R_0)\in(0,1]$ such that for every $h\in(0,h_0]$, 
\begin{align}
\big\|\nabla_h
\mathcal{R}_h[r_{h,j}]\big\|_{L_t^2([-T, T];H_z^{s'}(h\Z))}
&\ls
h^{1+s-s'}T^{\frac{1}{6}}R_0^2R_s,\label{ineq: general nonlinearity estimate 1}\\
\Big\|\nabla_h
\Big(\mathcal{R}_h[r_{h,1}]-\mathcal{R}_h[r_{h,2}]\Big)\Big\|_{L_t^2([-T, T];L_z^2(h\Z))}
&\ls
h
T^{\frac{1}{6}}R_0^2\|\mathbf{u}_{h,1}-\mathbf{u}_{h,2}\|_{\mathbf{X}_h^{0,b}},\label{ineq: general nonlinearity estimate 2}
\end{align}
where the implicit constants depend on \(\|V^{(4)}\|_{C([-1,1])}\).
\end{lemma}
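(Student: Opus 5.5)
We write $\mathcal R_h[y]=\frac{2}{h^4}\{V'(h^2y)-h^2y-\frac{h^4}{2}y^2\}$ and, using $V\in C^4$ with Taylor's theorem in integral form, represent it as
\[
\mathcal R_h[y]=h^2y^3 G(h^2y),
\qquad
G(\rho):=\int_0^1(1-\theta)^2V^{(4)}(\theta\rho)\,d\theta ,
\]
so that $|\mathcal R_h[y]|\lesssim h^2|y|^3$ whenever $|h^2y|\leq1$.

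The first task is to guarantee $|h^2r_{h,j}|\leq1$. We use $\|f_h\|_{L^\infty_z}\leq h^{-1/2}\|f_h\|_{L^2_z}$ on $h\Z$, which follows from the definition of the rescaled norms. The almost translations $e^{\pm\frac{t}{h^2}\pa_h}$ are $L^2$-unitary, and Lemma~\ref{lem: Xsb properties}(i) controls the time sup. Together these give
\[
\|r_{h,j}\|_{L^\infty_{t,z}}\lesssim h^{-1/2}R_0 .
\]
Hence $h^2|r_{h,j}|\lesssim h^{3/2}R_0\leq1$ for $h\leq h_0(R_0)$. This fixes $h_0$ and makes the constants depend only on $\|V^{(4)}\|_{C([-1,1])}$.

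\emph{Bound \eqref{ineq: general nonlinearity estimate 1}.} The derivative $\nabla_h$ contributes at most a factor $h^{-1}$ in size, and we want a net power $h^{1+s-s'}$. Since $s-s'\leq1$, we may spend the available factor $h^2$ in two ways:
\begin{itemize}
\item bound $|\nabla_h|^{s'}\nabla_h$ crudely by $h^{-(s'-s)}\,|\nabla_h|^{s}$ times a bounded multiplier when $s'\leq s$;
\item equivalently, estimate $\|\nabla_h F\|_{H^{s'}}\lesssim h^{-(1+s'-s)}\|F\|_{H^s}$ using $|m_h(\xi)|\lesssim\min(|\xi|,h^{-1})$.
\end{itemize}
Either way, with $F=\mathcal R_h[r_h]$ it suffices to show
\[
\|\mathcal R_h[r_h]\|_{L^2_tH^s_z}\lesssim h^{2}T^{1/6}R_0^2R_s .
\]

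We prove this via the difference characterization \eqref{ineq: characterization of the lattice Sobolev norm} for $0<s<1$, and via $\pa_h^+$ and the equivalence \eqref{ineq: norm equivalence of the lattice Sobolev norm} for $s=1$. Since $\frac{d}{dy}\mathcal R_h[y]=\frac{2}{h^2}\big(V''(h^2y)-1-h^2y\big)=O(h^2y^2)$, we get the pointwise bound
\[
|\mathcal R_h[a]-\mathcal R_h[b]|\lesssim h^2(|a|+|b|)^2|a-b| .
\]
Applying this to $a=r(z+hk)$, $b=r(z)$ gives
\[
\|\mathcal R_h[r]\|_{H^s}\lesssim h^2\|r\|_{L^\infty}^2\|r\|_{H^s}.
\]
To avoid the loss $h^{-1}$ from the crude $L^\infty$ bound, we instead use Strichartz-type control $\|r\|_{L^6_tL^\infty_z}\lesssim \|\mathbf u\|_{\mathbf X^{0,b}}$. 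This comes from the lattice KdV-type Strichartz estimate (as in \cite{HKY2021}) transferred to $X^{0,b}$ via $b>\frac12$. We then apply H\"older in time,
\[
L^2_t \supset L^6_t\cdot L^6_t\cdot L^\infty_t ,
\]
on $[-T,T]$ to produce the factor $T^{1/6}$, and use Lemma~\ref{lem: Xsb properties}(i) for the $L^\infty_tH^s$ factor.

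\emph{Bound \eqref{ineq: general nonlinearity estimate 2}.} The same pointwise Lipschitz bound applies to $\mathcal R_h[r_{h,1}]-\mathcal R_h[r_{h,2}]$. Combined with $|m_h|\lesssim h^{-1}$, it yields a net factor $h$ in place of $h^2$.

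\emph{Main obstacle.} The hard step is the $L^\infty$ control. The Strichartz estimate on $h\Z$ for the phase $\pm s_h$ must hold uniformly in $h$, survive the almost translations (which commute with the multipliers, so this is harmless), and have an exponent compatible with the factor $T^{1/6}$. If the available Strichartz estimate loses derivatives near $|\xi|\sim h^{-1}$, the fallback is to use the Sobolev embedding $\|f\|_{L^\infty}\lesssim h^{-1/2}\|f\|_{L^2}$ on high frequencies. This suffices only because of the slack $h^2$ versus the required $h^{1}$: it gives $h^{2}\cdot h^{-1}=h$, still enough for \eqref{ineq: general nonlinearity estimate 2}. The full $h^{1+s-s'}$ in \eqref{ineq: general nonlinearity estimate 1}, however, needs the Strichartz route.
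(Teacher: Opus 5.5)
Your argument is essentially the paper's proof. Its ingredients match: the smallness of $h^2 r_{h,j}$, the Lipschitz bound for $\mathcal R_h$ inserted into the difference characterization (and via $\pa_h^+$ at $s=1$), and the multiplier bound $\|\nabla_h F\|_{H^{s'}}\lesssim h^{-(1+s'-s)}\|F\|_{H^s}$ (your first bullet's $h^{-(s'-s)}$ is a slip). The last ingredient is H\"older against an $h$-uniform $L^6_tL^\infty_z$ bound, which the paper obtains from the smoothing Strichartz estimate plus Gagliardo--Nirenberg. There, uniformity in the almost translation comes from the phase $at\xi$ being linear in $\xi$, not from commutation, since $e^{a\pa_h}$ is not bounded on $L^\infty_z$ for $a\notin h\Z$.

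Your fallback aside is wrong, though. The crude bound on both $L^\infty_z$ factors costs $h^{-1}$, which together with the $h^{-1}$ from $\nabla_h$ exhausts the $h^2$ and leaves only $h^0T^{1/2}$, so the Strichartz route is needed for \eqref{ineq: general nonlinearity estimate 2} as well.
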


\begin{remark}
Taylor's theorem gives
\[
|\mathcal R_h[y]|
=
\frac{2}{h^4}
\left|
V'(h^2y)-h^2y-\frac{h^4}{2}y^2
\right|
\lesssim h^2|y|^3,
\qquad h^2|y|\leq1.
\]
Thus, the factor \(h^2\) compensates for the \(O(h^{-1})\) size of the lattice derivative \(\nabla_h\), leaving an additional factor of \(h\). Using only the elementary bound \(\|f_h\|_{L_z^\infty}\leq h^{-1/2}\|f_h\|_{L_z^2}\) for the two \(L_z^\infty\) factors consumes this remaining factor and yields
\[
\|\nabla_h\mathcal R_h[r_h]\|_{L_t^2([-T,T];H_z^{s'})}
\lesssim_{s,V}
h^{s-s'}T^{\frac12}R_0^2R_s
\]
under the assumptions of Lemma~\ref{lem: estimates for the higher-order remainder}. This bound gives smallness as \(h\to0\) when \(s>s'\). The Strichartz estimates \eqref{ineq: Strichartz for almost translation} allow us to retain the additional factor of \(h\) and obtain
\[
\|\nabla_h\mathcal R_h[r_h]\|_{L_t^2([-T,T];H_z^{s'})}
\lesssim_{s,V}
h^{1+s-s'}T^{\frac16}R_0^2R_s.
\]
In particular, the remainder tends to zero even when \(s'=s\). Its decay in \(h\) is therefore stronger than the corresponding bounds in Lemmas~\ref{lem: FPUT bilinear estimates}, \ref{lem: bilinear estimates for translation correction}, and \ref{lem: estimates for the translation correction}.
\end{remark}
\begin{lemma}[Gagliardo–Nirenberg inequality  {\cite[Proposition~2.4]{HY2019-2}}]
Let $s>0$, $1\leq p \leq q\leq \infty$, and $0<\theta<1$ satisfy $\frac{1}{q}=\frac{1}{p}-\theta s$. Then for $h\in(0,1]$,
\begin{equation}\label{ineq: Gagliardo–Nirenberg}
\|f_h\|_{L_z^q(h\Z)}
\ls
\|f_h\|_{L_z^p(h\Z)}^{1-\theta}
\big\||\pa_h|^sf_h\big\|_{L_z^p(h\Z)}^\theta,
\end{equation}
where the implicit constant is independent of $h\in (0,1]$.
\end{lemma}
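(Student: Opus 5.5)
The plan is a Littlewood--Paley splitting argument on $h\Z$ with an optimized frequency threshold, reducing everything to two uniform-in-$h$ lattice multiplier bounds: a Bernstein inequality for low frequencies and a derivative-gain bound for high frequencies. Fix a smooth even cutoff $\chi$ with $\chi=1$ on $[-1,1]$ and $\operatorname{supp}\chi\subset[-2,2]$, and set $\psi(\xi)=\chi(\xi)-\chi(2\xi)$. For dyadic $N\leq \frac{\pi}{4h}$, let $P^h_{\leq N}$ and $P^h_M$ be the lattice multipliers with symbols $\chi(\xi/N)$ and $\psi(\xi/M)$ restricted to $[-\frac{\pi}{h},\frac{\pi}{h})$. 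Group all frequencies $|\xi|\gtrsim 1/h$ into a single top piece $P^h_{\rm top}$, whose symbol is $1-\chi(4h\xi/\pi)$. The reason for this grouping is that the symbol of every other piece is then smooth and compactly supported inside the open fundamental domain. Its lattice kernel is $N\,\check\chi(Nz)$, sampled on $h\Z$ with $h\sum_z$-normalization. This kernel has $L^1_z(h\Z)$-norm bounded uniformly in $h$ and $N$, since the Riemann sum of a Schwartz function at mesh $hN\lesssim1$ is comparable to its integral.

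\textbf{Step 1 (Bernstein).} I will show that for $1\leq p\leq q\leq\infty$,
\[
\|P^h_{\leq N}f_h\|_{L^q_z}+\|P^h_Mf_h\|_{L^q_z}\lesssim N^{\frac1p-\frac1q}\|f_h\|_{L^p_z}
\]
(with $M$ in place of $N$ for the second term). The argument writes $P^h_{\leq N}=\tilde P^h_{\leq N}P^h_{\leq N}$ with a fattened cutoff $\tilde P^h_{\leq N}$. The kernel of $\tilde P^h_{\leq N}$ has $L^r_z$-norm $\lesssim N^{1-1/r}$ for $\frac1r=1+\frac1q-\frac1p$, so Young's inequality on $h\Z$ gives the bound. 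For the top piece, the crude estimate $\|f_h\|_{L^q_z}\leq h^{-(\frac1p-\frac1q)}\|f_h\|_{L^p_z}$ plays the role of Bernstein at scale $N\sim h^{-1}$.

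\textbf{Step 2 (derivative gain on high frequencies).} I will show that
\[
\|P^h_Mf_h\|_{L^p_z}\lesssim M^{-s}\||\pa_h|^sf_h\|_{L^p_z},
\]
and the same with $M\sim h^{-1}$ for the top piece. For the dyadic pieces the symbol $\psi(\xi/M)|\xi|^{-s}$ is smooth and compactly supported in the interior, so its kernel has $L^1$-norm $\lesssim M^{-s}$ by scaling. For the top piece the symbol $|\xi|^{-s}$ is not smooth at $\pm\pi/h$ on the torus. To avoid this, I will replace $|\pa_h|^s$ by $|\nabla_h|^s$ using \eqref{ineq: norm equivalence of the lattice Sobolev norm}. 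The symbol $(1-\chi(4h\xi/\pi))\,|\frac2h\sin\frac{h\xi}{2}|^{-s}$ is then a smooth $\frac{2\pi}{h}$-periodic function of $h\xi$ times $h^{s}$, so its kernel has $L^1$-norm $\lesssim h^{s}$.

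\textbf{Step 3 (optimization).} Write $f_h=P^h_{\leq N}f_h+\sum_{N<M}P^h_Mf_h+P^h_{\rm top}f_h$. Steps 1--2 give
\[
\|f_h\|_{L^q_z}\lesssim N^{\frac1p-\frac1q}\|f_h\|_{L^p_z}+\sum_{M>N}M^{\frac1p-\frac1q-s}\||\pa_h|^sf_h\|_{L^p_z}.
\]
The sum converges geometrically because $\frac1p-\frac1q=\theta s<s$, so it is $\lesssim N^{\theta s-s}\||\pa_h|^sf_h\|_{L^p_z}$. Choosing $N=(\||\pa_h|^sf_h\|_{L^p}/\|f_h\|_{L^p})^{1/s}$ yields \eqref{ineq: Gagliardo–Nirenberg}. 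If this choice exceeds $\frac{\pi}{4h}$, I instead take only the top-piece bound with $N\sim h^{-1}$; the resulting bound $h^{-\theta s}\|f_h\|_{L^p_z}$ is dominated by the right-hand side because $h^{-1}\lesssim N$.

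The main obstacle is Step 2 at the endpoints $p=1$ or $p=\infty$ (the latter relevant when $q=\infty$), for the top frequency piece. There the norm equivalence \eqref{ineq: norm equivalence of the lattice Sobolev norm} is only available for $1<p<\infty$. I would first try to prove the $L^1$-kernel bound for the multiplier $(1-\chi(4h\xi/\pi))|\xi|^{-s}$ directly. Near $\pm\pi/h$ its kernel decays like $|z|^{-2}$ times $h^{s}$ since the symbol is Lipschitz with a jump in derivative only, which would give the needed $L^1$ bound uniformly in $h$. Failing that, I would restrict to $1<p$, which suffices for the paper's applications.
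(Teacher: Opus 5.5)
The paper gives no proof of this lemma; it is imported from \cite[Proposition~2.4]{HY2019-2}. Your Littlewood--Paley argument is therefore a self-contained replacement, and it is correct. The following steps all check out:
\begin{itemize}
\item the Riemann-sum kernel bounds at mesh $hN\lesssim1$;
\item Young's inequality for the Haar measure $h\cdot(\text{counting})$ on $h\Z$;
\item the crude embedding $\|f_h\|_{L^q_z}\le h^{-(\frac1p-\frac1q)}\|f_h\|_{L^p_z}$;
\item the geometric summation, which uses $\theta<1$;
\item the regime $(\||\pa_h|^sf_h\|_{L^p_z}/\|f_h\|_{L^p_z})^{1/s}\gtrsim h^{-1}$, where applying the crude embedding to all of $f_h$ gives $h^{-\theta s}\|f_h\|_{L^p_z}\lesssim \|f_h\|_{L^p_z}^{1-\theta}\||\pa_h|^sf_h\|_{L^p_z}^{\theta}$.
\end{itemize}
Because it rests only on explicit kernel bounds, the argument is more elementary than anything requiring a lattice Mikhlin theorem.

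Three corrections.

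First, $p=\infty$ never occurs, since $\frac1p=\frac1q+\theta s>0$. When $q=\infty$, your Step~2 is still an $L^p\to L^p$ bound with $p<\infty$. So the only case your norm-equivalence route misses is $p=1$. The paper's application in Lemma~\ref{lem: almost translation mixed Lp bound} has $q=\infty$ and $p=r\in(6,\infty)$, which your main route already covers.

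Second, do not retreat to $p>1$, because the lemma as stated includes $p=1$. Your direct route works for every $1\le p\le\infty$ and makes the norm equivalence unnecessary.
\begin{itemize}
\item Write $\eta=h\xi$. The symbol $(1-\chi(4h\xi/\pi))|[\xi]|^{-s}$ equals $h^sG(h\xi)$, where $G$ is a $2\pi$-periodic function independent of $h$.
\item $G$ is smooth on $(-\pi,\pi)$ and continuous at $\pm\pi$, where it equals $\pi^{-s}$ from both sides; only $G'$ jumps there.
\item Two integrations by parts give Fourier coefficients $c_m=O(\la m\ra^{-2})$.
\item The lattice kernel is $K(hm)=h^{s-1}c_m$, so $\|K\|_{L^1_z(h\Z)}=h^s\sum_m|c_m|\lesssim h^s$.
\end{itemize}

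Third, a bookkeeping point. For the decomposition to be exact, take the dyadic scales to be $2^{-k}\frac{\pi}{4h}$, $k\ge0$. Then the telescoping sum $\chi(\xi/N)+\sum_{N<M\le\frac{\pi}{4h}}\psi(\xi/M)$ equals $\chi(4h\xi/\pi)$ and matches your top cutoff. Any fattened cutoffs must be supported strictly inside $(-\frac{\pi}{h},\frac{\pi}{h})$. Alternatively, skip fattening and apply Young's inequality directly to the kernel $M^{1-s}\check\phi(Mz)$ with $\phi(\xi)=\psi(\xi)|\xi|^{-s}$. Its $L^r_z(h\Z)$-norm is $\lesssim M^{\frac1p-\frac1q-s}$, which gives $\|P^h_Mf_h\|_{L^q_z}\lesssim M^{\frac1p-\frac1q-s}\||\pa_h|^sf_h\|_{L^p_z}$ in one step.
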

\begin{lemma}[Strichartz estimates]\label{lemma: Strichartz estimates}
Let $h\in(0,1]$, $a\in\R$ and $b>\frac{1}{2}$. Suppose that $2 \leq q, r\leq \infty$ and $\frac{2}{q}+\frac{1}{r}=\frac{1}{2}$ with $(q,r)\neq(4,\infty)$. Then, we have
\begin{equation}\label{ineq: Strichartz for almost translation}
\big\||\pa_h|^{\frac{1}{q}}e^{at\pa_h}u_h\big\|_{L_t^q(\R;L_z^r(h\Z))}
\ls 
\|u_h\|_{X_{h,\pm}^{0,b}},
\end{equation}
where the implicit constant is independent of \(h\in(0,1]\) and \(a\in\mathbb{R}\).
\end{lemma}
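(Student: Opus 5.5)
The plan is to prove a uniform Strichartz estimate for the modified linear flow
\[
U_{h,a}^\pm(t):=e^{at\pa_h}S_h^\pm(t)=e^{it\phi_{h,a}^\pm(-i\pa_h)},
\qquad
\phi_{h,a}^\pm(\xi):=a[\xi]\pm s_h(\xi),
\]
and then pass to \(X_{h,\pm}^{0,b}\) by the standard transference principle.

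\textbf{Transference step.} For \(b>\frac12\), I write \(u_h\) by Fourier inversion in time as
\[
u_h(t)=\frac{1}{2\pi}\int_\R e^{it\tau}S_h^\pm(t)g_\tau\,d\tau,
\qquad
\widehat{g_\tau}(\xi)=\widetilde{u_h}(\tau\pm s_h(\xi),\xi).
\]
Applying \(|\pa_h|^{1/q}e^{at\pa_h}\) inside the integral, Minkowski's inequality reduces the claim to the free estimate
\[
\big\||\pa_h|^{\frac1q}U_{h,a}^\pm(t)g\big\|_{L_t^qL_z^r}\ls\|g\|_{L_z^2(h\Z)},
\]
uniformly in \(h\in(0,1]\) and \(a\in\R\). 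Indeed, Cauchy--Schwarz in \(\tau\) together with \(\int\la\tau\ra^{-2b}d\tau<\infty\) then gives the bound by \(\|u_h\|_{X_{h,\pm}^{0,b}}\). The unimodular factor \(e^{it\tau}\) is harmless.

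\textbf{Dispersive estimate (main step).} The key point is that the almost-translation contributes only the linear term \(a[\xi]\) to the phase. Hence, on \((-\frac\pi h,\frac\pi h)\),
\[
(\phi_{h,a}^\pm)''(\xi)=\pm s_h''(\xi)=\pm\frac{1}{2h}\sin\Big(\frac{h\xi}{2}\Big),
\]
which is independent of \(a\). Since \(|h\xi/2|\le\pi/2\), we have \(|s_h''(\xi)|\sim|\xi|\) uniformly in \(h\), and \(s_h''\) is monotone on the whole interval. I will then prove
\[
\sup_{z\in h\Z}\bigg|\int_{-\pi/h}^{\pi/h}|s_h''(\xi)|^{\frac12}e^{i(t\phi_{h,a}^\pm(\xi)+z\xi)}\,d\xi\bigg|\ls|t|^{-\frac12}
\]
by the Kenig--Ponce--Vega form of van der Corput's lemma. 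That lemma applies to an amplitude \(|\phi''|^{1/2}\) with \(\phi''\) monotone on finitely many pieces; here I split at \(\xi=0\).

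Working on the interval \([-\frac\pi h,\frac\pi h)\) rather than on the torus avoids the discontinuity of \(e^{ia[\xi]}\) at \(\pm\frac\pi h\): the kernel of \(U_{h,a}^\pm(t)\) is exactly this integral. Replacing \(|s_h''|^{1/2}\) by \(|\xi|^{1/2}\) costs only a bounded multiplier ratio. To avoid multiplier theory on \(L^1\to L^\infty\), I will instead state the whole argument with \(D_h:=|s_h''(-i\pa_h)|^{1/2}\) in place of \(|\pa_h|^{1/2}\). At the very end I convert the \(L_z^r\) norm using the \(h\)-uniform equivalence of lattice Sobolev seminorms; here \(r<\infty\), which is exactly why \((4,\infty)\) is excluded. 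The result is
\[
\big\|D_h U_{h,a}^\pm(t)f\big\|_{L_z^\infty}\ls|t|^{-\frac12}\|f\|_{L_z^1}.
\]

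\textbf{Interpolation and \(TT^*\).} Interpolating this bound (Stein analytic interpolation in the power \(D_h^{\theta}\)) with \(L^2\)-unitarity gives
\[
\big\|D_h^{2/q}U_{h,a}^\pm(t)f\big\|_{L_z^r}\ls|t|^{-\frac2q}\|f\|_{L_z^{r'}},
\qquad
\frac2q=\frac12-\frac1r.
\]
The \(TT^*\) argument with the operator \(D_h^{1/q}U_{h,a}^\pm(t)\) then reduces the free estimate to a time convolution with \(|t|^{-2/q}\). For \(4<q<\infty\) this is bounded from \(L_t^{q'}\) to \(L_t^q\) by Hardy--Littlewood--Sobolev, since \(0<\frac2q<1\). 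The case \(q=\infty\), \(r=2\) is just unitarity.

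All constants are independent of \(h\) and \(a\), because \(a\) never enters \(\phi''\). I expect the only delicate point to be the uniform van der Corput bound near \(\xi=0\) and near the edges \(\pm\frac\pi h\). Near \(\xi=0\) the amplitude vanishes, and it is precisely the KPV weighted form that handles this. Near \(\pm\frac\pi h\), \(|s_h''|\sim h^{-1}\) stays comparable to \(|\xi|\), so monotonicity on each half-interval suffices.
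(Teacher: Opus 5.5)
Your proposal follows the same route as the paper. The paper's sketch defers to the case $a=0$ in \cite{HKY2021} and notes that on $[-\frac{\pi}{h},\frac{\pi}{h})$ the almost translation adds only the linear phase $a\xi$. The second derivative of the phase is therefore $\pm s_h''$, so the Kenig--Ponce--Vega form of van der Corput's lemma gives a dispersive bound uniform in $a$ and $h$, and $TT^*$ plus transference for $b>\frac12$ finish the proof. Your transference step, your dispersive bound for $D_h=|s_h''(-i\pa_h)|^{\frac12}$, and your use of Hardy--Littlewood--Sobolev for $4<q<\infty$ are all fine. For the analytic family in Stein interpolation you need the KPV bound with amplitude $|\phi''|^{\frac12+i\gamma}$ and constant $\ls 1+|\gamma|$, which the same lemma provides.

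There is, however, a factor-of-two error in your exponents that breaks the chain as written. The symbol of $D_h$ is $\sim|\xi|^{\frac12}$. Interpolating the $L^1\to L^\infty$ bound for $D_hU_{h,a}^\pm(t)$ with $L^2$-unitarity at parameter $\theta$ gives weight $D_h^{\theta}$, decay $|t|^{-\frac{\theta}{2}}$, and $\frac1r=\frac{1-\theta}{2}$. With $\frac2q+\frac1r=\frac12$ this forces $\theta=1-\frac2r=\frac4q$, so the interpolated estimate is $\|D_h^{4/q}U_{h,a}^\pm(t)f\|_{L_z^r}\ls|t|^{-\frac2q}\|f\|_{L_z^{r'}}$. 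The version you display, with weight $D_h^{2/q}$ and the same decay, is false: it is incompatible with Airy scaling at low frequencies. Accordingly, $TT^*$ must be run with $D_h^{2/q}U_{h,a}^\pm(t)$, not $D_h^{1/q}U_{h,a}^\pm(t)$. With your exponents you would end with a weight of order $|\xi|^{\frac{1}{2q}}$, which cannot be upgraded to $|\pa_h|^{\frac1q}$.

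Once corrected, your last step is exact. Since $|s_h''(\xi)|=\frac{1}{2h}|\sin(\frac{h\xi}{2})|=\frac14|m_h(\xi)|$, you have $D_h^{2/q}=4^{-\frac1q}|\nabla_h|^{\frac1q}$. Then \eqref{ineq: norm equivalence of the lattice Sobolev norm} with $p=r<\infty$ converts this to $|\pa_h|^{\frac1q}$, and this is where $r<\infty$ is used.
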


\begin{proof}[Sketch of proof]
We follow the argument in \cite[Lemma~5.6]{HKY2021}, that is, \eqref{ineq: Strichartz for almost translation} with \(a=0\). Indeed, the additional phase \(at\xi\) is linear in \(\xi\), so
\[
\partial_\xi^2\big\{(z+at)\xi\pm ts_h(\xi)\big\}
=
\pm ts_h''(\xi).
\]
Thus, van der Corput's lemma gives the same dispersive bound, with a constant independent of \(a\) and \(h\).
The standard \(TT^*\) argument and the transfer principle for \(b>\frac12\) then yield the desired estimate.
\end{proof}
\begin{lemma}\label{lem: almost translation mixed Lp bound}
Let $h\in(0,1]$, $a\in\R$, $b>\frac{1}{2}$ and $T>0$. Then
\begin{equation}\label{ineq: almost translation mixed L4 bound}
\|e^{at\pa_h}u_h\|_{L_t^4([-T,T];L_z^\infty)}
\ls
T^{\frac{1}{12}}\|u_h\|_{X_{h,\pm}^{0,b}}.
\end{equation}
\end{lemma}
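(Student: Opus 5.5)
We deduce the bound from the Strichartz estimate \eqref{ineq: Strichartz for almost translation}, a lattice Sobolev embedding, and Hölder's inequality in time. The admissible pair closest to the endpoint, $(q,r)=(4,\infty)$, is excluded. So we use a pair with finite $r$ and recover $L_z^\infty$ by trading the available derivative gain.

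\textbf{Choice of pair.} Take $(q,r)=(6,6)$, which satisfies $\frac{2}{6}+\frac16=\frac12$. Then \eqref{ineq: Strichartz for almost translation} gives
\[
\big\||\pa_h|^{\frac16}e^{at\pa_h}u_h\big\|_{L_t^6(\R;L_z^6(h\Z))}\ls\|u_h\|_{X_{h,\pm}^{0,b}},
\]
uniformly in $a$ and $h$. Since $\frac16=\frac16-0$, one-sixth of a derivative in $L^6$ is exactly scaling-critical for $L^\infty$. Sobolev embedding fails at the critical level, so we must gain some room.

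\textbf{Interpolation and Sobolev step.} To gain room, we interpolate with the trivial $X^{0,b}\subset L_t^\infty L_z^2$ bound (Lemma~\ref{lem: Xsb properties}(i)), noting that $e^{at\pa_h}$ is unitary on $L_z^2$. Interpolating between $L_t^6\dot W_z^{1/6,6}$ and $L_t^\infty L_z^2$ (complex interpolation of lattice Sobolev spaces, uniform in $h$) with parameter $\theta$ gives
\[
L_t^{p}\dot W_z^{\sigma,\rho},\qquad \tfrac1p=\tfrac{\theta}{6},\quad \sigma=\tfrac{\theta}{6},\quad \tfrac1\rho=\tfrac{\theta}{6}+\tfrac{1-\theta}{2}.
\]
This is again critical: $\sigma-\frac1\rho=-\frac{1-\theta}{2}<0$. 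So interpolation alone does not yield $L^\infty$, and we need a better route.

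\textbf{Revised route: reach exactly $L_t^4L_z^\infty$.} Instead, apply Strichartz with pairs close to $(4,\infty)$. Take $r<\infty$ large and $q=\frac{4r}{r-2}$, slightly above $4$, with the gain $|\pa_h|^{1/q}$. Since $\frac1q>\frac1r$, Sobolev embedding holds with room for high frequencies. We split $u_h=P^h_{\le1}u_h+P^h_{>1}u_h$.

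For the high-frequency part, use the lattice Gagliardo--Nirenberg inequality \eqref{ineq: Gagliardo–Nirenberg} in Bernstein form: $\|P_{>1}f\|_{L^\infty}\ls\||\pa_h|^{1/q}f\|_{L^r}$, valid since $\frac1q>\frac1r$. This bounds that part in $L_t^qL_z^\infty$. For the low-frequency part, use Bernstein: $\|P_{\le1}f\|_{L_z^\infty}\ls\|f\|_{L_z^2}$, so that part lies in $L_t^\infty L_z^\infty$.

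Hölder's inequality on $[-T,T]$ then gives
\[
\|\cdot\|_{L_t^4}\le(2T)^{\frac14-\frac1q}\|\cdot\|_{L_t^q}.
\]
To get exactly $T^{1/12}$, choose $q=6$, $r=6$. The high part then needs $\frac16>\frac16$, which fails; this is again the critical obstacle. So the main difficulty is matching the precise exponent $\frac1{12}$, which corresponds to $L_t^6$ data.

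\textbf{Resolution via the radial-type pair.} I would try the pair $(6,6)$ combined with an $\varepsilon$ of the $b>\frac12$ room, or a Littlewood--Paley square-function summation. Dyadic pieces of $e^{at\pa_h}P_N u_h$ satisfy $\|\cdot\|_{L^\infty}\ls N^{1/6}\|\cdot\|_{L^6}$, which is summable in $\ell^2$ only with a logarithmic loss. The alternative is to deduce the $L_t^6L_z^\infty$ bound directly: apply the dispersive estimate $\|S_h(t)P_N\|_{L^1\to L^\infty}\ls (N|t|)^{-1/2}$ in a frequency-localized $TT^*$ argument, then Hardy--Littlewood--Sobolev in time. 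This gives the maximal-type bound $\|e^{at\pa_h}S_h(t)f\|_{L_t^6L_z^\infty}\ls\|f\|_{L^2}$, and transference yields the $X^{0,b}$ version. The lemma then follows by Hölder on $[-T,T]$:
\[
\|\cdot\|_{L^4_T}\le (2T)^{\frac14-\frac16}\|\cdot\|_{L^6}=(2T)^{\frac1{12}}\|\cdot\|_{L^6}.
\]
The expected obstacle is this endpoint-type $L^6_tL^\infty_z$ estimate: summing the dyadic pieces uniformly in $h$, including near $|\xi|\sim\pi/h$ where $s_h''$ degenerates.
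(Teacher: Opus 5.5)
\textbf{There is a genuine gap.} You correctly reduce the lemma to a global bound $\|e^{at\pa_h}u_h\|_{L_t^6(\R;L_z^\infty)}\ls\|u_h\|_{X_{h,\pm}^{0,b}}$, followed by H\"older in time. You never prove that bound, however.

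\begin{itemize}
\item Your final paragraph only sketches a frequency-localized $TT^*$ argument. It then names the dyadic summation as an unresolved obstacle, and you correctly note that summing $L_t^6L_z^\infty$ bounds over dyadic pieces loses a logarithm.
\item Your earlier routes do not close either. Interpolating the Strichartz spaces with $L_t^\infty L_z^2$ and then applying a linear Sobolev embedding stays critical along the whole family.
\item The subcritical pair $q\in(4,6)$ with a high/low split lands in $L_t^q$. It therefore produces $T^{\frac14-\frac1q}+T^{\frac14}$, not $T^{\frac1{12}}$.
\item Your concern about $s_h''$ degenerating near $|\xi|\sim\frac{\pi}{h}$ is misplaced. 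We have $s_h''(\xi)=\frac{1}{2h}\sin(\frac{h[\xi]}{2})$, so $|s_h''(\xi)|\sim|\xi|$ on all of $\T_h$. It is $s_h'''(\xi)=\frac14\cos(\frac{h[\xi]}{2})$ that vanishes at the boundary.
\end{itemize}

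\textbf{The missing idea (the paper's argument).} Use the subcritical pair multiplicatively, at a fixed time, against the $L_z^2$ norm. Take $q\in(4,6)$, $\frac1r=\frac12-\frac2q$ and $\theta=\frac qr\in(0,1)$. The lattice Gagliardo--Nirenberg inequality gives
\[
\|f\|_{L_z^\infty}\ls\|f\|_{L_z^r}^{1-\theta}\big\||\pa_h|^{1/q}f\big\|_{L_z^r}^{\theta}.
\]
Bound $\|f\|_{L_z^r}\le\|f\|_{L_z^2}^{2/r}\|f\|_{L_z^\infty}^{1-2/r}$ and absorb the $L^\infty$ factor into the left side. This yields
\[
\|f\|_{L_z^\infty}\ls\|f\|_{L_z^2}^{1-\frac q6}\big\||\pa_h|^{\frac1q}f\big\|_{L_z^r}^{\frac q6}.
\]
Since $\frac q6\cdot\frac1q=\frac16$, H\"older in $t$ with exponents $\infty$ and $q$ puts $e^{at\pa_h}u_h$ exactly in $L_t^6L_z^\infty$. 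The two factors are $\|u_h\|_{L_t^\infty L_z^2}$ and the non-endpoint Strichartz norm in \eqref{ineq: Strichartz for almost translation}, and both are bounded by $\|u_h\|_{X_{h,\pm}^{0,b}}$. Finally, $\|\cdot\|_{L^4([-T,T])}\le(2T)^{\frac1{12}}\|\cdot\|_{L^6}$.

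\textbf{How your $TT^*$ idea could be rescued.} Drop the dyadic localization and work with the full kernel.
\begin{itemize}
\item On $|\xi|\le\frac{2\pi}{3h}$, where $s_h'''\ge\frac18$, apply van der Corput with $k=3$.
\item On the rest of $\T_h$, where $|s_h''|\gtrsim h^{-1}$, apply van der Corput with $k=2$.
\item For $|t|\le h^3$, use the trivial bound $h^{-1}$.
\end{itemize}
Together these give $\|e^{at\pa_h}S_h^\pm(t)\|_{L_z^1\to L_z^\infty}\ls|t|^{-1/3}$ uniformly in $a$ and $h$. Then $TT^*$ with Hardy--Littlewood--Sobolev from $L_t^{6/5}$ to $L_t^6$ gives the $L_t^6L_z^\infty$ bound directly, before transference.
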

\begin{proof}
For any fixed $q\in (4,6)$, choose $r$ and $\theta$ so that
$$
\frac{1}{r}=\frac{1}{2}-\frac{2}{q},
\qquad
\theta=\frac{q}{r}=\frac{q}{2}-2\in(0,1).
$$
By the Gagliardo-Nirenberg inequality \eqref{ineq: Gagliardo–Nirenberg} and Hölder's inequality,
$$
\begin{aligned}
\|e^{at\pa_h}u_h(t)\|_{L_z^\infty}
&\ls
\|e^{at\pa_h}u_h(t)\|_{L_z^r}^{1-\theta}
\big\||\pa_h|^{1/q}e^{at\pa_h}u_h(t)\big\|_{L_z^r}^{\theta}\\
&\ls
\|e^{at\pa_h}u_h(t)\|_{L_z^2}^{\frac{2}{r}(1-\theta)}
\|e^{at\pa_h}u_h(t)\|_{L_z^\infty}^{(1-\frac{2}{r})(1-\theta)}
\big\||\pa_h|^{1/q}e^{at\pa_h}u_h(t)\big\|_{L_z^r}^{\theta}.
\end{aligned}
$$
Using the unitarity and the identity $1-(1-\frac{2}{r})(1-\theta)=\frac{6}{r}$, we obtain
$$
\|e^{at\pa_h}u_h(t)\|_{L_z^\infty}
\ls
\|u_h(t)\|_{L_z^2}^{1-\frac{q}{6}}
\big\||\pa_h|^{1/q}e^{at\pa_h}u_h(t)\big\|_{L_z^r}^{\frac{q}{6}}.
$$
Then the embedding $X_{h,\pm}^{0,b}\hookrightarrow C_tL_z^2$ and the Strichartz estimate \eqref{ineq: Strichartz for almost translation} yield
$$
\|e^{at\pa_h}u_h\|_{L_t^6L_z^\infty}
\ls
\|u_h\|_{L_t^\infty L_z^2}^{1-\frac{q}{6}}
\big\||\pa_h|^{1/q}e^{at\pa_h}u_h\big\|_{L_t^qL_z^r}^{\frac{q}{6}} \ls \|u_h\|_{X_{h,\pm}^{0,b}}.
$$
Hence, Hölder's inequality in time gives
$$
\|e^{at\pa_h}u_h\|_{L_t^4([-T,T];L_z^\infty)}
\ls
T^{\frac{1}{12}}\|e^{at\pa_h}u_h\|_{L_t^6([-T,T];L_z^\infty)}
\ls
T^{\frac{1}{12}}\|u_h\|_{X_{h,\pm}^{0,b}}.
$$
This completes the proof.
\end{proof}
\begin{lemma}\label{lemma: rh uniform bound}
Under the assumptions of
Lemma~\ref{lem: estimates for the higher-order remainder},
there exists \(h_0=h_0(R_0)\in(0,1]\) such that for \(h\in(0,h_0]\) and \(j=1,2\),
\[
\sup_{t\in\mathbb R}\|r_{h,j}(t)\|_{H_z^s(h\Z)}
\lesssim R_s
\quad\textup{and}\quad
\sup_{t\in\mathbb R}h^2\|r_{h,j}(t)\|_{L_z^\infty(h\Z)}
\leq1.
\]
\end{lemma}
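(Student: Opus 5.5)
The plan is to read off both bounds from the decomposition $r_{h,j}(t)=e^{-\frac{t}{h^2}\pa_h}u_{h,j}^+(t)+e^{\frac{t}{h^2}\pa_h}u_{h,j}^-(t)$, pointwise in $t$, together with the embedding of Lemma~\ref{lem: Xsb properties}(i).

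\textbf{Sobolev bound.} The operators $e^{\pm\frac{t}{h^2}\pa_h}$ are Fourier multipliers of modulus one. They therefore commute with $\la\pa_h\ra^s$ and are unitary on $H_z^s(h\Z)$. For each $t\in\R$ this gives
\[
\|r_{h,j}(t)\|_{H_z^s(h\Z)}\leq \|u_{h,j}^+(t)\|_{H_z^s(h\Z)}+\|u_{h,j}^-(t)\|_{H_z^s(h\Z)}.
\]
The embedding $X_{h,\pm}^{s,b}\hookrightarrow C_t(\R;H_z^s(h\Z))$ for $b>\frac12$ (Lemma~\ref{lem: Xsb properties}(i)) then yields $\sup_t\|r_{h,j}(t)\|_{H_z^s}\lesssim\|\mathbf u_{h,j}\|_{\mathbf X_h^{s,b}}\leq R_s$. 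This step needs no smallness of $h$.

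\textbf{$L^\infty$ bound.} On the lattice we have the elementary estimate
\[
\|f_h\|_{L_z^\infty(h\Z)}\leq h^{-1/2}\|f_h\|_{L_z^2(h\Z)},
\]
since $h|f_h(z)|^2\leq\|f_h\|_{L_z^2}^2$ for each $z\in h\Z$. Applying the $s=0$ case of the previous step gives
\[
\sup_t h^2\|r_{h,j}(t)\|_{L_z^\infty}\leq h^{3/2}\sup_t\|r_{h,j}(t)\|_{L_z^2}\leq C h^{3/2}R_0,
\]
where $C$ is the embedding constant from Lemma~\ref{lem: Xsb properties}(i). Choosing $h_0:=\min\{1,(CR_0)^{-2/3}\}$ makes the right-hand side at most $1$ for all $h\in(0,h_0]$. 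This $h_0$ depends only on $R_0$, as the statement requires.

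The only point needing care is that the constant $C$ in the embedding is independent of $h$. This holds because the proof of Lemma~\ref{lem: Xsb properties}(i) uses only $\int\la\tau-\phi\ra^{-2b}\,d\tau\lesssim 1$, which is uniform in the phase $\phi=\pm s_h(\xi)$. No genuine obstacle is expected: the purpose of the lemma is to place $h^2r_{h,j}$ in $[-1,1]$, so that Taylor's theorem with $\|V^{(4)}\|_{C([-1,1])}$ can be applied in Lemma~\ref{lem: estimates for the higher-order remainder}.
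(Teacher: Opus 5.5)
Your proposal is correct and follows the paper's proof essentially verbatim. The Sobolev bound comes from unitarity of the almost translations $e^{\pm\frac{t}{h^2}\pa_h}$ together with the embedding $X_{h,\pm}^{s,b}\hookrightarrow C_tH_z^s$, and the $L^\infty$ bound from $\|f_h\|_{L_z^\infty}\leq h^{-1/2}\|f_h\|_{L_z^2}$ applied to the $s=0$ case, with the same choice $h_0=\min\{1,(cR_0)^{-2/3}\}$.
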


\begin{proof}
By the embedding
\(X_{h,\pm}^{s,b}\hookrightarrow C_tH_z^s(h\Z)\)
and the unitarity of \(e^{a\pa_h}\) on \(H_z^s(h\Z)\), we have
\[\sup_{t\in\mathbb R}
\|r_{h,j}(t)\|_{H_z^s}
\lesssim
\sum_\pm
\|u_{h,j}^\pm\|_{X_{h,\pm}^{s,b}}
\lesssim R_s.\]
Moreover, the elementary inequality
\(\|f_h\|_{L_z^\infty}\leq h^{-1/2}\|f_h\|_{L_z^2}\)
and the above inequality with \(s=0\) give
\[
h^2\|r_{h,j}(t)\|_{L_z^\infty}
\leq
h^{\frac32}\|r_{h,j}(t)\|_{L_z^2}
\leq
c h^{\frac32}R_0,
\]
where \(c>0\) is independent of \(h\), \(t\), \(j\), and \(R_0\). Choosing \(h_0:=\min\{1,(cR_0)^{-\frac23}\}\) proves the second bound and completes the proof.
\end{proof}

\begin{proof}[Proof of Lemma~\ref{lem: estimates for the higher-order remainder}]
Fix \(0\leq s'\leq s\leq1\) and \(0<h\leq h_0\), where \(h_0\) is chosen as in Lemma~\ref{lemma: rh uniform bound}. Throughout the proof, time norms are taken over \([-T,T]\).

\noindent\textbf{\underline{Proof of \eqref{ineq: general nonlinearity estimate 1}.}} We first reduce the estimate for
\(\nabla_h\mathcal R_h[r_{h,j}]\) to a bound for
\(\mathcal R_h[r_{h,j}]\) in \(\dot H_z^s\). Indeed, the Fourier multiplier of \(\nabla_h\) is given by \(\frac{2i}{h}\sin\big(\frac{h\xi}{2}\big)\), and \(\frac{2}{h}\big|\sin\big(\frac{h\xi}{2}\big)\big|\leq|\xi|\). Note also that if \(0<|\xi|\leq\frac{\pi}{h}\), then \(|\xi|^{1-s}\la\xi\ra^{s'}
\lesssim h^{s-s'-1}\). Hence, Plancherel's theorem gives
\begin{equation}\label{eq: higher-order remainder proof}
\big\|\nabla_h\mathcal R_h[r_{h,j}]\big\|_{L_t^2H_z^{s'}}
\leq
\big\|\pa_h\mathcal R_h[r_{h,j}]\big\|_{L_t^2H_z^{s'}}
\lesssim
h^{s-s'-1}
\|\mathcal R_h[r_{h,j}]\|_{L_t^2\dot H_z^s}.
\end{equation}

Next, we claim that for \(0\leq s\leq1\),
\begin{equation}\label{ineq: remainder Hs estimate}
\|\mathcal R_h[r_{h,j}]\|_{\dot{H}_z^s}
\lesssim 
h^2\|r_{h,j}\|_{L_z^\infty}^2
\|r_{h,j}\|_{\dot{H}_z^s}.
\end{equation}
Suppose that \(0<s<1\). Then, by the norm equivalence 
\eqref{ineq: characterization of the lattice Sobolev norm}, we have
\[
\|\mathcal R_h[r_{h,j}]\|_{\dot H_z^s}^2
\sim
h\sum_{k\in\Z\setminus\{0\}}
\frac{\|\mathcal R_h[r_{h,j}(\cdot+hk)]
-\mathcal R_h[r_{h,j}]\|_{L_z^2}^2
}{|hk|^{1+2s}}.
\]
Note that by Lemma~\ref{lemma: rh uniform bound},
\[
h^2\|r_{h,j}(\cdot+hk)\|_{L_z^\infty}
\leq1
\]
for every \(k\in\Z\). On the other hand, for \(h^2|y|\leq1\), Taylor's theorem gives
\[
|\mathcal R_h'[y]|
=
\frac{2}{h^2}
\big|V''(h^2y)-1-h^2y\big|=
h^2 |V^{(4)}(h^2y^*)|y^2
\lesssim h^2 y^2
\]
for some \(y^*\) between \(0\) and \(y\).
Here, the implicit constant depends on
\(\|V^{(4)}\|_{C([-1,1])}\). Consequently, the fundamental theorem of calculus yields
\begin{equation}\label{ineq: remainder function difference bound}
|\mathcal R_h[y_1]-\mathcal R_h[y_2]|
\lesssim
h^2(|y_1|^2+|y_2|^2)|y_1-y_2|
\end{equation}
whenever \(h^2|y_1|,h^2|y_2|\leq1\). Thus, applying \eqref{ineq: remainder function difference bound} and \eqref{ineq: characterization of the lattice Sobolev norm} once more, we obtain
\[\begin{aligned}
\|\mathcal R_h[r_{h,j}]\|_{\dot H_z^s}^2
&\lesssim h\sum_{k\in\Z\setminus\{0\}}
\frac{1
}{|hk|^{1+2s}}h^4\Big(\|r_{h,j}(\cdot+hk)\|_{L_z^\infty}^2+\|r_{h,j}\|_{L_z^\infty}^2\Big)^2\|r_{h,j}(\cdot+hk)-r_{h,j}\|_{L_z^2}^2\\
&\lesssim h^4\|r_{h,j}\|_{\dot H_z^s}^2\|r_{h,j}\|_{L_z^\infty}^4.
\end{aligned}\]
For \(s=0\), the difference estimate
\eqref{ineq: remainder function difference bound} with \(y_2=0\) and \(\mathcal R_h[0]=0\) immediately yields 
\begin{equation}\label{ineq: remainder L2 estimate}
\|\mathcal R_h[r_{h,j}]\|_{L_z^2}
\lesssim
h^2\|r_{h,j}\|_{L_z^\infty}^2
\|r_{h,j}\|_{L_z^2}.
\end{equation}
For \(s=1\), we observe that by \eqref{ineq: remainder function difference bound}, 
\[
\begin{aligned}
\big|\pa_h^+\mathcal R_h[r_{h,j}](z)\big|
&=
\frac{
|\mathcal R_h[r_{h,j}(z+h)]
-\mathcal R_h[r_{h,j}(z)]|
}{h}\\
&\lesssim
h^2\|r_{h,j}\|_{L_z^\infty}^2
\frac{|r_{h,j}(z+h)-r_{h,j}(z)|}{h}=
h^2\|r_{h,j}\|_{L_z^\infty}^2
|\pa_h^+r_{h,j}(z)|
\end{aligned}
\]
for every \(z\in h\Z\). Hence, by the norm equivalence \eqref{ineq: norm equivalence of the lattice Sobolev norm},
\[
\begin{aligned}
\|\mathcal R_h[r_{h,j}]\|_{\dot H_z^1}
&\lesssim
\|\pa_h^+\mathcal R_h[r_{h,j}]\|_{L_z^2}\lesssim
h^2\|r_{h,j}\|_{L_z^\infty}^2
\|\pa_h^+r_{h,j}\|_{L_z^2}\lesssim
h^2\|r_{h,j}\|_{L_z^\infty}^2
\|r_{h,j}\|_{\dot H_z^1}.
\end{aligned}
\]
Therefore, the claim \eqref{ineq: remainder Hs estimate} is proved.

Finally, returning to \eqref{eq: higher-order remainder proof} and applying H\"older's inequality, the Strichartz estimate \eqref{ineq: almost translation mixed L4 bound}, and Lemma~\ref{lemma: rh uniform bound}, we obtain
\[
\big\|\nabla_h\mathcal R_h[r_{h,j}]\big\|_{L_t^2H_z^{s'}}\lesssim
h^{1+s-s'}
\|r_{h,j}\|_{L_t^4L_z^\infty}^2
\|r_{h,j}\|_{L_t^\infty H_z^s}\lesssim
h^{1+s-s'}T^{\frac16}R_0^2R_s.
\]
This proves \eqref{ineq: general nonlinearity estimate 1}.

\noindent\textbf{\underline{Proof of \eqref{ineq: general nonlinearity estimate 2}.}}
Using \eqref{ineq: remainder function difference bound} and the unitarity of \(e^{\pm\frac{t}{h^2}\pa_h}\), together with the identity
\[r_{h,1}-r_{h,2}=e^{-\frac{t}{h^2}\pa_h}(u_{h,1}^+-u_{h,2}^+)
+e^{\frac{t}{h^2}\pa_h}(u_{h,1}^--u_{h,2}^-),\]
we obtain
\[
\begin{aligned}
\big\|\mathcal R_h[r_{h,1}]-\mathcal R_h[r_{h,2}]\big\|_{L_z^2}
&\lesssim
h^2
\Big(
\|r_{h,1}\|_{L_z^\infty}^2
+\|r_{h,2}\|_{L_z^\infty}^2
\Big)
\|r_{h,1}-r_{h,2}\|_{L_z^2}\\
&\leq
h^2
\Big(
\|r_{h,1}\|_{L_z^\infty}^2
+\|r_{h,2}\|_{L_z^\infty}^2
\Big)
\sum_\pm
\|u_{h,1}^\pm-u_{h,2}^\pm\|_{L_z^2}.
\end{aligned}
\]
By \(\|\nabla_hf_h\|_{L_z^2}\leq 2h^{-1}\|f_h\|_{L_z^2}\), H\"older's inequality, \eqref{ineq: almost translation mixed L4 bound},
and the embedding \(X_{h,\pm}^{0,b}\hookrightarrow C_tL_z^2\),
\[
\begin{aligned}
\big\|
\nabla_h\big(
\mathcal R_h[r_{h,1}]-\mathcal R_h[r_{h,2}]
\big)
\big\|_{L_t^2L_z^2}&\lesssim
h\Big(
\|r_{h,1}\|_{L_t^4L_z^\infty}^2
+\|r_{h,2}\|_{L_t^4L_z^\infty}^2
\Big)
\|\mathbf{u}_{h,1}-\mathbf{u}_{h,2}\|_{\mathbf{X}_h^{0,b}}\\
&\lesssim
hT^{\frac16}R_0^2
\|\mathbf{u}_{h,1}-\mathbf{u}_{h,2}\|_{\mathbf{X}_h^{0,b}},
\end{aligned}
\]
which completes the proof of the lemma.
\end{proof}

\section{Uniform bounds and persistence of regularity}\label{sec: Uniform bounds and persistence of regularity}
In this section, we prove global existence and a Sobolev norm bound that grows at most exponentially in time for the coupled FPUT system. We also establish analogous results for the KdV equation and the auxiliary \textit{frequency-localized decoupled FPUT system}.

\subsection{Local-in-time bound for the FPUT system}

We first establish a local-in-time bound for the coupled FPUT system, uniformly for sufficiently small \(h>0\).

\begin{proposition}[Local-in-time uniform bounds for the FPUT system]
\label{prop: Local uniform bounds for the FPUT system}
Let \(0\leq s\leq1\), \(0<\delta<\frac14\), and
\(\frac12<b<\frac34-\delta\), and let \(h_0>0\) be a small number given in Lemma \ref{lem: estimates for the higher-order remainder}. Suppose that 
\[
\sup_{h\in(0,1]}
\|u_{h,0}^\pm\|_{L_z^2(h\Z)}
\leq R_0
\qquad\textup{and}\qquad
\sup_{h\in(0,1]}
\|u_{h,0}^\pm\|_{H_z^s(h\Z)}
\leq R_s.
\]
Then, there exists \(T=T(R_0)>0\), independent of \(h\) and \(R_s\), such that for any
\(h\in(0,h_0]\), the coupled FPUT system
\eqref{eq: coupled FPUT integral form} with initial data
\(u_{h,0}^\pm\) admits a unique solution \(u_h^\pm\in C_t([-T,T];H_z^s(h\Z))\) that has an extension to \(\R\), still denoted by
\(u_h^\pm\), satisfying
\begin{equation}\label{ineq: uniform Xsb norm bounds for FPUT}
\sup_{h\in(0,h_0]}
\|u_h^\pm\|_{X_{h,\pm}^{s,b}}
\lesssim R_s.
\end{equation}
\end{proposition}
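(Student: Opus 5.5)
We will run a contraction mapping argument for the time-truncated Duhamel map
\[
\Phi_h^\pm(\mathbf u)(t):=\eta(t)S_h^\pm(t)u_{h,0}^\pm\mp\frac14\eta_T(t)\int_0^tS_h^\pm(t-t')\nabla_h\Big\{\big(u_h^\pm+e^{\pm\frac{2t'}{h^2}\pa_h}u_h^\mp\big)^2+\mathcal T_h^\pm[r_h]+e^{\pm\frac{t'}{h^2}\pa_h}\mathcal R_h[r_h]\Big\}(t')\,dt'.
\]
Here \(r_h\) is built from \(\mathbf u\) as in \eqref{decomposition of rh}. The map acts on the set
\[
\mathcal X_h=\big\{\mathbf u:\ \|\mathbf u\|_{\mathbf X_h^{0,b}}\leq M_0,\ \|\mathbf u\|_{\mathbf X_h^{s,b}}\leq M_s\big\},
\]
with \(M_0=2CR_0\) and \(M_s=2CR_s\). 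The metric is that of \(\mathbf X_h^{0,b}\), which is complete by Lemma~\ref{lem: completeness of the fixed-point space}. The linear term is bounded by \(C\|u_{h,0}^\pm\|_{H^\sigma}\) for \(\sigma\in\{0,s\}\) by Lemma~\ref{lem: Xsb properties}(ii).

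For the Duhamel term we apply Lemma~\ref{lem: Xsb properties}(iv) with the exponent \(b\). We then gain a positive power of \(T\) by passing from \(X^{\sigma,b-1}\) to \(X^{\sigma,-(1-b-\delta)}\) through (iii). More precisely, we insert a cutoff \(\eta_{2T}\) in front of the nonlinearity; this does not change the Duhamel integral on \([-T,T]\). Applying (iii) with \(b'=b-1\) and exponent \(-(1-b-\delta)\) gives a factor \(T^{\delta}\).

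The quadratic terms are then estimated as follows.
\begin{itemize}
\item \((u_h^\pm)^2\) is handled by \eqref{ineq: FPUT bilinear estimate 1}.
\item \((e^{\pm\frac{2t}{h^2}\pa_h}u_h^\mp)^2\) and the cross term are handled by \eqref{ineq: FPUT bilinear estimate 2} and \eqref{ineq: FPUT bilinear estimate 3} with \(s'=s\), where the factor \(h^0\) is harmless.
\item The translation correction is handled by \eqref{ineq: translation correction estimate 1} with \(s'=s\).
\end{itemize}
In each case the key structural point is that the bound is \(\lesssim\|\mathbf u\|_{\mathbf X^{0,b}}\|\mathbf u\|_{\mathbf X^{s,b}}\), which is linear in the high norm.

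The remainder is placed in \(L^2_tH^\sigma\subset X^{\sigma,0}\subset X^{\sigma,b-1}\), so it can be fed directly into (iv). Lemma~\ref{lem: estimates for the higher-order remainder} with \(s'=s\) then bounds it by \(hT^{1/6}M_0^2M_s\). Since the time cutoffs make the functions \(T\)-localized, we may apply that lemma on \([-2T,2T]\).

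Combining, we obtain
\[
\|\Phi(\mathbf u)\|_{\mathbf X^{s,b}}\leq CR_s+C T^{\theta}(M_0+M_0^2)M_s
\]
for some \(\theta>0\), and the same bound with \(s=0\). Choosing \(T=T(R_0)\) so small that \(CT^\theta(M_0+M_0^2)\leq\frac12\), the set \(\mathcal X_h\) is mapped into itself. This choice is independent of \(R_s\) and \(h\).

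The contraction estimate in the \(\mathbf X^{0,b}\) metric uses the same bilinear estimates at \(s=0\), applied via \(A^2-B^2=(A-B)(A+B)\). It also uses \eqref{ineq: translation correction estimate 2} and \eqref{ineq: general nonlinearity estimate 2}, and all of these bounds depend only on \(M_0\).

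The fixed point gives the solution, and its restriction to \([-T,T]\) solves \eqref{eq: coupled FPUT integral form}. Continuity in \(H^s\) follows from the embedding in Lemma~\ref{lem: Xsb properties}(i). The bound \eqref{ineq: uniform Xsb norm bounds for FPUT} is simply \(M_s\lesssim R_s\).

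Real-valuedness of the iterates, which is needed to apply Lemmas~\ref{lem: estimates for the translation correction} and~\ref{lem: estimates for the higher-order remainder}, follows from Remark~\ref{remark: Preservation of real-valuedness}. Uniqueness in \(C_tH^s\) is standard for \(X^{s,b}\)-based well-posedness: we compare two solutions on short subintervals using truncated extensions. We will be content with uniqueness in the class of restrictions of \(\mathbf X_h^{0,b}\) functions.

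\textbf{Main obstacle.} The delicate step is ensuring that \(T\) depends only on \(R_0\). This hinges on every nonlinear estimate being linear in the \(X^{s,b}\) norm. The remainder needs care here: Lemma~\ref{lem: estimates for the higher-order remainder} requires \(h\leq h_0(R_0)\), and since \(M_0\) depends only on \(R_0\), the threshold \(h_0\) is consistent. The other point requiring attention is the time-localization bookkeeping, which must produce the positive power of \(T\) while staying within the \(b\)-ranges allowed in Lemma~\ref{lem: Xsb properties}.
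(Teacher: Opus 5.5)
Your overall scheme matches the paper's proof. You use the same set $\{\|\mathbf u\|_{\mathbf X_h^{0,b}}\le 2cR_0,\ \|\mathbf u\|_{\mathbf X_h^{s,b}}\le 2cR_s\}$ with the $\mathbf X_h^{0,b}$ metric, the same bilinear, translation-correction and remainder lemmas, and the same key point that every nonlinear bound is linear in the high norm. However, the time-localization bookkeeping, which you yourself flagged as delicate, is wrong as you set it up.

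In your map the cutoff $\eta_T(t)$ sits \emph{outside} the Duhamel integral. So Lemma~\ref{lem: Xsb properties}(iv) must be applied at scale $T$, and as stated it costs $T^{\frac12-b}$. Inserting $\eta_{2T}(t')$ and using (iii) then recovers only $T^{\delta}$. The quadratic terms and the translation correction therefore carry the net factor $T^{\frac12-b+\delta}$, not $T^\delta$. Under the hypothesis $\frac12<b<\frac34-\delta$, this exponent is $\le 0$ whenever $b\ge\frac12+\delta$, which is admissible as soon as $\delta<\frac18$. In that range shrinking $T$ gives no smallness, so you cannot choose $T$ with $CT^\theta(M_0+M_0^2)\le\frac12$ for some $\theta>0$. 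Similarly, feeding the remainder straight into (iv) gives $T^{\frac12-b}\cdot hT^{\frac16}=hT^{\frac23-b}$. That term can be rescued only through the factor $h$, by shrinking $h_0$ after $T$ is fixed.

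The paper's arrangement repairs this: put $\eta_1(t)$ outside the integral and $\eta_T(t')$ inside it. Then (iv) is used at unit scale with no loss. Applying (iii) to $\eta_TF$, from exponent $b-1+\delta$ down to $b-1$, gives exactly $T^\delta$. For the remainder $G$, write $\eta_TG=\eta_T(\mathbbm 1_{[-2T,2T]}G)$ and apply (iii) from $X^{s,0}=L_t^2H_z^s$ to $X^{s,b-1}$, which gives $T^{1-b}$. Combined with \eqref{ineq: general nonlinearity estimate 1}, this yields $T^{\frac76-b}\le T^{2\delta}$.

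Alternatively, you can keep your map and replace (iv) by the sharp Ginibre--Tsutsumi--Velo estimate $\|\eta_T\int_0^tS(t-t')F(t')\,dt'\|_{X^{s,b}}\lesssim T^{1-b+b'}\|F\|_{X^{s,b'}}$, valid for $-\frac12<b'\le0\le b\le b'+1$. With $b'=-(1-b-\delta)$ this gives $T^\delta$ directly. With either correction, the rest of your argument goes through as you describe.
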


\begin{remark}
The lifespan depends only on the \(L^2\)-bound of the initial data, allowing us to propagate higher Sobolev regularity on the same time interval.
\end{remark}

\begin{proof}[Proof of Proposition \ref{prop: Local uniform bounds for the FPUT system}]
Let $T\in(0,1]$ be a small number to be chosen later. For \(\mathbf{u}_h=(u_h^+,u_h^-)\in \mathbf{X}_h^{s,b}
=
X_{h,+}^{s,b}\times X_{h,-}^{s,b}\), we define 
$$
\mathbf{\Phi}_h(\mathbf{u}_h)
=
(\Phi_h^+(\mathbf{u}_h),\Phi_h^-(\mathbf{u}_h))
$$
by
$$
\begin{aligned}
\Phi_h^\pm(\mathbf{u}_h)(t)
&:=
\eta_1(t)S_h^\pm(t)u_{h,0}^\pm \mp
\frac{\eta_1(t)}{4}\int_0^t S_h^\pm(t-t')\eta_T(t')\nabla_h
\Big\{\big(u_h^\pm+e^{\pm\frac{2t'}{h^2}\pa_h}u_h^\mp\big)^2\\
&\qquad\qquad\qquad\qquad\qquad\qquad\qquad+\mathcal{T}_h^\pm\big[r_h[\mathbf{u}_h]\big]+e^{\pm\frac{t'}{h^2}\pa_h}\mathcal{R}_h\big[r_h[\mathbf{u}_h]\big]
\Big\}(t') dt',
\end{aligned}
$$
where
$$
r_h[\mathbf{u}_h](t):=
e^{-\frac{t}{h^2}\pa_h}u_h^+(t)+e^{\frac{t}{h^2}\pa_h}u_h^-(t).
$$
Then, by the linear estimates in Lemma \ref{lem: Xsb properties},
$$
\begin{aligned}
\|\Phi_h^\pm(\mathbf{u}_h)\|_{X_{h,\pm}^{s,b}}
&\ls
\|u_{h,0}^\pm\|_{H_z^s}
+T^\delta\big\|
\nabla_h\big(u_h^\pm+e^{\pm\frac{2t}{h^2}\pa_h}u_h^\mp\big)^2
\big\|_{X_{h,\pm}^{s,-(1-b-\delta)}}\\
&+T^\delta\big\|\nabla_h\mathcal{T}_h^\pm\big[r_h[\mathbf{u}_h]\big]\big\|_{X_{h,\pm}^{s,-(1-b-\delta)}}
+T^{1-b}\big\|\nabla_h
e^{\pm\frac{t}{h^2}\pa_h}\mathcal{R}_h\big[r_h[\mathbf{u}_h]\big]\big\|_{L_t^2([-2T, 2T];H_z^s)}.
\end{aligned}
$$
Hence, it follows from the bilinear estimates (Lemma~\ref{lem: FPUT bilinear estimates}), the translation correction estimates (Lemma \ref{lem: estimates for the translation correction}) and the higher-order remainder estimate (Lemma~\ref{lem: estimates for the higher-order remainder}) that 
\[\|\mathbf{\Phi}_h(\mathbf{u}_h)\|_{\mathbf{X}_h^{s,b}}
\leq
cR_s
+
c\big(T^\delta \|\mathbf{u}_h\|_{\mathbf{X}_h^{0,b}}\|\mathbf{u}_h\|_{\mathbf{X}_h^{s,b}}+T^{\frac{7}{6}-b}\|\mathbf{u}_h\|_{\mathbf{X}_h^{0,b}}^2\|\mathbf{u}_h\|_{\mathbf{X}_h^{s,b}}\big).\]
In the same way, one can show that if \(\mathbf{u}_h, \mathbf{u}_{h,1}, \mathbf{u}_{h,2}\in \mathbf{X}_h^{s,b}\), then
\[\begin{aligned}
\|\mathbf{\Phi}_h(\mathbf{u}_h)\|_{\mathbf{X}_h^{0,b}}
&\leq
cR_0
+
c\big(T^\delta \|\mathbf{u}_h\|_{\mathbf{X}_h^{0,b}}^2+T^{\frac{7}{6}-b}\|\mathbf{u}_h\|_{\mathbf{X}_h^{0,b}}^3\big)\\
\|\mathbf{\Phi}_h(\mathbf{u}_{h,1})
-\mathbf{\Phi}_h(\mathbf{u}_{h,2})\|_{\mathbf{X}_h^{0,b}}
&\leq
c\big\{T^\delta (\|\mathbf{u}_{h,1}\|_{\mathbf{X}_h^{0,b}}+\|\mathbf{u}_{h,2}\|_{\mathbf{X}_h^{0,b}})+T^{\frac{7}{6}-b}(\|\mathbf{u}_{h,1}\|_{\mathbf{X}_h^{0,b}}^2+\|\mathbf{u}_{h,2}\|_{\mathbf{X}_h^{0,b}}^2)\big\}\\
&\qquad \times\|\mathbf{u}_{h,1}-\mathbf{u}_{h,2}\|_{\mathbf{X}_h^{0,b}}.
\end{aligned}\]
Note also that $2\delta\leq\frac{7}{6}-b$ by the assumption, so we may replace \(T^{\frac{7}{6}-b}\) by \(T^{2\delta}\) in the above three inequalities. Therefore, taking 
$$
T:=\min\bigg\{\bigg(\frac{1}{16c^2\la R_0\ra^2}\bigg)^{1/\delta}, \frac{1}{2}\bigg\},
$$
we prove that \(\mathbf{\Phi}_h\) is contractive on a complete\footnote{By Lemma~\ref{lem: completeness of the fixed-point space}, this space is complete. } space
$$
\Big\{\mathbf{u}_h\in \mathbf{X}_h^{s,b}: \|\mathbf{u}_h\|_{\mathbf{X}_h^{0,b}}\leq 2cR_0,\quad \|\mathbf{u}_h\|_{\mathbf{X}_h^{s,b}}\leq 2cR_s\Big\}.
$$
equipped with the metric \(d(\mathbf{u}_{h,1}, \mathbf{u}_{h,2}):=\|\mathbf{u}_{h,1}-\mathbf{u}_{h,2}\|_{\mathbf{X}_h^{0,b}}\). This completes the proof.
\end{proof}

\subsection{Local-in-time bounds for the KdV equation}

For \(s\geq0\) and \(\frac12<b<\frac34\), the argument used in
Lemma~\ref{lem: FPUT bilinear estimates} also yields the KdV
bilinear estimate
\begin{equation}\label{ineq:KdV bilinear estimate}
\|\pa_x(uv)\|_{X_\pm^{s,-\frac14}}
\lesssim
\|u\|_{X_\pm^{s,b}}\|v\|_{X_\pm^{0,b}}
+
\|u\|_{X_\pm^{0,b}}\|v\|_{X_\pm^{s,b}};
\end{equation}
see \cite{KPV1996}.
Using this estimate in place of Lemma~\ref{lem: FPUT bilinear estimates}, we repeat the argument in Proposition~\ref{prop: Local uniform bounds for the FPUT system} to obtain local bounds with persistence of regularity.
Again, the lifespan can be chosen depending only on the \(L^2\)-norm of the initial data.

\begin{proposition}[Local uniform bounds for KdV]
\label{prop: local uniform bounds for KdV}
Let \(s\geq0\) and \(\frac12<b<\frac34\).
If
\[
\|w_0^\pm\|_{L_x^2(\R)}\leq R_0
\quad\textup{and}\quad
\|w_0^\pm\|_{H_x^s(\R)}\leq R_s,
\]
then there exists \(T=T(R_0)\sim
\la R_0\ra^{-\frac{1}{3/4-b}}\), independent of \(R_s\), such that the KdV equation
\eqref{eq:KdV integral form} with initial data \(w_0^\pm\)
admits a unique solution \(w^\pm\in C_t([-T,T];H_x^s(\R))\) whose extension to \(\R\) in time,
still denoted by \(w^\pm\), satisfies 
\begin{equation}\label{ineq: uniform Xsb norm bounds for KdV}
\|w^\pm\|_{X_\pm^{s,b}}\lesssim R_s.
\end{equation}
\end{proposition}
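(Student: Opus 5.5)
The plan is to mirror the contraction argument of Proposition~\ref{prop: Local uniform bounds for the FPUT system}, with the FPUT bilinear estimates replaced by \eqref{ineq:KdV bilinear estimate}. For fixed sign $\pm$ and $T\in(0,1]$ to be chosen, I define
\[
\Phi^\pm(w)(t):=\eta_1(t)S^\pm(t)w_0^\pm\mp\frac{\eta_1(t)}{4}\int_0^tS^\pm(t-t')\eta_T(t')\pa_x\big(w(t')\big)^2dt'.
\]
Since the two KdV equations are decoupled, each sign is treated separately; by time reversal, which interchanges $X_+^{s,b}$ and $X_-^{s,b}$, it suffices to handle the upper sign.

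For the linear estimates I use Lemma~\ref{lem: Xsb properties}\,(ii) and (iv) with $T=1$ for the outer cutoff. This bounds the free part by $c\|w_0^\pm\|_{H^s}$ and the Duhamel part by $c\|\eta_T\pa_x(w^2)\|_{X_\pm^{s,b-1}}$. To gain a power of $T$, I write $\eta_T=\eta_T\eta_{2T}$ and move the cutoff inside one factor. Since $b-1<-\frac14$, Lemma~\ref{lem: Xsb properties}\,(iii) with exponents $b-1\le-\frac14$ gives
\[
\|\eta_T F\|_{X^{s,b-1}}\lesssim T^{\frac34-b}\|F\|_{X^{s,-\frac14}}.
\]
This requires $-\frac12<b-1$, which holds. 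Applying \eqref{ineq:KdV bilinear estimate} to $F=\pa_x(w\cdot w)$ then yields
\[
\|\Phi^\pm(w)\|_{X^{s,b}}\le cR_s+cT^{\frac34-b}\|w\|_{X^{0,b}}\|w\|_{X^{s,b}},
\qquad
\|\Phi^\pm(w)\|_{X^{0,b}}\le cR_0+cT^{\frac34-b}\|w\|_{X^{0,b}}^2.
\]
For differences, the identity $w_1^2-w_2^2=(w_1-w_2)(w_1+w_2)$ together with the $s=0$ case of \eqref{ineq:KdV bilinear estimate} gives
\[
\|\Phi^\pm(w_1)-\Phi^\pm(w_2)\|_{X^{0,b}}\le cT^{\frac34-b}\big(\|w_1\|_{X^{0,b}}+\|w_2\|_{X^{0,b}}\big)\|w_1-w_2\|_{X^{0,b}}.
\]
The essential point, exactly as in the FPUT case, is that the $H^s$ estimate is linear in $\|w\|_{X^{s,b}}$, with coefficient depending only on the $X^{0,b}$ norm.

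I then choose
\[
T\sim\min\big\{(16c^2\la R_0\ra^2)^{-1/(3/4-b)},\tfrac12\big\}\sim\la R_0\ra^{-\frac{1}{3/4-b}},
\]
which is independent of $R_s$. With this choice, $\Phi^\pm$ maps the set $\{\|w\|_{X^{0,b}}\le2cR_0,\ \|w\|_{X^{s,b}}\le2cR_s\}$ into itself and is a contraction in the $X^{0,b}$ metric. This set is complete by Lemma~\ref{lem: completeness of the fixed-point space}, so there is a fixed point $w^\pm$ with $\|w^\pm\|_{X_\pm^{s,b}}\le2cR_s$. Since $\eta_T\equiv1$ and $\eta_1\equiv1$ on $[-T,T]$, the fixed point solves \eqref{eq:KdV integral form} there. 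The embedding of Lemma~\ref{lem: Xsb properties}\,(i) gives $w^\pm\in C_t([-T,T];H^s_x)$, and the fixed point itself is the required extension to $\R$.

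Uniqueness in $C_tH^s$ on $[-T,T]$ is the step I expect to be the main obstacle, since the contraction only gives uniqueness among extensions lying in the ball. To handle it, I would restrict attention to uniqueness in the restriction-norm class, i.e.\ among solutions admitting some $X^{0,b}$ extension, by a standard shrinking-time argument. I would then appeal to the known unconditional uniqueness for KdV in $L^2$ (Kenig--Ponce--Vega, Bourgain) to cover all $C_tH^s$ solutions. The bilinear estimate \eqref{ineq:KdV bilinear estimate} itself is quoted from \cite{KPV1996} and requires no further work.
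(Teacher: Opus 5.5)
Your proposal is correct and is essentially the paper's argument: the paper reruns the contraction of Proposition~\ref{prop: Local uniform bounds for the FPUT system} with \eqref{ineq:KdV bilinear estimate} in place of the FPUT bilinear estimates, gaining $T^{\frac34-b}$ from the modulation exponent $-\frac14$ exactly as you do. One small slip: there is no cubic remainder here, so $T^{\frac34-b}\lesssim\langle R_0\rangle^{-1}$ already closes the argument and gives the stated rate, whereas your choice $(16c^2\langle R_0\rangle^2)^{-1/(3/4-b)}$ is $\sim\langle R_0\rangle^{-2/(3/4-b)}$, not $\langle R_0\rangle^{-1/(3/4-b)}$ (your extra remarks on unconditional uniqueness go beyond what the paper addresses).
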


\subsection{Local-in-time bounds for the frequency-localized FPUT equation}\label{subsec: Local-in-time bounds for the frequency-localized FPUT equation}
To simplify the proof of the FPUT-to-KdV approximation, we follow the strategy of Hong and Yang~\cite{HY2024} and introduce the \textit{frequency-localized decoupled FPUT} equation
\begin{equation}\label{eq: lattice auxiliary equation}
\boxed{\quad
v_h^\pm(t)
=
S_h^\pm(t)P_{\leq N_0}^hu_{h,0}^\pm
\mp\frac14
\int_0^t
S_h^\pm(t-t')P_{\leq N_0}^h
\nabla_h\bigl(v_h^\pm(t')^2\bigr) dt',
\quad}
\end{equation}
where
\[
N_0:=\frac12 h^{-\frac25}.
\]
This equation retains only the quadratic self-interaction of each component, so \(v_h^+\) and \(v_h^-\) evolve independently.
Since \(P_{\leq N_0}^h\) commutes with \(S_h^\pm(t)\) and is applied to both the initial data and the nonlinear term, we also have
\[
P_{\leq N_0}^hv_h^\pm=v_h^\pm.
\]
This auxiliary equation \eqref{eq: lattice auxiliary equation} serves only as an analytical tool. In the next section, we first compare the coupled FPUT system with the auxiliary equation and then compare the latter with the KdV equations, dividing the proof into two simpler steps.

\begin{remark}[Choice of \(N_0\)]
The cutoff \(P_{\leq N_0}^h\) balances the low-frequency phase error with the
high-frequency tail. As noted in Remark~\ref{rmk: sensitivity on the phase function}, the convergence \(s_h(\xi)\to s(\xi)\) does not ensure uniform comparability of the FPUT and Airy Bourgain norms.
Indeed,
\[
|s_h(\xi)-s(\xi)|
=
\frac{1}{h^3}
\left|
h\xi-2\sin\left(\frac{h\xi}{2}\right)
-\frac{(h\xi)^3}{24}
\right|
\sim
\frac{1}{h^3}\to\infty,
\quad
\textup{for}\quad\frac{\pi}{2h}\leq|\xi|<\frac{\pi}{h}.
\]
A direct comparison may require more spatial regularity than is available in our setting.

To address this issue, we use the frequency-localized model \eqref{eq: lattice auxiliary equation}. For \(0\leq s\leq5\) and \(|\xi|\leq N_0\), Taylor's theorem gives
\begin{equation}\label{ineq: phase difference}
|s_h(\xi)-s(\xi)|
\lesssim
h^2|\xi|^5
\leq
h^2N_0^{5-s}|\xi|^s
\lesssim_s
h^{\frac{2s}{5}}|\xi|^s,
\end{equation}
whereas for high frequencies,
\begin{equation}\label{ineq: high freq tail estimate}
\|(1-P_{\leq N_0}^h)f_h\|_{L^2}
\lesssim
N_0^{-s}\|f_h\|_{H^s}.
\end{equation}
Thus, choosing \(N_0=\frac12h^{-\frac25}\) balances the two bounds, giving the common factor \(h^{\frac{2s}{5}}\).
\end{remark}

\begin{remark}
The cutoff $P_{\leq N_0}^h$ prevents frequency wrapping in the quadratic nonlinearity. Indeed, for every \(h\in(0,1]\), we have \(2N_0=h^{-\frac25}<\frac{\pi}{h}\). Thus, the product of two functions with Fourier support in \(\{|\xi|\leq N_0\}\) has Fourier support in \(\{|\xi|\leq2N_0\}\), which remains inside \([-\frac{\pi}{h},\frac{\pi}{h})\).
\end{remark}

The auxiliary equation \eqref{eq: lattice auxiliary equation} has the same linear propagator as the coupled FPUT system and contains only quadratic self-interactions.
Since \(P_{\leq N_0}^h\) is bounded on the Bourgain spaces, the bilinear estimate 
\eqref{ineq: FPUT bilinear estimate 1} and the argument in Proposition~\ref{prop: Local uniform bounds for the FPUT system} yield the following local bounds with persistence of regularity. The lifespan depends only on the uniform \(L^2\)-bound of the initial data.

\begin{proposition}[Local uniform bounds for the auxiliary equation]
\label{prop: local uniform bounds for the auxiliary equation}
Let \(s\geq0\), \(0<\delta<\frac14\), and
\(\frac12<b<\frac34-\delta\). Suppose that
\[
\sup_{h\in(0,1]}
\|u_{h,0}^\pm\|_{L_z^2(h\Z)}
\leq R_0
\quad\textup{and}\quad
\sup_{h\in(0,1]}
\|u_{h,0}^\pm\|_{H_z^s(h\Z)}
\leq R_s.
\]
Then there exists a time \(T=T(R_0)\sim\la R_0\ra^{-1/\delta}\), independent of
\(h\in(0,1]\) and \(R_s\), such that for any \(h\in(0,1]\), the auxiliary equation \eqref{eq: lattice auxiliary equation} with initial data \(P_{\leq N_0}^hu_{h,0}^\pm\) admits a unique solution \(v_h^\pm\in C_t([-T,T];H_z^s(h\Z))\). Moreover, this solution has an extension to \(\R\) in time, still denoted by \(v_h^\pm\), satisfying
\(P_{\leq N_0}^hv_h^\pm=v_h^\pm\) and
\begin{equation}\label{ineq: uniform Xsb norm bounds for auxiliary equation}
\sup_{h\in(0,1]}
\|v_h^\pm\|_{X_{h,\pm}^{s,b}}
\lesssim R_s.
\end{equation}
\end{proposition}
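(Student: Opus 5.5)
The plan is to run a contraction mapping argument for the truncated Duhamel operator, following the proof of Proposition~\ref{prop: Local uniform bounds for the FPUT system} but in a simpler form. The auxiliary equation \eqref{eq: lattice auxiliary equation} decouples into two scalar equations, and it contains neither a translation correction nor a higher-order remainder. For each sign we define
\[
\Psi_h^\pm(v_h)(t):=\eta_1(t)S_h^\pm(t)P_{\leq N_0}^hu_{h,0}^\pm\mp\frac{\eta_1(t)}{4}\int_0^tS_h^\pm(t-t')P_{\leq N_0}^h\eta_T(t')\nabla_h\bigl(v_h(t')^2\bigr)dt'.
\]

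\textbf{Step 1: self-map and contraction.} The projection $P_{\leq N_0}^h$ is a Fourier multiplier in $\xi$ with symbol bounded by one, so it is a contraction on every $X_{h,\pm}^{s,b}$ and on $H_z^s$. In particular $\|P_{\leq N_0}^hu_{h,0}^\pm\|_{H_z^s}\leq R_s$ and $\|P_{\leq N_0}^hu_{h,0}^\pm\|_{L_z^2}\leq R_0$.

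Lemma~\ref{lem: Xsb properties}(ii)--(iv) gives the inhomogeneous estimate. It is combined with time localization from $X^{s,-(1-b-\delta)}$ to $X^{s,b-1}$, which produces a factor $T^{\delta}$. The bilinear estimate \eqref{ineq: FPUT bilinear estimate 1} is then applied with $u_h=v_h$, which yields
\[
\|\Psi_h^\pm(v_h)\|_{X_{h,\pm}^{s,b}}\leq cR_s+cT^\delta\|v_h\|_{X_{h,\pm}^{0,b}}\|v_h\|_{X_{h,\pm}^{s,b}}.
\]
The same estimate with $s=0$ gives the $X^{0,b}$ bound.

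For differences we write $v_1^2-v_2^2=(v_1-v_2)(v_1+v_2)$ and apply \eqref{ineq: FPUT bilinear estimate 1} with $s=0$. This gives the contraction estimate with factor $cT^\delta(\|v_1\|_{X^{0,b}}+\|v_2\|_{X^{0,b}})$.

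We choose $T\sim\la R_0\ra^{-1/\delta}$, independent of $h\in(0,1]$ and of $R_s$. Then $\Psi_h^\pm$ maps the set $\{\|v\|_{X^{0,b}}\leq2cR_0,\ \|v\|_{X^{s,b}}\leq2cR_s\}$ into itself and contracts in the $X^{0,b}$ metric. This set is complete by Lemma~\ref{lem: completeness of the fixed-point space}.

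Unlike Proposition~\ref{prop: Local uniform bounds for the FPUT system}, no restriction $h\leq h_0$ is needed. The threshold $h_0$ there came only from Lemma~\ref{lemma: rh uniform bound} in the remainder estimate, and no remainder term appears here.

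\textbf{Step 2: properties of the fixed point.} The fixed point is continuous with values in $H_z^s$ by the embedding in Lemma~\ref{lem: Xsb properties}(i). On $[-T,T]$ the cutoffs equal one, so the fixed point solves \eqref{eq: lattice auxiliary equation}. The extension to $\R$ is the fixed point itself.

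The identity $P_{\leq N_0}^hv_h^\pm=v_h^\pm$ holds for two reasons. The range of $\Psi_h^\pm$ lies in $P_{\leq N_0}^h$ of the space, since $P_{\leq N_0}^h$ commutes with $S_h^\pm$ and with multiplication by the time cutoffs. Because the fixed point lies in this range, the identity follows.

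\textbf{Step 3: uniqueness.} Uniqueness in $C_t([-T,T];H^s_z)$ is the expected main obstacle, because the contraction gives uniqueness only inside the ball. I would handle it as follows. Since $v_h$ is frequency localized to $|\xi|\leq N_0<\pi/h$ for fixed $h$, any two $C_tL^2_z$ solutions satisfy a Lipschitz ODE in $L_z^2$. This holds because $\nabla_h$ and $P^h_{\leq N_0}$ are bounded, and because $\|fg\|_{L^2}\leq h^{-1/2}\|f\|_{L^2}\|g\|_{L^2}$ on the lattice. Gronwall's inequality then gives uniqueness, with constants depending on $h$; that dependence is harmless for a uniqueness statement.

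Finally, the bound \eqref{ineq: uniform Xsb norm bounds for auxiliary equation} follows from the radius $2cR_s$ of the ball, with $c$ independent of $h$.
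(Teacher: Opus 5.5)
Your proposal is correct and follows the paper's route. The paper, too, runs the contraction argument of Proposition~\ref{prop: Local uniform bounds for the FPUT system} for the time-truncated Duhamel map in the $X^{0,b}$-metric ball of Lemma~\ref{lem: completeness of the fixed-point space}, using only \eqref{ineq: FPUT bilinear estimate 1} and the boundedness of $P^h_{\leq N_0}$, with no remainder terms and hence no restriction $h\leq h_0$. Your $h$-dependent Gronwall argument for uniqueness in $C_t([-T,T];H_z^s(h\Z))$ is a valid supplement that the paper leaves implicit.
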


\subsection{Long-time bounds}

The key point of Propositions~\ref{prop: Local uniform bounds for the FPUT system} and \ref{prop: local uniform bounds for KdV} is that their lifespans depend only on the lower $L^2$-bound and are independent of the higher Sobolev bounds. Consequently, uniform-in-time control of $L^2$-norms allows the local estimates to be iterated, yielding global solutions and exponential-in-time $H^s$-bounds.

\begin{proposition}[Exponential-in-time bound]\label{prop: exponential bound}
Let $0 \leq s\leq 1$ and $R_0, R_s>0$, and let \(h_0>0\) be sufficiently small. Then, there exist constants $C=C(R_0,s)\geq 1$, $K=K(R_0,s)>0$, independent of $R_s$, $t$ and $h$, such that the following holds. Suppose that
$$
\sup_{h\in(0,1]}\|u_{h,0}^\pm\|_{L_z^2(h\Z)}, 
    \|w_0^\pm\|_{L_x^2(\R)} \leq R_0
\quad\textup{and}\quad
\sup_{h\in(0,1]}\|u_{h,0}^\pm\|_{H_z^s(h\Z)}, 
   \|w_0^\pm\|_{H_x^s(\R)} \leq R_s.
$$
Let $w^\pm(t)$ be the solution of the KdV equation \eqref{eq:KdV integral form} with initial data $w_0^\pm$. For each $h\in (0,h_0]$, let $u_h^\pm(t)$ be the solution of the coupled FPUT system \eqref{eq: coupled FPUT integral form} with initial data $u_{h,0}^\pm$. Then, these solutions extend globally in time and satisfy
\begin{equation}\label{ineq: FPUT KdV Hs norm growth}
\sum_\pm 
\Big(\|u_h^\pm(t)\|_{H_z^s(h\Z)}+\|w^\pm(t)\|_{H_x^s(\R)}\Big)
\leq Ce^{K|t|}R_s,
\end{equation}
for every $h\in(0,h_0]$ and $t\in\R$.
\end{proposition}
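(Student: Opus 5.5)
The plan is to iterate the local theory of Propositions~\ref{prop: Local uniform bounds for the FPUT system} and~\ref{prop: local uniform bounds for KdV}. The crucial point is that their lifespans depend only on the $L^2$ size of the data. Accordingly, I first establish uniform-in-time $L^2$ control of both flows, and then propagate the $H^s$ norm step by step with a fixed multiplicative loss per step.

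\emph{Step 1: $L^2$ control.} For KdV this is immediate: $\|w^\pm(t)\|_{L_x^2}=\|w_0^\pm\|_{L_x^2}\le R_0$, by $L^2$ conservation for the $L^2$ solutions of Bourgain/Kenig--Ponce--Vega, extended by the local theory. For FPUT I use conservation of the rescaled Hamiltonian \eqref{eq: rescaled FPUT-Hamiltonian}. Since $V''(0)=1$ and $V\in C^4$, we have $\frac{1}{h^4}V(h^2 y)=\frac12 y^2+O(h^2|y|^3)$ when $h^2|y|\le 1$. Moreover $\|h^2|\nabla_h|^{-1}\partial_t r_h\|_{L^2}$ and $\|r_h\|_{L^2}$ together are equivalent to $\sum_\pm\|u_h^\pm\|_{L^2}$, since $u_h^\pm$ are unitary images of $\frac12(r_h\mp h^2\nabla_h^{-1}\partial_t r_h)$. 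Hence
\[
\mathcal H_h(t)=\tfrac12\textstyle\sum_\pm\|u_h^\pm(t)\|_{L_z^2}^2+O\big(h^2\|r_h\|_{L^\infty}\|r_h\|_{L^2}^2\big),
\]
and the error term is $O(h^{3/2}\|r_h\|_{L^2}^3)$ by $\|f\|_{L^\infty}\le h^{-1/2}\|f\|_{L^2}$. A continuity (bootstrap) argument then gives $\sum_\pm\|u_h^\pm(t)\|_{L^2}\le 2\sum_\pm\|u_{h,0}^\pm\|_{L^2}+1\le 4R_0+1=:\tilde R_0$ for all $t$ in the existence interval, provided $h\le h_0(R_0)$. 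Specifically, the set of times where the bound holds is closed, and since $h^{3/2}\tilde R_0$ is small it is also open with a strictly better bound. I expect this step to be the main obstacle. One must check carefully that the Hamiltonian is finite and conserved for these low-regularity lattice solutions, which holds because on $h\Z$ everything is bounded for fixed $h$, so the lattice ODE is classical. One must also confirm that $h^2|r_h|\le1$ is maintained, which follows by Lemma~\ref{lemma: rh uniform bound}-type reasoning as long as $h^{3/2}\tilde R_0$ is small.

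\emph{Step 2: iteration.} Let $T_*=T(\tilde R_0)$ be the common lifespan from Propositions~\ref{prop: Local uniform bounds for the FPUT system} and~\ref{prop: local uniform bounds for KdV} applied with $L^2$ bound $\tilde R_0$; it is independent of $h$ and of the $H^s$ size. On each interval $[kT_*,(k+1)T_*]$, restart at time $kT_*$ with data of $L^2$ norm at most $\tilde R_0$ (by Step~1) and $H^s$ norm $A_k$. The two systems are autonomous up to the explicit $t$-dependence of the translations $e^{\pm\frac{2t}{h^2}\partial_h}$, and the bilinear estimates are uniform in that shift, so restarting is legitimate. By the local bounds together with the embedding $X^{s,b}\hookrightarrow C_tH^s$, we get $A_{k+1}\le C_1A_k$ for a constant $C_1=C_1(\tilde R_0,s)$. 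Hence $\|u_h^\pm(t)\|_{H^s}\le C_1^{k+1}R_s\le C_1e^{(\log C_1/T_*)|t|}R_s$, and the same holds for $w^\pm$. Negative times are handled symmetrically.

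\emph{Step 3: global existence.} Global existence follows from the same iteration, because the step size $T_*$ never shrinks. Uniqueness on each step comes from the local theory, so the pieces glue into a solution in $C_t(\R;H^s)$. Setting $K=\log C_1/T_*$ and $C=2C_1$, both depending only on $R_0$ and $s$, completes the proof.
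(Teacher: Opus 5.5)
Your proposal is correct and follows essentially the paper's route. You get uniform $L^2$ control from KdV mass conservation and a bootstrap on the conserved rescaled Hamiltonian; in fact $\mathcal H_h=\sum_\pm\|u_h^\pm\|_{L_z^2}^2+O(h^2\|r_h\|_{L_z^\infty}\|r_h\|_{L_z^2}^2)$, without your factor $\tfrac12$, though this is harmless. You then iterate the local bounds, whose lifespan depends only on the $L^2$ size. The one difference is the restart: you argue the estimates are uniform under the shift, which must also cover the translation correction and the remainder, not only $e^{\pm\frac{2t}{h^2}\pa_h}$; the paper instead uses that the $r_h^\pm$-system is autonomous, so $e^{\mp\frac{t_0}{h^2}\pa_h}u_h^\pm(t_0+\cdot)$ solves exactly the same coupled FPUT system with data $r_h^\pm(t_0)$, whose norms equal those of $u_h^\pm(t_0)$.
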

For the KdV, the required uniform $L^2$-control follows from the $L^2$-conservation law. The corresponding FPUT estimate, derived from conservation of the rescaled Hamiltonian, is established in the following lemma.
\begin{lemma}[Uniform $L^2$-bound for the FPUT system]\label{lem: uniform L2 bound for the FPUT}
Assume that $V$ satisfies \eqref{eq:potential-assumption}, and let $R>0$. For each $h\in(0,1]$, let $u_h^\pm$ be a solution of the coupled FPUT system \eqref{eq: coupled FPUT integral form}, on an interval containing $0$, with initial data $u_{h,0}^\pm$ satisfying
$$
\sum_\pm\|u_{h,0}^\pm\|_{L_z^2(h\Z)}^2\leq 2R^2.
$$
Then there exists $h_0=h_0(V,R)\in(0,1)$, such that, for every $h\in(0,h_0]$,
\begin{equation}\label{ineq: uniform L2 bound for the FPUT}
\sum_{\pm}\|u_h^\pm(t)\|_{L_z^2(h\Z)}^2\leq 3R^2,
\end{equation}
as long as the solution exists.
The same estimate holds with $(u_{h,0}^\pm,u_h^\pm)$ replaced by $(r_{h,0}^\pm,r_h^\pm)$, where $r_h^\pm$ are defined in \eqref{eq: def of transformed variables}.
\end{lemma}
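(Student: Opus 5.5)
Our approach is to express the rescaled Hamiltonian \eqref{eq: rescaled FPUT-Hamiltonian} in terms of the diagonal variables, show it is comparable to the $L^2$ mass up to a small cubic error, and close with a continuity argument. Since $u_h^\pm=e^{\pm\frac{t}{h^2}\pa_h}r_h^\pm$ and $e^{a\pa_h}$ is unitary on $L_z^2(h\Z)$, it suffices to work with $r_h^\pm$; this also gives the last assertion of the lemma. By definition $r_h=r_h^++r_h^-$ and $h^2\nabla_h^{-1}\pa_tr_h=r_h^--r_h^+$. Moreover $|\nabla_h|^{-1}$ and $\nabla_h^{-1}$ differ only by the unimodular symbol $\pm i$, so $\|h^2|\nabla_h|^{-1}\pa_tr_h\|_{L^2_z}=\|r_h^--r_h^+\|_{L^2_z}$. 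By the parallelogram law,
\[
\tfrac12\|r_h^+-r_h^-\|_{L_z^2}^2+\tfrac12\|r_h^++r_h^-\|_{L_z^2}^2=\sum_\pm\|r_h^\pm\|_{L_z^2}^2 .
\]
By Taylor's theorem with \eqref{eq:potential-assumption}, $\frac1{h^4}V(h^2y)=\frac{y^2}{2}+E_h(y)$ with $|E_h(y)|\lesssim h^2|y|^3$ whenever $h^2|y|\le 1$; the constant depends on $\|V'''\|_{C([-1,1])}$. Hence
\[
\Big|\mathcal H_h(t)-\sum_\pm\|r_h^\pm(t)\|_{L_z^2}^2\Big|\lesssim h^2\|r_h(t)\|_{L_z^\infty}\|r_h(t)\|_{L_z^2}^2\le h^{3/2}\|r_h(t)\|_{L_z^2}^3,
\]
where the last step uses $\|f_h\|_{L^\infty_z}\le h^{-1/2}\|f_h\|_{L^2_z}$. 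We will also need $h^2\|r_h\|_{L^\infty_z}\le h^{3/2}\|r_h\|_{L^2_z}\le1$, which holds as long as $\|r_h\|_{L^2_z}\lesssim R$ and $h\le h_0(R)$ is small.

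Next, conservation. We will use that $\mathcal H_h(t)=\mathcal H_h(0)$ for the solutions at hand, since they are finite-energy solutions of the Hamiltonian lattice ODE. In $L^2_z(h\Z)$ with $h$ fixed, $\nabla_h$ is bounded and $V'$ is locally Lipschitz, so this system is a locally Lipschitz ODE, and the identity $\frac{d}{dt}\mathcal H_h=0$ follows from a direct computation with discrete summation by parts. The one point to check is that $\frac12\|h^2|\nabla_h|^{-1}\pa_tr_h\|^2$ is exactly the kinetic energy $\frac12\sum p^2$ after rescaling. Here $|\nabla_h|^{-1}\pa_tr_h$ corresponds to $p$, because $\pa_tr=\nabla q$-type differences of $p$. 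I will verify this via $\pa_t r_h=\frac1{h^2}\nabla_h(\cdot)$ from the first-order system, which makes $h^2\nabla_h^{-1}\pa_tr_h$ a well-defined $L^2$ function.

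Finally, the bootstrap. Let $M(t)=\sum_\pm\|r_h^\pm(t)\|_{L_z^2}^2$, which is continuous in $t$, and note $\|r_h\|_{L^2_z}^2\le 2M$. At $t=0$ we have $M(0)\le2R^2$, so $\mathcal H_h(0)\le 2R^2+Ch^{3/2}(2\cdot 2R^2)^{3/2}$. Suppose that on some interval $M\le 3R^2$. Then
\[
M(t)\le \mathcal H_h(t)+Ch^{3/2}(6R^2)^{3/2}=\mathcal H_h(0)+Ch^{3/2}(6R^2)^{3/2}\le 2R^2+C'h^{3/2}R^3<\tfrac52R^2
\]
once $h\le h_0(V,R)$. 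By continuity, the set where $M\le3R^2$ is open and closed in the existence interval, so the bound holds for as long as the solution exists. Transferring back through the unitary almost-translations gives \eqref{ineq: uniform L2 bound for the FPUT}.

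The main obstacle is making the conservation of $\mathcal H_h$ rigorous for the solutions of \eqref{eq: coupled FPUT integral form}, rather than merely for formal ones. I would handle it by noting that, for fixed $h$, the Duhamel solution coincides with the classical $C^1$ solution of the lattice ODE in $L^2_z$, for which the energy identity is a finite computation justified by absolute convergence of the sums.
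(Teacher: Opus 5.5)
Your proposal is correct and follows the paper's proof essentially step by step. You reduce to $r_h^\pm$ by unitarity, identify $\sum_\pm\|r_h^\pm\|_{L_z^2}^2$ with the quadratic part of $\mathcal H_h$, bound the cubic Taylor remainder by $h^{3/2}\|r_h\|_{L_z^2}^3$ using $\|f_h\|_{L_z^\infty}\le h^{-1/2}\|f_h\|_{L_z^2}$, and close with a continuity argument. The differences from the paper are cosmetic: you bootstrap from $3R^2$ down to $\frac52R^2$ instead of from $4R^2$ down to $3R^2$, and you sketch a fixed-$h$ justification of Hamiltonian conservation that the paper simply takes as given.
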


\begin{proof}
Since $\|r_h^\pm\|_{L_z^2(h\Z)}=\|u_h^\pm\|_{L_z^2(h\Z)}$ by \eqref{eq: def of uh}, it suffices to prove the estimate for $r_h^\pm$ only. Recalling the conservation of the rescaled FPUT Hamiltonian (see \eqref{eq: rescaled FPUT-Hamiltonian}), by the definition of $r_h^\pm$ in \eqref{eq: def of transformed variables} and Taylor's theorem, we have
\begin{equation}\label{eq: rewritten energy}
\begin{aligned}
\sum_\pm\|r_h^\pm(t)\|_{L_z^2(h\Z)}^2
&=
\frac{1}{2}
\Big(\|r_h(t)\|_{L_z^2(h\Z)}^2
+\big\|h^2|\nabla_h|^{-1}\pa_tr_h(t)\big\|_{L_z^2(h\Z)}^2\Big)\\
&=
\mathcal{H}_h(t)
-h\sum_{x\in h\Z}
\bigg(\frac{1}{h^4}V\big(h^2r_h(t)\big)-\frac{1}{2}r_h(t)^2\bigg)\\
&=
\mathcal{H}_h(0)
-h\sum_{z\in h\Z}\frac{V'''(h^2r_h^{*}(t))}{6}h^2r_h(t)^3,
\end{aligned}
\end{equation}
where $r_h^{*}(t)$ lies between $0$ and $r_h(t)$. Suppose that, on an interval containing $0$,
$$
\sum_\pm\|r_h^\pm(t)\|_{L_z^2(h\Z)}^2\leq 4R^2.
$$ 
Then by the trivial bound \(\|f_h\|_{L_z^\infty}\leq\frac{1}{h^{1/2}}\|f_h\|_{L_z^2}\), for sufficiently small $h_0$,
$$
h^2\|r_h(t)\|_{L_z^\infty}
\leq
h^{\frac{3}{2}}\|r_h(t)\|_{L_z^2}
\leq 2\sqrt{2}h^{\frac{3}{2}}R\leq 1.
$$
For the last term in \eqref{eq: rewritten energy}, we obtain
\begin{equation}\label{ineq: energy latter term bound}
\begin{aligned}
\bigg|h\sum_{z\in h\Z}
\frac{V'''(h^2r_h^{*}(t))}{6}h^2r_h(t)^3\bigg|
&\leq \frac{\|V'''\|_{C([-1,1])}}{6}h^2\|r_h(t)\|_{L_z^\infty(h\Z)}\|r_h(t)\|_{L_z^2(h\Z)}^2 \\
&\leq \frac{\|V'''\|_{C([-1,1])}}{6}h^{\frac{3}{2}}\bigg(2\sum_\pm\|r_h^\pm(t)\|_{L_z^2(h\Z)}^2\bigg)^{3/2}.
\end{aligned}
\end{equation}
Applying \eqref{eq: rewritten energy} and \eqref{ineq: energy latter term bound} at $t=0$, we obtain, for some constant $C_1>0$,
\begin{equation}\label{ineq: H(0) bound}
\mathcal{H}_h(0)\leq 2R^2+C_1h^{\frac{3}{2}}R^3.
\end{equation}
Thus, choosing $h_0=h_0(V,R)$ sufficiently small, we obtain
$$
\sum_\pm\|r_h^\pm(t)\|_{L_z^2(h\Z)}^2
\leq
(2R^2+C_1h^{\frac{3}{2}}R^3) + 8C_2h^{\frac{3}{2}}R^3 \leq 3R^2,
$$
for all $h\in(0,h_0]$ and some constant $C_2>0$. Here $C_1$, $C_2$ depend only on $V$. Hence, \eqref{ineq: uniform L2 bound for the FPUT} follows from the bootstrap argument with continuity of the solution in time.
\end{proof}

\begin{proof}[Proof of Proposition~\ref{prop: exponential bound}]
Choose $h_0=h_0(V,R_0)>0$ sufficiently small so that Proposition~\ref{prop: Local uniform bounds for the FPUT system} and Lemma~\ref{lem: uniform L2 bound for the FPUT} hold. Lemma~\ref{lem: uniform L2 bound for the FPUT} and $L^2$ conservation law for the KdV imply 
$$
\|w^\pm(t)\|_{L_x^2(\R)}
\leq R_0
\quad\textup{and}\quad
\sum_{\pm}\|u_h^\pm(t)\|_{L_z^2(h\Z)}^2\leq 3R_0^2,
$$
as long as these solutions exist.
The system \eqref{eq: def of transformed variables} is autonomous. Thus, for any $t_0$ in the interval of existence, the profiles $e^{\mp\frac{t_0}{h^2}\pa_h}u_h^\pm(t_0+\cdot)$ solve \eqref{eq: coupled FPUT integral form} with initial data $r_h^\pm(t_0)$. The fixed almost translations preserve Sobolev norms, so Proposition~\ref{prop: Local uniform bounds for the FPUT system} applies uniformly in $t_0$.
Since the lifespans in Propositions~\ref{prop: Local uniform bounds for the FPUT system} and \ref{prop: local uniform bounds for KdV} depend only on these lower-order bounds, the local solutions extend globally in time.

By the local uniform bounds for the FPUT system \eqref{ineq: uniform Xsb norm bounds for FPUT} and KdV \eqref{ineq: uniform Xsb norm bounds for KdV}, there exist $T=T(R_0)>0$ and $C=C(R_0,s)\geq 1$, independent of $h$ and $R_s$, such that
$$
\sum_\pm
\Big(\|u_h^\pm(t+\Delta t)\|_{H_z^s(h\Z)}+\|w^\pm(t+\Delta t)\|_{H_x^s(\R)}\Big)
\leq
C\sum_\pm
\Big(\|u_h^\pm(t)\|_{H_z^s(h\Z)}+\|w^\pm(t)\|_{H_x^s(\R)}\Big),
$$
for every $t\in\R$ and $|\Delta t|\leq T$. Iterating this estimate yields \eqref{ineq: FPUT KdV Hs norm growth}.
\end{proof}

\section{KdV approximation and global extension}
\label{sec: KdV Approximation and Global Extension}
We now prove Theorem~\ref{thm: main theorem}, the main result of the paper.
\subsection{First local-in-time reduction}
\label{subsec: First local-in-time reduction}
In this subsection and the next, we establish a local-in-time FPUT--KdV approximation in the low-regularity range \(0<s\leq1\) (Proposition~\ref{prop: Local KdV limit for FPUT with persistence of regularity}). This extends the approximation result of Hong, Kwak, and Yang~\cite{HKY2021}, which required \(s>\frac34\), to lower regularity. We begin by reducing the coupled FPUT \eqref{eq: coupled FPUT integral form} to the frequency-localized decoupled FPUT  \eqref{eq: lattice auxiliary equation}.

\begin{lemma}[First local-in-time reduction]
\label{lem: first reduction}
Let \(0\leq s\leq1\), and assume that
\[
\sup_{h\in(0,1]}
\|u_{h,0}^\pm\|_{L_z^2(h\Z)}
\leq R_0
\quad\textup{and}\quad
\sup_{h\in(0,1]}
\|u_{h,0}^\pm\|_{H_z^s(h\Z)}
\leq R_s.
\]
For each \(h\in(0,1]\), let \(u_h^\pm\) (resp., \(v_h^\pm\)) be the solution to the coupled FPUT
\eqref{eq: coupled FPUT integral form} (resp., the frequency-localized decoupled FPUT \eqref{eq: lattice auxiliary equation}) with initial data \(u_{h,0}^\pm\) (resp., \(P_{\leq N_0}^h u_{h,0}^\pm\)), where \(N_0=\frac12h^{-\frac25}\).
Then, there exist \(h_0\in(0,1]\), \(T=T(R_0)>0\), and \(C_s>0\), independent of \(h\in(0,h_0]\) and \(R_s\), such that both solutions exist on \([-T,T]\) and
\begin{equation}\label{ineq: first reduction}
\|u_h^\pm-v_h^\pm\|_{C_t([-T,T];L_z^2(h\Z))}
\leq
C_sR_sh^{\frac{2s}{5}}.
\end{equation}
\end{lemma}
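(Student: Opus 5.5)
We compare $u_h^\pm$ with $v_h^\pm$ by splitting $u_h^\pm$ into low and high frequencies and estimating each piece in the Bourgain norm. Let $T=T(R_0)$ be the smaller of the lifespans in Proposition~\ref{prop: Local uniform bounds for the FPUT system} and Proposition~\ref{prop: local uniform bounds for the auxiliary equation}, shrunk further if needed. On $[-T,T]$ both solutions exist and have extensions satisfying $\|u_h^\pm\|_{X_{h,\pm}^{s,b}},\|v_h^\pm\|_{X_{h,\pm}^{s,b}}\lesssim R_s$, with the same bounds at the $L^2$ level with $R_0$ in place of $R_s$.

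For the high-frequency part we use \eqref{ineq: high freq tail estimate} and the embedding $X^{s,b}_{h,\pm}\hookrightarrow C_tH^s_z$:
\[
\|(1-P^h_{\le N_0})u_h^\pm\|_{C_tL^2_z}\lesssim N_0^{-s}R_s\sim h^{\frac{2s}{5}}R_s .
\]
It remains to bound $D^\pm:=P^h_{\le N_0}u_h^\pm-v_h^\pm$. Applying $P^h_{\le N_0}$ to \eqref{eq: coupled FPUT integral form} and subtracting \eqref{eq: lattice auxiliary equation}, the initial data cancel. Writing $u_h^\pm=P^h_{\le N_0}u_h^\pm+(1-P^h_{\le N_0})u_h^\pm$, the Duhamel integrand becomes a sum of four groups of terms:
\begin{enumerate}[(a)]
\item $\nabla_h\big((P^h_{\le N_0}u_h^\pm)^2-(v_h^\pm)^2\big)=\nabla_h\big(D^\pm(P^h_{\le N_0}u_h^\pm+v_h^\pm)\big)$;
\item the high-low self-interactions $\nabla_h\big((1-P^h_{\le N_0})u_h^\pm\cdot(u_h^\pm+P^h_{\le N_0}u_h^\pm)\big)$;
\item the fast-moving terms $2u_h^\pm e^{\pm\frac{2t}{h^2}\pa_h}u_h^\mp+(e^{\pm\frac{2t}{h^2}\pa_h}u_h^\mp)^2$;
\item $\mathcal T_h^\pm$ and $e^{\pm\frac{t}{h^2}\pa_h}\mathcal R_h$.
\end{enumerate}
We then apply Lemma~\ref{lem: Xsb properties}(iv) together with the time localization $\eta_T$, exactly as in the proof of Proposition~\ref{prop: Local uniform bounds for the FPUT system}, measuring everything in $X^{0,b}_{h,\pm}$.

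We estimate the four groups as follows.
\begin{itemize}
\item Group (a): \eqref{ineq: FPUT bilinear estimate 1} with $s=0$ gives $T^\delta R_0\|D^\pm\|_{X^{0,b}}$, which is absorbed into the left side once $T$ is small depending on $R_0$.
\item Group (b): \eqref{ineq: FPUT bilinear estimate 1} with $s=0$ and $\|(1-P^h_{\le N_0})u_h^\pm\|_{X^{0,b}}\lesssim N_0^{-s}\|u_h^\pm\|_{X^{s,b}}$ gives $\lesssim h^{2s/5}R_0R_s$.
\item Group (c): \eqref{ineq: FPUT bilinear estimate 2}--\eqref{ineq: FPUT bilinear estimate 3} with $s'=0$ give $h^{\min\{s,1/2\}}R_0R_s\le h^{2s/5}R_0R_s$, since $s\le1$.
\item Group (d): Lemma~\ref{lem: estimates for the translation correction} with $s'=0$ gives $h^{s}R_0R_s$, and Lemma~\ref{lem: estimates for the higher-order remainder} gives $h^{1+s}R_0^2R_s$. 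The projection $P^h_{\le N_0}$ is harmless because it is bounded on every $X^{s,b}_{h,\pm}$.
\end{itemize}
Collecting these bounds yields $\|D^\pm\|_{X^{0,b}}\le C_sR_sh^{2s/5}$ (with constants depending on $R_0$). The embedding into $C_tL^2_z$ together with the high-frequency bound then gives \eqref{ineq: first reduction}.

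The main obstacle is technical: $D^\pm$ is defined only on $[-T,T]$, while the $X^{s,b}$ extensions of $u_h^\pm$ and $v_h^\pm$ come from two different fixed-point problems. To close the absorption in group (a), I will work with the truncated Duhamel difference $\eta_1\int_0^t S_h^\pm(t-t')\eta_T(\cdots)dt'$. Concretely, I take the extension of $D^\pm$ to be the difference of the two fixed-point extensions restricted through $\eta_T$, and use the standard restriction-norm argument: take the infimum over all extensions agreeing on $[-T,T]$. The rest follows mechanically from the estimates above, because every non-self-interaction term gains a positive power of $h$ of at least $h^{2s/5}$.
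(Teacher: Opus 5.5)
Your proposal is correct and is essentially the paper's argument. It uses the same Duhamel subtraction and the same four families of estimates: the self-interaction difference is absorbed for $T=T(R_0)$, the fast-moving terms carry $h^{\min\{s,\frac12\}}$, the translation correction carries $h^s$, and the remainder carries $h^{1+s}$. The only cosmetic difference is that you compare $P^h_{\le N_0}u_h^\pm$ with $v_h^\pm$, so the initial data cancel and the tail loss enters through the factor $(1-P^h_{\le N_0})u_h^\pm$. The paper instead compares $u_h^\pm$ with $v_h^\pm$ directly and extracts $N_0^{-s}$ from $P^h_{>N_0}u_{h,0}^\pm$ and $P^h_{>N_0}(u_h^\pm)^2$.

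The extension issue you flag is handled more simply in the paper. It takes both $u_h^\pm$ and $v_h^\pm$ as fixed points of the time-cutoff equations with the same $\eta_1,\eta_T$, so the difference identity holds on all of $\R$ and no restriction-norm infimum is needed.
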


\begin{proof}
Fix $0<\delta<\frac{1}{4}$ and
$\frac{1}{2}<b<\frac{3}{4}-\delta$ so that the bilinear estimates in
Lemma~\ref{lem: FPUT bilinear estimates} hold. Next, choose $h_0=h_0(V,R_0,\alpha)\in(0,1]$ sufficiently small so that
\eqref{ineq: general nonlinearity estimate 1} holds and
\(N_0=\frac{1}{2}h^{-\frac{2}{5}}\leq\frac{\alpha}{2h}\)
for every \(h\in(0,h_0]\). Let $T\in(0,\frac{1}{2}]$ be sufficiently small, depending only on $R_0$, to be chosen below.
By the local well-posedness results (Propositions~\ref{prop: Local uniform bounds for the FPUT system} and \ref{prop: local uniform bounds for the auxiliary equation}), there exist $u_h^\pm(t)$ and $v_h^\pm(t)$ solve the equations \eqref{eq: coupled FPUT integral form} and \eqref{eq: lattice auxiliary equation}, respectively, on the time interval $[-T,T]$ for every $h\in(0,h_0]$, and
\begin{equation}\label{ineq: uniform Xsb bound for first reduction}
\|u_h^\pm\|_{X_{h,\pm}^{0,b}}
+
\|v_h^\pm\|_{X_{h,\pm}^{0,b}}
\lesssim R_0,
\qquad
\|u_h^\pm\|_{X_{h,\pm}^{s,b}}
+
\|v_h^\pm\|_{X_{h,\pm}^{s,b}}
\lesssim R_s.
\end{equation}

Subtracting the two Duhamel formulas with the time cutoffs \(\eta_1\) and \(\eta_T\), we write 
\[
\begin{aligned}
(u_h^\pm-v_h^\pm)(t)
&=
\eta_1(t)S_h^\pm(t)
P_{>N_0}^h u_{h,0}^\pm\\
&\quad
\mp\frac{\eta_1(t)}4
\int_0^t
S_h^\pm(t-t')\eta_T(t')\nabla_h
\Big\{
\big(
u_h^\pm
+
e^{\pm\frac{2t'}{h^2}\pa_h}u_h^\mp
\big)^2
-
P_{\leq N_0}^h(v_h^\pm)^2\\
&\hspace{6.6 cm}
+\mathcal T_h^\pm[r_h]
+
e^{\pm\frac{t'}{h^2}\pa_h}\mathcal R_h[r_h]
\Big\}(t') dt',
\end{aligned}
\]
where \(P_{>N_0}^h=1-P_{\leq N_0}^h\). Then, by the linear estimates (Lemma~\ref{lem: Xsb properties}),
$$
\begin{aligned}
\|u_h^\pm-v_h^\pm\|_{X_{h,\pm}^{0,b}}
&\ls
\|P_{> N_0}^hu_{h,0}^\pm\|_{L_z^2(h\Z)}+T^\delta
\Big\|\nabla_h
\Big\{
\big(u_h^\pm+e^{\pm\frac{2t}{h^2}\pa_h}u_h^\mp\big)^2
-P_{\leq N_0}^h(v_h^\pm)^2
\Big\}
\Big\|_{X_{h,\pm}^{0,-(1-b-\delta)}}\\
&\quad
+T^\delta
\|\nabla_h\mathcal{T}_h^\pm[r_h]\|_{X_{h,\pm}^{0,-(1-b-\delta)}}
+T^{1-b}
\big\|\nabla_he^{\pm\frac{t}{h^2}\pa_h}\mathcal{R}_h[r_h]
\big\|_{L_t^2([-2T,2T];L_z^2)}\\
&=\textup{(I)}+\textup{(II)}+\textup{(III)}+\textup{(IV)}.
\end{aligned}
$$
For \(\textup{(I)}\), we have
\[
\|P_{>N_0}^hu_{h,0}^\pm\|_{L_z^2}
\lesssim
N_0^{-s}\|u_{h,0}^\pm\|_{H_z^s}.
\]
For the quadratic nonlinear terms \(\textup{(II)}\), we expand
\[
\begin{aligned}
&\big(u_h^\pm+e^{\pm\frac{2t}{h^2}\pa_h}u_h^\mp\big)^2
-P_{\leq N_0}^h(v_h^\pm)^2\\
&=
P_{\leq N_0}^h\big\{(u_h^\pm)^2-(v_h^\pm)^2\big\}
+P_{>N_0}^h(u_h^\pm)^2
+\big(e^{\pm\frac{2t}{h^2}\pa_h}u_h^\mp\big)^2
+2u_h^\pm\big(e^{\pm\frac{2t}{h^2}\pa_h}u_h^\mp\big)
\end{aligned}
\]
and apply the bilinear estimates (Lemma~\ref{lem: FPUT bilinear estimates}). For the translation correction \(\textup{(III)}\), we apply
\eqref{ineq: translation correction estimate 1}.
For the higher-order remainder \(\textup{(IV)}\), we use the unitarity of $e^{\pm\frac{t}{h^2}\pa_h}$ and the bound \eqref{ineq: general nonlinearity estimate 1} on $[-2T,2T]$. Consequently, we obtain
$$
\begin{aligned}
\|u_h^\pm-v_h^\pm\|_{X_{h,\pm}^{0,b}}
&\ls
N_0^{-s}\|u_{h,0}^\pm\|_{H_z^s}
+T^\delta
\Big(
\|u_h^\pm\|_{X_{h,\pm}^{0,b}}
+\|v_h^\pm\|_{X_{h,\pm}^{0,b}}
\Big)
\|u_h^\pm-v_h^\pm\|_{X_{h,\pm}^{0,b}}\\
&\quad
+T^\delta N_0^{-s}
\|u_h^\pm\|_{X_{h,\pm}^{s,b}}
\|u_h^\pm\|_{X_{h,\pm}^{0,b}}
+T^\delta h^{\min\{s,\frac{1}{2}\}}
\|u_h^\mp\|_{X_{h,\mp}^{s,b}}
\|u_h^\mp\|_{X_{h,\mp}^{0,b}}\\
&\quad
+T^\delta h^{\min\{s,\frac{1}{2}\}}
\Big(
\|u_h^\pm\|_{X_{h,\pm}^{s,b}}
\|u_h^\mp\|_{X_{h,\mp}^{0,b}}
+\|u_h^\pm\|_{X_{h,\pm}^{0,b}}
\|u_h^\mp\|_{X_{h,\mp}^{s,b}}
\Big)\\
&\quad
+T^\delta h^sR_0R_s
+T^{1-b}T^{\frac{1}{6}}h^{1+s}R_0^2R_s.
\end{aligned}
$$
Recalling $N_0=\frac{1}{2}h^{-\frac{2}{5}}$ and applying the uniform $X^{s,b}$-bounds \eqref{ineq: uniform Xsb bound for first reduction}, we obtain
\[\|u_h^\pm-v_h^\pm\|_{X_{h,\pm}^{0,b}}
\ls
h^{\frac{2s}{5}}R_s
+T^\delta R_0\|u_h^\pm-v_h^\pm\|_{X_{h,\pm}^{0,b}}+T^\delta
h^{\frac{2s}{5}}R_0R_s
+T^{\frac{7}{6}-b}h^{1+s}R_0^2R_s.\]
Thus, there exist \(c_0, c_s\geq1\), independent of \(h\), \(T\), \(R_0\), and \(R_s\), such that 
\begin{equation}\label{ineq: u-v estimate}
\|u_h^\pm-v_h^\pm\|_{X_{h,\pm}^{0,b}}
\leq
c_s
\big\{
1+T^\delta R_0+(T^\delta R_0)^2
\big\}
h^{\frac{2s}{5}}R_s+
c_0T^\delta R_0
\|u_h^\pm-v_h^\pm\|_{X_{h,\pm}^{0,b}}.
\end{equation}
Taking \(T=T(R_0)>0\) sufficiently small so that
\(c_0T^\delta R_0\leq\frac12\), we absorb the last term in \eqref{ineq: u-v estimate} and obtain
\[
\|u_h^\pm-v_h^\pm\|_{X_{h,\pm}^{0,b}}
\leq
4c_s h^{\frac{2s}{5}}R_s.
\]
Finally, since \(b>\frac12\), by the embedding
\(X_{h,\pm}^{0,b}\hookrightarrow C_tL_z^2(h\Z)\), we prove \eqref{ineq: first reduction}.
\end{proof}

\subsection{Second local-in-time reduction}
\label{subsec: Second local-in-time reduction}
Next, we reduce the frequency-localized decoupled FPUT system \eqref{eq: lattice auxiliary equation} to the KdV equations \eqref{eq:KdV integral form}.
\begin{lemma}[Second local-in-time reduction]
\label{lem: second reduction}
Let \(0\leq s\leq1\), and assume that
$$
\sup_{h\in(0,1]}
\|u_{h,0}^\pm\|_{L_z^2(h\Z)}, 
\|w_0^\pm\|_{L_x^2(\R)}
\leq R_0
\quad\textup{and}\quad
\sup_{h\in(0,1]}
\|u_{h,0}^\pm\|_{H_z^s(h\Z)}, 
\|w_0^\pm\|_{H_x^s(\R)}
\leq R_s.
$$
For each \(h\in(0,1]\), let \(v_h^\pm\) (resp., \(w^\pm\)) be the solution to the
frequency-localized decoupled FPUT
\eqref{eq: lattice auxiliary equation} (resp., the KdV 
\eqref{eq:KdV integral form}) with initial data
\(P_{\leq N_0}^h u_{h,0}^\pm\) (resp., \(w_0^\pm\)), where \(N_0=\frac12h^{-\frac25}\). Then, there exist \(h_0\in(0,1]\), \(T=T(R_0)>0\) and  \(C_s>0\), independent of
\(h\in(0,h_0]\) and \(R_s\), such that, for every \(h\in(0,h_0]\),
both solutions exist on \([-T,T]\) and
\begin{equation}\label{ineq: second reduction}
\|\mathfrak{C}_hv_h^\pm-w^\pm\|_{C_t([-T,T];L_x^2(\R))}
\leq
C_s
\|\mathfrak{C}_hu_{h,0}^\pm-w_0^\pm\|_{L_x^2(\R)}
+
C_sR_sh^{\frac{2s}{5}},
\end{equation}
where \(\mathfrak{C}_h\) is the continuation
operator defined in \eqref{eq: definition of Ch}.
\end{lemma}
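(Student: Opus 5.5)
The plan is to transfer the auxiliary equation to the continuum with $\mathfrak{C}_h$ and compare it with KdV in a single Airy Bourgain space $X_\pm^{0,b}$. On the low-frequency range the FPUT and Airy norms are comparable, so both solutions can be measured there, and the high-frequency part of $w^\pm$ is controlled by its Sobolev regularity.

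\textbf{Continuum form of the auxiliary equation.} Set $V^\pm:=\mathfrak{C}_hv_h^\pm$. Since $v_h^\pm=P_{\leq N_0}^hv_h^\pm$ and $2N_0\leq \alpha/h$ for $h\leq h_0$, the product $(v_h^\pm)^2$ has Fourier support in $[-2N_0,2N_0]$ with no frequency wrapping. In that range $\mathfrak{C}_h$ maps lattice products to continuum products: on Fourier support $\hat\varphi(h\xi)=1$, and the lattice convolution equals the continuum one. Applying Lemma~\ref{lem: Low-frequency multiplier identity} to $S_h^\pm(t)$, $\nabla_h$ and $P^h_{\leq N_0}$, I expect $V^\pm$ to solve
\[
V^\pm(t)=\tilde S_h^\pm(t)P_{\leq N_0}\mathfrak{C}_hu_{h,0}^\pm\mp\frac14\int_0^t\tilde S_h^\pm(t-t')P_{\leq N_0}\tilde\nabla_h\bigl(V^\pm(t')^2\bigr)\,dt'.
\]
Here $\tilde S_h^\pm$ and $\tilde\nabla_h$ are the continuum multipliers with the extended symbols $s_h$ and $\frac{2i}{h}\sin(\frac{h\xi}{2})$. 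The continuum Bourgain norm of $V^\pm$ with phase $s_h$ then equals the lattice norm of $v_h^\pm$, up to the factor $|\hat\varphi(h\xi)|=1$; this is the low-frequency identification of Bourgain norms. Hence $\|V^\pm\|_{X^{0,b}_{h,\pm}}\lesssim R_0$ and $\|V^\pm\|_{X^{s,b}_{h,\pm}}\lesssim R_s$ by Proposition~\ref{prop: local uniform bounds for the auxiliary equation}.

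\textbf{Switching to the Airy norm.} For functions supported in $|\xi|\leq 2N_0$, \eqref{ineq: phase difference} gives $|s_h-s|\lesssim h^2N_0^5\lesssim 1$. Therefore $\langle\tau\mp s_h(\xi)\rangle\sim\langle\tau\mp s(\xi)\rangle$ uniformly in $h$, and the two norms are equivalent there. In particular $V^\pm$ is bounded in $X_\pm^{s,b}$.

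\textbf{Difference equation.} Split $w^\pm=P_{\leq N_0}w^\pm+P_{>N_0}w^\pm$. The tail satisfies $\|P_{>N_0}w^\pm\|_{C_tL^2}\lesssim N_0^{-s}R_s\lesssim h^{2s/5}R_s$ by Proposition~\ref{prop: local uniform bounds for KdV}. For $D^\pm:=V^\pm-P_{\leq N_0}w^\pm$, I will write a Duhamel formula in the Airy flow $S^\pm$, with cutoffs $\eta_1$ and $\eta_T$. It contains the following terms:
\begin{itemize}
\item[(a)] the initial difference $P_{\leq N_0}(\mathfrak{C}_hu_{h,0}^\pm-w_0^\pm)$;
\item[(b)] the linear phase error $(\tilde S_h^\pm-S^\pm)$ acting on the data and the Duhamel term of $V^\pm$;
\item[(c)] the symbol error $P_{\leq N_0}(\tilde\nabla_h-\partial_x)(V^\pm)^2$;
\item[(d)] the nonlinear difference $P_{\leq N_0}\partial_x\bigl((V^\pm)^2-(w^\pm)^2\bigr)$.
\end{itemize}
Term (d) splits into $\partial_x\bigl(D^\pm(V^\pm+P_{\leq N_0}w^\pm)\bigr)$ plus terms involving $P_{>N_0}w^\pm$. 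The first part is handled by \eqref{ineq:KdV bilinear estimate} at $s=0$, giving $T^\delta R_0\|D^\pm\|_{X^{0,b}_\pm}$, which is absorbed for small $T$. The second part gets a factor $h^{2s/5}$ from the regularity gap: I measure $P_{>N_0}w^\pm$ in $X^{0,b}$ and use $\|P_{>N_0}w\|_{X^{0,b}_\pm}\lesssim N_0^{-s}\|w\|_{X^{s,b}_\pm}$. Term (c) has symbol error $O(h^2|\xi|^3)\lesssim h^2N_0^2|\xi|$. So it is $\lesssim h^2N_0^2R_0^2\ll h^{2s/5}$ by the bilinear estimate.

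\textbf{Main obstacle: the linear phase error (b).} I would avoid comparing propagators directly. Instead I would treat $V^\pm$ as a solution of the Airy Duhamel formula with an extra forcing $\mp i(s_h-s)(-i\partial_x)V^\pm$: indeed $V^\pm$ solves $\partial_tV=\pm i s_h(-i\partial_x)V+\dots$, which equals $\pm i s(-i\partial_x)V\pm i(s_h-s)(-i\partial_x)V+\dots$. By Lemma~\ref{lem: Xsb properties}(iv), this forcing contributes $T^{1/2-b}\|(s_h-s)V^\pm\|_{X^{0,b-1}_\pm}\lesssim T^{1/2-b}\|(s_h-s)V^\pm\|_{L^2_{t,x}}$. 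Using \eqref{ineq: phase difference} with $|s_h-s|\lesssim h^{2s/5}|\xi|^s$ on $|\xi|\leq N_0$, this is $\lesssim T^{1/2-b}h^{2s/5}\|V^\pm\|_{X^{s,0}}\lesssim h^{2s/5}R_s$. One caution: the factor $T^{1/2-b}$ does not give smallness, and to keep it harmless I would use a $T$-independent bound with the cutoff on $[-2,2]$ and $\|\eta_T V\|_{X^{s,0}}\lesssim \|V\|_{X^{s,b}}$. Collecting (a)–(d), absorbing the $T^\delta R_0\|D^\pm\|$ term, and using the embedding $X^{0,b}_\pm\hookrightarrow C_tL^2$ together with $\|P_{\leq N_0}f\|_{L^2}\leq\|f\|_{L^2}$ will give \eqref{ineq: second reduction}.
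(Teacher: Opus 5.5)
Your proposal is correct, and its overall structure matches the paper's proof. You pass to the continuum form of the auxiliary equation through $\mathfrak{C}_h$ and use the low-frequency equivalence of the $s_h$- and $s$-modulation weights. You control the tail $\|P_{>N_0}w^\pm\|\lesssim N_0^{-s}R_s$, estimate the symbol and nonlinear differences in $X_\pm^{0,-\frac14}$, and absorb the difference term for small $T$.

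The genuine difference is in step (b), the linear phase error. The paper compares the two propagators directly (Lemma~\ref{lem: Low-frequency comparison of the FPUT and Airy flows}). It expands $e^{\pm its_h(\xi)}-e^{\pm its(\xi)}$ by the fundamental theorem of calculus in an interpolation parameter $\theta$, measures the result in the interpolated spaces $X^{s,b}_{\pm,\theta}$ with the weight $t\eta_T(t)$, and for the Duhamel part applies the bilinear estimate at the $X^{s,b-1+\delta}$ level to $\nabla_h\{(v_h^\pm)^2\}$ (term (III)). You keep only the Airy propagator and turn the mismatch into a linear source $\pm i(s_h-s)(-i\partial_x)$ acting on the FPUT evolution. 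Since $b-1<0$, this source can be measured crudely in $L^2_{t,x}$. With $|s_h-s|\lesssim h^{\frac{2s}{5}}|\xi|^s$ on $|\xi|\le N_0$, it then needs only the a priori bound $\sup_t\|V^\pm(t)\|_{H^s}\lesssim R_s$. This handles the data and Duhamel parts at once and avoids the interpolated Bourgain spaces and the second bilinear estimate. The cost is that this term carries no power of $T$, which is harmless because it is a pure source term.

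Three small bookkeeping points.
\begin{enumerate}
\item If you use the paper's global extensions built with $\eta_1$ and $\eta_T$, then $V^\pm=\eta_1U^\pm$, where $U^\pm$ is the uncut evolution. The exact Airy--Duhamel identity holds with source $\pm i(s_h-s)(-i\partial_x)U^\pm$ on the support of $\eta_1$, not $\eta_TV^\pm$. This is still fine: $\eta_T$ vanishes for $|t|\ge2T$ and $2T\le1$, so $\sup_t\|U^\pm(t)\|_{H^s}=\sup_{|t|\le2T}\|V^\pm(t)\|_{H^s}\lesssim R_s$.
\item In (c), use $\|V^\pm\|_{X_\pm^{0,b}}^2\le\|V^\pm\|_{X_\pm^{0,b}}\|V^\pm\|_{X_\pm^{s,b}}$. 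The error is then $\lesssim h^{\frac65}R_0R_s$ rather than $h^{\frac65}R_0^2$, and it fits the required form $C_sR_sh^{\frac{2s}{5}}$ with $C_s$ independent of $R_s$.
\item Your splitting in (d), which isolates $D^\pm$ and gains $N_0^{-s}$ on the $P_{>N_0}w^\pm$ pieces, is a valid variant. The paper instead absorbs the full $\|\mathfrak{C}_hv_h^\pm-w^\pm\|_{X_\pm^{0,b}}$, including the high-frequency tail.
\end{enumerate}
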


The proof follows a similar strategy to that of
Lemma~\ref{lem: first reduction}, but a key difference is that we must compare a function on \(h\Z\) with one on \(\R\). To make this comparison, we set \(N_0=\frac12h^{-\frac25}\), take \(h\) sufficiently small so that \(2N_0\leq\frac{\alpha}{h}\), and use the continuation
operator \(\mathfrak{C}_h\). Indeed, since Bourgain norms are sensitive to the underlying phase functions, localization to low frequencies is important for comparing the FPUT and Airy Bourgain norms uniformly in \(h\). This is the main reason for introducing the auxiliary equation
\eqref{eq: lattice auxiliary equation}.

We extend the FPUT phase \(s_h(\xi)\) from \(\T_h\) to \(\R\)
using the same formula \eqref{eq: FPUT phase function}, and define
the corresponding flow \(\widetilde{S}_h^\pm(t)\) on \(L_x^2(\R)\) by
$$
\mathcal{F}_x\big(\widetilde{S}_h^\pm(t)f\big)(\xi)
=e^{\pm its_h(\xi)}\hat{f}(\xi),
\qquad \xi\in\R.
$$
Let \(\widetilde{X}_{h,\pm}^{s,b}\) denote the Bourgain space associated
with this flow, equipped with the norm
$$
\|u\|_{\widetilde{X}_{h,\pm}^{s,b}(\R\times\R)}
:=
\big\|
\la\xi\ra^s\la\tau\mp s_h(\xi)\ra^b\tilde{u}(\tau,\xi)
\big\|_{L_{\tau,\xi}^2(\R\times\R)}.
$$
On the localized frequency range, the continuation operator gives
an exact identification between the lattice Bourgain norm and
the continuum Bourgain norm associated with this extended FPUT flow.

\begin{lemma}[Low-frequency identification of Bourgain norms]\label{lemma: Low-frequency identification of Bourgain norms}
Let $s,b\in\R$ and $0<N\leq \frac{\alpha}{h}$.
Then
\begin{equation}\label{eq: equality of the continuum lattice Xsb norms}
\|\mathfrak{C}_hP_{\leq N}^hu_h\|_{\widetilde{X}_{h,\pm}^{s,b}(\R\times\R)}
=\|P_{\leq N}^hu_h\|_{X_{h,\pm}^{s,b}(\R\times h\Z)}  .
\end{equation}
\end{lemma}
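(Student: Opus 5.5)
The plan is to compute both sides directly on the Fourier side and compare them frequency by frequency. The space-time Fourier transform of a function on $\R\times h\Z$ is taken in $t$ and in $z$ separately. We therefore apply \eqref{eq: localized Ch FT} pointwise in time and then take the Fourier transform in $t$. This gives, for a.e.\ $(\tau,\xi)\in\R\times\R$,
\[
\widetilde{\mathfrak{C}_hP_{\leq N}^hu_h}(\tau,\xi)
=\mathbbm{1}_{[-N,N]}(\xi)\hat{\varphi}(h\xi)\widetilde{u_h}([\xi])\big|_{\tau}.
\]
The time transform commutes with $\mathfrak{C}_h$ because $\mathfrak{C}_h$ acts only in the spatial variable and is bounded on $L^2$. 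One can justify this first for Schwartz-in-time finite sums and then pass to the limit by density.

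Next we use that $N\leq\frac{\alpha}{h}<\frac{\pi}{h}$. On the support $|\xi|\leq N$ we then have $[\xi]=\xi$, and \eqref{phi assumption 2} gives $|\hat{\varphi}(h\xi)|=1$, since $|h\xi|\leq\alpha$. Consequently the continuum norm becomes
\[
\|\mathfrak{C}_hP_{\leq N}^hu_h\|_{\widetilde{X}_{h,\pm}^{s,b}}^2
=\int_\R\int_{-N}^{N}\la\xi\ra^{2s}\la\tau\mp s_h(\xi)\ra^{2b}|\widetilde{u_h}(\tau,\xi)|^2\,d\xi\,d\tau,
\]
up to the common normalizing constant $(2\pi)^{-2}$ if Plancherel normalization is built into the norms; this constant appears identically on both sides.

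On the lattice side, the space-time transform of $P_{\leq N}^hu_h$ is $\mathbbm{1}_{[-N,N]}(\xi)\widetilde{u_h}(\tau,\xi)$ for $\xi\in[-\frac{\pi}{h},\frac{\pi}{h})$. The lattice weight is $\la\xi\ra^{s}\la\tau\mp s_h(\xi)\ra^{b}$, and on $[-N,N]$ the extended phase coincides with the lattice phase because $[\xi]=\xi$ there. Hence the lattice norm is given by the same integral, and the two sides are equal, which proves \eqref{eq: equality of the continuum lattice Xsb norms}.

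The only delicate point is the first identity: one must ensure that no periodic copies of the frequency support contribute. Without the cutoff, $\hat{\varphi}(h\xi)\widetilde{\mathcal{F}}_h$ also has mass near $|\xi|\sim\frac{2\pi}{h}$. This is exactly what \eqref{eq: localized Ch FT} rules out: $|\hat\varphi|=1$ on the base copy forces $\hat\varphi(h\xi+2\pi k)=0$ for $k\neq0$ by \eqref{phi assumption 1}.

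The argument uses no property of the weights beyond their agreement on $[-N,N]$. It therefore holds for all $s,b\in\R$, and both norms are finite or infinite simultaneously.
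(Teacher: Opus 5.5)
Your proposal is correct and follows essentially the same route as the paper: you apply \eqref{eq: localized Ch FT} to reduce the continuum norm to an integral over $|\xi|\leq N$ weighted by $|\hat{\varphi}(h\xi)|^2$, and then use $|\hat{\varphi}(h\xi)|=1$ there by \eqref{phi assumption 2}. Your extra remarks, that the time Fourier transform commutes with $\mathfrak{C}_h$ and that $[\xi]=\xi$ on the support, only make explicit what the paper's one-line proof leaves implicit.
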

\begin{proof}
By \eqref{eq: localized Ch FT}, we have
$$
\|\mathfrak{C}_hP_{\leq N}^hu_h\|_{\widetilde{X}_{h,\pm}^{s,b}(\R\times\R)}^2
=
\int_\R\int_{-N}^N\la\xi\ra^{2s}\la\tau\mp s_h(\xi)\ra^{2b}|\hat{\varphi}(h\xi)|^2|\tilde{u}_h(\tau, \xi)|^2  d\xi  d\tau.
$$
Since $|\hat{\varphi}(h\xi)|=1$ for $|\xi|\leq N\leq \frac{\alpha}{h}$, \eqref{eq: equality of the continuum lattice Xsb norms} follows.
\end{proof}
Since \(P_{\leq N_0}^hv_h^\pm=v_h^\pm\), applying \(\mathfrak{C}_h\)
to \eqref{eq: lattice auxiliary equation} and using
\eqref{eq: commutativity of multipliers in low frequency} gives the
following continuum representation on the interval of existence of
\(v_h^\pm\):
\begin{equation}\label{eq: continuum auxiliary equation}
\mathfrak{C}_hv_h^\pm(t)
=\widetilde{S}_h^\pm(t)\mathfrak{C}_hP_{\leq N_0}^hu_{h,0}^\pm 
\mp\frac{1}{4}
\int_0^t\widetilde{S}_h^\pm(t-t') m_h(-i\partial_x)\mathfrak{C}_hP_{\leq N_0}^h\big\{(v_h^\pm(t'))^2\big\} dt',
\end{equation}
where $m_h(-i\partial_x)$ is the Fourier multiplier on $\R$ defined by
$$
\big(m_h(-i\partial_x)f\big)\widehat{\hphantom{a}}(\xi)=m_h(\xi)\hat{f}(\xi),
\qquad \xi\in\R\,     \textup{ and }\,     f\in L_x^2(\R).
$$
The low-frequency comparison of \eqref{eq: continuum auxiliary equation}
with \eqref{eq:KdV integral form} requires estimates for three discrepancies:
the difference between
the FPUT and Airy propagators, the difference between
\(m_h(-i\partial_x)\) and \(\pa_x\), and the product discrepancy
\(\mathfrak{C}_h\{(v_h^\pm)^2\}-(\mathfrak{C}_hv_h^\pm)^2\).
We estimate these errors in
Lemmas~\ref{lem: Low-frequency comparison of the FPUT and Airy flows},
\ref{lem: Low-frequency comparison of derivative symbols}, and
\ref{lem: Low-frequency product discrepancy for Ch}, respectively.

We begin with the linear flows. Although their Bourgain norms need not
be uniformly comparable at arbitrary frequencies, the phase difference
is bounded on \(\{|\xi|\leq N_0\}\). This gives a low-frequency norm
equivalence and allows us to estimate the difference of the two flows
in the Airy Bourgain space.
\begin{lemma}[Low-frequency comparison of the FPUT and Airy flows]\label{lem: Low-frequency comparison of the FPUT and Airy flows}
For $0<T\leq1$, set $\eta_T(t)=\eta(\frac{t}{T})$ where $\eta\in C_c^\infty(\R)$ is a smooth time cut-off. Then, for every $s,b\in \R$, we have
\begin{align}
\|P_{\leq N_0}w\|_{X_{\pm}^{s,b}}
&\sim
\|P_{\leq N_0}w\|_{\widetilde{X}_{h,\pm}^{s,b}},\label{ineq: low frequency comparison estimate 1}\\
\|P_{\leq N_0}^hu_h\|_{X_{h,\pm}^{s,b}}
&\sim
\|\mathfrak{C}_hP_{\leq N_0}^hu_h\|_{X_{\pm}^{s,b}}.\label{ineq: Xsb norm equivalence of Ch in low frequency}
\end{align}
Moreover, if $0\leq s\leq 5$ and $\frac{1}{2}<b\leq 1$, then
\begin{equation}\label{ineq: low frequency comparison estimate 2}
\big\|\eta_T(t)\big(\widetilde{S}_{h}^\pm(t)-S^\pm(t)\big)P_{\leq N_0}w_0\big\|_{X_{\pm}^{0,b}}
\ls
h^{\frac{2s}{5}}T^{\frac{3}{2}-b}\|w_0\|_{H_x^s(\R)},
\end{equation}
\begin{equation}\label{ineq: low frequency comparison estimate 3}
\bigg\|
\eta_T(t)\int_0^t\big(\widetilde{S}_{h}^\pm(t-t')-S^\pm(t-t')\big)\eta_T(t')(P_{\leq N_0}F)(t')dt'
\bigg\|_{X_{\pm}^{0,b}}
\ls
h^{\frac{2s}{5}}T^{\frac{3}{2}-b}\|F\|_{X_{\pm}^{s,-(1-b)}}.
\end{equation}
\end{lemma}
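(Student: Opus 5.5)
The plan has four steps: two norm equivalences, then the linear-flow estimate, then the Duhamel estimate.

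\textbf{Step 1: the equivalence \eqref{ineq: low frequency comparison estimate 1}.} On the support $|\xi|\leq N_0$, the phase difference bound \eqref{ineq: phase difference} (with $s=0$ there, or directly) gives
\[
|s_h(\xi)-s(\xi)|\lesssim h^2|\xi|^5\leq h^2N_0^5\lesssim 1 .
\]
Hence
\[
\la\tau\mp s_h(\xi)\ra\sim\la\tau\mp s(\xi)\ra
\]
uniformly in $h$, $\tau$, and $|\xi|\leq N_0$. Raising this to the power $b$ (positive or negative) and inserting it into the definitions of the two norms gives \eqref{ineq: low frequency comparison estimate 1} for all $s,b\in\R$.

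\textbf{Step 2: the equivalence \eqref{ineq: Xsb norm equivalence of Ch in low frequency}.} Since $N_0\leq\frac{\alpha}{h}$ for small $h$, we have $\mathfrak{C}_hP^h_{\leq N_0}=P_{\leq N_0}\mathfrak{C}_h$ by \eqref{eq: localized Ch FT}. Lemma \ref{lemma: Low-frequency identification of Bourgain norms} then identifies the lattice norm with the $\widetilde X_{h,\pm}^{s,b}$ norm of $\mathfrak{C}_hP^h_{\leq N_0}u_h$. Step 1 applied to $w=\mathfrak{C}_hu_h$ converts this into the $X_\pm^{s,b}$ norm.

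\textbf{Step 3: the linear flow estimate \eqref{ineq: low frequency comparison estimate 2}.} Write
\[
\big(\widetilde S_h^\pm(t)-S^\pm(t)\big)f=S^\pm(t)\big(e^{\pm it(s_h-s)(-i\partial_x)}-1\big)f .
\]
This has the form $\eta_T(t)S^\pm(t)g(t)$, with $\hat g(t,\xi)=(e^{\pm it\phi(\xi)}-1)\hat f(\xi)$ and $\phi=s_h-s$. By Step 1, $|\phi|\lesssim 1$ on the support. The $X^{0,b}_\pm$ norm equals $\|\eta_T(t)(e^{\pm it\phi}-1)\hat f(\xi)\|_{L^2_\xi H^b_t}$. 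For fixed $\xi$, set $\psi(t)=\eta(t)(e^{\pm iTt\phi}-1)$. Since $|T\phi|\lesssim1$, every derivative of order at most two of $\psi$ is bounded by $|T\phi|$, so $\|\psi\|_{H^1}\lesssim T|\phi|$. Scaling $t\mapsto Tt$ gives
\[
\|\eta_T(e^{\pm it\phi}-1)\|_{H^b_t}\lesssim T^{\frac12-b}\,T|\phi(\xi)| .
\]
Finally, $|\phi(\xi)|\lesssim h^{\frac{2s}5}|\xi|^s$ by \eqref{ineq: phase difference}. Integrating in $\xi$ yields $h^{\frac{2s}{5}}T^{\frac32-b}\|f\|_{H^s}$.

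\textbf{Step 4: the Duhamel estimate \eqref{ineq: low frequency comparison estimate 3}, the main obstacle.} Here I would work on the Fourier side in $(\tau,\xi)$ for fixed $\xi$, where the problem becomes one-dimensional. Write $G=\eta_T P_{\leq N_0}F$ and $\hat G(t,\xi)=e^{\pm its(\xi)}\hat g(t,\xi)$, so that the difference of the two Duhamel terms has spatial Fourier transform
\[
e^{\pm its(\xi)}\int_0^t\big(e^{\pm i(t-t')\phi(\xi)}-1\big)\hat g(t',\xi)\,dt' .
\]
The strategy is the standard proof of Lemma \ref{lem: Xsb properties}(iv): decompose in the modulation $\la\lambda\ra\lessgtr T^{-1}$, where $\lambda=\tau\mp s(\xi)$. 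I would then expand
\[
e^{\pm i(t-t')\phi}-1=\sum_{k\ge1}\frac{(\pm i\phi)^k(t-t')^k}{k!} .
\]
Each factor $(t-t')^k$, localized by the cutoffs $\eta_T(t)$ and $\eta_T(t')$, costs $T^k$, and the resulting series in $T|\phi|\lesssim1$ sums geometrically. The difficulty is to keep the weight $\la\tau\mp s\ra^{b-1}$ on $F$ rather than losing $\la\tau\ra$ factors. To handle this, I would instead use the representation
\[
\int_0^t e^{\pm i(t-t')\phi}(\cdots)\,dt'
\]
as the Duhamel operator associated with the modified phase $s+\phi=s_h$. Applying Lemma \ref{lem: Xsb properties}(iv) in $\widetilde X_{h,\pm}$ and in $X_\pm$, together with Step 1 to pass between the two norms, gives uniform control but no smallness. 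Smallness then comes from writing the difference as
\[
\int_0^t\int_{t'}^{t}\big(\pm i\phi\big)\,e^{\pm i(t-t'')\phi}\,dt''\,\hat g(t')\,dt' ,
\]
which is a double Duhamel integral. Applying the inhomogeneous estimate twice (once with a cutoff, gaining $T$ from the extra time integration) and bounding $|\phi|\lesssim h^{\frac{2s}5}\la\xi\ra^s$ gives the factor $h^{\frac{2s}{5}}T^{\frac32-b}$. The bookkeeping of the powers of $T$ in this last step is what I expect to require the most care.
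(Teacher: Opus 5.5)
Steps 1 and 2 agree with the paper. Step 3 is correct but takes a different route. The paper expands $e^{\pm its_h}-e^{\pm its}$ by the fundamental theorem of calculus in the interpolated phase $\theta s_h+(1-\theta)s$, then applies Lemma~\ref{lem: Xsb properties}(ii) with the cutoff profile $t\eta(t)$ in the equivalent interpolated Bourgain space. You instead factor out $S^\pm(t)$ and compute $\|\eta_T(t)(e^{\pm it\phi(\xi)}-1)\|_{H_t^b}\lesssim T^{\frac32-b}|\phi(\xi)|$ directly, which is more elementary and avoids interpolated spaces.

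The gap is Step~4, i.e.\ \eqref{ineq: low frequency comparison estimate 3}, which you leave as a sketch. Your double-integral identity is correct: the difference is $\pm i\,\phi(-i\pa_x)$ applied to the $s_h$-Duhamel integral of the Airy Duhamel term $K(t'')=\int_0^{t''}S^\pm(t''-t')G(t')\,dt'$. However, the mechanism you name does not give the stated bound. With Lemma~\ref{lem: Xsb properties} as stated, the inner application of (iv) puts $\phi K$ in $X_\pm^{0,b}$, while the outer application needs its input in $X_\pm^{0,b-1}$. The only tool in that lemma that turns this unit of modulation into a power of $T$ is (iii). It raises the exponent from $b-1$ only to some $b'<\frac12$, gaining $T^{b'-b+1}$, while each use of (iv) costs $T^{\frac12-b}$. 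The net factor is therefore $T^{\frac52-3b-\epsilon}$. This is strictly weaker than $T^{\frac32-b}$, and it is not even small for $b\geq\frac56$, a range the statement allows. Even a lossless form of (iv) would only give $T^{\frac32-b-\epsilon}$. To close your route you need an estimate you do not state, for instance
\[
\Big\|\eta_T(t)\int_0^t\widetilde{S}_h^\pm(t-t'')K(t'')\,dt''\Big\|_{\widetilde{X}_{h,\pm}^{0,b}}
\lesssim T^{\frac32-b}\sup_{|t''|\leq 2T}\|K(t'')\|_{L_x^2},
\]
which follows by interpolating the $L_t^2$ and $H_t^1$ norms of the profile. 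You would combine it with the lossless bound $\sup_{|t|\leq2}\|\int_0^tS^\pm(t-t')G(t')\,dt'\|_{H_x^s}\lesssim\|G\|_{X_\pm^{s,b-1}}$.

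The simpler missing idea is the one you discarded: powers of time never touch the modulation weight; they only change the cutoffs. The paper writes
\[
e^{\pm i(t-t')s_h}-e^{\pm i(t-t')s}=\pm i(t-t')(s_h-s)\int_0^1e^{\pm i(t-t')\{\theta s_h+(1-\theta)s\}}\,d\theta,
\]
splits $t-t'$ into $t$ and $-t'$, and absorbs these factors via $t\eta_T(t)=T\cdot\frac{t}{T}\eta(\frac{t}{T})$ and the same identity in $t'$. A single application of (iv) in the interpolated-phase space, which is equivalent to $X_\pm$ on $\{|\xi|\leq N_0\}$, then gives $T\cdot T^{\frac12-b}$, and $|s_h-s|\lesssim h^{\frac{2s}{5}}|\xi|^s$ supplies the factor of $h$. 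Your power series works the same way once $(t-t')^k$ is expanded binomially. The modified cutoffs $t^j\eta(t)$ make the $k$-th term at most $\frac{(CT|\phi|)^k}{k!}$ times the (iv) bound, and these sum to $O(T|\phi|)$ because $T|\phi|\lesssim1$. So no $\la\tau\ra$ factors are lost.
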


\begin{proof}
For $|\xi|\leq N_0$, Taylor's theorem yields
\begin{equation}\label{ineq: phase difference0}
|s_h(\xi)-s(\xi)|\ls h^2|\xi|^5\ls 1,
\end{equation}
so that $\la \tau\mp\{\theta s_h(\xi)+(1-\theta)s(\xi)\}\ra
\sim \la\tau \mp s(\xi)\ra$ for any $\theta\in[0,1]$. If $P_{\leq N_0}u=u$, then
\begin{equation}\label{ineq: interpolated Xsb equivalence}
\|u\|_{X_{\pm,\theta}^{s,b}}
:=
\big\|\la\xi\ra^s\la \tau\mp\{\theta s_h(\xi)+(1-\theta)s(\xi)\}\ra^b\tilde{u}(\tau, \xi)\big\|_{L_{\tau, \xi}^2}
\sim
\|u\|_{X_{\pm}^{s,b}}.
\end{equation}
Taking $\theta=1$ proves \eqref{ineq: low frequency comparison estimate 1}. Also, \eqref{eq: equality of the continuum lattice Xsb norms} and \eqref{ineq: low frequency comparison estimate 1} give \eqref{ineq: Xsb norm equivalence of Ch in low frequency}:
$$
\|P_{\leq N_0}^hu_h\|_{X_{h,\pm}^{s,b}(\R\times h\Z)}
=
\|\mathfrak{C}_hP_{\leq N_0}^hu_h\|_{\widetilde{X}_{h,\pm}^{s,b}(\R\times\R)}
\sim
\|\mathfrak{C}_hP_{\leq N_0}^hu_h\|_{X_{\pm}^{s,b}(\R\times\R)}.
$$
In the time-cutoff estimates below, we also use
\(t\eta_T(t)=T\cdot\frac{t}{T}\eta(\frac{t}{T})\) and apply the same estimates with the
fixed smooth profile \(t\mapsto t\eta(t)\).
By the fundamental theorem of calculus,
\begin{equation}\label{eq: phase difference FTC}
e^{\pm its_h(\xi)}-e^{\pm its(\xi)}
=
\pm it\big(s_h(\xi)-s(\xi)\big)\int_0^1 e^{\pm it\{\theta s_h(\xi)+(1-\theta)s(\xi)\}}d\theta.
\end{equation}
Using \eqref{ineq: interpolated Xsb equivalence}, \eqref{ineq: phase difference} and Lemma~\ref{lem: Xsb properties}, we obtain \eqref{ineq: low frequency comparison estimate 2}:
$$
\begin{aligned}
&\big\|\eta_T(t)\big(\widetilde{S}_{h}^\pm(t)-S^\pm(t)\big)P_{\leq N_0}w_0\big\|_{X_{\pm}^{0,b}}\\
&\leq
\int_0^1
\Big\|t\eta_T(t)e^{\pm it\{\theta s_h(-i\partial_x)+(1-\theta)s(-i\partial_x)\}}\big\{s_h(-i\partial_x)-s(-i\partial_x)\big\}P_{\leq N_0}w_0\Big\|_{X_\pm^{0,b}}  d\theta\\
&
\ls
h^{\frac{2s}{5}}
\int_0^1
\Big\|t\eta_T(t)e^{\pm it\{\theta s_h(-i\partial_x)+(1-\theta)s(-i\partial_x)\}}P_{\leq N_0}w_0\Big\|_{{X_{\pm, \theta}^{s,b}}}  d\theta
\ls h^{\frac{2s}{5}}T^{\frac{3}{2}-b}\|w_0\|_{H_x^s(\R)}.
\end{aligned}
$$
Similarly, the inhomogeneous estimate \eqref{ineq: low frequency comparison estimate 3} follows by the same phase expansion, splitting the factor $t-t'$ into $t$ and $-t'$ and applying the inhomogeneous estimate in Lemma~\ref{lem: Xsb properties}.
\end{proof}
We next compare the lattice derivative multiplier with the continuum
derivative. Taylor expansion on \(\{|\xi|\leq N_0\}\) gives the following
estimate, which holds for every temporal exponent and will be used
with exponent \(-\frac14\).
\begin{lemma}[Low-frequency comparison of derivative symbols]\label{lem: Low-frequency comparison of derivative symbols}
For $0\leq s\leq1$ and $b\in\R$, one has
\begin{equation}\label{ineq: derivative symbol difference}
\big\|P_{\leq N_0}\{m_h(-i\partial_x)-\pa_x\}u\big\|_{X_\pm^{0,b}}
\ls
h^{\frac{2s}{5}}\|\pa_xu\|_{X_\pm^{s,b}}.
\end{equation}
\end{lemma}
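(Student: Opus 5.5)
Since every operator in \eqref{ineq: derivative symbol difference} is a Fourier multiplier in $x$ alone, the modulation weight $\la\tau\mp s(\xi)\ra^b$ passes through unchanged. By Plancherel in $(\tau,\xi)$, the estimate therefore reduces to a pointwise symbol bound on the frequency support $\{|\xi|\leq N_0\}$. Precisely, it suffices to show
\[
\mathbbm{1}_{\{|\xi|\leq N_0\}}\,\big|m_h(\xi)-i\xi\big|
\;\lesssim\; h^{\frac{2s}{5}}\,\la\xi\ra^{s}|\xi|
\qquad\text{uniformly in } h\in(0,1] \text{ and } \xi\in\R.
\]
Multiplying both sides by $\la\tau\mp s(\xi)\ra^b|\tilde u(\tau,\xi)|$ and taking $L^2_{\tau,\xi}$ norms then yields \eqref{ineq: derivative symbol difference}, because the right-hand side is exactly the symbol of $\la\xi\ra^s|\pa_x|$ and hence matches $\|\pa_x u\|_{X_\pm^{s,b}}$.

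The pointwise bound follows from Taylor's theorem. Since $m_h(\xi)=\frac{2i}{h}\sin(\frac{h\xi}{2})$, and $|\xi|\leq N_0<\frac{\pi}{h}$ so that $[\xi]=\xi$, the expansion of the sine gives
\[
|m_h(\xi)-i\xi| = \frac{2}{h}\Big|\tfrac{h\xi}{2}-\sin\big(\tfrac{h\xi}{2}\big)\Big| \lesssim h^2|\xi|^3 .
\]
It remains to check that $h^2|\xi|^2\lesssim h^{\frac{2s}{5}}\la\xi\ra^s$ on $|\xi|\leq N_0$. Because $0\leq s\leq 1\leq 2$, we can estimate
\[
h^2|\xi|^2 \leq h^2 N_0^{2-s}|\xi|^{s} \lesssim h^{2-\frac{2(2-s)}{5}}\la\xi\ra^s = h^{\frac{6}{5}+\frac{2s}{5}}\la\xi\ra^s \leq h^{\frac{2s}{5}}\la\xi\ra^s,
\]
using $N_0\sim h^{-2/5}$ and $h\leq 1$. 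This is in fact better than required, by a factor $h^{6/5}$.

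The argument contains no real obstacle. The only points needing care are that $N_0<\pi/h$, so that no periodic representative $[\xi]$ issue arises on the support, and that the exponent bookkeeping remains uniform for all $s\in[0,1]$.
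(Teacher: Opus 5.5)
Your proposal is correct and follows the paper's proof: the Taylor bound $|m_h(\xi)-i\xi|\lesssim h^2|\xi|^3\lesssim h^{\frac{6+2s}{5}}|\xi|\la\xi\ra^s$ on $\{|\xi|\leq N_0\}$, followed by the definition of the $X_\pm^{s,b}$ norm. Your added check that $N_0<\frac{\pi}{h}$, so that $[\xi]=\xi$ on the support, is a detail the paper leaves implicit.
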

\begin{proof}
By Taylor expansion, for $|\xi|\leq N_0$, we obtain
$$
|m_h(\xi)-i\xi|\ls h^2|\xi|^3\ls h^2N_0^{2-s}|\xi|\la\xi\ra^s 
\ls h^{\frac{6+2s}{5}}|\xi|\la\xi\ra^s\ls h^{\frac{2s}{5}}|\xi|\la\xi\ra^s,
$$
since $0<h\leq 1$. Therefore, by the definition of the $X_\pm^{s,b}$ norm, \eqref{ineq: derivative symbol difference} follows.
\end{proof}

The remaining error comes from the failure of \(\mathfrak{C}_h\) to
preserve products. The frequency restriction below prevents wrapping
across the boundary of the fundamental domain $[-\frac{\pi}{h},\frac{\pi}{h})$.
\begin{lemma}[Low-frequency product identity]\label{lem: Low-frequency product discrepancy for Ch}
Let $0<2N\leq \frac{\alpha}{h}$. If $P_{\leq N}^hu_h=u_h$ and $P_{\leq N}^hv_h=v_h$, then
\begin{equation}\label{ineq: almost multiplicativity}
\mathfrak{C}_h(u_h v_h)=(\mathfrak{C}_hu_h)(\mathfrak{C}_hv_h)
\end{equation}
\end{lemma}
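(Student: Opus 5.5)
We will compare the Fourier transforms of the two sides on $\R$. By \eqref{eq: Ch FT}, for almost every $\xi\in\R$,
\[
\mathcal{F}_x\big(\mathfrak{C}_h(u_hv_h)\big)(\xi)=\hat{\varphi}(h\xi)\,\widetilde{\mathcal{F}}_h(u_hv_h)(\xi).
\]
The first step is to locate the Fourier support of $u_hv_h$ on $\T_h$. By the lattice convolution formula, $\mathcal{F}_h(u_hv_h)(\xi)=\frac{1}{2\pi}\int_{-N}^{N}\widehat{u_h}([\xi-\xi_1])\widehat{v_h}(\xi_1)\,d\xi_1$. The integrand vanishes unless $|[\xi-\xi_1]|\le N$. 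Since $|\xi_1|\le N$ and $2N\le\frac{\alpha}{h}<\frac{\pi}{h}$, the representative of $\xi-\xi_1$ is never wrapped when $|\xi|\le 2N$. Indeed, a wrapped representative would force $|\xi|\ge\frac{\pi}{h}-N$ by \eqref{ineq: triangle inequality on Th for wrapped frequency}, and we will check directly that such $\xi$ contribute nothing.

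Thus, for $\xi\in[-\frac{\pi}{h},\frac{\pi}{h})$,
\[
\mathcal{F}_h(u_hv_h)(\xi)=\frac{1}{2\pi}\int_{\R}\widehat{u_h}(\xi-\xi_1)\mathbbm 1_{[-N,N]}(\xi-\xi_1)\,\widehat{v_h}(\xi_1)\mathbbm 1_{[-N,N]}(\xi_1)\,d\xi_1 .
\]
This is supported in $|\xi|\le 2N$. Consequently $P^h_{\le 2N}(u_hv_h)=u_hv_h$, and the lattice product coincides with the ordinary (non-periodic) convolution of the low-frequency pieces.

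Next we compute the right-hand side. By \eqref{eq: localized Ch FT} with $N\le\frac{\alpha}{h}$, we have $\mathcal{F}_x(\mathfrak{C}_hu_h)(\xi)=\mathbbm 1_{[-N,N]}(\xi)\hat\varphi(h\xi)\widehat{u_h}(\xi)$, and $\hat\varphi(h\xi)=1$ there by \eqref{phi assumption 2}. The same holds for $v_h$. Hence
\[
\mathcal{F}_x\big((\mathfrak{C}_hu_h)(\mathfrak{C}_hv_h)\big)(\xi)=\frac{1}{2\pi}\int_\R \mathbbm 1_{[-N,N]}(\xi-\xi_1)\widehat{u_h}(\xi-\xi_1)\,\mathbbm 1_{[-N,N]}(\xi_1)\widehat{v_h}(\xi_1)\,d\xi_1,
\]
which is supported in $|\xi|\le 2N$.

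For the left-hand side, we apply \eqref{eq: localized Ch FT} with $2N$ in place of $N$, which is allowed since $2N\le\frac{\alpha}{h}$. Using $P^h_{\le 2N}(u_hv_h)=u_hv_h$, this gives $\mathcal{F}_x(\mathfrak{C}_h(u_hv_h))(\xi)=\mathbbm 1_{[-2N,2N]}(\xi)\hat\varphi(h\xi)\mathcal{F}_h(u_hv_h)(\xi)$. Here again $\hat\varphi(h\xi)=1$ because $2N h\le\alpha$. Comparing with the convolution formula from the first step, the two transforms agree almost everywhere, and Plancherel yields \eqref{ineq: almost multiplicativity}.

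The only delicate point is the no-wrapping claim, so we spell it out. For $\xi\in[-\frac{\pi}{h},\frac{\pi}{h})$ and $|\xi_1|\le N$, suppose the integrand is nonzero, so $|[\xi-\xi_1]|\le N$. Then $\xi-\xi_1$ differs from $[\xi-\xi_1]$ by a multiple of $\frac{2\pi}{h}$, and $|\xi-\xi_1|<\frac{\pi}{h}+N<\frac{2\pi}{h}-N$. This forces $\xi-\xi_1=[\xi-\xi_1]$, which in turn gives $|\xi|\le 2N$. This uses only $2N<\frac{\pi}{h}$, which follows from $\alpha<\pi$.

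All integrals converge absolutely, since $\widehat{u_h},\widehat{v_h}\in L^2$ on compact sets and hence lie in $L^1$ there. The identity therefore holds pointwise almost everywhere.
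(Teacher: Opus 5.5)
Your proof is correct and follows the paper's argument: there is no frequency wrapping because $2N<\frac{\pi}{h}$, and then \eqref{eq: localized Ch FT} is applied at scales $N$ and $2N$ together with $\hat\varphi(h\xi)=1$ for $|h\xi|\leq\alpha$; you simply write it out in full. One blemish is your preliminary remark that $\xi-\xi_1$ is never wrapped whenever $|\xi|\leq 2N$, which can fail literally when $3N\geq\frac{\pi}{h}$ (possible if $\alpha>\frac{2\pi}{3}$). This is harmless, because your final paragraph correctly claims no wrapping only where the integrand is nonzero, and that is all the argument needs.
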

\begin{proof}
The condition $2N\leq\frac{\alpha}{h}<\frac{\pi}{h}$ prevents frequency wrapping in $u_h v_h$. Thus, by \eqref{eq: localized Ch FT} and \eqref{phi assumption 2}, our lemma follows.
\end{proof}

We now prove the second reduction. The high-frequency part is controlled by the Sobolev tail of the KdV solution, while the preceding three estimates control the low-frequency comparison.
\begin{proof}[Proof of Lemma \ref{lem: second reduction}]
Fix \(0<\delta<\frac14\) and
\(\frac12<b<\frac34-\delta\). Let $T\in(0,\frac{1}{2}]$ be sufficiently small, depending only on $R_0$,
to be specified below. By
Propositions~\ref{prop: local uniform bounds for KdV}
and~\ref{prop: local uniform bounds for the auxiliary equation}, solutions \(v_h^\pm\) and \(w^\pm\) exist on \([-T,T]\).
We use the global extensions constructed as fixed points of the
corresponding time-cutoff equations with the same \(\eta_1\) and
\(\eta_T\). For each sufficiently small \(T\) chosen below, these
extensions satisfy
\begin{equation}\label{ineq: uniform Xsb bound for local KdV limit}
\|v_h^\pm\|_{X_{h,\pm}^{0,b}}
+
\|w^\pm\|_{X_\pm^{0,b}}
\lesssim R_0,
\qquad
\|v_h^\pm\|_{X_{h,\pm}^{s,b}}
+
\|w^\pm\|_{X_\pm^{s,b}}
\lesssim R_s.
\end{equation}
We take \(h_0=\min\{1,\alpha^{5/3}\}\) so that \(2N_0=h^{-\frac25}\leq\frac{\alpha}{h}\). 

For the difference, we write 
\begin{equation}\label{ineq: Cv-w decomposition}
\|\mathfrak{C}_hv_h^\pm-w^\pm\|_{X_{\pm}^{0,b}}
\leq
\|P_{\leq N_0}(\mathfrak{C}_hv_h^\pm-w^\pm)\|_{X_{\pm}^{0,b}}
+\|P_{>N_0}w^\pm\|_{X_{\pm}^{0,b}}.
\end{equation}
Indeed, $P_{\leq N_0}^hv_h^\pm=v_h^\pm$ by construction, so low-frequency compatibility \eqref{eq: localized Ch FT} gives
$$
P_{\leq N_0}\mathfrak{C}_hv_h^\pm
=
\mathfrak{C}_hP_{\leq N_0}^hv_h^\pm
=
\mathfrak{C}_hv_h^\pm.
$$
Moreover, the uniform $X^{s,b}$-bounds \eqref{ineq: uniform Xsb bound for local KdV limit} imply
\begin{equation}\label{ineq: w high freq estimate}
\|P_{>N_0}w^\pm\|_{X_{\pm}^{0,b}}
\ls
N_0^{-s}\|w^\pm\|_{X_{\pm}^{s,b}}
\ls
h^{\frac{2s}{5}}R_s.
\end{equation}
Hence, it suffices to estimate the low-frequency part. By the time-cutoff versions of \eqref{eq: continuum auxiliary equation}
and \eqref{eq:KdV integral form}, we write, for all \(t\in\R\), 
$$
\begin{aligned}
&P_{\leq N_0}(\mathfrak{C}_hv_h^\pm-w^\pm)(t)\\
&=\eta_1(t)\big(\widetilde{S}_h^\pm(t)-S^\pm(t)\big)\mathfrak{C}_hP_{\leq N_0}^hu_{h,0}^\pm +\eta_1(t)S^\pm(t)P_{\leq N_0}\big(\mathfrak{C}_hu_{h,0}^\pm-w_0^\pm\big) \\
&
\quad\mp\frac{\eta_1(t)}{4}
\int_0^t\big(\widetilde{S}_h^\pm(t-t')-S^\pm(t-t')\big) \eta_T(t')P_{\leq N_0}m_h(-i\partial_x)\mathfrak{C}_h\big\{(v_h^\pm)^2\big\}(t') dt'\\
&\quad\mp\frac{\eta_1(t)}{4}
\int_0^tS^\pm(t-t')\eta_T(t')P_{\leq N_0}\Big[m_h(-i\partial_x)\mathfrak{C}_h\big\{(v_h^\pm)^2\big\}-
\pa_x
\big\{(w^\pm)^2\big\}\Big](t')dt'.
\end{aligned}
$$
Here we used $P_{\leq N_0}\mathfrak{C}_hP_{\leq N_0}^h=P_{\leq N_0}\mathfrak{C}_h$ which holds by \eqref{eq: localized Ch FT}.

Since $N_0=\frac{1}{2}h^{-\frac{2}{5}}$, applying the linear estimates (Lemma~\ref{lem: Xsb properties}) and the low-frequency flow comparison (Lemma~\ref{lem: Low-frequency comparison of the FPUT and Airy flows}), we obtain 
$$
\begin{aligned}
\big\|P_{\leq N_0}(\mathfrak{C}_hv_h^\pm-w^\pm)\big\|_{X_\pm^{0,b}}
&\ls 
h^{\frac{2s}{5}}\|\mathfrak{C}_hP_{\leq N_0}^hu_{h,0}^\pm\|_{H_x^s}
+
\big\|P_{\leq N_0}(\mathfrak{C}_hu_{h,0}^\pm-w_0^\pm)\big\|_{L_x^2}\\
&\quad
+h^{\frac{2s}{5}} T^\delta\big\|P_{\leq N_0}m_h(-i\partial_x)\mathfrak{C}_h\big\{(v_h^\pm)^2\big\}\big\|_{X_\pm^{s,b-1+\delta}}\\
&\quad+
T^{\frac{3}{4}-b}\Big\|P_{\leq N_0}\Big[m_h(-i\partial_x)\mathfrak{C}_h\big\{(v_h^\pm)^2\big\}-\pa_x\big\{(w^\pm)^2\big\}\Big]\Big\|_{X_\pm^{0,-\frac{1}{4}}}\\
&=:
\textup{(I)} + \textup{(II)} + \textup{(III)} + \textup{(IV)}.
\end{aligned}
$$
Here the flow estimates are applied at time scale \(1\), with
\(\eta_T\) included in the forcing terms. We use
\(\eta_1\eta_T=\eta_T\), which follows from \(T\leq\frac12\), and then
apply the time-localization estimate in Lemma~\ref{lem: Xsb properties}
from temporal exponent \(b-1+\delta\) (resp., \(-\frac14\)) to \(b-1\),
gaining \(T^\delta\) (resp., \(T^{\frac34-b}\)).
By \eqref{ineq: Ch Hs equivalence}, we obtain
$$
\textup{(I)} + \textup{(II)}
\ls
h^{\frac{2s}{5}}\|P_{\leq N_0}^hu_{h,0}^\pm\|_{H_z^s}+\|\mathfrak{C}_hu_{h,0}^\pm-w_0^\pm\|_{L_x^2}
\ls
h^{\frac{2s}{5}} R_s +\|\mathfrak{C}_hu_{h,0}^\pm-w_0^\pm\|_{L_x^2}.
$$
For $\textup{(III)}$, \eqref{eq: localized Ch FT}, the low-frequency Bourgain norm equivalence \eqref{ineq: Xsb norm equivalence of Ch in low frequency} and the bilinear estimate \eqref{ineq: FPUT bilinear estimate 1} imply
$$
\begin{aligned}
\textup{(III)}
&\ls
h^{\frac{2s}{5}} T^\delta
\big\|\mathfrak{C}_hP_{\leq N_0}^h\nabla_h\big\{(v_h^\pm)^2\big\}\big\|_{X_\pm^{s,b-1+\delta}}\\
&\ls
h^{\frac{2s}{5}} T^\delta
\big\|P_{\leq N_0}^h\nabla_h\big\{(v_h^\pm)^2\big\}\big\|_{X_{h,\pm}^{s,b-1+\delta}}
\ls
h^{\frac{2s}{5}} T^\delta\|v_h^\pm\|_{X_{h,\pm}^{s,b}}\|v_h^\pm\|_{X_{h,\pm}^{0,b}}
\ls
h^{\frac{2s}{5}} T^\delta R_0R_s.
\end{aligned}
$$
Next, using \eqref{lem: Low-frequency product discrepancy for Ch}, the triangle inequality and $|m_h(\xi)|\leq |\xi|$, we decompose
$$
\begin{aligned}
\textup{(IV)}
&\leq
T^{\frac{3}{4}-b}\big\|P_{\leq N_0}(m_h(-i\partial_x)-\pa_x)(w^\pm)^2\big\|_{X_\pm^{0,-\frac{1}{4}}}
+
T^{\frac{3}{4}-b}\big\|P_{\leq N_0}\pa_x
\big\{(\mathfrak{C}_hv_h^\pm)^2-(w^\pm)^2\big\}\big\|_{X_\pm^{0,-\frac{1}{4}}}\\
&\quad+
T^{\frac{3}{4}-b}
\big\|P_{\leq N_0}\pa_x
\big\{\mathfrak{C}_h\big((v_h^\pm)^2\big)-(\mathfrak{C}_hv_h^\pm)^2\big\}\big\|_{X_\pm^{0,-\frac{1}{4}}}
=: \textup{(IV-a)} + \textup{(IV-b)}.
\end{aligned}
$$
For $\textup{(IV-a)}$, the derivative comparison \eqref{ineq: derivative symbol difference} and the KdV bilinear estimate \eqref{ineq:KdV bilinear estimate} give
$$
\textup{(IV-a)}
\ls
T^{\frac{3}{4}-b}h^{\frac{2s}{5}}\|\pa_x(w^\pm)^2\|_{X_\pm^{s,-\frac{1}{4}}}
\ls
T^{\frac{3}{4}-b}h^{\frac{2s}{5}}\|w^\pm\|_{X_\pm^{s,b}}\|w^\pm\|_{X_\pm^{0,b}}
\ls
T^{\frac{3}{4}-b}h^{\frac{2s}{5}}R_0R_s.
$$
For $\textup{(IV-b)}$, the KdV bilinear estimate \eqref{ineq:KdV bilinear estimate} and $\|\mathfrak{C}_hv_h^\pm\|_{X_\pm^{0,b}}\sim\|v_h^\pm\|_{X_{h,\pm}^{0,b}}$ from \eqref{ineq: Xsb norm equivalence of Ch in low frequency} yield
$$
\textup{(IV-b)}
\ls
T^{\frac{3}{4}-b}
\Big(\|\mathfrak{C}_hv_h^\pm\|_{X_\pm^{0,b}}+\|w^\pm\|_{X_\pm^{0,b}}\Big)\|\mathfrak{C}_hv_h^\pm-w^\pm\|_{X_\pm^{0,b}}
\ls
T^{\frac{3}{4}-b}R_0\|\mathfrak{C}_hv_h^\pm-w^\pm\|_{X_\pm^{0,b}}.
$$
Combining these estimates yields
$$
\textup{(IV)}
\ls
T^{\frac{3}{4}-b}h^{\frac{2s}{5}}R_0R_s
+
T^{\frac{3}{4}-b}R_0\|\mathfrak{C}_hv_h^\pm-w^\pm\|_{X_\pm^{0,b}}.
$$
Substituting the bounds for $\textup{(I)}$--$\textup{(IV)}$ together with \eqref{ineq: w high freq estimate} into \eqref{ineq: Cv-w decomposition}, we have
\begin{equation}\label{ineq: Cv-w estimate}
\|\mathfrak{C}_hv_h^\pm-w^\pm\|_{X_{\pm}^{0,b}}
\leq
C_2\|\mathfrak{C}_hu_{h,0}^\pm-w_0^\pm\|_{L_x^2}
+C_2(1+3T^\delta R_0)h^{\frac{2s}{5}}R_s
+C_0T^\delta R_0\|\mathfrak{C}_hv_h^\pm-w^\pm\|_{X_{\pm}^{0,b}},
\end{equation}
where $C_0,C_2>0$ are independent of $h$, $T$, $R_0$, and $R_s$,
and $C_0$ is also independent of $s$. Here we used $\frac{3}{4}-b>\delta$ and $T\leq1$.
Taking \(T=T(R_0)>0\) sufficiently small so that
\(C_0T^\delta R_0\leq\frac12\), we absorb the last term in
\eqref{ineq: Cv-w estimate} and obtain
\[
\|\mathfrak{C}_hv_h^\pm-w^\pm\|_{X_\pm^{0,b}}
\leq C_s\|\mathfrak{C}_hu_{h,0}^\pm-w_0^\pm\|_{L_x^2}
+C_sh^{\frac{2s}{5}}R_s.
\]
Finally, the embedding \(X_\pm^{0,b}\hookrightarrow C_tL_x^2(\R)\),
valid for \(b>\frac12\), proves \eqref{ineq: second reduction}.
\end{proof}

We can now combine the two reductions. After decreasing \(h_0\) and
\(T=T(R_0)\) so that both lemmas apply, we decompose the approximation
error as
\begin{equation}\label{eq: auxiliary decomposition}
\mathfrak{C}_hu_h^\pm-w^\pm
=
\mathfrak{C}_h(u_h^\pm-v_h^\pm)
+
(\mathfrak C_hv_h^\pm-w^\pm).
\end{equation}
The first term is controlled by Lemma~\ref{lem: first reduction} and the
\(L^2\)-isometry \eqref{eq: L2 isometry}, while the second is controlled
by Lemma~\ref{lem: second reduction}. The triangle inequality therefore
gives the following local approximation result.

\begin{proposition}[Local-in-time KdV limit for the FPUT system with persistence of regularity]\label{prop: Local KdV limit for FPUT with persistence of regularity}
Let $0\leq s\leq 1$ and $\mathfrak{C}_h$ be the continuation operator defined in \eqref{eq: definition of Ch}. Suppose
$$
\sup_{h\in(0,1]}
\|u_{h,0}^\pm\|_{L_z^2(h\Z)}, 
\|w_0^\pm\|_{L_x^2(\R)}
\leq R_0
\quad\textup{and}\quad
\sup_{h\in(0,1]}
\|u_{h,0}^\pm\|_{H_z^s(h\Z)}, 
\|w_0^\pm\|_{H_x^s(\R)}
\leq R_s.
$$
Let $w^\pm$ be the solution to the KdV equation
\eqref{eq:KdV integral form} with initial data $w_0^\pm$.
For each $h\in(0,1]$, let $u_h^\pm$ be the solution to the coupled FPUT
system \eqref{eq: coupled FPUT integral form} with initial data
$u_{h,0}^\pm$. Then, there exist $h_0\in(0,1]$, a time $T=T(R_0)>0$
and a constant $C_s=C_s(R_0)>1$, independent of $h\in(0,h_0]$ and
$R_s$, such that, for every $h\in(0,h_0]$, both solutions exist on
$[-T,T]$ and
\begin{equation}\label{ineq: FPUT to KdV local difference}
\|\mathfrak{C}_hu_h^\pm-w^\pm\|_{C_t([-T,T];L_x^2(\R))}
\leq
C_s\|\mathfrak{C}_hu_{h,0}^\pm-w_0^\pm\|_{L_x^2(\R)}
+
C_sR_sh^{\frac{2s}{5}}.
\end{equation}
\end{proposition}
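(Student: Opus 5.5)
The estimate follows by combining Lemma~\ref{lem: first reduction} and Lemma~\ref{lem: second reduction} through the decomposition \eqref{eq: auxiliary decomposition}. I first fix the parameters. I take \(h_0\) to be the minimum of the thresholds in the two lemmas; this includes the remainder threshold from Lemma~\ref{lem: estimates for the higher-order remainder} and the condition \(2N_0\leq\frac{\alpha}{h}\). I then take \(T=T(R_0)\) to be the minimum of the two lifespans. Both lifespans depend only on \(R_0\), and each lemma's proof only requires \(T\) to be small enough for absorption, so decreasing \(T\) does no harm. With these choices, Propositions~\ref{prop: Local uniform bounds for the FPUT system}, \ref{prop: local uniform bounds for KdV} and \ref{prop: local uniform bounds for the auxiliary equation} guarantee that \(u_h^\pm\), \(v_h^\pm\) and \(w^\pm\) all exist on \([-T,T]\) for every \(h\in(0,h_0]\). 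Here \(v_h^\pm\) is the auxiliary solution with data \(P_{\leq N_0}^hu_{h,0}^\pm\).

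Next, I fix \(t\in[-T,T]\) and apply the triangle inequality to \eqref{eq: auxiliary decomposition}. The first term is \(\mathfrak{C}_h(u_h^\pm-v_h^\pm)(t)\). Since \(\mathfrak{C}_h\) is an \(L^2\)-isometry by \eqref{eq: L2 isometry}, its \(L_x^2\)-norm equals \(\|u_h^\pm(t)-v_h^\pm(t)\|_{L_z^2(h\Z)}\). By \eqref{ineq: first reduction}, this is at most \(C_sR_sh^{\frac{2s}{5}}\). The second term is \(\mathfrak{C}_hv_h^\pm-w^\pm\), which is bounded directly by \eqref{ineq: second reduction}. Taking the supremum over \(t\) and adding the two bounds gives \eqref{ineq: FPUT to KdV local difference}. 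The resulting constant is the sum of the constants from the two lemmas, enlarged to exceed \(1\).

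The only step that needs care is the bookkeeping of the constants, and it is not a real obstacle. I must check that the combined constant depends on \(R_0\) and \(s\) but not on \(h\), \(R_s\) or \(T\). This holds because both lemmas produce bounds linear in \(R_s\), and in both the smallness of \(T\) is dictated solely by \(R_0\). I also note that the case \(s=0\) is trivially included: there \(h^{\frac{2s}{5}}=1\), and the estimate reduces to the uniform \(L^2\) bounds.
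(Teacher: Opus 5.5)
Your proposal is correct and follows the paper's argument. The paper likewise shrinks \(h_0\) and \(T=T(R_0)\) so that Lemma~\ref{lem: first reduction} and Lemma~\ref{lem: second reduction} both apply, splits the error via \eqref{eq: auxiliary decomposition}, bounds the first piece by \eqref{ineq: first reduction} together with the \(L^2\)-isometry \eqref{eq: L2 isometry} and the second by \eqref{ineq: second reduction}, and concludes by the triangle inequality. Shrinking \(T\) is harmless for a simpler reason than the one you give: each lemma's \(C_t([-T,T];L^2)\) conclusion automatically restricts to smaller intervals, so you do not need to reopen their proofs.
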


\begin{remark}
The lifespan depends only on the lower-order $L^2$-bounds, whereas the
higher $H^s$-bounds enter through the approximation error. Together with
the uniform-in-time $L^2$-bounds and the propagated $H^s$-bounds, this
allows the local approximation estimate to be iterated over successive
time intervals of a common length.
\end{remark}

\subsection{Global extension}
We now prove the main theorem by iterating the local approximation in Proposition~\ref{prop: Local KdV limit for FPUT with persistence of regularity}. Lemma~\ref{lem: uniform L2 bound for the FPUT} and the $L^2$-conservation law for KdV provide uniform-in-time lower-order bounds, and hence a common time step depending only on the initial $L^2$-bounds. Proposition~\ref{prop: exponential bound} controls the $H^s$-norms at the iteration times, so that the accumulated local errors grow at most exponentially in time.

\begin{proof}[Proof of Theorem~\ref{thm: main theorem}]
It suffices to consider $t\geq0$, since the case $t<0$ follows by the same argument. Choose $R_0, R_s\geq 1$ such that
$$
\sup_{h\in(0,1]}\|u_{h,0}^\pm\|_{L_z^2(h\Z)}, 
\|w_0^\pm\|_{L_x^2(\R)}
\leq R_0
\quad\textup{and}\quad
\sup_{h\in(0,1]}\|u_{h,0}^\pm\|_{H_z^s(h\Z)}, 
\|w_0^\pm\|_{H_x^s(\R)}
\leq R_s.
$$
By Lemma~\ref{lem: uniform L2 bound for the FPUT} and the $L^2$-conservation law for the KdV equation, the $L^2$-norms of both solutions remain uniformly bounded by a constant depending only on $R_0$. 
The proofs of Lemmas~\ref{lem: first reduction} and \ref{lem: second reduction} apply uniformly at any initial time: the additional fixed almost translations preserve Bourgain norms, and the additional constant scalar phases in \eqref{eq: general translation correction} have modulus one.
Consequently, Proposition~\ref{prop: Local KdV limit for FPUT with persistence of regularity} yields $h_0\in(0,1]$ and a time step 
$$
\Delta t=\Delta t(R_0)>0,
$$
independent of $h$, $R_s$, and the initial time, such that the local KdV approximation holds on every interval of length $\Delta t$.
Set 
$$
t_n:=n\Delta t, \quad n\in\Z_{\geq 0}.
$$ 
Applying Proposition~\ref{prop: Local KdV limit for FPUT with persistence of regularity} on $[t_n,t_{n+1}]$, and using Proposition~\ref{prop: exponential bound} to control the $H^s$-norms at $t=t_n$, for every $n\in \Z_{\geq 0}$ and $h\in(0,h_0]$, we obtain, after enlarging the constant if necessary, a constant $C=C(R_0,s)\geq 2$ such that
$$
\|\mathfrak{C}_hu_h^\pm-w^\pm\|_{C_t([t_n, t_{n+1}];L_x^2(\R))}
\leq
C\|\mathfrak{C}_hu_h^\pm(t_n)-w^\pm(t_n)\|_{L_x^2(\R)}+Ce^{Ct_n}R_sh^{\frac{2s}{5}}.
$$
An induction on $m$ then gives, for every $m\geq 0$,
$$
\|\mathfrak{C}_hu_h^\pm-w^\pm\|_{C_t([0, t_m];L_x^2(\R))}
\leq
C^m\|\mathfrak{C}_hu_{h,0}^\pm-w_0^\pm\|_{L_x^2(\R)}
+
R_sh^{\frac{2s}{5}}\sum_{k=0}^{m-1}C^{m-k}e^{Ct_k}.
$$
Since $t_k=k\Delta t$, 
$$
\sum_{k=0}^{m-1}C^{m-k}e^{Ct_k}
=
\sum_{k=0}^{m-1}C^{m-k}(e^{C\Delta t})^k
\leq
(C+e^{C\Delta t})^m. 
$$
Set
$$
K=K(R_0, s):=\frac{\log(C+e^{C\Delta t})}{\Delta t}.
$$
It follows that
$$
\|\mathfrak{C}_hu_h^\pm-w^\pm\|_{C_t([0,t_m];L_x^2(\R))}
\leq
e^{Kt_m}
\Big(
\|\mathfrak{C}_hu_{h,0}^\pm-w_0^\pm\|_{L_x^2(\R)}
+R_sh^{\frac{2s}{5}}\Big).
$$
Given $t\geq0$, choose $m\in\Z_{\geq 0}$ such that $t\leq t_m<t+\Delta t$. Since $e^{Kt_m}\leq e^{Kt}e^{K\Delta t}$, we obtain
$$
\|\mathfrak{C}_hu_h^\pm(t)-w^\pm(t)\|_{L_x^2(\R)}
\leq
 e^{Kt}e^{K\Delta t}\Big(\|\mathfrak{C}_hu_{h,0}^\pm-w_0^\pm\|_{L_x^2(\R)}
+R_sh^{\frac{2s}{5}}\Big).
$$
Define
$$
K_0:=\max\{K, e^{K\Delta t}\}
\quad\textup{and}\quad
K_s:=K_0R_s.
$$
Then $K_0$ depends only on $R_0$ and $s$, whereas $K_s$ may additionally depend on $R_s$. Thus, \eqref{ineq: convergence bound} follows.
\end{proof}

\subsection{Continuum and small-amplitude limits}
We now prove the continuum and small-amplitude limits in Theorem~\ref{thm: continuum limit}. The approximation for the strain $r_h$ does not follow directly from the moving-frame approximation for $u_h^\pm$, since $\mathfrak{C}_h$ does not generally intertwine lattice almost translations with continuum translations; see \cite[Section~8.3]{HKY2021} for the analogous issue. We address this using low-frequency compatibility and the high-frequency Sobolev tail estimate. The small-amplitude limit then follows by rescaling \eqref{eq:FPUT-rescaling}.
\begin{proof}[Proof of Theorem~\ref{thm: continuum limit}]
By \eqref{eq: def of transformed variables}, the hypothesis of the theorem, and \eqref{ineq: Ch Hs equivalence},
$$
\sup_{h\in(0,1]}
\sum_\pm
\Big(\|u_{h,0}^\pm\|_{H_z^s}+\|w^{\pm, h}(0)\|_{H_x^s}\Big)\ls
\sup_{h\in(0,1]}\Big(\|r_{h,0}\|_{H_z^s}+\big\|h^2\nabla_h^{-1}r_{h,1}\big\|_{H_z^s}\Big)<\infty.
$$
Since $w^{\pm, h}(0)=\mathfrak{C}_hu_{h,0}^\pm$,  Theorem~\ref{thm: main theorem} yields
$$
\|\mathfrak{C}_hu_h^\pm(t)-w^{\pm,h}(t)\|_{L_x^2(\R)}
\leq
K_se^{K_0|t|}h^{\frac{2s}{5}},
\qquad t\in\R.
$$
Reversing the diagonalization in Section~\ref{subsec: Derivation of the coupled FPUT system}, we reconstruct
$$
r_h^\pm(t)=e^{\mp\frac{t}{h^2}\pa_h}u_h^\pm(t),
\quad
r_h=r_h^++r_h^-,
$$
(see \eqref{decomposition of rh}). Then $r_h$ is a global solution of the rescaled FPUT system \eqref{eq:FPUT-system-expanded} with initial data $(r_{h,0}, r_{h,1})$. Its uniqueness follows from the uniqueness of $u_h^\pm$.

Using low-frequency compatibility \eqref{eq: commutativity of multipliers in low frequency}, the $L^2$-isometry of $\mathfrak{C}_h$ \eqref{eq: L2 isometry} and the unitarity of the lattice and continuum translation operators, we obtain 
$$
\begin{aligned}
\big\|\mathfrak{C}_hr_h^\pm-e^{\mp\frac{t}{h^2}\pa_x}w^{\pm,h}\big\|_{L_x^2}
&\leq
\big\|\mathfrak{C}_he^{\mp\frac{t}{h^2}\pa_h}(1-P_{\leq N_0}^h)u_h^\pm\big\|_{L_x^2}
+\big\|e^{\mp\frac{t}{h^2}\pa_x}(\mathfrak{C}_hP_{\leq N_0}^hu_h^\pm-w^{\pm,h})\big\|_{L_x^2}\\
&\leq
\big\|(1-P_{\leq N_0}^h)u_h^\pm\big\|_{L_z^2}
+
\big\|\mathfrak{C}_hP_{\leq N_0}^hu_h^\pm-w^{\pm,h}\big\|_{L_x^2}\\
&\leq 2\big\|(1-P_{\leq N_0}^h)u_h^\pm\big\|_{L_z^2}
+\|\mathfrak{C}_hu_h^\pm-w^{\pm,h}\|_{L_x^2}\\
&\leq
2N_0^{-s}\|u_h^\pm(t)\|_{H_z^s}+\|\mathfrak{C}_hu_h^\pm(t)-w^{\pm,h}(t)\|_{L_x^2}
\leq
K_se^{K_0|t|}h^{\frac{2s}{5}},
\end{aligned}
$$
after enlarging $K_0$ and $K_s$ if necessary. Here the last inequality follows from  \eqref{ineq: FPUT KdV Hs norm growth}. Since $r_h=r_h^++r_h^-$ and $e^{\mp\frac{t}{h^2}\pa_x}w^{\pm,h}(t,x)=w^{\pm,h}(t,x\mp\frac{t}{h^2})$, summing over the signs and taking the supremum over $|t|\leq T$ proves the continuum-limit estimate \eqref{ineq: continuum limit}.

For the small-amplitude limit \eqref{ineq: small-amplitude limit}, define
$$
r(t,z):=h^2r_h(h^3t, hz), \quad z\in\Z.
$$
Using the rescaling \eqref{eq:FPUT-rescaling}, $r$ solves the original FPUT system \eqref{eq:FPUT-system-original-scale} and 
$$
(\mathfrak{C}_1r)(t, x)
=
h^2\sum_{z\in\Z}\varphi(x-z)r_h(h^3t, hz)
=h^2(\mathfrak{C}_hr_h)(h^3t, hx).
$$
By changing variables $y=hx$ and applying the continuum limit estimate \eqref{ineq: continuum limit}, we obtain
$$
\begin{aligned}
&\big\|(\mathfrak{C}_1r)(t,x)-h^2w^{+,h}(h^3t, h(x-t))-h^2w^{-,h}(h^3t, h(x+t))\big\|_{L_x^2(\R)}\\
&\quad
=
h^{\frac{3}{2}}\big\|(\mathfrak{C}_hr_h)(h^3t, y)-w^{+,h}(h^3t,  y-ht)-w^{-,h}(h^3t, y+ht)\big\|_{L_y^2(\R)}\leq K_se^{K_0T}h^{\frac{3}{2}+\frac{2s}{5}},
\end{aligned}
$$
for $|t|\leq T/h^3$. This proves the small-amplitude estimate \eqref{ineq: small-amplitude limit} and completes the proof.
\end{proof}

\appendix

\section{Proof of Lemma \ref{lem: unified integral bounds}}\label{appendix: Proof of integral estimates}
\noindent\textbf{\underline{Reduction and symmetries.}}
The integral
\(\mathcal I_{\tau,\xi,\ell}^{h;\sigma_1,\sigma_2}\) vanishes whenever
\(|\xi|\geq\frac{2\pi}{h}\). Moreover, up to endpoints of measure
zero, the change of variables
\(\xi_1\mapsto\xi-\xi_1\) gives
\begin{equation}\label{eq: symmetry in sigma1 sigma2}
\mathscr I_{\tau,\xi,\ell}^{h;\sigma_1,\sigma_2}
=
\mathscr I_{\tau,\xi,\ell}^{h;\sigma_2,\sigma_1}.
\end{equation}
Thus, among the four possible sign pairs, it suffices to consider
\[
(\sigma_1,\sigma_2)=(-1,-1),  (1,1), (-1,1).
\]
Away from the endpoints \(\xi=\pm\frac{\pi}{h}\), the symmetry \((\tau,\xi,\xi_1,\ell)
\mapsto
(-\tau,-\xi,-\xi_1,-\ell)\) reduces the proof to the case \(\xi\geq0\). Since the case
\(\xi=0\) is trivial, we may assume
\[
0<\xi<\frac{2\pi}{h}.
\]
For such \(\xi\), the conditions \(-\frac{\pi}{h}\leq\xi_1<\frac{\pi}{h}\) and \(-\frac{\pi}{h}\leq\xi-\xi_1<\frac{\pi}{h}\) are equivalent, up to endpoints of measure zero, to
\[
\xi-\frac{\pi}{h}<\xi_1<\frac{\pi}{h}.
\]
Furthermore, either \([\xi]=\xi\) or
\([\xi]=\xi-\frac{2\pi}{h}\), and hence
\begin{equation}\label{eq: common factor}
\sin^2\left(\frac{h[\xi]}{2}\right)
=
\sin^2\left(\frac{h\xi}{2}\right)
=
4\sin^2\left(\frac{h\xi}{4}\right)
\cos^2\left(\frac{h\xi}{4}\right).
\end{equation}
Using the definition of \(s_h\), we can write the modulation as
\begin{equation}\label{eq: common modulation normal form}
\begin{aligned}
&\tau
-\frac{(1+\sigma_1)\xi_1
+(1+\sigma_2)(\xi-\xi_1)}{h^2}
-\frac{2\pi\ell}{h^3}
+\sigma_1s_h(\xi_1)
+\sigma_2s_h(\xi-\xi_1)\\
&=
\tau-\frac{\xi}{h^2}-\frac{2\pi\ell}{h^3}
-\frac{2}{h^3}
\left\{
\sigma_1\sin\left(\frac{h\xi_1}{2}\right)
+\sigma_2\sin\left(\frac{h(\xi-\xi_1)}{2}\right)
\right\}.
\end{aligned}
\end{equation}

\medskip
\noindent\textbf{\underline{Equal-sign interactions.}}
Suppose that
\(\sigma_1=\sigma_2=\sigma\in\{-1,1\}\). By
\eqref{eq: common modulation normal form} and the sum-to-product identity, the modulation becomes
\[
\tau-\frac{\xi}{h^2}-\frac{2\pi\ell}{h^3}
-\frac{4\sigma}{h^3}
\sin\left(\frac{h\xi}{4}\right)
\cos\left(\frac{h(2\xi_1-\xi)}{4}\right).
\]
This expression is invariant under the reflection \(\xi_1\mapsto\xi-\xi_1\), whose fixed point is \(\frac{\xi}{2}\). Therefore, the integral \(\mathcal{I}_{\tau,\xi,\ell}^{h;\sigma,\sigma}\) can be written as 
\[
\begin{aligned}
\mathcal I_{\tau,\xi,\ell}^{h;\sigma,\sigma}
=
2\int_{\frac{\xi}{2}}^{\frac{\pi}{h}}
\frac{d\xi_1}
{\big\langle
\tau-\frac{\xi}{h^2}-\frac{2\pi\ell}{h^3}
-\frac{4\sigma}{h^3}
\sin\big(\frac{h\xi}{4}\big)
\cos\big(\frac{h(2\xi_1-\xi)}{4}\big)
\big\rangle^{2b}}.
\end{aligned}
\]
Note that on the interval of integration, 
\[
0
\leq
\frac{h(2\xi_1-\xi)}{4}
<
\frac{\pi}{2}-\frac{h\xi}{4}
<
\frac{\pi}{2}.
\]
We define
\[
y=
y(\xi_1):=
\frac{4}{h^3}\sin\left(\frac{h\xi}{4}\right)
\left\{
1-\cos\left(\frac{h(2\xi_1-\xi)}{4}\right)
\right\}.
\]
Then, \(y(\xi_1)\) is nonnegative and non-decreasing, and
\[
\begin{aligned}
\left(\frac{dy}{d\xi_1}\right)^2
&=
\frac{1}{h}\sin\left(\frac{h\xi}{4}\right)
\left\{
1+\cos\left(\frac{h(2\xi_1-\xi)}{4}\right)
\right\}y\geq
\frac{1}{h}\sin\left(\frac{h\xi}{4}\right)y.
\end{aligned}
\]
Hence, making the change of variables $y=y(\xi_1)$ and applying the integral bound 
\[\int_{\mathbb{R}}
\frac{dx}
{\la x\ra^{2b}\sqrt{|x-y|}}
\lesssim
\frac{1}{\la y\ra^{\frac12}}\]
with \(b>\frac12\), we obtain
\[
\begin{aligned}
\mathcal I_{\tau,\xi,\ell}^{h;\sigma,\sigma}
&\lesssim
\frac{\sqrt h}
{\sqrt{\sin\big(\frac{h\xi}{4}\big)}}
\int_0^\infty
\frac{dy}
{\sqrt y
 \big\langle
 \tau-\frac{\xi}{h^2}-\frac{2\pi\ell}{h^3}
 -\frac{4\sigma}{h^3}\sin\big(\frac{h\xi}{4}\big)
 +\sigma y
 \big\rangle^{2b}}\\
 &\lesssim
\frac{\sqrt h}
{\sqrt{\sin\big(\frac{h\xi}{4}\big)}
 \big\langle
 \tau-\frac{\xi}{h^2}-\frac{2\pi\ell}{h^3}
 -\frac{4\sigma}{h^3}\sin\big(\frac{h\xi}{4}\big)
 \big\rangle^{\frac12}}.
\end{aligned}
\]
Consequently, the elementary inequality \(\frac{1}
{\la A\ra^{\frac12}\la B\ra^{\frac12}}
\lesssim
\frac{1}{\la A-B\ra^{\frac12}}
\leq
\frac{1}{|A-B|^{\frac12}}\), together with the double-angle identity, gives
\begin{equation}\label{ineq: common same-sign integral estimate}
\mathscr I_{\tau,\xi,\ell}^{h;\sigma,\sigma}
\lesssim
\frac{\sin^{\frac32}\big(\frac{h\xi}{4}\big)
\cos^2\big(\frac{h\xi}{4}\big)}
{h^{\frac32}
\big|
s_h([\xi])-\frac{\xi}{h^2}-\frac{2\pi\ell}{h^3}
-\frac{4\sigma}{h^3}\sin\big(\frac{h\xi}{4}\big)
\big|^{\frac12}}.
\end{equation}
Suppose first that
\(0<\xi<\frac{\pi}{h}\) and \(\ell=0\). Since \([\xi]=\xi\), we have
\[
s_h([\xi])
=
\frac{\xi}{h^2}
-\frac{4}{h^3}
\sin\left(\frac{h\xi}{4}\right)
\cos\left(\frac{h\xi}{4}\right).
\]
It follows from
\eqref{ineq: common same-sign integral estimate} that
\[
\mathscr I_{\tau,\xi,0}^{h;-1,-1}
\lesssim
\frac{
\sin\big(\frac{h\xi}{4}\big)
\cos^2\big(\frac{h\xi}{4}\big)
}
{\sqrt{1-\cos\big(\frac{h\xi}{4}\big)}}\lesssim
\cos^2\bigg(\frac{h\xi}{4}\bigg)
\sqrt{1+\cos(\frac{h\xi}{4})}
\lesssim1
\]
and
\[
\mathscr I_{\tau,\xi,0}^{h;1,1}
\lesssim
\frac{
\sin\big(\frac{h\xi}{4}\big)
\cos^2\big(\frac{h\xi}{4}\big)
}
{\sqrt{1+\cos\big(\frac{h\xi}{4}\big)}}
\lesssim
\sin(\frac{h\xi}{4}).
\]
Suppose next that
\(\frac{\pi}{h}\leq\xi<\frac{2\pi}{h}\). Since
\([\xi]=\xi-\frac{2\pi}{h}\), we have
\[
s_h([\xi])
=
\frac{\xi}{h^2}-\frac{2\pi}{h^3}
+\frac{4}{h^3}
\sin\left(\frac{h\xi}{4}\right)
\cos\left(\frac{h\xi}{4}\right),
\]
and consequently,
\[
\begin{aligned}
s_h([\xi])-\frac{\xi}{h^2}
-\frac{2\pi\ell}{h^3}
-\frac{4\sigma}{h^3}\sin\left(\frac{h\xi}{4}\right)
=
-\frac{1}{h^3}
\left[
2\pi(1+\ell)
+4\sin\left(\frac{h\xi}{4}\right)
\left\{
\sigma-\cos\left(\frac{h\xi}{4}\right)
\right\}
\right].
\end{aligned}
\]
For
\(\frac{\pi}{h}\leq\xi<\frac{2\pi}{h}\),
\(\ell\in\{-1,0,1\}\), and
\(\sigma\in\{-1,1\}\), we have the uniform lower bound
\[\left|
2\pi(1+\ell)
+4\sin\left(\frac{h\xi}{4}\right)
\left\{
\sigma-\cos\left(\frac{h\xi}{4}\right)
\right\}
\right|
\gtrsim1,\]
because \(1
\leq
\sin z\{
1+\cos z\}
\leq
\frac{3\sqrt3}{4}\) while \(\frac{\sqrt2-1}{2}
\leq
\sin z
\{
1-\cos z\}
\leq1\) for $\frac{\pi}{4}\leq z\leq\frac{\pi}{2}$. Thus, \eqref{ineq: common same-sign integral estimate} gives
\[
\mathscr I_{\tau,\xi,\ell}^{h;\sigma,\sigma}
\lesssim1.
\]

\medskip
\noindent\textbf{\underline{Opposite-sign interactions.}}
By \eqref{eq: symmetry in sigma1 sigma2}, it suffices to consider \((\sigma_1,\sigma_2)=(-1,1)\). In this case, \eqref{eq: common modulation normal form} and the sum-to-product identity give
\[
\begin{aligned}
&\tau-\frac{2(\xi-\xi_1)}{h^2}
-\frac{2\pi\ell}{h^3}
-s_h(\xi_1)+s_h(\xi-\xi_1)\\
&=
\tau-\frac{\xi}{h^2}-\frac{2\pi\ell}{h^3}
+\frac{4}{h^3}
\cos\left(\frac{h\xi}{4}\right)
\sin\left(\frac{h(2\xi_1-\xi)}{4}\right).
\end{aligned}
\]
Now, we make the change of variables
\[
y
:=
\frac{4}{h^3}
\cos\left(\frac{h\xi}{4}\right)
\sin\left(\frac{h(2\xi_1-\xi)}{4}\right).
\]
Since
\[
-\frac{\pi}{2}+\frac{h\xi}{4}
<
\frac{h(2\xi_1-\xi)}{4}
<
\frac{\pi}{2}-\frac{h\xi}{4},
\]
we have
\[
\cos\left(\frac{h(2\xi_1-\xi)}{4}\right)
\geq
\sin\left(\frac{h\xi}{4}\right)>0,
\]
and hence, 
\[
\frac{dy}{d\xi_1}
=
\frac{2}{h^2}
\cos\left(\frac{h\xi}{4}\right)
\cos\left(\frac{h(2\xi_1-\xi)}{4}\right)\geq
\frac{2}{h^2}
\sin\left(\frac{h\xi}{4}\right)
\cos\left(\frac{h\xi}{4}\right).
\]
Thus, it follows that 
\[
\mathcal I_{\tau,\xi,\ell}^{h;-1,1}
\lesssim 
\frac{h^2}
{\sin\big(\frac{h\xi}{4}\big)\cos\big(\frac{h\xi}{4}\big)}
\int_{\mathbb R}
\frac{dy}
{\big\langle
\tau-\frac{\xi}{h^2}
-\frac{2\pi\ell}{h^3}+y
\big\rangle^{2b}}\lesssim\frac{h^2}
{\sin\big(\frac{h\xi}{4}\big)\cos\big(\frac{h\xi}{4}\big)},
\]
because \(b>\frac12\). Together with \eqref{eq: common factor}, this bound yields 
\[\mathscr I_{\tau,\xi,\ell}^{h;-1,1}
\lesssim
\sin\left(\frac{h\xi}{4}\right)
\cos\left(\frac{h\xi}{4}\right)\lesssim1.\]
Indeed, if
\(0<\xi<\frac{\pi}{h}\) and \(\ell=0\), we have a stronger bound
\[
\mathscr I_{\tau,\xi,0}^{h;-1,1}
\lesssim
\sin\left(\frac{h\xi}{4}\right).
\]
By the symmetries used in the reduction, the preceding estimates also hold in the corresponding negative-frequency regions. Finally, writing the integrals over a fixed compact interval with the corresponding indicator functions, we obtain the estimates at the endpoint \(\xi=-\frac{\pi}{h}\) by letting \(\xi\to(-\frac{\pi}{h})^+\) and applying the dominated convergence theorem. Combining the equal-sign and opposite-sign estimates proves \eqref{ineq: unified integral bound-1}, \eqref{ineq: unified integral bounds 2 and 3}, and \eqref{ineq: unified integral bound-1'}.


\bibliographystyle{abbrv}
\bibliography{Reference}

\end{document}